\pgfplotsset{width=7cm,compat=1.16} 
\tikzset{>=latex} 
\tikzset{
  every point/.style = {circle, inner sep={.75\pgflinewidth}, opacity=1, draw, solid, fill=white},
  point/.style={insert path={node[every point, #1]{}}}, point/.default={},
  colored point/.style = {point={fill=#1}},
  point name/.style = {insert path={coordinate (#1)}},
  inherit/.style = {point/.style={insert path={node[circle, inner sep={.75\pgflinewidth}, draw, fill, #1]{}}}},
  ar3/.style={decoration={markings,mark=at position 0.46 with {\arrow{stealth}},mark=at position 0.97 with {\arrow{stealth}}}, postaction={decorate}},
  ar4/.style={decoration={markings,mark=at position 0.31 with {\arrow{stealth}},mark=at position 0.82 with {\arrow{stealth}}}, postaction={decorate}},
  ar4r/.style={decoration={markings,mark=at position 0.31 with {\arrow[>=stealth]{<}},mark=at position 0.82 with {\arrow[>=stealth]{<}}}, postaction={decorate}},
  ar5/.style={decoration={markings,mark=at position 0.33 with {\arrow{stealth}},mark=at position 0.86 with {\arrow{stealth}}}, postaction={decorate}},
  ->-/.style={decoration={markings,mark=at position 0.15 with {\arrow{stealth}},mark=at position 0.49 with {\arrow{stealth}},mark=at position 0.95 with {\arrow{stealth}}}, postaction={decorate}},
  -<-/.style={decoration={markings,mark=at position 0.18 with {\arrow[>=stealth]{<}},mark=at position 0.5 with {\arrow[>=stealth]{<}},mark=at position 0.88 with {\arrow[>=stealth]{<}}}, postaction={decorate}},
  ar1/.style={decoration={markings,mark=at position 0.13 with {\arrow{stealth}},mark=at position 0.5 with {\arrow{stealth}},mark=at position 0.97 with {\arrow{stealth}}},postaction={decorate}},
  ar2/.style={decoration={markings,mark=at position 0.11 with {\arrow{stealth}},mark=at position 0.51 with {\arrow{stealth}},mark=at position 0.99 with {\arrow{stealth}}},postaction={decorate}}
}
\colorlet{xcol}{blue!60!black}
\colorlet{myred}{red!80!black}
\colorlet{myblue}{blue!80!black}
\colorlet{mygreen}{green!40!black}
\colorlet{mypurple}{red!50!blue!90!black!80}
\colorlet{mydarkred}{myred!80!black}
\colorlet{mydarkblue}{myblue!80!black}
\tikzstyle{xline}=[xcol,thick,smooth]
\tikzstyle{width}=[{Latex[length=5,width=3]}-{Latex[length=5,width=3]},thick]
\tikzstyle{mydashed}=[dash pattern=on 1.7pt off 1.7pt]
\tikzset{
  traj/.style 2 args={xline,postaction={decorate},decoration={markings,
    mark=at position #1 with {\arrow{stealth}},
    mark=at position #2 with {\arrow{stealth}}}
  }
}
\def\epsilon{\varepsilon}
\newcommand{\BF}{\bf\boldmath }
\newcommand{\EM}{\BF }
\def\em{\bf}
\newcommand{\ie}{{\it{i.e.}}}
\newcommand{\eps}{\varepsilon}
\newcommand{\R}{{\mathbb{R}}}
\newcommand{\bN}{{\mathbb{N}}}
\newcommand{\bZ}{{\mathbb{Z}}}
\def\cA{{\cal A}}
\def\cC{{\cal C}}
\def\cD{{\cal D}}
\def\cI{{\cal I}}
\def\cL{{\cal L}}
\def\cM{{\cal M}}
\def\cO{{\cal O}}
\def\cP{{\cal P}}
\def\cR{{\cal R}}
\def\cS{{\cal S}}
\def\bR{{\mathbb R}}
\def\NW{{\cP}}
\newcommand{\beq}{\begin{linenomath}\begin{equation*}} 
\newcommand{\eeq}{\end{equation*}\end{linenomath}} 
\newcommand{\beqn}{\begin{linenomath}\begin{equation}} 
\newcommand{\eeqn}{\end{equation}\end{linenomath}} 
\newtheorem*{lemmA}{Lemma A}
\newtheorem{theorem}{Theorem}
\newtheorem{proposition}{Proposition}[subsection]
\newtheorem{definition}[proposition]{Definition}
\newtheorem{lemma}[proposition]{Lemma}
\newtheorem{corollary}[proposition]{Corollary}
\newtheorem{example}[proposition]{Example}
\newtheorem{theoremX}{Theorem}
\def\Dto{\overset{D}{\geq}}
\def\leadsto{\Sto}
\def\Ato{\succcurlyeq_A}
\def\Dto{\succcurlyeq_D}
\def\Fto{\succcurlyeq_F}
\def\CRtoI{\succcurlyeq_{\cC_{F,d_1}}}
\def\CRtoII{\succcurlyeq_{\cC_{F,d_2}}}
\def\NWto{\succcurlyeq_{\cP_F}}
\def\NWeq{=_{\cP_F}}
\def\Sto{\succcurlyeq_S}
\def\lra{\overset{S}{=}}
\def\Feq{\overset{F}{=}}
\def\Seq{\overset{S}{=}}
\definecolor{dgn}{rgb}{0.2, .6, 0.0}
\newcommand{\black}{\color{black}{}}
\newcommand{\blue}{\color{black}{}}
\definecolor{orange}{rgb}{0.8, .2, 0.0}
\definecolor{jim}{rgb}{0,.5,.5}
\definecolor{rob}{rgb}{0,0.4, 0.6}
\newcommand{\T}{{\mathbb{T}}}
\DeclareMathOperator{\Down}{Down}
\DeclareMathOperator{\Up}{Up}
\DeclareMathOperator{\Node}{Node}
\def\Om{\Omega}
\newcommand{\bydef}{\overset{\mathrm{def}}{=\joinrel=}}
\def\cT{{\cal T}}
\def\sR{\cR}
\newif\ifincludeXX
\newif\ifincludeOLD
\begin{document}

\title{Streams and Graphs\\
of Dynamical Systems.}
\author{Roberto De Leo and James A. Yorke}

\date{\today}
\maketitle

{R. De Leo at
       Department of Mathematics, Howard University, Washington DC 20059,\\
       {roberto.deleo$@$howard.edu}  \\ 
      
      J.A. Yorke at
      Institute for Physical Science and Technology and the Departments of Mathematics and Physics, University of Maryland College Park, MD 20742,\\
      {yorke@umd.edu}
}

\begin{abstract}
    While studying gradient dynamical systems, Morse introduced the idea of encoding the qualitative behavior of a dynamical system into a graph.
    Smale later refined Morse's idea and extended it to Axiom-A diffeomorphisms on manifolds.
    In Smale's vision, nodes are indecomposable closed invariant subsets of the non-wandering set with a dense orbit and there is an edge from node $M$ to node $N$
    (we say that $N$ is {\em downstream} from $M$) if the unstable manifold of $M$ intersects the stable manifold of $N$.
    Since then, the decomposition of the non-wandering set was studied in many other settings, while the edges component of Smale's construction has been often overlooked.

    In the same years, more sophisticated generalizations of the non-wandering set, introduced by Birkhoff in 1920s, were elaborated first by Auslander in early 1960s, by Conley in early 1970s and later by Easton and other authors.
    In our language, each of these generalizations involves the introduction of a closed and transitive extension of the prolongational relation, that is closed but not transitive. 

    In the present article, we develop a theory that generalizes at the same time both these lines of research. We study the general properties of closed transitive relations (which we call {\em streams}) containing the space of orbits of a discrete-time or continuous-time semi-flow and we argue that these relations play a central role in the qualitative study of dynamical systems. 
    All most studied concepts of recurrence currently in literature can be defined in terms of our streams.
    Finally, we show how to associate to each stream a graph encoding its qualitative properties. 
    Our main general result is that each stream of a semi-flow with ``compact dynamics'' on a connected space has a connected graph.
    The range of semi-flows covered by our theorem goes from 1-dimensional discrete-time systems like the logistic map up to infinite-dimensional continuous-time systems like the semi-flow of quasilinear parabolic reaction-diffusion partial differential  equations.
\end{abstract}



\maketitle

\section{Introduction}
This article is about the following two fundamental ingredients in the qualitative description of a dynamical system: the invariant sets of the system and the dynamical relations between them.
The foundational example we have in mind, introduced and studied by Smale in~\cite{Sma61}, is the  flow of a Morse gradient vector field on a compact manifold.
In this case, each invariant set is a fixed point and there are only finitely many of them.
Heteroclinic trajectories of the system determine a relation between invariant sets that we express as ``being downstream from'' and can be encoded into a graph. 

\black We aim \black at providing a natural framework that unifies and widely generalizes the following two fundamental and seminal ideas and results on the subject.
\smallskip\par\noindent
{\bf 1. Smale's Graph of a Dynamical System.} In~\cite{Sma67}, as a generalization of the example above, Smale introduced the concept of {\em graph of a dynamical system} and studied the case of Axiom-A diffeomorphisms on a compact manifold.
In that case, given
an Axiom-A diffeomorphism $F$, the nodes of the associated graph are (finitely many) closed, disjoint invariant subsets $N_i$ of the non-wandering set $NW_F$ such that: i) $F|_{N_i}$ is topologically transitive; ii) $\cup N_i=NW_F$.
There is an edge from node $N_1$ to node $N_2$ if and only if the unstable manifold of $N_1$ has non-empty intersection with the stable manifold of $N_2$.
In our terminology, we say that $N_1$ is downstream from $N_2$.
This decomposition of $NW_F$ in ``nodes'' was called by Smale its {\em spectral decomposition} and it has been widely generalized, e.g. to 
maps on the interval~\cite{dMvS93}, homeomorphisms on compact~\cite{Akin93,AH94} and non-compact~\cite{DLRW13,KNY18} metric spaces and on non-metrizable spaces~\cite{Oh22}.
On the contrary, the edges -- and so the whole graph point of view -- did not receive, in the authors knowledge, much attention by the dynamical system community, 
with the following noticeable exceptions: an article by D. Mizin~\cite{Miz02} (in Russian; see Chap.~9 in Osipenko's book~\cite{Osi06} for a presentation in English of Mizin's results); several works by Fiedler and Rocha on the graphs of the semi-flows of some class of semi-linear parabolic PDEs~\cite{FR96,FR00,FR23}; two recent works by the present authors on the logistic map and the Lorenz system~\cite{DLY20,DLY21}.
The graph introduced by Mizin is the same we introduced independently in~\cite{DLY20}.
\medskip\par\noindent
{\bf 2. Quasi-orders and Auslander's generalized recurrence.} In~\cite{Aus63}, 
as a generalization of the concept of non-wandering point of a semi-flow $F$, 
Auslander studied the smallest quasi-order (i.e. reflexive and transitive) relation $\Ato$ that extends the orbit relation (i.e., for all $x$ in the phase space, $x\Ato y$ for all $y$ in the orbit of $x$) and is topologically closed.
Auslander used $\Ato$ to generalize the set $NW_F$ as follows: $x$ is a {\em generalized recurrent point} if and only if either $x$ is a fixed point or there is a $y$ such that $x\Ato y$ and $y\Ato x$. 
Thanks to the transitivity of the relation, the set of all these generalized recurrent points has the nice property, similar to the spectral decomposition of Smale, to naturally decompose into the disjoint union of closed invariant sets (which we still call {\em nodes}).
Notice that nodes in Smale's spectral decomposition (i.e. ``non-wandering nodes'') can be sometimes not disjoint (e.g. see Example~\ref{ex: physical pendulum} and~\ref{ex:non transitive}) while nodes of Auslander's generalized recurrent set, as mentioned above, are always disjoint. 
More details, with a thorough discussion about relations and generalized recurrence, can be found in a recent work by Akin and Auslander~\cite{AA10}.

\smallskip

In a nutshell, in this work we extend Auslander's idea by considering {\em all} (rather than just the smallest one) closed quasi-orders that extend the orbit relation of $F$ (we call these relations {\em streams}), and by showing that their qualitative features can be encoded into a graph, similarly to how Smale did for Axiom-A systems.
Each stream $S$ has a set $\cR_S$ of $S$-recurrent points, defined in the same way we defined above the generalized recurrent points, and on $\cR_S$ is defined a natural equivalence relation that decomposes it into the disjoint union of closed forward-invariant sets. 
We show that, besides Auslander's generalized recurrent points, also Conley's chain-recurrent points~\cite{Con72b} and Easton's strong-recurrent points~\cite{Eas77} arise naturally as the sets of recurrent points of a stream, that we call respectively the {\em chains} and the {\em summable chains} streams.
In particular, we show that several common properties of these sets derive from general properties of streams.
Finally, we extend Smale's graph idea to streams by associating to each stream $S$ a directed graph $\Gamma_S$ as follows: 
the nodes of $\Gamma_S$ are the equivalence classes of the recurrent points of $S$ and there is an edge from node $N_1$ to node $N_2$ if and only if $N_1\Sto N_2$ (Defs.~\ref{def: N1>=N2} and~\ref{def graphs}).

We point out that some authors already used similar relations in the study of general properties of dynamical systems.
For instance Akin, inspired by~\cite{Aus63}, based his monography on topological dynamical systems~\cite{Akin93} on streams (although he does not use this terminology).
Duarte and Torres used similar relations in the framework of set-valued dynamical system~\cite{DT06}.
More recently, Akin and Auslander used streams (again, not with this terminology) in their work on compactification of locally compact dynamical systems~\cite{AA10}.
\black

\smallskip
Our main results are the following. Assume that the semi-flow $F$ has a compact global trapping region (Definition~\ref{def: trapping region}; notice that this condition is trivially satisfied when $X$ is compact). Then:
\begin{enumerate}
    \item The graph of the prolongational relation of $F$ is connected (Theorem~\ref{thm:NW}).
    \item The graph of each stream of $F$ is connected (Theorem~\ref{thm: connectedness}).
    \item Given any stream $S$ of $F$, for each node $N$ there is at least a top node $T$ (i.e. $T$ has no incoming edges) and a bottom node $B$ (i.e. $B$ has no outgoing edges) such that $T\Sto N\Sto B$ (Theorem~\ref{thm:topnode}).
    \item The nodes and graph of the chains stream of a continuous-time semi-flow coincide with those of its time-1 map (Theorem~\ref{thm: cont=disc}).
    This extends a result by Hurley~\cite{Hur95} that the set of the recurrent points of the two systems coincide.
    \item If the smallest (i.e. Auslander's) stream has at most countably many nodes, then the chains streams and Auslander's stream (and so every stream in between, including all summable chains streams)  are equal to each other (Theorem~\ref{thm: CF=AF}). 
    In particular, their nodes and graphs are equal.
\end{enumerate}
Moreover, in Section~\ref{sec:T-unimodal} we classify the prolongational graph of T-unimodal maps (Definition~\ref{def:T-unimodal}), namely continuous unimodal maps without wandering intervals and without non-topological attracting periodic orbits.
This extends and clarifies some aspect of our classification of the graph of the logistic map in~\cite{DLY20} and of the graph of T-unimodal maps with a chaotic attractor in~\cite{ADL23}.




%
\section{Setting, main definitions and tools}
\medskip\noindent
Throughout the article, {\BF$X$} will denote a {\sl metrizable} connected topological space. 
\blue
We will usually denote by $x,y,z$ points in $X$ and by $d$ any distance function compatible with the topology of $X$. 
\black

\medskip
The main object of interest of this work is a discrete-time or continuous-time semi-flow, as defined below.
%
\begin{definition}\label{semi-flow} 
    A {\em semi-flow} on a topological space $X$ is a continuous map 
    {\EM$F:\T\times X\to X$}, where either $\T=0,1,2,\dots$ (discrete time) or $\T=[0,\infty)$ (continuous time), satisfying the following properties:
    \begin{enumerate}
        \item 
    $F^0(x)=x\text{ for each }x\in X$;
    \item
    $F^{t_1+t_2}(x)=F^{t_2}(F^{t_1}(x))\text{ for each }x\in X\text{ and }t_1,t_2\geq0$.
    \end{enumerate}
\end{definition}
\noindent
Notice that the discrete case consists in the iterations of the time-1 map $F^1$.
\black
Discussions here will usually focus on discrete time semi-flows, and we leave it to the reader to adapt the notation to the continuous-time case.
\black

Our motivational example is the (continuous-time) semi-flow induced by a (finite- or infinite-dimensional) ODE
\beq\label{ODE}
    \dot x = f(x),
\eeq
whose solutions are unique and defined for all times $t\geq0$.
In this case, $F^{t_0}(x_0)$ is defined as the value at time $t=t_0$ of the solution of the ODE that passes through $x_0$ at time $t=0$.

The following definitions are crucial for this article.
\begin{definition}
    \label{def:orbit}
    Given a semi-flow $F$, we write {\EM $x\Fto y$} if $y=F^t(x)$ for some $t\ge0$ and we say that $y$ is {\EM $F$-downstream from} $x$.
    We write {\EM $x\Feq y$} if $x\Fto y$ and $y\Fto x$.
    We call {\em orbit space} of $F$ the set 
    \beq
    \text{\EM$\cO_F$} = \{(x,y): x\Fto y\},
    \eeq
    namely the binary relation on $X$ whose symbol is $\Fto$.
    The orbit of any given point $x$ under $F$ is
    \beq
    \text{\EM$\cO_F(x)$} = \{y: (x,y)\in\cO_F\}.
    \eeq

    In analogy with the orbit space, we call {\em limit space} of $F$ the binary relation
    \beq
    \text{\EM$\Om_F$} = \{(x,y):\text{ there is }t_n\to\infty\text{ as }n\to\infty\text{ such that }F^{t_n}(x)\to y\}.
    \eeq
    The limit set of a point $x$ under $F$ is the set
    \beq
    \text{\EM$\Om_F(x)$}=\{y: (x,y)\in \Om_F\}.
    \eeq
    
    We say that $x$ is {\em fixed} for $F$, or that $x$ is a {\em fixed-point} of $F$, if $\cO_F(x)=\{x\}$; that $x$ is {\em periodic} if either $x$ is fixed or there is a $y\neq x$ such that $x\Feq y$; that $x$ is {\EM recurrent} (or, to emphasize the map, $F$-recurrent) if $x\in\Om_F(x)$.
    Finally, we say that a function $L\in C^0(X)$ is a {\em Lyapunov function} for $F$ if $x\Fto y$ implies $L(x)\geq L(y)$.
\end{definition}
%

\subsection{
Compactness and Trapping regions} 
%
In this article, we focus on the  case of semi-flows with ``compact dynamics''. 
By this, we mean semi-flows such that each of their orbit are contained within a compact set.
Of course we could just assume $X$ to be compact.
Such a strong assumption, though, would rule out cases important in applications, as examples below will show.
In fact, even local compactness would be too strong because it would rule out infinite-dimensional dynamical systems (e.g. parabolic PDEs).
Hence, from now on, we do not make any extra assumption on $X$ and we rather focus on the case when all orbits of $F$ are attracted by some compact set $Q\subset X$, as illustrated below.
%
\begin{definition}
    \label{def: trapping region}
    Let $Q\subset X$.
    We say that $Q$ is a {\em trapping region} for $F$ if it is closed and forward-invariant under $F$.
    Set ${N_\eps(Q)}=\{y:d(y,Q)<\eps\}$.
    A trapping region $Q$ is:
    \begin{itemize}
        \item {\em stable} if, for every $\eps>0$, there is a $\delta>0$ such that \\
        $F^t(N_\delta(Q))\subset N_\eps(Q)$ for all $t\geq0$;
        \item {\em uniformly asymptotically stable} if there is an $\eta>0$ such that, for every $\eps,\delta\in(0,\eta)$, there is $\tau>0$ such that\\ $F^t(N_\delta(Q))\subset N_\eps(Q)$ for all $t\geq\tau$;
        \item {\em globally attracting}  
        \blue if, for every $x$ and $\eps>0$, there is a $T$ such that $x\in N_\eps(Q)$ for all $t\geq T$.
    \end{itemize}
    We say that a trapping region $Q$ is a {\em global trapping region} if it is globally attracting and uniformly asymptotically stable.
    Finally, we say that $F$ is a {\em semi-flow with compact dynamics} if $F$ has a compact global trapping region.
\end{definition}
Notice that $X$ is, trivially, a global trapping region for every semi-flow on $X$.
Hence, when $X$ is compact, each semi-flow on $X$ has compact dynamics.
We leave to the reader the proof of the next statement.
\begin{proposition}
  Let $F$ be a semi-flow with compact dynamics and let $Q$ be a compact global trapping region for $F$.
  \black
  Then, for each $x$, $\Om_F(x)$ is a non-empty subset of $Q$.
\end{proposition}
\begin{proposition}
    \label{prop:recurrent pt}
    Let $F$ be a semi-flow with compact dynamics.
    Then, for every $x$, there is an $F$-recurrent point in $\Omega_F(x)$.
\end{proposition}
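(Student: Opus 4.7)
The plan is the classical Birkhoff-style argument: find a minimal non-empty closed forward-invariant subset of $\Omega_F(x)$ via Zorn's lemma, then observe that every point of such a minimal set is recurrent.

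First, I would record the structural properties of $\Omega_F(x)$ that will be used. By the previous proposition, $\Omega_F(x)$ is a non-empty subset of the compact global trapping region $Q$. Using that $X$ is metrizable, $\Omega_F(x)$ is closed (a standard sequential/diagonal argument: if $y_k \in \Omega_F(x)$ and $y_k \to y$, extract times $t_n \to \infty$ with $F^{t_n}(x) \to y$), and hence compact. It is also forward-invariant: if $F^{t_n}(x) \to y$ with $t_n \to \infty$, then for every $t \geq 0$ continuity gives $F^{t_n + t}(x) \to F^t(y)$, so $F^t(y) \in \Omega_F(x)$.

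Next, I would consider the family $\mathcal{F}$ of non-empty closed forward-invariant subsets of $\Omega_F(x)$, partially ordered by reverse inclusion. The family is non-empty since $\Omega_F(x)$ itself lies in it. For any totally ordered chain $\{C_\alpha\} \subset \mathcal{F}$, each $C_\alpha$ is a non-empty closed subset of the compact set $\Omega_F(x)$, so the chain has the finite intersection property and its intersection $\bigcap_\alpha C_\alpha$ is non-empty; it is closed (intersection of closed sets) and forward-invariant (intersection of forward-invariant sets), hence an upper bound in $\mathcal{F}$. By Zorn's lemma there exists a minimal element $M \in \mathcal{F}$.

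Finally, I would fix any $y \in M$ and show it is recurrent. Since $M$ is forward-invariant and closed, $\mathcal{O}_F(y) \subset M$ and therefore $\Omega_F(y) \subset M$. But $\Omega_F(y)$ is itself non-empty, closed, and forward-invariant (by the same reasoning applied to $y$ in place of $x$, using that $\mathcal{O}_F(y) \subset Q$), so it belongs to $\mathcal{F}$. Minimality of $M$ forces $\Omega_F(y) = M$, and in particular $y \in M = \Omega_F(y)$, i.e., $y$ is $F$-recurrent. Since $y \in M \subset \Omega_F(x)$, this completes the proof.

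The only subtle step is the Zorn's-lemma verification, where one has to explicitly invoke the compactness of $\Omega_F(x) \subset Q$ to guarantee that descending chains of non-empty closed forward-invariant sets have non-empty intersection; once that is in place, the remaining steps are routine manipulations of $\omega$-limit sets.
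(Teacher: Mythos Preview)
Your proof is correct and follows essentially the same route as the paper: a Zorn's-lemma search for a minimal non-empty compact forward-invariant subset of $\Omega_F(x)$, after which every point of that minimal set is recurrent. The only cosmetic difference is that the paper applies Zorn's lemma to the smaller family of $\omega$-limit sets of points in $\Omega_F(x)$ rather than to all closed forward-invariant subsets, but the compactness argument and the final minimality step are identical.
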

\begin{proof}
    Let $y\in\Om_F(x)$. 
    If $y$ is not $F$-recurrent, then $y\not\in\Om_F(y) $ and so $\Om_F(y)$ is a proper subset of $\Om_F(x)$.
    Let $\Lambda$ be the set of all subsets of $\Om_F(x)$ that are the limit set of some point in $\Om_F(x)$. 
    The collection of such sets is partially ordered by inclusion. 
    By Zorn's lemma, there is a maximal ordered collection $\cC$ of these sets. 
    They are all compact, so their intersection $\Omega^*$ is compact and non-empty. 
    Since the intersection of invariant sets is invariant, for each $z\in \Omega^*$ we have that $\Om_F(z)\subset \Omega^*$.
    By maximality, $\Om_F(z)$ must be in $\cC$ and it cannot be strictly smaller than $\Omega^*$. 
    Hence, $z\in\Om_F(z)=\Omega^*$.
\end{proof}
\blue
Finally, we present a technical lemma that will be relevant in Section~\ref{sec:CR}.
\begin{definition}
    Given any set $A\subset X$, we use the notation
    \BF $$
    \cI_F(A) = \bigcup_{t\in[0,\infty)}F^t(A).
    $$
\end{definition}
The reader can verify the following:
\begin{proposition}
    $\cI_F(A)$ is the smallest $F$-invariant set containing $A$.
\end{proposition}
\begin{lemma}
    \label{lemma: invariant set}
    Assume that $X$ is completely metrizable and let $d$ be a complete metric for $X$.
    Let $F$ have compact dynamics and let $Q$ be a compact global trapping region for $F$.
    Let $\eta>0$ be small enough that, for every $\eps\in(0,\eta)$, there exists $\tau_\eps>0$ such that 
    $F^t(Q_\eps)\subset Q_{\eps/2}$ for all $t\geq\tau_\eps$.
    Then
    $$
    \bigcap_{\eps\in(0,\eta)}
    \cI_F(Q_\eps)
    =Q.
    $$
    In other words, for every $\delta>0$, there is an $\eps>0$ such that $\cI_F(Q_\eps)\subset Q_\delta$.
\end{lemma}
\begin{proof}
    Let $y\in \cap_{\eps\in(0,\eta)}
    \cI_F(Q_\eps)\setminus Q$.
    Then we can find sequences $z_i\in X$ with $d(z_i,Q)\to0$ and $t_i>0$ such that $F^{t_i}(z_i)=y$.
    For each $z_i$ there is at least a $q_i\in Q$ such that $d(z_i,Q)=d(z_i,q_i)$.
    Since $Q$ is compact, the $q_i$ have a converging subsequence. 
    Hence, we can assume without loss of generality that $q_i\to\bar q\in Q$.
    
    Since $d(z_i,q_i)\to0$ and $q_i$ converges, the sequence $z_i$ is Cauchy and so it converges to $\bar q$.
    If the sequence of the corresponding $t_i$ were bounded, we could extract a converging subsequence $t_i\to \tau$ and, by continuity, $F^{\tau}(\bar q)=y$.
    This is impossible because $Q$ is forward-invariant, so necessarily $t_i\to\infty$.
    Therefore, there are points $z_i$ arbitrarily closed to $\bar q$ whose orbit passes through $y$ after an arbitrarily long time.   
    
    We show now that this leads to a contradiction.
    Indeed, set an $\eps_0\in(0,\eta)$ such that $\eps_0<2d(y,Q)$.
    Then $F^t(Q_{\eps_0})\subset Q_{\eps_0/2}$ for all $t\geq\tau_{\eps_0}$.
    Now, take a $z_i$ with $d(z_i,\bar q)<\eps_0$ such that $t_i>\tau_{\eps_0}$ (since $t_i\to\infty$, almost all $z_i$ satisfy this condition).
    Then $F^{t_i}(z_i)\in Q_{\eps/2}$, so that $d(F^{t_i}(z_i),Q)<\eps_0/2<d(y,Q)$, against the assumption that $F^{t_i}(z_i)=y$.
\end{proof}
\black
While having compact dynamics is a quite restrictive condition, there are many important classes of semi-flows that satisfy it. 
We present some of them in the examples below.
\begin{example}[\em Logistic map]
    Consider the logistic map $$\ell_\mu(x)=\mu x(1-x),$$ where $\mu\in(2,4)$ and $X=(0,1)$, and set $c_k(\mu)=\ell^k_\mu(1/2)$. 
    Then $Q=[c_2(\mu),c_1(\mu)]$ is a compact global trapping region for $\ell_\mu$. 
\end{example}
\begin{example}[\em Lorenz system]
    Consider the Lorenz ODE system in $X=\R^3$
    \beq
    \begin{cases}
    \label{eq:Lor}
    \dot x &=-\sigma x +\sigma y  \\
    \dot y &=-xz + r x -y\\
    \dot z &=\phantom{+}xy - \beta z
    \end{cases}.    
    \eeq
    The convex function 
    $$
    L(x,y,z) = r x^2 + \sigma y^2 + \sigma(z-2r)^2
    $$
    is a Lyapunov function for the flow $F^t$ of the Lorenz system far enough from the origin.
    More precisely, 
    $$
    \frac{d}{dt}L(F^t(x_0,y_0,z_0))\Big|_{t=0}<0
    $$
    for all $(x_0,y_0,z_0)$ far enough from the origin.
    Hence, every ball of large enough radius centered at the origin is a compact global trapping region for $F$.
\end{example}
\begin{example}[\em Reaction-Diffusion PDEs]
    One of the simplest infinite-dimensional dynamical systems to which our machinery applies is the reaction diffusion scalar PDE
    \beq
    u_t = u_{xx} + f(x,u,\nabla u),\; u(0,x)=u_0(x),
    \eeq
    where $x\in(0,1)$.
    PDEs of this type arise from many phenomena in chemistry reactions, population dynamics, biophysics and more (see~\cite{FR23} for a large number of references).
    It turns out that the most convenient setting for these PDEs is the functional space $X=L^2(0,1)$ of all square-summable functions on the unit interval.
    It can be proved, indeed, that the weak version of the PDE above admits a continuous semi-flow 
    $F:\R\times L^2(0,1)\to L^2(0,1)$.
    Moreover, for some choice of the right-hand-side $f$, the corresponding semi-flow $F$ has a compact global trapping region.
    This is the case, for instance, when $f(x,u,0)\cdot u<0$ for large $|u|$ and $f(x,u,p)$ has subquadratic growth in $|p|$.
    In this case, indeed, one can prove that there is a ball $Q$ in the subset of $H^1_0$ functions (namely $L^2$ functions whose weak first derivative is also $L^2$ and that are limit of continuous functions on $(0,1)$ with compact support) such that, for each $x$, there is a $T$ such that $F^t(x)\in Q$ for each $t\geq T$.
    Since $H^1_0(0,1)$ is a compact subset of $L^2(0,1)$, then $Q$ is compact as a subset of $L^2$ (see~\cite{Rob01}, Chapter 11).
    Hence, Q is a compact global trapping region for this system.
    The graphs of many of this type of PDEs are under investigation by Fedler and Rocha~\cite{FR96,FR00,FR23}.
\end{example}
\begin{example}[\em Navier-Stokes]
    Consider the 2D Navier-Stokes PDEs on the 2-torus $\T^2$, given by
    $$
    \begin{cases}
    \dot u-\nu\Delta u + (u\cdot\nabla)u+\nabla p=f&\\
    \nabla\cdot u =0&\
    \end{cases}, 
    $$
    where $\nu$ is a positive parameter and the unknowns $u$ and $p$ are a square-integrable vector field and a square-integrable function on $\T^2$.
    As in the example above, this PDE defines a continuous semi-flow on $X=L^2$ and one can prove that there is a ball $Q$ in the $H^1$ norm that each trajectory
    enters in finite time. Notice that a set that is bounded with respect to the $H^1$ norm is compact in the $L^2$ topology.
    This ball $Q$ is a compact trapping region for the semi-flow  (see~\cite{Rob01}, Chapter 12).
\end{example}
\begin{example}[\em Ricci flow]
    V. and T. Ivancevic showed in~\cite{II11} that every reaction-diffusion model in literature can be seen as a special case of the celebrated Ricci PDE on manifolds
    $$
    \dot g = Ricci_g,
    $$
    where $g$ is a smooth metric on a manifold $M$ and $Ricci_g$ is the corresponding Ricci manifold, a $(0,2)$ tensor obtained as contraction of the curvature tensor of $g$.
    This shows that Ricci flow has in many cases a compact global trapping region.
\end{example}
%
\subsection{Chains} 
Since $X$ is metrizable, given a semi-flow $F$ we can use distances to define finite sequences of points called chains, as defined below.
Chains play a major role in defining the main types of recurrence.
Below we define chains only for discrete semi-flows.
In Section~\ref{sec:CR} we show that this is enough for our goals (see Theorem~\ref{thm: cont=disc}).
\begin{definition}
    \label{def:chains}
    A {\EM $(F,d,\eps)$-bchain} (or simply {\EM $\eps$-bchain}, when there is no ambiguity) from point $x$ to point $y$ \blue of length $k$ \black is a finite sequence $C$ of $k$ points $z_1,\dots,z_k$, with $k>1$, such that:
    \begin{enumerate}
    \item $d(z_1,x)\leq\eps$;
    \item $d(z_k,y)\leq\eps$;
    \item $d(F(z_i),z_{i+1})\leq \eps$ for $i=1,\dots,k-1$.
    \end{enumerate}
    We say that an $\eps$-bchain $C$ is:
    \begin{enumerate}    
    \item an {\EM $\eps$-chain} if $z_1=x$ and $z_k=y$.
    \item an {\EM $\eps$-$\alpha$chain}  if $z_k=y$ and $F(z_i)=z_{i+1}$ for all $i=2,\dots,k-1$.
    \item an {\EM $\eps$-$\omega$chain}  if $z_1=x$ and $F(z_i)=z_{i+1}$ for all $i=1,\dots,k-2$.
    \item an {\EM $\eps$-$\alpha\omega$chain}  if $F(z_i)=z_{i+1}$ for all $i=1,\dots,k-2$.
    \end{enumerate}
    Finally, we say that $C$ is an {\EM $(F,d,\eps)$-$\Sigma$chain} if $$\sum_{i=0}^{k-1} d(x_i,F(x_i))<\eps.$$
    \blue
    Sometimes, when $x=y$, we refer to an $\eps$-chain as an {\BF $\eps$-loop based at $x$}.
\end{definition}
In other words, an $\eps$-bchain is like an $\eps$-chain where it is allowed to make a jump {\em before} the first application of $F$; an $\eps$-$\alpha$chain is an $\eps$-bchain with a single jump, the very first one; an $\eps$-$\omega$chain is an $\eps$-bchain with a single jump, the very last one; an $\eps$-$\alpha\omega$chain is an $\eps$-bchain with just two jumps: the very first one and the very last one.

The concept of chains for maps goes back at least to Birkhoff~\cite{Bir26}. To the authors' knowledge, chains for (semi-)flows (see Definition~\ref{def: continuous-time chains}) were introduced by Conley~\cite{Con72} to define the chain-recurrent set.
Later Bowen~\cite{Bow75}, while studying the shadowing phenomenon, considered also the case of
infinite chains and called them {\em pseudo-orbits}. Some authors still use the term chains for finite truncations of pseudo-orbits~\cite{Med21,BNP24}.
Nowadays, though, the terms {\em chains} and {\em pseudo-orbits} are often used as synonyms~\cite{Has08,Tho82}.
Finally, $\Sigma$chains were introduced by Easton, under the name of {\em strong chains}, in~\cite{Eas77}.

As a first application of chains, we provide a characterization of recurrent points in metric spaces.
We leave its proof to the reader.
\begin{proposition}
    \label{prop:recurrent}
    A point $x$ is recurrent under $F$ if and only if, for every $\eps>0$, there is an $\eps$-$\omega$chain from $x$ to itself.
\end{proposition}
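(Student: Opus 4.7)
The first step is to unfold the definition of an $\eps$-$\omega$chain from $x$ to itself. Because $F(z_i)=z_{i+1}$ for $i=1,\dots,k-2$ and $z_1=x$, the intermediate points are forced: $z_i=F^{i-1}(x)$ for $i=1,\dots,k-1$. The remaining conditions $z_k=x$ and $d(F(z_{k-1}),z_k)\le\eps$ reduce to the single requirement $d(F^{k-1}(x),x)\le\eps$. Thus, writing $n=k-1\ge 1$, the existence of an $\eps$-$\omega$chain from $x$ to itself is \emph{equivalent} to the existence of an integer $n\ge 1$ with $d(F^n(x),x)\le\eps$. So the proposition reduces to the metric characterization
\[
  x\in\Om_F(x)\quad\Longleftrightarrow\quad \forall\eps>0\ \exists n\ge 1:\ d(F^n(x),x)\le\eps.
\]

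For the forward implication, suppose $x\in\Om_F(x)$, so there exist times $t_j\to\infty$ in $\T$ with $F^{t_j}(x)\to x$. Given $\eps>0$, pick $j$ large enough that $t_j\ge 1$ and $d(F^{t_j}(x),x)\le\eps$, and set $n=t_j$.

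For the converse, assume that for every $\eps>0$ there is some $n\ge 1$ with $d(F^n(x),x)\le\eps$. Choosing $\eps_k=1/k$, we obtain integers $n_k\ge 1$ with $d(F^{n_k}(x),x)\le 1/k$. We split into two cases. If $\{n_k\}$ is unbounded, we extract a subsequence with $n_{k_j}\to\infty$; along this subsequence $F^{n_{k_j}}(x)\to x$, so $x\in\Om_F(x)$ directly from the definition. If $\{n_k\}$ is bounded, then by pigeonhole some value $n^\ast$ is attained as $n_k$ for infinitely many $k$, giving $d(F^{n^\ast}(x),x)\le 1/k$ for arbitrarily large $k$; hence $F^{n^\ast}(x)=x$, i.e.\ $x$ is periodic. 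The sequence $t_m:=mn^\ast\to\infty$ then satisfies $F^{t_m}(x)=x$, so again $x\in\Om_F(x)$.

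There is no real obstacle to the argument; the only point worth stressing is the dichotomy in the converse direction, which is needed because the defining sequence $t_n$ for recurrence must go to infinity, while an a priori bound on the approach times $n_k$ can only be reconciled with arbitrarily small distances through exact periodicity.
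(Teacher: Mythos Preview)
Your argument is correct. The paper leaves this proposition to the reader, and your proof is exactly the straightforward verification the authors had in mind: unwinding the definition of an $\eps$-$\omega$chain from $x$ to itself yields precisely the condition $d(F^{n}(x),x)\le\eps$ for some $n\ge 1$, and the bounded/unbounded dichotomy on the approach times cleanly handles the requirement that the defining sequence in $\Om_F(x)$ tend to infinity.
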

\section{Quasi-orders and ``being downstream''}
\label{sec:quasi-orders}
The relation $\Fto$ is clearly reflexive and transitive.
It is therefore natural generalizing the concept of 
{\em being downstream}
by considering the following class of relations.
\begin{definition}
    \label{def: QO}
    A quasi-order on $X$ is a binary relation $D\subset X\times X$ that is reflexive and transitive, namely:
    \begin{enumerate}
        \item $(x,x)\in D$ for every $x$;
        \item if $(x,y)\in D$ and $(y,z)\in D$, then $(x,z)\in D$.
    \end{enumerate}
    We sometimes use the notation $x\Dto y$ to say that $(x,y)\in D$.
    We call $\Dto$ the symbol of $D$. 
    Then reflexivity means that $x\Dto x$ for all $x$ and transitivity that, if $x\Dto y$ and $y\Dto z$, then $x\Dto z$.
\end{definition}
\begin{example}
  The orbit space $\cO_F$ of a semi-flow $F$ is a quasi-order on $X$.
  The symbol $\Fto$ introduced in Definition~\ref{def:orbit} is precisely the symbol of this quasi-order.
\end{example}
The following definition generalizes Definition~\ref{def:orbit} to quasi-orders.
\begin{definition}
    Given a relation $D$, we use the notation
    $$
    \Down_D(x) = \{y: x\Dto y\},
    \;\;
    \Up_D(y) = \{x : x\Dto y\},
    $$
    and we say that the points of $\Down_D(x)$ are {\EM $D$-downstream} from $x$ (or simply {\EM downstream} when there is no ambiguity) and points in $\Up_D(x)$ are {\EM $D$-upstream} from $x$ (or simply {\EM upstream} when there is no ambiguity).
    Finally, for a $M\subset X$ we set
    \beq
    \Down_D(M)=\cup_{x\in M}\Down_D(x),
    \;\;
    \Up_D(M)=\cup_{x\in M}\Up_D(x).
    \eeq
    We say that $M$ is {\EM $D$-invariant} if $\Down_D(M)\subset M$.
    We refer sometimes to the set $\Up_D(M)$ as the {\EM $D$-basin} of $M$.
\end{definition}
\begin{proposition}
    Let $D$ be a quasi-order. Then 
    $\Down_D(\Down_D(x))=\Down_D(x)$ and
    $\Up_D(\Up_D(x))=\Up_D(x)$ for every $x$.
    In particular, for every $M\subset X$, $\Down_D(M)$ is $D$-invariant.
\end{proposition}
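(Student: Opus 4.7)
The plan is to prove the two identities $\Down_D^2 = \Down_D$ and $\Up_D^2 = \Up_D$ by straightforward double inclusion, using exactly the two defining properties of a quasi-order. The invariance claim will then be immediate from the identity applied to the set $M$.

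For the inclusion $\Down_D(M) \subseteq \Down_D^2(M) = \Down_D(\Down_D(M))$, I would use reflexivity: any $y \in \Down_D(M)$ satisfies $y \Dto y$, hence $y \in \Down_D(\{y\}) \subseteq \Down_D(\Down_D(M))$. For the reverse inclusion, I would use transitivity: if $z \in \Down_D(\Down_D(M))$, then there exists $y \in \Down_D(M)$ with $y \Dto z$, and there exists $x \in M$ with $x \Dto y$; combining these via transitivity yields $x \Dto z$, so $z \in \Down_D(M)$. The argument for $\Up_D^2 = \Up_D$ is symmetric, swapping the roles of the two sides of the relation throughout (reflexivity for one inclusion, transitivity for the other).

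For the final assertion, note that $D$-invariance of a set $N$ means $\Down_D(N) \subseteq N$. Taking $N = \Down_D(M)$, the identity just established gives $\Down_D(\Down_D(M)) = \Down_D(M) \subseteq \Down_D(M)$, which is exactly the invariance condition.

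There is no real obstacle here: the statement is essentially a formal consequence of the quasi-order axioms, and the two inclusions correspond one-to-one with the two axioms. The only thing worth noting carefully is the notational convention that $\Down_D^2$ means the composition $\Down_D \circ \Down_D$ applied to sets, so that the statement should be read as the equality of two set-valued operators on subsets of $X$ (equivalently, as the equality $\Down_D(\Down_D(M)) = \Down_D(M)$ for every $M \subseteq X$).
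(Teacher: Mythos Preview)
Your proof is correct and follows the same direct approach as the paper, which simply remarks that the claim is an immediate consequence of transitivity. In fact your write-up is more careful: you explicitly note that reflexivity gives the inclusion $\Down_D(M)\subseteq\Down_D^2(M)$, while transitivity gives the reverse, whereas the paper's one-line proof only mentions transitivity.
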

\begin{proof}
    It is an immediate consequence of transitivity.
\end{proof}
Notice that, in general, the set of all points downstream from a closed set of points is not closed.
\begin{example}
    Let $F$ be the flow of the vector field $\frac{\partial}{\partial x}$ on $R^2$.
    Consider the quasi-order $\cO_F\subset\bR^4$ and set $C=\{xy=-1:y>0\}$.
    The set $C$ is a closed subset of the plane but the sets
    \beq
    \Down_{\cO_F}(C)=\{(x,y):y>0,xy\geq-1\},
    \eeq
    \beq
    \Up_{\cO_F}(C)=\{(x,y):y>0,xy\leq-1\}
    \eeq
    are not.
\end{example}
%
\subsection{Closed quasi-orders and Lyapunov functions}
It turns out that strong properties hold for closed quasi-orders, namely quasi-orders that are closed in the product topology of $X\times X$.

A first important property is that, when $D$ is closed, the sets $\Down_D(C)$ and $\Up_D(C)$ are closed for every compact set $C$.
%
\begin{proposition}
    \label{prop:Down(compact)}
    If $K\subset X$ is compact, then $\Down_D(K)$ and $\Up_D(K)$ are closed.
    \black In particular, \black when \black $X$ is compact, if $C$ is closed then $\Down_D(C)$ and $\Up_D(C)$ are closed.
\end{proposition}
\begin{proof}
    Let $x_n\in\Up_D(K)$ be a sequence converging to a point $\bar x\in X$.
    Then there exists a sequence $y_n\in K$ such that $(x_n,y_n)\in D$. 
    Since $K$ is compact, there is a subsequence $y_{n_k}$ converging to a $\bar y\in K$.
    Hence, in $D$, $(x_{n_k},y_{n_k})\to(\bar x,\bar y)$.
    Since $D$ is closed, $(\bar x,\bar y)\in D$, namely $\bar x\in\Up_D(K)$ .
    Hence, $\Up_D(K)$ is closed.
    Mutatis mutandis, the same argument works for $\Down_D(K)$.
\end{proof}

%
Notice that in the proof we never used the transitivity of the relation.
Hence, this result holds also for the prolongational relation (see Section~\ref{sec:NW}).

A second important property is that every closed quasi-order can be defined through a collection of continuous functions as described below.
\begin{definition}
  \label{def:Lyapunov}
    A function $L\in C^0(X)$ is a {\em Lyapunov function} for a quasi-order $D$ if $x\Dto y$ implies $L(x)\geq L(y)$.    
    We denote by $\cL_D\subset C^0(X)$ the set of all Lyapunov functions of $D$.
\end{definition}  
Notice that constant functions are Lyapunov for every quasi-order and that, if $L_1,L_2\in\cL_D$, then $a_1 L_1+a_2 L_2\in\cL_D$ for all $a_1,a_2\geq0$.
Hence, the proposition below follows.
\begin{proposition}
    For any quasi-order $D$, the set $\cL_D$ is a non-empty closed convex cone.
\end{proposition}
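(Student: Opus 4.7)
The statement packages three assertions, and my plan is to verify each in turn using only the definition of a Lyapunov function together with the order-preserving properties of limits and of non-negative linear combinations of real numbers.

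First, for non-emptiness I would exhibit the constant functions: for any $c\in\R$, the map $L\equiv c$ satisfies $L(x)=c\geq c=L(y)$ whenever $x\Dto y$, so $L\in\cL_D$. This is already noted in the paragraph preceding the statement. Second, for the convex cone property, the plan is to use that the ordering on $\R$ is preserved under addition and under multiplication by non-negative scalars: given $L_1,L_2\in\cL_D$ and $a_1,a_2\geq 0$, whenever $x\Dto y$ we have $L_i(x)\geq L_i(y)$ for $i=1,2$, hence $a_1 L_1(x)+a_2 L_2(x)\geq a_1 L_1(y)+a_2 L_2(y)$. This immediately yields both closure under non-negative scalar multiplication (take $a_2=0$) and convexity (specialize to $a_1+a_2=1$), which together is the definition of a convex cone.

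Third, for topological closedness inside $C^0(X)$ (with respect to any topology finer than the topology of pointwise convergence, such as the compact-open topology), suppose $L_n\in\cL_D$ and $L_n\to L$. For each fixed pair $(x,y)$ with $x\Dto y$ we have $L_n(x)\geq L_n(y)$ for all $n$; since evaluation at a point is continuous and the non-strict inequality $\geq$ is preserved under pointwise limits in $\R$, the limit satisfies $L(x)\geq L(y)$. As $(x,y)$ was arbitrary in $D$, $L\in\cL_D$, so $\cL_D$ is closed.

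There is no real obstacle here: the only step worth flagging is the mild topological care in the closedness argument, namely that we only need preservation of $\geq$ under pointwise limits, which is automatic.
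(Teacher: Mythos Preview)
Your proof is correct and follows the same approach as the paper, which simply notes before the proposition that constant functions are Lyapunov and that $\cL_D$ is closed under non-negative linear combinations. You are in fact more thorough than the paper, since you spell out the closedness argument (preservation of $\geq$ under pointwise limits) that the paper leaves implicit.
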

\begin{example}
    The set of Lyapunov functions of the quasi-order $\cO_F$ coincides with the set of Lyapunov functions of the semi-flow $F$.
\end{example}
%
One can also run things backward and use collections of functions to define quasi-orders, as illustrated below.

Given a collection $\cS\subset C^0(X)$, we set
\beq
   \cD_\cS = \{(x,y) : L(x)\geq L(y)\text{ for all }L\in \cS\}.
\eeq
Notice that, if $C_\cS$ is the convex cone generated by $\cS$ (namely the smallest closed convex cone containing $\cS$), then 
$\cD_\cS=\cD_{C_\cS}$.

%
\begin{proposition}
    The set $\cD_\cS$ is a topologically closed quasi-order on $X$. Moreover, if $\cT\subset\cS$, then $\cD_\cS\subset\cD_\cT$.
\end{proposition}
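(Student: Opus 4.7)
The plan is to verify the three required properties (reflexivity, transitivity, topological closedness) and the monotonicity claim in turn, all directly from the definition of $\cD_\cS$. Each check is an essentially one-line manipulation, so the main task is just to organize them cleanly; I do not anticipate any serious obstacle.

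First, I would handle the quasi-order axioms. Reflexivity is immediate: for every $x\in X$ and every $L\in\cS$ we have $L(x)\ge L(x)$, hence $(x,x)\in\cD_\cS$. Transitivity follows by chaining inequalities: if $(x,y),(y,z)\in\cD_\cS$, then for each $L\in\cS$ we get $L(x)\ge L(y)\ge L(z)$, so $(x,z)\in\cD_\cS$. These two together give the quasi-order structure from Def.~\ref{def: QO}.

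Next I would prove topological closedness using continuity of the functions in $\cS$. Suppose $(x_n,y_n)\in\cD_\cS$ converges to $(\bar x,\bar y)$ in $X\times X$. For each $L\in\cS$, continuity yields $L(x_n)\to L(\bar x)$ and $L(y_n)\to L(\bar y)$, and passing to the limit in $L(x_n)\ge L(y_n)$ gives $L(\bar x)\ge L(\bar y)$. Since this holds for all $L\in\cS$, we conclude $(\bar x,\bar y)\in\cD_\cS$. Thus $\cD_\cS$ is closed in $X\times X$.

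Finally, the monotonicity statement is simply set-theoretic. If $\cT\subset\cS$ and $(x,y)\in\cD_\cS$, then $L(x)\ge L(y)$ holds for every $L\in\cS$, hence in particular for every $L\in\cT$, giving $(x,y)\in\cD_\cT$. The only subtlety worth flagging is the direction of the inclusion: enlarging the family $\cS$ adds more constraints and therefore shrinks $\cD_\cS$, which is consistent with the remark just above about $C_\cS$ generating the same quasi-order as $\cS$.
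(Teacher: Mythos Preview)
Your proof is correct and complete. The paper actually leaves this proof to the reader, and your verification of reflexivity, transitivity, closedness via continuity, and the set-theoretic monotonicity is exactly the straightforward argument one would expect.
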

We leave to the proof to the reader.
\begin{example}[The largest and smallest quasi-order.]
    $$
    \cD_\emptyset = X\times X.
    $$
    $$
    \cD_{C^0(X)} = \emptyset.
    $$
\end{example}

Auslander showed that, when $X$ is separable and locally compact (in particular, when $X$ is compact), for every closed quasi-order $D$, the set $\cL_D$ contains enough functions to fully characterize $D$,
as illustrated below.
The proof, by Auslander, that we include for reader's convenience, is a variant of the standard proof of Urhyson's Lemma.

%
%
\begin{lemmA}[Auslander, 1963~\cite{Aus63}, Theorem~4]
    \label{lem:thm4}
    Let $X$ be separable and locally compact and let $D$ be a closed quasi-order on $X$. 
    Then, if $x\not\Dto y$ (equivalently, if $(x,y)\not\in D$), there exists a function $L\in\cL_D$ such that $L(x)<L(y)$.
\end{lemmA}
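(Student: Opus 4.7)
The plan is to follow the template of Urysohn's lemma, modified to produce a Lyapunov function for $D$ rather than merely a continuous function separating two closed sets.

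First, I would reduce the problem to separating two disjoint closed $D$-saturated sets. Since $D$ is closed and $(x,y)\notin D$, choose open neighborhoods $U_x\ni x$ and $U_y\ni y$ with $(U_x\times U_y)\cap D=\emptyset$, and by local compactness shrink them to compact neighborhoods $K_x\subset U_x$ and $K_y\subset U_y$. Setting $A:=\Down_D(K_x)$ and $B:=\Up_D(K_y)$, Proposition~\ref{prop:Down(compact)} gives that both are closed. Transitivity of $D$ together with the choice of $U_x,U_y$ forces $A\cap B=\emptyset$: a common point $z$ would yield $u\in K_x$ and $v\in K_y$ with $u\Dto z\Dto v$, hence $u\Dto v$, contradicting $(U_x\times U_y)\cap D=\emptyset$. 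By construction $A$ is $D$-invariant and $B$ is $D$-coinvariant ($\Up_D(B)\subset B$), and of course $x\in A$, $y\in B$.

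Next, I would build inductively, over dyadic rationals $r\in[0,1]$, a family of closed $D$-invariant sets $A_r$ satisfying: (i) $A_0=A$ and $A_1=X$; (ii) $A_r\cap B=\emptyset$ for $r<1$; (iii) $A_r\subset\mathrm{int}(A_s)$ whenever $r<s$. The Lyapunov function is then $L(z):=\inf\{r\in[0,1]\text{ dyadic}: z\in A_r\}$, with the convention $\inf\emptyset=1$. Continuity of $L$ follows from the nesting condition (iii) exactly as in the classical Urysohn argument, the Lyapunov property $L(a)\geq L(b)$ whenever $a\Dto b$ is immediate from the fact that each sublevel set $\{L\leq r\}$ coincides with $A_r$ and is therefore $D$-invariant, and $L(x)=0<1=L(y)$ since $x\in A_0$ while $y\in B$ avoids every $A_r$ with $r<1$.

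The main obstacle is the inductive step, a "$D$-normality" lemma: given closed $D$-invariant $A_r\subset\mathrm{int}(A_s)$ both disjoint from $B$, one must produce a closed $D$-invariant $A_t$ with $A_r\subset\mathrm{int}(A_t)\subset A_t\subset\mathrm{int}(A_s)$, still disjoint from $B$. Metric normality of $X$ yields an open $V$ with $A_r\subset V\subset\bar V\subset\mathrm{int}(A_s)$, and since $A_s$ is $D$-invariant one has $\Down_D(\bar V)\subset A_s$. The delicate points are that $\Down_D(\bar V)$ need not be closed (because $\bar V$ need not be compact) and its closure need not remain inside $\mathrm{int}(A_s)$. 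My plan is to exploit the $\sigma$-compactness of $X$ (guaranteed by separability, local compactness and metrizability) to exhaust $X$ by compacts $K_n$ with $K_n\subset\mathrm{int}(K_{n+1})$ and $X=\bigcup_n K_n$, decompose $\bar V=\bigcup_n(\bar V\cap K_n)$ into compact pieces, apply Proposition~\ref{prop:Down(compact)} to each piece, and combine via a telescoping sequence of open sets that progressively shrink inside $\mathrm{int}(A_s)$ to confine each saturation. The role of separability is precisely to make this countable exhaustion available; once past this technicality, the Urysohn template proceeds routinely.
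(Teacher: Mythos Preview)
Your Urysohn-template strategy matches the paper's: build a dyadic family of closed $D$-invariant sets between a saturated neighborhood of $x$ and the complement of one of $y$, then set $L(z)=\inf\{r:z\in A_r\}$. The paper's inductive step is simpler than what you attempt: at each stage it takes a \emph{compact} neighborhood $A_r$ of the previously built $U_{r'}$ and sets $U_r=\Down(A_r)$, which is closed by Proposition~\ref{prop:Down(compact)} and $D$-invariant by transitivity. This needs $U_{r'}$ itself to be compact, which the paper obtains by writing ``since $X$ is compact, $U_0$ is compact as well'' --- a slip relative to the stated locally-compact hypothesis. So the paper's argument is clean for compact $X$ and, under the stated hypothesis, has precisely the gap you flagged.

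Your $\sigma$-compact workaround is where separability should enter, but as written it is not a proof. Saturating the compact pieces $\bar V\cap K_n$ yields an increasing sequence of closed $D$-invariant sets whose union $\Down_D(\bar V)$ is $D$-invariant but generally not closed; taking its closure restores closedness but can destroy $D$-invariance (the closure of a $D$-invariant set need not be $D$-invariant, even for closed $D$). The phrase ``telescoping sequence of open sets that progressively shrink inside $\mathrm{int}(A_s)$'' does not explain how you manufacture a single set that is simultaneously closed, $D$-invariant, contains $A_r$ in its interior, and sits inside $\mathrm{int}(A_s)$. That construction is the heart of the lemma in the non-compact case and is missing from both your proposal and the paper's presentation.
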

\begin{proof}
{\black Assume first that $X$ is compact.}
We will show below that one can build an infinite countable family of closed sets $U_r$, labeled by rational (dyadic) numbers $r$ in $[0,1]$ such that:
\begin{enumerate}
    \item $x\in\cap_r U_r$;
    \item $y\not\in\cup_r U_r$;
    \item if $r<r'$, then $U_r\subset interior(U_{r'})$;
    \item $U_r$ is $D$-invariant, namely $\Down_D(U_r)=U_r$.
\end{enumerate}
With the sets $U_r$, one can define the function $L$ in the proposition claim as follows:
$$
L(z)=\inf\{r : z\in U_r\}.
$$
A standard argument shows that $L$, by point (3) above, is continuous. Moreover:
\begin{itemize}
    \item $L(x)=0$ by point (1) above;
    \item $L(y)=1$ by point (2) above;
    \item if $z\Dto z'$, then $L(z)\geq L(z')$ by points (3) and (4) above.
\end{itemize}
Now we will show how to build the sets $U_r$.
Let $A_0$ and $B$ disjoint {\black closed} neighborhoods of, respectively, $x$ and $y$ such that $\Down(A_0)\cap B=\emptyset$.
Such neighborhoods exist because $D$ is closed. Indeed, assume that there is no such neighborhoods $A_0$ and $B$.
This means that there is a sequence $y_i$ converging to $y$ and $x_i$ converging to $x$ such that $x_i\Dto y_i$ for all $i$. 
Since $D$ is closed, this implies that $x\Dto y$, against the hypotheses.

Set $U_0=\Down(A_0)$.
{\black Since $X$ is compact, $U_0$ is closed by Proposition~\ref{prop:Down(compact)}.}
Moreover, we also have that $x\in U_0$, $\Down(U_0)=U_0$ and $U_0\cap B=\emptyset$.

Now, let $A_{\frac{1}{2}}$ be a closed neighborhood of $U_0$ such that $U_0\subset interior(A_{\frac{1}{2}})$ and $\Down(A_{\frac{1}{2}})\cap B=\emptyset$.
As above, set $U_{\frac{1}{2}}=\Down(A_{\frac{1}{2}})$.
Then, {\black by the same argument above,} $U_{\frac{1}{2}}$ is closed, $\Down(U_{\frac{1}{2}})=U_{\frac{1}{2}}$, $U_{\frac{1}{2}}\cap B=\emptyset$ and $U_0\subset interior(U_{\frac{1}{2}})$.

By repeating this construction, we can build sets $U_{\frac{3}{4}},U_{\frac{7}{8}},\dots$.
To build $U_{\frac{1}{4}}$, we proceed as follows.
Let $B_{\frac{1}{2}}=closure(X\setminus U_{\frac{1}{2}})$.
Then $U_0\cap B_{\frac{1}{2}}=\emptyset$ and we can repeat the first step of our construction with $B_{\frac{1}{2}}$ in place of $B$.
We denote by $U_{\frac{1}{4}}$ the compact set we get this way. 
Notice that $U_{\frac{1}{4}}\subset interior(U_{\frac{1}{2}})$.
This way, we can build a set $U_r$ for each dyadic $r$ with the four properties listed above.

{\black
  Consider now, the general case, when $X$ is a non-compact, locally compact and separable space.
  Recall that every separable metric space is Lindelof and that every locally compact Lindelof space is
  hemicompact.
  Hence, there are compact sets $X_i\subset X$, $i\in\bN$, such that $X_i\subsetneq X_{i+1}$ for all $i$ and $X=\cup_{i\in\bN} X_i$.
  Let $i_0$ be any integer such that $x,y\in X_{i_0}$. Then, by the argument above, there is a Lyapunov function $L$ for $D$ in  $X_{i_0}$.
  In order to prove the theorem, it is enough to prove that we can extend $L$ to a Lyapunov function for $D$ on $X_{i_0+1}$. By induction,
  indeed, this allows to extend $L$ to the whole $X$.

  We can accomplish this by noticing that closed neighborhoods $A'_0$ of $x$ and $B'$ of $y$ such that $\Down_D(A'_0)\cap B'=\emptyset$
  exist in $X$ (notice, though, that in this case $A'_0$ is not necessarily compact and so $\Down_D(A'_0)$ is not necessarily closed).
  Hence, in $X_{i_0}$, we can elect to choose $A_0 = A'_0\cap X_{i_0}$ and $B=B'\cap X_{i_0}$. By repeating this argument at every step of the construction
  of the sets $U_r$, we build a Lyapunov function $L_{i_0}$ for $D$ in $X_{i_0}$.
  Then we can repeat the same construction in $X_{i_0+1}$ by setting $A_0 = A'_0\cap X_{i_0+1}$ and $B=B'\cap X_{i_0+1}$ and so on.
  This way, we end up with a Lyapunov function $L_{i_0+1}$ for $D$ in $X_{i_0+1}$ such that, by construction,
  $L_{i_0+1}|_{X_{i_0}} = L_{i_0}$.
}
\end{proof}
%
The following theorem, implicitly contained in~\cite{Aus63}, shows that closed quasi-orders are completely determined by their set of Lyapunov functions.
\begin{theoremX}[Auslander, 1963~\cite{Aus63}]
    \label{thm: Lyapunov characterization}
    Let $X$ be separable and locally-compact and let $D$ be a closed quasi-order on $X$.
    Then 
    $
    D=\cD_{\cL_D}.
    $
\end{theoremX}
\begin{proof}
    Let $L\in\cL_D$.
    By definition, $L(x)\geq L(y)$ for all pairs $(x,y)\in D$, so $D\subset\cD_{\cL_D}$. 
    By Lemma~A above, if $(x,y)\not\in D$, there is some $L\in\cL_D$ such that $L(x)<L(y)$.
    Hence $(x,y)\not\in\cD_{\cL_D}$, namely $\cD_{\cL_D}\subset D$.
\end{proof}
%

%
%
We conclude this discussion on general closed quasi-orders by showing that, for each relation, there is a  smallest closed quasi-order that contains it.
\begin{proposition}
    The set of all quasi-orders on $X$ is closed under intersection.
\end{proposition}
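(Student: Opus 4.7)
The proof is essentially a direct check of the two defining properties from Definition~\ref{def: QO}. Given a family $\{D_\alpha\}_{\alpha\in A}$ of quasi-orders on $X$, I would set $D=\bigcap_{\alpha\in A} D_\alpha$ (viewed as subsets of $X\times X$) and verify that $D$ is reflexive and transitive.

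For reflexivity, I note that each $D_\alpha$ contains the diagonal $\{(x,x):x\in X\}$, so the diagonal is also contained in the intersection $D$. Hence $x\Dto x$ for every $x\in X$. For transitivity, suppose $(x,y)\in D$ and $(y,z)\in D$. Then, by definition of intersection, $(x,y)\in D_\alpha$ and $(y,z)\in D_\alpha$ for every $\alpha\in A$. Applying transitivity of each $D_\alpha$ separately gives $(x,z)\in D_\alpha$ for every $\alpha$, so $(x,z)\in D$.

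There is no real obstacle here: the statement is a general fact about relations, and the argument does not depend on any topology, on compactness, or on the particular structure of $X$. The only thing worth flagging is that a ``quasi-order'' in this paper means just a reflexive and transitive relation (Def.~\ref{def: QO}), with no anti-symmetry required, so one does not need to worry about that axiom being destroyed by intersection. Combined with the trivial observation that intersections of closed subsets of $X\times X$ are closed, this proposition is exactly what is needed to conclude that for any relation $R\subset X\times X$ there is a smallest closed quasi-order containing $R$, namely the intersection of all closed quasi-orders that contain $R$ (the family being non-empty, since $X\times X$ itself is one).
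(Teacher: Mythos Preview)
Your proof is correct and follows exactly the approach the paper intends: the paper's own proof simply asserts that the intersection of closed sets is closed and leaves the verification that an intersection of quasi-orders is a quasi-order to the reader, which is precisely the reflexivity-and-transitivity check you carried out. Your closing remarks about closedness and the existence of a smallest closed quasi-order also match the paper's immediate corollary.
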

\begin{proof}
    The intersection of closed sets is closed.
    We leave to the reader to verify that 
    the intersection of two quasi-orders is a quasi-order.
\end{proof}
\begin{definition}
    We say that a relation $R$ extends a relation $R'$ if $R'\subset R$.
\end{definition}
\begin{corollary}
    \label{cor:smallest}
    Given any relation $R$ on $X$, there is a smallest closed quasi-order $D$ on $X$ extending $R$.   
\end{corollary}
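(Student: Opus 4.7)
The plan is to construct $D$ explicitly as the intersection of all closed quasi-orders containing $R$. Let $\mathcal{F}$ denote the family of closed quasi-orders on $X$ that extend $R$. First I would check that $\mathcal{F}$ is non-empty: by the example following $\cD_\cS$, the full relation $X\times X=\cD_\emptyset$ is a closed quasi-order, and it trivially contains $R$, so $X\times X\in\mathcal{F}$. Hence the intersection
\beq
D \;=\; \bigcap_{D'\in\mathcal{F}} D'
\eeq
is well-defined as a subset of $X\times X$.

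Next I would verify that $D$ has the required properties. Containment $R\subset D$ is immediate, since $R\subset D'$ for every $D'\in\mathcal{F}$. To see that $D$ itself is a closed quasi-order, I would invoke the preceding proposition, noting that its argument extends verbatim from pairwise to arbitrary intersections: an arbitrary intersection of closed subsets of $X\times X$ is closed, and reflexivity and transitivity are both universally quantified conditions, hence preserved by arbitrary intersection. Explicitly, $(x,x)\in D'$ for every $D'\in\mathcal{F}$, so $(x,x)\in D$; and if $(x,y),(y,z)\in D$, then for every $D'\in\mathcal{F}$ we have $(x,y),(y,z)\in D'$, hence $(x,z)\in D'$, hence $(x,z)\in D$. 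So $D\in\mathcal{F}$.

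Finally, minimality follows from the construction itself: if $D''$ is any closed quasi-order on $X$ extending $R$, then $D''\in\mathcal{F}$, so $D\subset D''$ by the definition of intersection. Thus $D$ is the unique smallest element of $\mathcal{F}$ with respect to inclusion.

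There is essentially no obstacle here; the only point requiring care is the upgrade from the pairwise-intersection statement of the preceding proposition to arbitrary intersections, which is a one-line check. The whole argument is the standard ``closure under intersection yields a closure operator'' pattern, applied to the lattice of closed quasi-orders.
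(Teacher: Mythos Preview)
Your proof is correct and follows exactly the paper's approach: define $D$ as the intersection of all closed quasi-orders containing $R$, noting that $X\times X$ makes this family non-empty. Your explicit verification that arbitrary (not just pairwise) intersections of closed quasi-orders are closed quasi-orders is a reasonable elaboration of what the paper leaves implicit.
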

\begin{proof}
    The set $D$ is the intersection of all closed quasi-orders that contain $R$.
    This set is non-empty because $X\times X$ is a closed quasi-order.
\end{proof}
%
\section{Closing $\cO_F$: the prolongational relation $\cP_F$ and the non-wandering set $NW_F$.}
\label{sec:NW}
The orbit space $\cO_F$ is a quasi-order but, in general, is not closed.
Hence, in general, $\cO_F$ is strictly contained in $\cD_{\cL_F}$, that is the smallest closed quasi-order extension of $\cO_F$.
\begin{example}
    Let $F$ be the flow of the ODE $x'=1$ on the real line.
    Then $\cO_F(x)=[x,+\infty)$ and $\cO_F=\{(x,y):y\geq x\}$.
    In this case, $\cO_F$ is a closed quasi-order.
\end{example}
\begin{example}
    \label{ex:OF open}
    Let $F$ be the flow of the ODE $x'=-\sin(\pi x)$ on the $[0,1]$ segment. 
    Then $\cO_F(0)=\{0\}$, $\cO_F(1)=\{1\}$ and $\cO_F(x)=(0,x]$ for each $x\in(0,1)$ (see Figure~\ref{fig:streams}, left).
    Hence, in this case $\cO_F$ equals the closed triangle with vertices $(0,0),(1,0),(1,1)$ minus the segments $(0,1]\times\{0\}$ and $\{1\}\times[0,1)$.
    In particular, $\cO_F$ is not closed.
\end{example}
Having a topologically closed relation is preferable -- for instance, closedness is essential to prove all general theorems we present in this article.
A natural option is considering the relation obtained by taking the topological closure of $\cO_F$.
\subsection{The prolongational relation}
\begin{definition}
  We denote by {\EM$\NW_F$} the relation $\overline{\cO_F}$, the topological closure of $\cO_F$, and by {\EM$\NWto$} the corresponding symbol. 
  We call this relation the {\em prolongational relation}.
\end{definition}
This relation was first introduced and studied by Auslander and Guerin~\cite{AG97} as a tool to study weak mixing in minimal flows.
By definition, a point belongs to $\NW_F$ if and only if there is a sequence $(x_n,y_n)\subset\cO_F$ such that $(x_n,y_n)\to(x,y)$ in the product topology.
This means that, arbitrarily close to $x$ and $y$, it is possible to find, respectively, points $x'$ and $y'$ such that $y'\in\cO_F(x')$.
The reader can verify the following elementary fact.
\begin{proposition}
    Let $d$ be any metric on $X$ compatible with the topology of $X$.
    Then $(x,y)\in\NW_F$ if and only if, for every $\eps>0$, there is an $\eps$-$\alpha\omega$chain from $x$ to $y$.
\end{proposition}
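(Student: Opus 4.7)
The plan is to unfold both sides of the equivalence directly from the definition of $\NW_F=\overline{\cO_F}$ and of $\eps$-$\alpha\omega$chain. By design, such a chain is precisely an honest orbit segment of $F$ with two small slack terms — one jump at the start, one at the end — so each direction becomes a one-step translation between approximations of $(x,y)$ in the product topology on $X\times X$ and orbit-plus-jump sequences in $X$.

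For the forward direction, I would suppose $(x,y)\in\NW_F$. By definition of closure there is a sequence $(x_n,y_n)\in\cO_F$ with $(x_n,y_n)\to(x,y)$, so $y_n=F^{t_n}(x_n)$ for some $t_n\geq 0$. Given $\eps>0$, I pick $n$ large enough that $d(x_n,x)\leq\eps$ and $d(y_n,y)\leq\eps$, then set $k=t_n+1$, $z_i=F^{i-1}(x_n)$ for $1\leq i\leq k-1$, and $z_k=y$. Then $z_1=x_n$ lies within $\eps$ of $x$, $z_k=y$, the iterates satisfy $F(z_i)=z_{i+1}$ exactly for $1\leq i\leq k-2$, and $d(F(z_{k-1}),z_k)=d(F^{t_n}(x_n),y)=d(y_n,y)\leq\eps$, yielding the required $\eps$-$\alpha\omega$chain from $x$ to $y$.

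For the reverse direction, I take $\eps_n\downarrow 0$ and, by hypothesis, an $\eps_n$-$\alpha\omega$chain $z_1^{(n)},\dots,z_{k_n}^{(n)}$ from $x$ to $y$. Exactness on the interior forces $F(z_{k_n-1}^{(n)})=F^{k_n-1}(z_1^{(n)})$, so setting $x_n=z_1^{(n)}$ and $y_n=F^{k_n-1}(x_n)$ produces a pair $(x_n,y_n)\in\cO_F$ with $d(x_n,x)\leq\eps_n$ and $d(y_n,y)=d(F(z_{k_n-1}^{(n)}),z_{k_n}^{(n)})\leq\eps_n$. Letting $n\to\infty$ places $(x,y)$ in $\overline{\cO_F}=\NW_F$.

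No hard obstacle is expected: both directions are essentially a matter of unpacking definitions. The only bookkeeping point concerns the degenerate case $t_n=0$ for every $n$ in the forward direction, which would require $k=1$ in violation of the $k>1$ clause of the bchain definition; this situation forces $x=y$, so one handles it either by noting that the witnessing sequence can be chosen with $t_n\geq 1$ whenever such a choice exists or by reading the proposition as characterizing the non-diagonal part of $\NW_F$. The other degenerate case, $k=2$ (no exact interior iterates, just the two slack jumps), is harmless: the exact-interior condition in the $\alpha\omega$chain definition ranges over an empty index set, so both arguments above go through unchanged.
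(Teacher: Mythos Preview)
The paper leaves this verification to the reader, and your argument is exactly the straightforward unfolding of definitions they have in mind; both directions are handled correctly.

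Your flag on the diagonal is well taken and worth making explicit. Since $(x,x)\in\cO_F\subset\NW_F$ always (witnessed by $t=0$), while the chain condition with $k>1$ forces $d(F^{k-1}(z_1),x)\leq\eps$ for some $k-1\geq 1$ and is therefore precisely the non-wandering condition on $x$ (as the paper itself records two propositions later), the equivalence as literally stated fails at wandering diagonal points. Your first proposed fix (``choose $t_n\geq 1$ whenever such a choice exists'') does not actually close the gap, since for a wandering non-fixed $x$ no such choice exists for small $\eps$; your second reading---that the proposition characterizes the off-diagonal part of $\NW_F$---is the correct one, and indeed for $x\neq y$ and $\eps<d(x,y)/2$ any witnessing pair automatically has $t_n\geq 1$, so your construction goes through cleanly there.
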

The set $\Down_{\NW_F}(x)$ coincides with the so-called {\em first prolongation} of $x$~\cite{Ura53,Aus63},
which justifies the name {\em prolongational relation} for $\cP_F$.
Notice that, since $\NW_F$ is not transitive, in general $\Down_{\NW_F}\left(\Down_{\NW_F}(x)\right)$ is strictly larger than $\Down_{\NW_F}(x)$.
\begin{example}
    Let $X=[0,2]$ and let $F$ be the flow of the ODE $$x'=|\sin(\pi x)|.$$
    Notice that $\Down_{\NW_F}(1) = [1,2]$ and, for each $x\in(0,1)$, 
    $$\Down_{\NW_F}(x)=\cO_F(x)\cup\{1\}.$$
    Hence, for each $x\in(0,1)$, 
    $$\Down_{\NW_F}\left(\Down_{\NW_F}(x)\right)=\cO_F(x)\cup[1,2].$$
\end{example}
The following technical elementary lemma, that will be used in several proofs of this section, shows that some weaker notion of transitivity does hold for $\NW_F$.
\begin{lemma}
    \label{lemma: weak transitivity}
    Assume that $x\NWto y$.
    Then $x\NWto\cO_F(y)\cup\Omega_F(y)$.
\end{lemma}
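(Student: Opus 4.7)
The plan is to unpack the hypothesis $x \NWto y$ via the definition of $\NW_F = \overline{\cO_F}$ and then propagate it forward using continuity of $F^t$ for fixed $t$. Pick a metric $d$ compatible with the topology of $X$ (possible since $X$ is metrizable) and use it on $X \times X$. The hypothesis gives a sequence $(x_n, y_n) \in \cO_F$ with $(x_n, y_n) \to (x, y)$, i.e.\ times $t_n \geq 0$ with $y_n = F^{t_n}(x_n)$. The goal is to show that, for any $z \in \cO_F(y) \cup \Omega_F(y)$, there is a sequence in $\cO_F$ converging to $(x, z)$.

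First I would handle the easy half, $z \in \cO_F(y)$. Then $z = F^s(y)$ for some $s \geq 0$. By continuity of $F^s$, we have $F^s(y_n) \to F^s(y) = z$, and by the semi-group property $F^s(y_n) = F^{s + t_n}(x_n) \in \cO_F(x_n)$. Hence the pair $(x_n, F^s(y_n))$ lies in $\cO_F$ and converges in $X \times X$ to $(x, z)$, so $(x,z) \in \NW_F$.

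Next, for the harder case $z \in \Omega_F(y)$, I would build the required sequence by a diagonal argument. Choose $s_k \nearrow \infty$ with $F^{s_k}(y) \to z$. For each fixed $k$, continuity of $F^{s_k}$ at $y$ plus $y_n \to y$ yields an index $n_k$ (which we may take strictly increasing in $k$) such that
\[
d\bigl(F^{s_k}(y_{n_k}),\, F^{s_k}(y)\bigr) < \tfrac{1}{k}
\quad\text{and}\quad
d(x_{n_k}, x) < \tfrac{1}{k}.
\]
Then $F^{s_k}(y_{n_k}) \to z$ by the triangle inequality, and $x_{n_k} \to x$. Since $F^{s_k}(y_{n_k}) = F^{s_k + t_{n_k}}(x_{n_k})$ with $s_k + t_{n_k} \geq 0$, the pair $(x_{n_k}, F^{s_k}(y_{n_k}))$ lies in $\cO_F$ and converges to $(x, z)$, proving $x \NWto z$.

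The only subtle point is this diagonal step: one must be sure the indices $n_k$ can be chosen to make both coordinates converge simultaneously, and this is where the $\NW_F$ relation fails to be transitive in general (one cannot simply compose the approximations). The argument works here because the second leg of the ``path'' from $y$ to $z$ is genuine forward dynamics — exactly an $F^{s_k}$ with fixed $s_k$ per stage — which is continuous, rather than another limit of orbit pairs. This is precisely what the lemma is quantifying: $\NW_F$ does not chain through arbitrary $\NW_F$-related intermediates, but it does chain through intermediates reached by the actual semi-flow or its $\Omega$-limit.
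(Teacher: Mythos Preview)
Your proof is correct and follows essentially the same approach as the paper: both use continuity of the fixed-time map $F^s$ (or $F^N$) to push an approximating orbit segment that lands near $y$ a bit further so that it lands near $F^s(y)$ or near a point of $\Omega_F(y)$. The only cosmetic difference is that the paper phrases everything via the $\eps$-$\alpha\omega$chain characterization of $\NW_F$, whereas you work directly with sequences in $\overline{\cO_F}$; the underlying mechanism and the diagonal step for the $\Omega_F(y)$ case are identical.
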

\begin{proof}
    We prove first that, if $x\NWto y$, then $x\NWto F^n(y)$ for every $n\geq0$.
    This is an immediate consequence of the continuity of $F^n$.
    Indeed, fix a $n\geq0$ and let $\eps>0$.
    Then we can find an $\eta>0$ such that $d(z,y)<\eta$ implies $d(F^n(z),F^n(y))<\eps$.
    If $\eta>\eps$, set $\eta=\eps$.
    Let now $c_1,\dots,F^k(c_1)$ be an $\eta$-$\alpha\omega$chain from $x$ to $y$, namely $d(c_1,x)<\eta$ and $d(F^k(c_1),y)<\eta$.
    Then $c_1,\dots,F^{k+n}(c_1)$ is an $\eps$-${\alpha\omega}$chain from $x$ to $F^n(y)$.
    Hence, $x\NWto\cO_F(y)$.

    Let now $z\in\Omega_F(y)$.
    Then, for every $\eps>0$, there is a $N>0$ such that $d(F^N(y),z)<\eps$.
    By continuity, we can also find $\eta>0$ such that $d(w,y)<\eta$ implies $d(F^N(w),F^N(y))<\eps$.
    Now, set $\eta=\eps$ if $\eta>\eps$.
    Then, by the triangular inequality, every $\eta$-$\alpha\omega$chain from $x$ to $y$ can be extended to a $2\eps$-$\alpha\omega$chain from $x$ to $z$.
    Hence, $x\NWto\Omega_F(y)$.
\end{proof}
\blue
\begin{lemma}
    \label{lemma: infinite chains}
    Assume that $x\NWto y$ and $y\not\in\cO_F(x)$. 
    Then, for every $\eps>0$, there is an $N_\eps>0$, with $N_\eps\to\infty$ as $\eps\to0$, such that every $\eps$-$\alpha\omega$chain from $x$ to $y$ has length larger than $N_\eps$.
\end{lemma}
\begin{proof}
    Suppose, by contradiction, that there is an integer $N>0$ such that, for every $\eps>0$, we can find an $\eps$-$\alpha\omega$chain from $x$ to $y$ whose length is not larger than $N$.
    Take a sequence $\eps_n\to0$ and corresponding $\eps_n$-$\alpha\omega$chains $(z_1^{(n)},\dots,z_{k_n}^{(n)})$ from $x$ to $y$.
    There is at least one integer $m$ with $1\leq m\leq N$ such that infinitely many of these chains have length $m$.
    Passing to a suitable subsequence $n_j$, we have a sequence of $\eps_{n_j}$-$\alpha\omega$chains of length $m$ from $x$ to $y$.
    Therefore, as $\eps_{n_j}\to0$, $z_1^{(n_j)}\to x$ and $z_m^{(n_j)}\to F^{m-1}(x)$.
    Hence, it must be that $y=F^{m-1}(x)$, against the theorem's assumption that $y$ is not in the orbit of $x$.   
\end{proof}
\black
\begin{proposition}[\EM Properties of $\NW_F$]
    \label{prop:Down}
    For any semi-flow $F$, $\NW_F$ is forward-invariant under the induced action of $F$ on $X\times X$ given by $F^t(x,y)=(F^t(x),F^t(y))$.
    Moreover, for every $x$, the following holds:
    \begin{enumerate}
        \item $\Down_{\NW_F}(x)$ is forward-invariant under $F$;
        \item $\Down_{\NW_F}(x)
        \;\supset\; 
        \cO_F(x)\cup\Omega_F(x)$;
        \item if $\Omega_F(x)\neq\emptyset$, 
        $\cO_F(x)\cup\Down_{\NW_F}(\Omega_F(x))
        \supset
        \Down_{\NW_F}(x)$;
        \item $\cO_F(x)\cup\Down_{\NW_F}(y)
         \;\supset\; 
        \Down_{\NW_F}(x)
        $ for all $y\in\cO_F(x)$;
        \item if $F^t$ is open for all $t\geq0$ (for instance if $F$ is a flow or an open map), then\\
        $\Down_{\NW_F}(x)
        = \cO_F(x)\cup\Down_{\NW_F}(y)$ for all $y\in\cO_F(x)$.
    \end{enumerate}
\end{proposition}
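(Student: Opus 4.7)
The plan is to exploit the closure $\NW_F = \overline{\cO_F}$ and the semigroup law of $F^t$ throughout. Items (1)--(3) fall out of the definition, item (5) is the technical nucleus, and (4) and (6) follow from (5) via Proposition~\ref{prop:Down(compact)} and an open-map lift, respectively. For (1), I take $(x,y) \in \NW_F$ approximated by $(x_n, F^{s_n}(x_n)) \in \cO_F$; applying $F^t \times F^t$ coordinatewise and using continuity, $(F^t(x_n), F^{t+s_n}(x_n)) \to (F^t(x), F^t(y))$ with each term still in $\cO_F$, so the limit lies in $\NW_F$. For (2), the same approximating sequence used only in the second coordinate gives $(x_n, F^{t+s_n}(x_n)) \to (x, F^t(y)) \in \NW_F$. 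Item (3) is immediate from $\cO_F \subset \NW_F$ together with the definition of $\Omega_F(x)$: any $\omega = \lim F^{t_n}(x)$ satisfies $(x, F^{t_n}(x)) \to (x, \omega)$ in $\overline{\cO_F}$.

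The heart of the argument is (5). Fix $y = F^s(x) \in \cO_F(x)$ and $z \in \Down_{\NW_F}(x)$, with $(x_n, F^{t_n}(x_n)) \to (x, z)$. I perform a dichotomy on the sequence $\{t_n\}$. If $\{t_n\}$ has a bounded subsequence, I extract a convergent subsequence $t_{n_k} \to t^*$ (constant in the discrete case, convergent by sequential compactness of $[0,T]$ in the continuous case); joint continuity of $(t,w) \mapsto F^t(w)$ then gives $z = F^{t^*}(x) \in \cO_F(x)$. Otherwise $t_n \to \infty$ and eventually $t_n \geq s$, so I rewrite $F^{t_n}(x_n) = F^{t_n - s}(F^s(x_n))$ and use $F^s(x_n) \to F^s(x) = y$ to see that the pairs $(F^s(x_n), F^{t_n - s}(F^s(x_n))) \in \cO_F$ converge to $(y, z)$, certifying $y \NWto z$.

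Item (4) follows almost formally from (5). If $z \in \Down_{\NW_F}(x) \setminus \cO_F(x)$, then by (5) every $y \in \cO_F(x)$ satisfies $y \NWto z$, so $\cO_F(x) \subset \Up_{\NW_F}(\{z\})$; since $\{z\}$ is compact, Proposition~\ref{prop:Down(compact)} (whose proof, as the authors remark, does not use transitivity) implies $\Up_{\NW_F}(\{z\})$ is closed, hence $\overline{\cO_F(x)} \subset \Up_{\NW_F}(\{z\})$ and in particular $\Omega_F(x) \subset \Up_{\NW_F}(\{z\})$; any $\omega \in \Omega_F(x)$ (non-empty by hypothesis) then witnesses $z \in \Down_{\NW_F}(\Omega_F(x))$. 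For (6), the $\subset$ inclusion is exactly (5); for $\supset$, given $y \NWto z$ with $y_n \to y$ and $F^{u_n}(y_n) \to z$, openness of $F^s$ makes $F^s$ send every neighborhood of $x$ onto a neighborhood of $y = F^s(x)$, so I can lift to $x_n \to x$ with $F^s(x_n) = y_n$; then $F^{s+u_n}(x_n) = F^{u_n}(y_n) \to z$ certifies $x \NWto z$.

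The main obstacle I anticipate is keeping the dichotomy in (5) uniform across discrete and continuous time: in discrete time a bounded integer sequence always has a constant subsequence, but in continuous time the subsequence limit $t^*$ may be any nonnegative real, and one must fold it back into $\cO_F(x)$ via joint continuity of the semi-flow rather than continuity of a single iterate $F^{t^*}$. Everything else is either bookkeeping with $\overline{\cO_F}$ or a direct appeal to results already established in the excerpt.
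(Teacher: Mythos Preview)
Your proof is correct, and in several places it is cleaner than the paper's own argument. The paper works throughout in the language of $\eps$-$\alpha\omega$chains, whereas you work directly with approximating sequences $(x_n,F^{t_n}(x_n))\to(x,z)$ in $\overline{\cO_F}$; both are equivalent descriptions of $\NW_F$, so this is purely stylistic. Three genuine methodological differences are worth noting. First, for the forward-invariance of $\NW_F$ under $F^t\times F^t$ you give a one-line direct argument, while the paper postpones it to the end and deduces it from item~(4) together with Lemma~\ref{lemma: weak transitivity}; your route is shorter and avoids the case split on whether $y\in\cO_F(x)$. Second, the paper proves its items~(3) and~(4) by two parallel chain arguments (``the chain is eventually long, hence passes close to $y$''), whereas you prove your~(5) [paper's~(4)] first via the bounded/unbounded dichotomy on $\{t_n\}$ and then derive your~(4) [paper's~(3)] from it by the neat observation that $\cO_F(x)\subset\Up_{\NW_F}(\{z\})$, invoking Proposition~\ref{prop:Down(compact)} on the compact singleton $\{z\}$ to pass to $\overline{\cO_F(x)}\supset\Omega_F(x)$. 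This is a genuinely different and more conceptual derivation. Third, your treatment of the open-map lift in~(6) [paper's~(5)] is essentially the same as the paper's, just phrased with sequences rather than $\eta$-balls.

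One cosmetic remark: your numbering is shifted by one relative to the proposition as stated (you absorbed the unnumbered forward-invariance claim as your item~(1)); this is harmless but worth flagging if you intend to match the paper's labels.
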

\begin{proof}
    (1) Let $y\in\Down_{\NW_F}(x)$. 
    We need to show that $F(y)\in\Down_{\NW_F}(x)$.
    This follows immediately from Lemma~\ref{lemma: weak transitivity}.

    (2) This is an immediate consequence of the facts that $\cO_F\subset\NW_F$ and that $\NW_F$ is closed.

    (3) Let $x\NWto z$ and $y\in\Omega_F(x)$. 
    \blue 
    Assume that $z\not\in\cO_F(x)$ and take a sequence $\eps_n\to0^+$ and a corresponding sequence of $\eps_n$-$\alpha\omega$chains $(w_1^{(n)},\dots,w_{N_n}^{(n)})$ from $x$ to $z$.
    Then, by Lemma~\ref{lemma: infinite chains}, the length of these chains diverges with $n$.
    Now, fix an $\eta>0$ and $y\in\Omega_F(x)$.
    By definition of limit set, there is a $k>0$ such that $d(F^k(x),y)<\eta$.
    By the continuity of $F^k$, there is a $\delta>0$ such that $d(x,\zeta)<\delta$ implies that $d(F^k(x),F^k(\zeta))<\eta$.
    For almost all $n$, $\eps_n<\min\{\eta,\delta\}$ and $N_n>k+1$.
    Hence, given the $\eps_n$-$\alpha\omega$chain $(w_1^{(n)},\dots,w_{N_n}^{(n)})$, the sequence $(w_{k+1}^{(n)}=F^{k}(w_1^{(n)}),\dots,w_{N_n}^{(n)})$ is a $2\eta$-$\alpha\omega$chain from $y$ to $z$. 
    Hence, $y\NWto z$. 
    Since this holds for al $y\in\Omega_F(x)$, we get that $\Omega_F(x)\NWto z$.
    
    \black   
    (4) This can be proven by the same argument of point (3).  

    (5) By (4), it is enough to prove the inverse inclusion.
    Let $y=F^n(x)$, $z\in\Down(y)$ and fix an $\eps>0$.
    By continuity and since $F^n$ is open, there is an $\eta>0$ such that, for every $y'$ such that $d(y,y')<\eta$, there is an $x'$ with $d(x,x')<\eps$ and $F(x')=y'$.
    We can assume without loss of generality that $\eta\leq \eps$.
    Hence, every $\eta$-$\alpha\omega$chain from $y$ to $z$ can be extended to an $\eps$-$\alpha\omega$chain from $x$ to $z$.

    Finally, we prove the forward-invariance of $\NW_F$.
    We need to prove that $x\NWto y$ implies $F^t(x)\NWto F^t(y)$ for every $t\geq0$.
    Notice first that $x\Fto y$ implies trivially that $F^t(x)\Fto F^t(y)$.
    Suppose now that $y$ is not on the orbit of $x$.
    Then, by point (4) above, $F^t(x)\NWto y$ and furthermore, by Lemma~\ref{lemma: weak transitivity}, $F^t(x)\NWto F^s(y)$ for every $s\geq0$.
    Hence, in particular, $F^t(x)\NWto F^t(y)$.
\end{proof}
The following examples show that there is no way to improve the inclusions in the proposition above.
%
\begin{example}[$\Down_{\NW_F}(x)$ can be larger than $\cO_F(x)\cup\Omega_F(x)$]
    \label{ex: saddle point 1}
    Consider a flow $F$ on $\R^2$ with a single recurrent point $f$, which is a saddle point of the type shown in Figure~\ref{fig:homoclinic}(right).
    We denote by $C$ the ``critical loop'' equal to the union of $f$ with the homoclinic trajectory $h$. 
    In this example we assume that, outside of the portion displayed, each non-critical trajectory coincides with some horizontal line.
    Hence, if $x$ lies on a non-critical open trajectory, $\Omega_F(x)=\emptyset$.
    In case of the closed trajectories, all of which are inside $C$, $\cO_F(x)=\Om_F(x)$ for each $x$ on them.
    Moreover, the reader can verify that there is no point $\NW_F$-downstream from a point $x$ on a non-critical trajectory other than the points on its trajectory, namely $\Down_{\NW_F}(x)=\cO_F(x)$ for all such points.
    The situation is different for points on some critical trajectory.
    Notice that the saddle consists of the fixed point $f$, its stable manifolds $g_s$ and $h$ and its unstable manifold $g_u$ and $h$.
    All points on $g_s$ asymptote to the fixed point $f$, so that $\Down_{\NW_F}(x)\supset\cO_F(x)\cup\{f\}$ for all $x\in g_s$. 
    Moreover, by hopping on trajectories above $g_s$, from any $x\in g_s$ one can reach every other point on $h$ and $g_u$, so that
    $$
    \Down_{\NW_F}(x) = \cO_F(x)\cup\Omega_F(x)\cup h\cup g_u\;\text{ for all }\;x\in g_s,
    $$
    where all these sets are pairwise disjoint.
    Similarly, the reader can verify that $\cO_F(f)=\Omega_F(f)$ and
    $$
    \Down_{\NW_F}(f) = \cO_F(f)\cup h\cup g_u
    $$
    and that
    $$
    \Down_{\NW_F}(x) = \cO_F(x)\cup\Omega_F(x)\cup g_u\;\text{ for all }\;x\in h,
    $$
    where all unions above are union of pairwise disjoint sets.

    In the saddle displayed in Figure~\ref{fig:homoclinic}(left), the only difference is that the trajectories inside $C$ other than the fixed point are not closed but rather spiral outwards.
    Consequently, for all non-fixed points inside $C$, $\Omega_F(x)=h\cup\{f\}$ and 
    $$
    \Down_{\NW_F}(x) = \cO_F(x)\cup\Omega_F(x).
    $$
    Finally, $\cO_F(c)=\Omega_F(c)=\{c\}$ while
    $$
    \Down_{\NW_F}(c) = \cO_F(c)\cup D,
    $$
    where $D$ is the closed disc having $C$ as its boundary.
\end{example}
\begin{figure}
    \begin{tikzpicture}[scale=1.275]
      \draw[myred,yshift=0 em, thick,ar5] (0,0) -- (2,0) [point={thick, fill=green, point name=A}] -- (4,0);
      \draw[mypurple,yshift=0 em, thin,ar4] (0,-0.15) -- (4,-0.15);
      \draw[mypurple,yshift=0 em, thin,ar4] (0,-0.3) -- (4,-0.3);
      \node  at ([shift={(90:0.18)}]A) {$\scriptstyle f$};
      \node  at (-.15,0) {$\scriptstyle g_s$};
      \node  at (4.15,0) {$\scriptstyle g_u$};
      \node at (2.3875,0.252) {$\scriptstyle h$};
      \draw[mypurple,thin,->-]
      (0,0.15) .. controls (2.5,0.15) and (1, 0.3) .. (1.08, 1.55) .. controls (1.2,2.15) and (1.7,2.2)
      .. (2, 2.22) ..
      controls (2.3,2.2) and (2.8,2.15) .. (2.92,1.55) .. controls (3,0.3) and (1.5,0.15) .. (4,0.15);

      \draw[mypurple,thin,ar1]
      (0,0.3) .. controls (2.3,0.3) and (0.7, 0.45) .. (1, 1.7) .. controls (1.2,2.3) and (1.7,2.35)
      .. (2, 2.35) ..
      controls (2.3,2.35) and (2.8,2.3) .. (3,1.7) .. controls (3.3,0.45) and (1.7,0.3) .. (4,0.3);
      \draw[mypurple,thin,ar2]
      (0,0.45) .. controls (2.,0.45) and (0.4, 0.6) .. (1, 2) .. controls (1.2,2.45) and (1.7,2.5)
      .. (2, 2.5) ..
      controls (2.3,2.5) and (2.8,2.45) .. (3,2) .. controls (3.6,0.6) and (2,0.45) .. (4,0.45);

      \Loop[dist = 4cm, dir = NO, style={myred,thick,-<-}](A.north)

      \draw [myblue,domain=0:37,variable=\t,smooth,samples=200,xshift=56.5,yshift=33,-stealth]
      plot ( {3 * \t *\t/ pi) * cos(\t r)/2000}, {-3.8 * \t *\t/ pi) * sin(\t r)/2000} );
    
    \end{tikzpicture}
    \hskip -1cm
    \begin{tikzpicture}[scale=1.275]
      \draw[myred,yshift=0 em, thick,ar5] (0,0) -- (2,0) [point={thick, fill=green, point name=A}] -- (4,0);
      \draw[mypurple,yshift=0 em, thin,ar4] (0,-0.15) -- (4,-0.15);
      \draw[mypurple,yshift=0 em, thin,ar4] (0,-0.3) -- (4,-0.3);
      \node  at ([shift={(90:0.18)}]A) {$\scriptstyle f$};
      \node  at (-.15,0) {$\scriptstyle g_s$};
      \node  at (4.15,0) {$\scriptstyle g_u$};
      \node at (2.3875,0.252) {$\scriptstyle h$};
      \node  at (2,1.1) {$\scriptstyle c$};
      \draw[mypurple,thin,->-]
      (0,0.15) .. controls (2.5,0.15) and (1, 0.3) .. (1.08, 1.55) .. controls (1.2,2.15) and (1.7,2.2)
      .. (2, 2.22) ..
      controls (2.3,2.2) and (2.8,2.15) .. (2.92,1.55) .. controls (3,0.3) and (1.5,0.15) .. (4,0.15);

      \draw[mypurple,thin,ar1]
      (0,0.3) .. controls (2.3,0.3) and (0.7, 0.45) .. (1, 1.7) .. controls (1.2,2.3) and (1.7,2.35)
      .. (2, 2.35) ..
      controls (2.3,2.35) and (2.8,2.3) .. (3,1.7) .. controls (3.3,0.45) and (1.7,0.3) .. (4,0.3);
      \draw[mypurple,thin,ar2]
      (0,0.45) .. controls (2.,0.45) and (0.4, 0.6) .. (1, 2) .. controls (1.2,2.45) and (1.7,2.5)
      .. (2, 2.5) ..
      controls (2.3,2.5) and (2.8,2.45) .. (3,2) .. controls (3.6,0.6) and (2,0.45) .. (4,0.45);

      \Loop[dist = 4cm, dir = NO, style={myred,thick,-<-}](A.north)
      
      \draw[myblue,ar3] plot[smooth cycle,tension=0.7] coordinates{ (2,0.3) (1.6,0.7) (1.3,1.4) (1.5,1.85) (2,2) (2.5,1.85) (2.7,1.4) (2.4,0.7)  };
      \draw[myblue,ar4] plot[smooth cycle,tension=0.8] coordinates{ (2,0.6) (1.5,1.3) (2,1.75) (2.5,1.3)  };
      \draw[myblue,ar4] plot[smooth cycle,tension=0.8] coordinates{ (2,0.9) (1.7,1.3)(2,1.5)(2.3,1.3)  };
      \filldraw [myblue] (2,1.2) circle (0.5pt);
      
    \end{tikzpicture}  
    \vskip .3cm
  \caption{
    {\em Two topologically inequivalent homoclinic saddle points on a surface.}
    The prolongational relation for the corresponding flows is discussed in Example~\ref{ex: saddle point 1}.    
  }
    \label{fig:homoclinic}
\end{figure}
\begin{example}[$\cO_F(x)\cup\Down_{\NW_F}(\Omega_F(x))$ can be larger than $\Down_{\NW_F}(x)$]
    Let $X=[0,2]$ and let $F$ be the flow of the ODE $x'=|\sin(\pi x)|.$
    Then $\Down_{\NW_F}(1)=[1,2]$ while, for any $x\in(0,1)$, $\Omega_F(x)=\{1\}$ and $Down_{\NW_F}(x) = \cO_F(x)\cup\{1\}$.
    Hence $\cO_F(x)\cup\Down_{\NW_F}(\Omega_F(x))=\cO_F(x)\cup[1,2]$ is strictly larger than $Down_{\NW_F}(x)$.
\end{example}
\begin{example}[$\cO_F(x)\cup\Down_{\NW_F}(y)$ can be larger than $\Down_{\NW_F}(x)$ for $y\in\cO_F(x)$]
    Let $F:[0,2]\to[0,2]$ be the continuous function
    $$
    F(x)=
    \begin{cases}
        1,&x\leq1\\
        \sqrt{x-1}+1,&x\geq1\\
    \end{cases}
    $$
    Then $\Down_{\NW_F}(1)=[1,2]$ while, for every $x\in(0,1)$, $$\Down_{\NW_F}(x)=\cO_F(x)=\{x,1\}.$$
    Hence, for each such $x$, $1\in\cO_F(x)$ and $\cO_F(x)\cup\Down_{\NW_F}(1)$ is strictly larger than $\Down_{\NW_F}(x)$.
    
    
\end{example}
%

%
Although $\NW_F$ is not, in general, transitive, the two examples below show that sometimes it is, even in non-trivial cases. 
When this happens, then $\NW_F$ is necessarily the smallest closed quasi-order containing $\cO_F$ and so it equals $\cD_{\cL_F}$. 
%
\begin{example}
    Let $F$ be the flow in Example~\ref{ex:OF open}.
    Then $\NW_F\subset\R^2$ is the closed triangle with vertices $(0,0)$, $(1,0)$, $(1,1)$.
    A direct check shows that this relation is transitive.
\end{example}
\begin{example}
    \label{ex:lm}
    Consider the restriction of the logistic map $\ell_\mu$, $\mu\in(1,3)$, to its image $X=[0,\ell_\mu(1/2)]$.
    The dynamics of $\ell_\mu$ in this range of $\mu$ is very simple: there are two fixed points, 0 and $p_\mu=1-1/\mu$, and every point but 0 and 1 converges to $p_\mu$.
    Hence, $\Down_{\NW_{\ell_\mu}}(x)=\cO_{\ell_\mu}(x)\cup\{p_\mu\}$ for all $x\neq0,1$.
    Moreover, $\Down_{\NW_{\ell_\mu}}(0)=[0,\ell_\mu(1/2)]$, since every point in the image of $\ell_\mu$ has a backward trajectory asymptoting to $0$.
    Then $\NW_{\ell_\mu}$ is transitive because 
    is transitive on its subset $\cO_{\ell_\mu}$ and it contains all possible pairs $(0,x)$, $x\leq\ell_\mu(1/2)$, and $(x,p_\mu)$, $x\in[0,1)$.
    Notice that $\NW_{\ell_\mu}$ is not fully transitive on $X=[0,1]$ because $1\NWto0$ and, by transitivity, 1 is upstream from all points $x\in X$, but $(1,x)\in\NW_{\ell_\mu}$ only for $x=0$.
\end{example}
%

%
The lack of transitivity of $\NW_F$ comes from the fact that, in general, the concatenation of two $\eps$-$\alpha\omega$chains is not an $\eps$-$\alpha\omega$chain.
Linkable pairs, defined below, are precisely those ones for which this happens.
\begin{definition}
    \label{def:linkable}
    Let $C$ be an $\eps$-$\alpha\omega$chain from $x$ to $y$ with points $$c_1,F(c_1),\dots,F^k(c_1)$$ and let $C'$ be an $\eps$-$\alpha\omega$chain from $y$ to $z$ with points $$c'_1,F(c'_1),\dots,F^{k'}(c'_1).$$
    We say that $C$ and $C'$ are {\em linkable} if $c'_1 = F^k(c_1)$.
    
    If $x\NWto y$ and $y\NWto z$ and, for every $\eps>0$, there is an $\eps$-$\alpha\omega$chain from $x$ to $y$ and one from $y$ to $z$ that are linkable, we say that $\NW_F$ is {\em transitive} on the pairs $(x,y)$ and $(y,z)$.
\end{definition}
In the definition below we define non-wandering points, first introduced in literature by Birkhoff~\cite{Bir27}, using the prolongational relation.
\begin{definition}
    \label{def:NW}
    We say that a point $x$ is {\em non-wandering} for $F$ if 
    either $x$ is fixed or there exists a $y\neq x$ such that:
    \begin{enumerate}
        \item $x\NWto y$ and $y\NWto x$;
        \item $\NW_F$ is transitive on the pairs $(x,y)$ and $(y,x)$. 
    \end{enumerate}
    In this case, we say that $x$ and $y$ are {\em $\NW_F$-equivalent} and write that $x\NWeq y$.
    We denote by {\EM$NW_F$} the set of all non-wandering points of $F$.
    We say that a set $M\subset NW_F$ is $\NW_F$-equivalent if all points of $M$ are $\NW_F$-equivalent to each other.
\end{definition}
\begin{proposition}
    \label{prop:NW_F in Q}
    Let $F$ be a semi-flow with compact dynamics and let $Q$ be a global trapping region for $F$.
    Then $NW_F\subset Q$.
\end{proposition}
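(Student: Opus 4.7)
The plan is to show that every non-wandering point $x$ lies in $Q$ by combining two ingredients: a uniform estimate that every orbit approaches $Q$, and the fact that a non-wandering point produces honest orbit segments of $F$ starting and ending arbitrarily close to $x$.

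First I would establish the auxiliary estimate $d(F^t(z),Q)\to 0$ as $t\to\infty$ for every $z\in X$. The proposition immediately before Prop.~\ref{prop:recurrent pt} gives $\Omega_F(z)\ne\emptyset$ and $\Omega_F(z)\subset Q$, so the orbit enters $N_\eps(Q)$ for every $\eps>0$; uniform asymptotic stability applied with $\delta=\eps<\eta$ then produces a time $\tau$ such that, once the orbit has reached $N_\eps(Q)$ and an additional $\tau$ units have elapsed, it remains in $N_\eps(Q)$ forever. Since $\eps$ is arbitrary, the claim follows.

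Next I would exploit the non-wandering hypothesis on $x$. If $x$ is fixed then $x\in\Omega_F(x)\subset Q$ and there is nothing to prove. Otherwise, Def.~\ref{def:NW} yields $y\ne x$ with $x\NWto y$, $y\NWto x$, and $\NW_F$ transitive on $(x,y)$ and $(y,x)$; by Def.~\ref{def:linkable} this means that for every $\eps>0$ there is a pair of linkable $\eps$-$\alpha\omega$chains whose concatenated orbit parts yield a true orbit segment $c^{(\eps)},F(c^{(\eps)}),\dots,F^{T_\eps}(c^{(\eps)})$ of length $T_\eps\ge 2$ with $d(c^{(\eps)},x)\le\eps$ and $d(F^{T_\eps}(c^{(\eps)}),x)\le\eps$.

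I would then split on the behaviour of $T_\eps$. If $T_\eps$ stays bounded along some $\eps_n\to 0$, pass to a constant subsequence $T_{\eps_n}=T$; continuity of $F^T$ together with $F^T(c^{(\eps_n)})\to x$ forces $F^T(x)=x$, so $x$ is periodic and $x\in\Omega_F(x)\subset Q$. If instead $T_{\eps_n}\to\infty$, fix small $\eps\in(0,\eta)$, use the auxiliary estimate to choose $T^*$ with $F^{T^*}(x)\in N_{\eps/2}(Q)$, invoke continuity of $F^{T^*}$ to get $F^{T^*}(c^{(\eps_n)})\in N_\eps(Q)$ for $n$ large, and apply uniform asymptotic stability with time $\tau$ to obtain $F^{T_{\eps_n}}(c^{(\eps_n)})\in N_\eps(Q)$ whenever $T_{\eps_n}\ge T^*+\tau$. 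Letting $n\to\infty$ gives $d(x,Q)\le\eps$, and since $\eps$ is arbitrary and $Q$ is closed, $x\in Q$. The main subtlety is coordinating the chain tolerance $\eps$ with the stability neighbourhood size, but $T_{\eps_n}\to\infty$ makes the threshold $T^*+\tau$ automatic, so the rest is standard continuity.
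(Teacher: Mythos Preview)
Your proof is correct and rests on the same dynamical ingredient as the paper's—uniform asymptotic stability forces orbits to be eventually trapped in any prescribed $N_\eps(Q)$—but the two arguments are packaged differently.

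The paper first invokes Prop.~\ref{prop:Down}(3) to decompose $\Down_{\NW_F}(x)$, and then proves the auxiliary claim ``$p\in Q$ and $y\notin Q$ imply $p\not\NWto y$'' by noting that an $\eps$-$\alpha\omega$chain starting near $p\in Q$ already lies in $Q_\eta$, so uniform asymptotic stability traps it in $Q_{\eta/2}$ after a fixed time. The final step from this auxiliary claim back to $NW_F\subset Q$ is left implicit (one needs the decomposition plus the fact that $\Omega_F(x)\subset Q$ to place the returning chain under the control of a point of $Q$). Your argument avoids this detour: you never assume the base point is in $Q$, and instead use that the orbit of $x$ itself enters $N_{\eps/2}(Q)$ at some $T^*$, so nearby orbits enter $N_\eps(Q)$ by continuity and then get trapped. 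This makes the conclusion $d(x,Q)\le\eps$ immediate from the returning endpoint, without appealing to Prop.~\ref{prop:Down}(3). Your route is thus more self-contained for the stated proposition, while the paper's route yields the additional fact $\Down_{\NW_F}(Q)\subset Q$ along the way.

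One small remark: you could shorten the second paragraph by citing the equivalent characterization (point~(2) of the proposition following Def.~\ref{def:NW}) directly—``for every $\eps>0$ there is an $\eps$-$\alpha\omega$chain from $x$ to itself''—instead of unpacking linkability; this already hands you the orbit segment $c^{(\eps)},\dots,F^{T_\eps}(c^{(\eps)})$ with both ends near $x$.
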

\begin{proof}
  Since $F$ has compact dynamics, $\Omega_F(x)\neq\emptyset$ {\black for every $x$} and so, by Proposition~\ref{prop:Down}(3),
  all points downstream from $x$ either belong to $\cO_F(x)$ or are downstream from $\Omega_F(x)$.
  Moreover, $\Omega_F(x)\subset Q$.
  Assume that there are $x\in Q$ and $y\not\in Q$ such that $x\NWto y$.
  Then, for every $\eps>0$, there is a trajectory starting from some $z$, with $d(x,z)<\eps$, and a $N>0$ such that $d(y,F^N(z))<\eps$.
  Since $y\not\in\cO_F(x)$, because $Q$ is a trapping region for $F$, then $N\to\infty$ as $\eps\to0$.
  This leads to a contradiction. 
  Indeed, let $\eta=d(y,Q)$ and set $Q_\delta=\{w:d(w,Q)<\delta\}$.
  Then, by hypothesis, there is a $T>0$ such that $F^t(Q_\eta)\subset Q_{\eta/2}$ for all $t\geq T$.
  In particular, for small enough $\eps$, $d(y,F^t(z))>\eps$ for all $t\geq T$.
  So, it is impossible that, at the same time, for diverging values of $N$ we have that $d(y,F^N(z))<\eps$.
\end{proof}

The following proposition shows, in particular, that the definition above of non-wandering point is equivalent to the standard definition, introduced by Birkohff in 1927.
\begin{proposition}
    Given a point $x$, the following are equivalent:
    \begin{enumerate}
        \item $x$ is non-wandering;
        \item for each $\eps>0$, there is an $\eps$-$\alpha\omega$chain from $x$ to itself;
        \item for every neighborhood $U$ of $x$, there is a $t>0$ such that $F^t(U)\cap U\neq\emptyset$.
    \end{enumerate}
\end{proposition}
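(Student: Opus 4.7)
The plan is to prove the cycle $(1) \Rightarrow (2) \Rightarrow (3) \Rightarrow (1)$. The equivalence $(2) \Leftrightarrow (3)$ is essentially unpacking definitions, while the substantive step is $(3) \Rightarrow (1)$, where the transitivity clause in Def.~\ref{def:NW} must be verified from a much weaker hypothesis.

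For $(1) \Rightarrow (2)$, the fixed case is immediate with the trivial chain $z_1 = z_2 = x$. In the non-fixed case, the transitivity clause in Def.~\ref{def:NW} supplies, for each $\eps > 0$, linkable $\eps$-$\alpha\omega$chains $C_1 : x \to y$ and $C_2 : y \to x$. By the linkability condition, the final $F$-iterate of $C_1$ coincides with the initial non-jump point of $C_2$, so their concatenation is a single $\eps$-$\alpha\omega$chain from $x$ to $x$, with only the starting jump from $C_1$ and the ending jump from $C_2$ remaining.

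For $(2) \Leftrightarrow (3)$, unpack the chain condition: an $\eps$-$\alpha\omega$chain from $x$ to itself with $k \geq 2$ points amounts to a pair $z \in B_\eps(x)$, $F^{k-1}(z) \in B_\eps(x)$ with $k - 1 \geq 1$. Since open balls around $x$ form a neighborhood base, this is just a rephrasing of $(3)$ in both directions.

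For $(3) \Rightarrow (1)$, the fixed case is trivial. In the non-fixed case set $y = F(x) \neq x$. The key technical observation is that for every $\eps > 0$ there exist $\delta \leq \eps$ and an integer $t \geq 2$ with $F^t(B_\delta(x)) \cap B_\delta(x) \neq \emptyset$: otherwise, for some $\eps_0 > 0$ and all $\delta \leq \eps_0$ only $t = 1$ would work in $(3)$, producing $z_\delta \in B_\delta(x)$ with $F(z_\delta) \in B_\delta(x)$; letting $\delta \to 0$ and using continuity of $F$ forces $F(x) = x$, contradicting non-fixedness. Shrinking $\delta$ further via continuity of $F$ at $x$ so that also $d(F(z_0), y) < \eps$, one defines $C_1 : x \to y$ via the two-point chain $z_0, y$ and $C_2 : y \to x$ via the chain $F(z_0), F^2(z_0), \dots, F^{t-1}(z_0), x$. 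Both are $\eps$-$\alpha\omega$chains, and they are linkable at their common $F$-iterate $F(z_0)$, so $x \NWto y$, $y \NWto x$, and $\NW_F$ is transitive on this pair, establishing $(1)$.

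The main obstacle is the existence of $t \geq 2$ in the key lemma within $(3) \Rightarrow (1)$: it is precisely what gives room for a non-trivial $C_2$ and hence for the linkability, and so for verifying the transitivity clause of Def.~\ref{def:NW} from the weaker Birkhoff-style hypothesis. Everything else reduces to careful bookkeeping with the chain indices.
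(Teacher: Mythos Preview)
Your proof is correct and follows essentially the same route as the paper: both arguments take $y = F(x)$ in the non-fixed case and split a chain from $x$ to itself into a short piece $x \to y$ and a longer piece $y \to x$, then check linkability. Your version is in fact tidier than the paper's on two points: you make explicit the lemma that a return time $t \ge 2$ can always be arranged when $x$ is not fixed (the paper's proof of $2\Rightarrow 1$ tacitly assumes $N\ge 2$ without comment), and you are careful to take the endpoints of $C_1$ and $C_2$ to be literally $y$ and $x$, as required by the definition $z_k = y$ of an $\alpha\omega$chain.
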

\begin{proof}
    (a) Let us show that $1\implies 2$.
    Suppose first that $x$ is fixed. 
    Then, by continuity of $F^1$, for every $\eps>0$, there is a $\delta\in(0,\eps)$ such that $d(x,z)<\delta$ implies that $d(x,F^1(z))<\eps$.
    Hence, the sequence $z,F^1(z)$ is an $\eps$-$\alpha\omega$chain from $x$ to itself.
    Suppose now that $x$ is not fixed.
    Then, there is a $y\neq x$ such that, for every $\eps>0$, there is an $\eps$-$\alpha\omega$chain from $x$ to $y$ and one from $y$ to $x$ and these chains are linkable. 
    The concatenation of these two chains is an $\eps$-$\alpha\omega$chain from $x$ to itself.

    (b) Let us show that $2\implies 3$.
    Let $U$ be a neighborhood of $x$ and let $\eps>0$ so small that every point within $\eps>0$ from $x$ is in $U$. 
    By hypothesis, there is an $\eps$-$\alpha\omega$chain from $x$ to itself, namely there is $y\in U$ such that, for some $t>0$, $F^t(x)\in U$.
    Hence, $U\cap F^t(U)\neq\emptyset$.

    (c) The fact that $3\implies 2$ is essentially a tautology. 
    We will now show that $2\implies 1$.
    By continuity of $F^1$, for every $\eps>0$, there is a $\delta\in(0,\eps)$ such that $d(x,z)<\delta$ implies that $d(F^1(x),F^1(z))<\eps$.
    Hence, for any $\delta$-$\alpha\omega$chain $z,F^1(z),F^2(z),\dots F^N(z)$ from $x$ to itself, the sequence $z,F^1(z)$ is an $\eps$-$\alpha\omega$chain from $x$ to $F^1(x)$ and the sequence $F^1(z),F^2(z),\dots F^N(z)$ is an $\eps$-$\alpha\omega$chain from $F^1(x)$ to $x$.
%
\end{proof}
The fact that $x\NWto y$ and $y\NWto z$ are not enough to grant $x\NWto z$ is illustrated in the following elementary example.
%
\begin{example}
    Consider a circle with a fixed point $N$ and every other point that rotates clockwise and asymptotes to $N$.
    Then, for every $x\neq N$, we have that $x\NWto N$, since $x$ asymptotes forward to $N$, and $N\NWto x$, because $x$ asymptotes backward to $N$.
    On the other side, every $\eps$-$\alpha\omega$chain from $x$ cannot move beyond $N$ by more than $\eps$.
    Hence, for $\eps$ small enough, no such chain can reach $x$.
\end{example}

A particularly important type of $\NW_F$-equivalent sets is limit sets.
\begin{proposition}
    \label{prop:Om(x)}
    For every $x$, the set $\Omega_F(x)$ is $\NW_F$-equivalent.
\end{proposition}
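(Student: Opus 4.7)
The plan is to show, for every pair $y,z\in\Omega_F(x)$, that $y\NWeq z$ in the sense of Def.~\ref{def:NW}. Since both points are limit points of the same orbit, my strategy is to build the two required $\epsilon$-$\alpha\omega$chains (one from $y$ to $z$ and one from $z$ to $y$) out of a single long segment of the forward orbit of $x$, so that they automatically end and start at the same orbital point and are therefore linkable.

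Concretely, I fix sequences $t_n\to\infty$ and $s_n\to\infty$ with $F^{t_n}(x)\to y$ and $F^{s_n}(x)\to z$, which exist by definition of $\Omega_F(x)$. Given $\epsilon>0$, since both sequences diverge I can interleave them to pick indices $t_{n_1}<s_{m_1}<t_{n_2}$ whose corresponding iterates lie within $\epsilon$ of $y$, $z$, and $y$ respectively. Setting $c_1:=F^{t_{n_1}}(x)$ and $k:=s_{m_1}-t_{n_1}$, the orbit piece $c_1,F(c_1),\dots,F^k(c_1)$ is an $\epsilon$-$\alpha\omega$chain $C$ from $y$ to $z$: all intermediate jumps vanish, while the initial jump (from $y$ to $c_1$) and the terminal jump (from $F^k(c_1)=F^{s_{m_1}}(x)$ to $z$) each have size less than $\epsilon$. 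Taking $c'_1:=F^k(c_1)$ and repeating the construction on the orbit segment up to time $t_{n_2}$ produces an $\epsilon$-$\alpha\omega$chain $C'$ from $z$ to $y$ whose first point satisfies $c'_1=F^k(c_1)$, so $C$ and $C'$ are linkable in the sense of Def.~\ref{def:linkable}.

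I expect no significant obstacle: the argument is index bookkeeping plus the observation that orbit segments are automatically chains with no interior jumps. The only minor subtlety is the degenerate case $y=z$, which I would handle by choosing three times $t_{n_1}<t_{n_2}<t_{n_3}$ from the single sequence $\{t_n\}$ and running the same construction; this doubles as a proof that each point of $\Omega_F(x)$ is itself non-wandering. For continuous-time semi-flows, where Def.~\ref{def:linkable} is not formally stated in the paper, I would substitute the intrinsic characterization $\NW_F=\overline{\cO_F}$: the pairs $(F^{t_n}(x),F^{s_m}(x))\in\cO_F$ converge to $(y,z)$, placing $(y,z)\in\NW_F$, and an analogous interleaving argument produces the transitivity on the pairs $(y,z)$ and $(z,y)$.
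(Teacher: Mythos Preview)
Your proof is correct and follows essentially the same approach as the paper's: both exploit that a single forward orbit of $x$ visits every neighborhood of every point of $\Omega_F(x)$ infinitely often, so suitably chosen orbit segments furnish the required $\eps$-$\alpha\omega$chains, with linkability coming for free since consecutive segments share an endpoint. The paper compresses this into a one-line remark, whereas you spell out the index bookkeeping and the linkability verification explicitly.
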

\begin{proof}
  For every $y\in\Omega_F(x)$, the orbit $\cO_F(x)$ can be used to build, for every $\eps>0$, $\eps$-$\alpha\omega$chains from $y$
  \black
  to any other point in $y\in\Omega_F(x)$. Hence, $y\in NW_F$ and $\Omega_F(x)$ is $\NW_F$-equivalent.
\end{proof}
Recall that recurrent points and $\omega$-limit points are non-wandering (e.g. compare Proposition~\ref{prop:recurrent} with point (2) of the proposition above) but the converse is not true, as next examples illustrates.
%
%
\begin{example}[Not all non-wandering points are limit points]
    Consider the two flows depicted in Figure~\ref{fig:homoclinic}.
    In the left picture, the limits sets are the following: 
    the fixed points $f$ and $c$ and the critical loop $C=h\cup\{f\}$.
    As pointed out in the previous example, $NW_F=\{f,c\}\cup h$ and so every non-wandering point is a limit point.
    On the contrary, in the right picture, while $NW_F$ is the same, the limit sets are the following: the fixed points $f$ and $c$ and each periodic orbit inside $C$.
    Hence, each point of $h$ is non-wandering but not a limit point.
\end{example}

\subsection{Nodes and Spectral Decomposition of \EM$NW_F$.}
The $\NW_F$-equivalence induces a decomposition of $NW_F$ as follows.
\begin{definition}
    We call {\em nodes} of $NW_F$ \blue (or, equivalently, of $\cP_F$) \black the maximal $\NW_F$-equivalent subsets of $NW_F$.    
\end{definition}
\begin{example}
    \label{ex:8}
    Let $F$ be the flow displayed in Figure~\ref{fig:homoclinic}(left).
    Then $NW_F=\{f\}\cup h\cup\{c\}$ and there are two nodes: the critical loop $C=\{f\}\cup h$ and the fixed point $c$.
    Notice that, unlike in the logistic map case above,  node $C$ contains no dense orbit but rather is a limit set (of any non-fixed point inside $C$).
\end{example}
\begin{example}
    Let $G$ be the flow displayed in Figure~\ref{fig:homoclinic}(right).
    Then $NW_G=\{f,c\}\cup h\cup D$, where $D$ is the open disk that has $C$ as boundary, and there are
    uncountably many nodes: the critical loop $C=\{f\}\cup h$ and each periodic trajectory of $G$ (whose union is equal to $D$).
    Recall that $C$ contains no dense orbit and, in this case, it is not even a limit set.
\end{example}
\begin{example}
    Let $H$ be the restriction of either $F$ or $G$ above to their closed invariant set $C$. Then $NW_H=\{f\}$, so there is a single node. 
\end{example}
Notice that the example above shows also the well-known fact that the non-wandering set is not natural with respect to restrictions.
\blue
\begin{definition}
    We say that a semiflow $F$ with compact dynamics has {\em strong compact dynamics} if it has a compact global trapping region that is a neighborhood of $NW_F$.
\end{definition}
\black
\begin{proposition}
    \label{prop:NW}
    For any semi-flow $F$, 
    $NW_F$ satisfies the following:
    \begin{enumerate}
        \item for each $x\in NW_F$, $\cO_F(x)$ is a $\NW_F$-equivalent set; 
        \item $NW_F$ and each of its nodes are forward-invariant under $F$;
        \item $NW_F$ and each of its nodes are closed.
    \end{enumerate}
    Assume now that $F$ has compact dynamics. 
    Then $NW_F$ satisfies the following further properties:
    \begin{enumerate}
    \setcounter{enumi}{3}
        \item each node of $NW_F$ contains at least a recurrent point $x$ (and so the whole set $\Omega_F(x)$);
        \item each node of $NW_F$ contains a limit set and each limit set is contained in at least a node of $NW_F$;
    \end{enumerate}
    \blue
    Finally, if $F$ has strong compact dynamics and every $F^t$ is an open map,
    then $NW_F$ satisfies the following further property:
    \begin{enumerate}
    \setcounter{enumi}{5}
        \item \blue $NW_F$ and its nodes are $F$-invariant.
    \end{enumerate}
\end{proposition}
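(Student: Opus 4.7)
My plan is to verify the six assertions in order, using Prop.~\ref{prop:Down}, Lemma~\ref{lemma: weak transitivity}, Prop.~\ref{prop:recurrent pt} and Prop.~\ref{prop:Om(x)}. For (1), I distinguish $x$ periodic (in which case $\cO_F(x)$ is a periodic orbit and every pair of its points is $\NW_F$-equivalent via orbit segments) from $x$ non-periodic, where I would transport the linkable witness chain $x\to y\to x$ (provided by $x\in NW_F$) along the orbit: the forward direction $F^s(x)\Fto F^t(x)$ (for $s\le t$) is supplied by the orbit itself, and the backward direction $F^t(x)\NWto F^s(x)$ is obtained by concatenating a suitable translate of the witness loop with an orbit segment, invoking continuity of iterates and Lemma~\ref{lemma: weak transitivity}. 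Linkability persists because the construction concatenates only at endpoints.

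For (2), forward-invariance of $NW_F$ follows from forward-invariance of $\NW_F$ (Prop.~\ref{prop:Down}); for a node $N\ni z$ with any witness $x\in N$, (1) gives that $\cO_F(z)$ is $\NW_F$-equivalent to $x$, so $F^t(z)\in N$ by maximality. For (3), closedness of $NW_F$ uses the topological characterization from the preceding proposition (if $x_n\to x$ with $x_n\in NW_F$, every neighborhood $U$ of $x$ eventually contains some $x_n$ and thus witnesses $F^t(U)\cap U\ne\emptyset$). A node $N$ is closed because $\NW_F$ is closed and, for $y_n\to y$ with $y_n\in N$ and any $x\in N$, the linkable chains witnessing $y_n\NWeq x$ can be perturbed via $d(y_n,y)<\eps/2$ and the triangle inequality to linkable chains witnessing $y\NWeq x$.

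For (4), items (1) and (3) combine to give $\overline{\cO_F(x)}\subset N$, hence $\Omega_F(x)\subset N$, and Prop.~\ref{prop:recurrent pt} supplies a recurrent $r\in\Omega_F(x)\subset N$. For (5), the same recurrent $r\in N$ makes $\Omega_F(r)$ an $\NW_F$-equivalent set by Prop.~\ref{prop:Om(x)}, so maximality forces $\Omega_F(r)\subset N$; conversely, any non-empty $\Omega_F(y)$ is $\NW_F$-equivalent by Prop.~\ref{prop:Om(x)} and extends by Zorn's lemma to a maximal $\NW_F$-equivalent set. For (6), (2) already supplies $F(N)\subset N$; for $N\subset F(N)$, given $z\in N$ and an $\eps$-$\alpha\omega$chain $c_1,F(c_1),\ldots,F^{k-2}(c_1),z$ from $z$ to itself, the penultimate point $w_\eps=F^{k-2}(c_1)$ satisfies $d(F(w_\eps),z)\le\eps$; compact dynamics confines $\{w_\eps\}$ to a compact set, so along a subsequence $w_\eps\to w$ with $F(w)=z$, and the two halves of the chain yield linkable chains from $z$ to $w$ and back with linkage at $w$, giving $w\NWeq z$ and $w\in N$ by maximality. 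Since $NW_F=\bigcup_\alpha N_\alpha$, full invariance of $NW_F$ follows.

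The main obstacle is tracking linkability precisely in (1) and in the preimage construction of (6): the relations $\NWto$ pass to limits via closedness of $\NW_F$, but assembling linkable $\alpha\omega$chains from orbit segments, witness loops, and compactness arguments requires careful bookkeeping of the $\eps$-precision at each concatenation, together with uniform continuity of the iterates $F^t$ on the compact trapping region. The rest is a systematic layering of Prop.~\ref{prop:Down}, Lemma~\ref{lemma: weak transitivity}, Prop.~\ref{prop:recurrent pt} and Prop.~\ref{prop:Om(x)} with the maximality defining nodes.
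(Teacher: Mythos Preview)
Your plan is correct and follows essentially the same route as the paper's proof: (1) via continuity of iterates transporting the witness $\alpha\omega$loop along the orbit, (2) from (1), (4)--(5) from forward-invariance plus closedness plus Prop.~\ref{prop:recurrent pt} and Prop.~\ref{prop:Om(x)}, and (6) by extracting a preimage as a limit of penultimate chain points via compactness. Two minor remarks: your use of the topological characterization (``every neighborhood $U$ of $x$ eventually contains $x_n$'') for the closedness of $NW_F$ in (3) is cleaner than the paper's route, which instead passes the relation $F^1(x_n)\NWeq x_n$ to the limit; and in (6) your explicit verification that the two halves of the loop give linkable chains witnessing $w\NWeq z$ is more careful than the paper, which simply asserts $\bar y\in N$ from ``$N$ is forward invariant and closed'' (a reason that, taken literally, does not suffice --- your linkability argument is the correct justification).
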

\begin{proof}
    (1) 
    Since $x\in NW_F$, for every $\eps>0$ there is an $\eps$-$\alpha\omega$chain from $x$ to itself. 
    Let $y=F^k(x)$ for some $k\geq0$ and let $\eps>0$.
    Since $F^k$ is continuous, there are $\eta>0$ such that $d(x,c)<\eta$ implies $d(y,F^k(c))<\eps$ and $\eta'>0$ such that $d(x,c)<\eta'$ implies $d(y,F^k(c))<\eta$.
    Let $C$ be an $\eta'$-$\alpha\omega$chain from $x$ to itself with points $c,F(c),\dots,F^r(c)$.
    We can assume without loss of generality that $\eps\geq\eta\geq\eta'$.
    Then the chain $C'$ with points $F^k(c),\dots,F^r(c)$ is an $\eta$-$\alpha\omega$chain from $y$ to $x$ and $C''$ with points $F^k(c),\dots,F^{r+k}(c)$ is an $\eps$-$\alpha\omega$chain from $y$ to itself.
    Hence, $y\in NW_F$.
    Moreover, our argument shows that $y\NWto x$, so $x\NWeq y$.

    (2) This is an immediate consequence of (1).


 
    
    
    (3) Let $x_1,x_2,\dots$ be a sequence of points in $NW_F$ converging to some $\bar x\in X$.
    If $\bar x$ is fixed, the claim is trivially true.
    If $\bar x$ is not fixed, then we can assume without loss of generality that the sequence $x_i$ has no fixed points since,
    by continuity, the limit of fixed points would be fixed.
    Hence, for each $i$, $F^1(x_i)\neq x_i$ but, by Proposition~\ref{prop:Om(x)}, $F^1(x_i)\NWeq x_i$.
    Then, by the closure of $\NW_F$ and continuity of $F$, $F^1(\bar x)\NWeq\bar x$.
    Hence, $\bar x\in NW_F$.
    Now, suppose that all of the $x_i$ lie in a node $N$.
    Then $x_n\NWeq x_m$ for all $n,m$ and so, by the closure of $\NW_F$, $x_n\NWeq\bar x$ for all $n$, so $\bar x\in N$.
    Hence, both $NW_F$ and its nodes are closed.
    
    (4) This is an immediate consequence of point (2) together with Proposition~\ref{prop:recurrent pt}. 

    (5) By (4), each node contains at least a limit set. 
    By Proposition~\ref{prop:Om(x)}, every limit set is $\NW_F$-equivalent and so it is contained in a node.

    (6)
    \blue
    Let $N$ be a node and $x\in N$.
    Let $C_i=(c_i,F(c_i),\dots,F^{n_i}(c_i))$, $i=1,2,\dots$, be an $\eps_i$-$\alpha\omega$chain from $x$ to itself of length $n_i+1$, with $\eps_i\to 0^+$.
    Set $y_i = F^{n_i-1}(c_i)$.
    Then 
    $d(x,F(y_i))<\eps_i$, namely
    $F(y_i)\to x$ as $i\to\infty$.
    Since $Q$ is a forward-invariant neighborhood of $NW_F$, for $i$ large enough $C_i$ lies entirely in $Q$, which is compact, and so  we can assume without loss of generality that $y_i\to\bar y$.
    By continuity, $F(\bar y)=x$.
    
    We now prove that $\bar y\in NW_F$.
    First of all, notice that, since $c_i\to x$ and $F^{n_i-1}(c_i)\to\bar y$, we have that $x\NWto\bar y$.
    Moreover, since $F^1$ is an open map, for every $\eps>0$ there is a $\delta>0$ such that, for every $z\in N_\delta(\{x\})$, there is a $w\in N_\eps(\bar y)$ such that $F(w)=z$.
    Notice that we can assume without loss of generality that $\delta\leq\eps$.
    Now, fix an $\eps>0$, let $\delta>0$ be the $\delta$ mentioned above and let $(z_0,\dots,F^n(z_0))$ be a $\delta$-$\alpha\omega$chain from $x$ to $\bar y$.
    Then there is $w_0\in N_\eps(\{\bar y\})$ such that $F(w_0)=z_0$ and $(w_0,z_0,\dots,F^n(z_0))$ is an $\eps$-$\alpha\omega$chain from $\bar y$ to itself.
    Hence, $\bar y\in NW_F$.
    Moreover, it follows from our argument that $x$ and $\bar y$ are $\NW_F$-equivalent, so $\bar y\in N$.
    Hence, each node is invariant and so is the whole non-wandering set.
\end{proof}
\blue
Next two examples show that case 6 of the proposition above is sharp.
The first is an example of a semiflow $F$ on a two-dimensional non compact topological manifold with a non-invariant non-wandering set.
\begin{figure}
    \centering
    \includegraphics[width=0.5\linewidth]{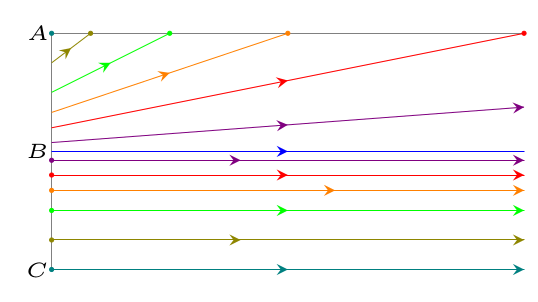}
    \caption{\blue{\bf A semiflow with a non-invariant non-wandering set.}
    The picture shows several orbits of a semiflow $F$ on the non compact space $X$ equal to the unbounded strip shown in figure where we identify points on the horizontal half-line $h$ passing through $A$ with points on the vertical segment $BC$ so that $A$ is identified with $C$ and points going to infinity on $h$ are identified with points going to $B$ on $BC$.
    Several orbits of $F$ are shown, each one painted in a different color.
    As the picture suggests, $\cap_{t\geq0}F^t(X)=\emptyset$, i.e. no subset of $X$ is $F$-invariant.
    The non-wandering set coincides with the blue orbit.
    }
    \label{fig: nw1}
\end{figure}
\begin{example}
    Let $X$ be the quotient of the rectangle $R=[1,\infty)\times [-1,1]$ under the identification of the halfline $[1,\infty)\times\{1\}$ with the segment $\{1\}\times(0,-1]$ given by $(x,1)\sim(1,-1/x)$.
    We define on $X$ a continuous-time semiflow 
    as follows.
    For each $p\in X$, there is a point $q=(1,y)$, $y\in[0,1]$, such that $p\in\cO_F(q)$.
    For $y\in[0,1]$,
    $$
    F^t(0,y) = 
    \begin{cases}
    (1+t)(1,y),& t\leq 1/y-1;\\
    (1+t,-y),& t > 1/y.
    \end{cases}
    $$
    The action of $F$ on any other point of $X$ can be obtained from the formula above using the fact that
    $F^s(p) = F^s(F^t(q)) = F^{s+t}(q)$.
    Several orbits are illustrated in Figure~\ref{fig: nw1}, where distinct orbits are painted in different colors.
    The reader can verify that 
    $$
    NW_F=[1,\infty)\times\{0\}
    $$ 
    and that 
    $$
    F^t(NW_F)=[1+t,\infty)\times\{0\}\neq NW_F
    $$
    for every $t>0$.
    Consider now the time-1 map $f=F^1$.
    Then $NW_f=\{1,2,\dots\}\times\{0\}$ and $f(NW_f)=\{2,3,\dots\}\times\{0\}$.
    The map $f$ is open but, thanks to the fact that $X$ is not compact, not necessarily each point of $NW_f$ has a preimage. 
    In this concrete case, the point $(1,0)$ has no preimage.
\end{example}
The example above can be slightly modified to a semiflow on a compact space with non invariant non-wandering set. 
\begin{figure}
    \centering
    \includegraphics[width=0.5\linewidth]{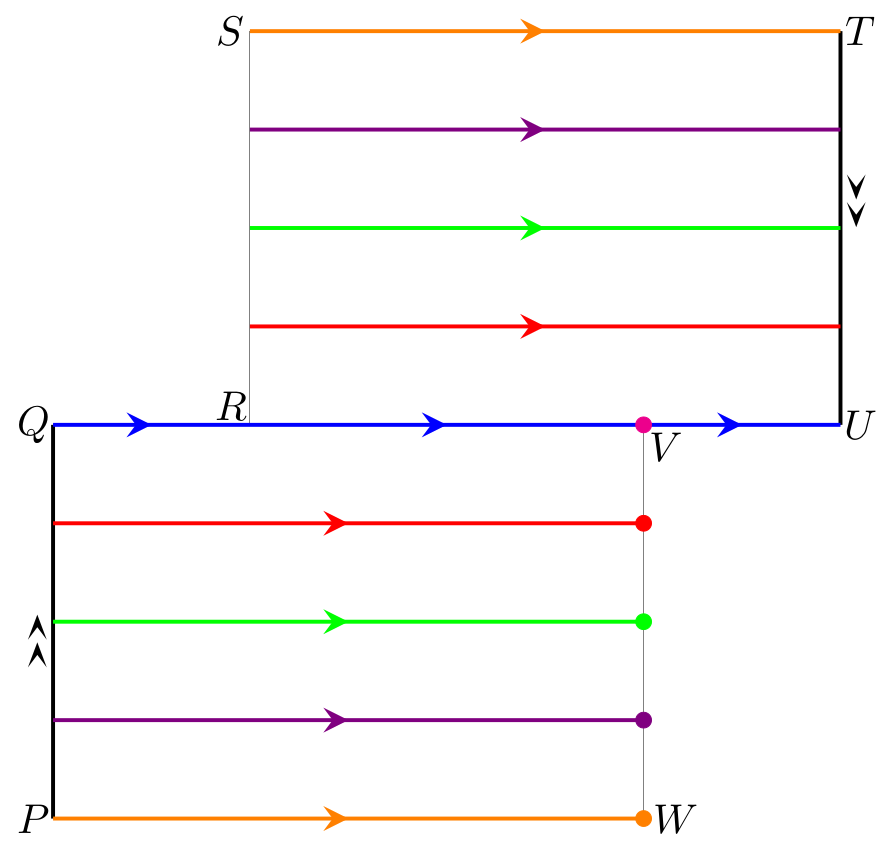}
    \caption{{\bf A semiflow with a non-invariant non-wandering set.}}
    \label{fig: nw3}
\end{figure}
\begin{example}
    Let $X$ be the Mobius strip realized as shown in Figure~\ref{fig: nw3}.
    We show several orbits of the continuous-time semiflow $F$ on $X$
    whose action on $X$ is defined as follows.
    Every orbit of $F$ is horizontal and each point, except for points on the blue one, lies on the orbit of a point on the $RS$ segment and each of these orbits asymptotes to a fixed point on the $VW$ segment. 
    The blue orbit has forward and backward limit sets equal to the magenta fixed point $V$.
    The only $F$-invariant sets of $X$ are the fixed points and the blue orbit.
    The reader can verify that each point in $RV$, including the endpoints of the segment, is non-wandering and that $NW_F=AE\cup EF$.
    Since $A$ is not fixed, $F^t(NW_F) = F^t(A)E\cup EF\neq NW_F$ for every $t>0$.
    Notice that no map $F^t$ is open at any point $p$ such that $F^t(p)=R$.
\end{example}
\black
\medskip
Since every point of $NW_F$ belongs to a maximal $\NW_F$-equivalent subset of $NW_F$ (i.e. to  a node), $NW_F$ decomposes naturally as the union of nodes.

The first result about decomposing $NW_F$ into ``elementary pieces'' goes back to Smale~\cite{Sma67}, that  proved that the non-wandering set of an Axiom-A diffeomorphism $f$ on a compact manifold is the finite union of disjoint closed invariant sets, on each of which $f$ is transitive (i.e. has a dense orbit).
Smale called this the {\em spectral decomposition} of $NW_f$.
%
%
%
Due to the presence of a dense orbit, clearly each set in the spectral decomposition is a $\NW$-equivalent set and so it is entirely contained in a single node of the non-wandering set.
In case of Axiom-A diffeomorphisms and all generalization of Smale's spectral decomposition known to the authors (maps on the interval~\cite{dMvS93}, homeomorphisms on compact~\cite{Akin93,AH94} and non-compact~\cite{DLRW13,KNY18}  metric spaces and on non-metrizable spaces~\cite{Oh22}), the spectral decomposition coincides with the decompostion of $NW_F$ into nodes of $\NW_F$.
%


Notice that, 
due to the fact that $\NW_F$ is not transitive,
non-wandering nodes are not, in general, disjoint from each other, as illustrated in next examples.

\begin{example}
    \label{ex: physical pendulum}
    Consider the physical pendulum system $q''=-\sin q$, whose phase-space portrait is shown in Figure~\ref{fig:physical pendulum}.
    Here $X$ is the $(q,p)$ plane and $F$ is the flow of the vector field $\eta=(p,-\sin q)$.
    In the figure, we labeled by $s_i$ the fixed saddle points, by $c_i$ the fixed centers and by $h_i$ (resp. $g_i$) the heteroclinic trajectory from $s_i$ to $s_{i+1}$ and lie above (resp. below) the center $c_i$.
    We denote by $D_i$ the open disc bounded by the critical loop $C_i=\{s_i\}\cup g_i\cup\{s_{i+1}\}\cup h_i$.
    In this case, 
    $$
    NW_F=\cup_i D_i\cup_i\{s_i\}\cup_i h_i\cup_i g_i.
    $$
    The reader can verify that the nodes are the following: each periodic trajectory plus each critical loop.
    Each pair of consecutive critical loop nodes $C_i,C_{i+1}$ has in common the saddle point $s_{i+1}$.
    Notice that each point of $h_i$ is upstream from all points of $C_i$ and all points of the $h_j$ with $j\geq i$ but the only $g_j$ it is upstream from is $g_i$.
    A corresponding property holds for the points of $g_i$.
\end{example}


    
%
\begin{example} 
    \label{ex:non transitive}
    Let $\mu_r\simeq 3.868...$ be the parameter value of the right endpoint of the period-3 window {\black of the logistic map's bifurcation diagram
    (see Figure~\ref{fig:per3})}.
    Then {\black the logistic map} $\ell_{\mu_r}$ has precisely three closed forward invariant indecomposable sets:
    the repelling fixed point $0$, a repelling Cantor set $C$ {\black (painted in red in Figure~\ref{fig:per3})} and an attracting period-3 cycle of intervals $A$.
    Each of these sets has an $F$-dense orbit and so is a node of $NW_{\ell_{\mu_r}}$, namely the spectral decomposition
    of the non-wandering set of $\ell_{\mu_r}$ is  $NW_{\ell_{\mu_r}}=\{0\}\cup C\cup A$. 
    
    The Cantor set and the attractor have in common a period-3 repelling periodic orbit $\gamma$.
    It is this non-empty intersection that makes $\NW_{\ell_{\mu_r}}$ non transitive.
    Take indeed a $x\in C\setminus A$, $y\in\gamma$ and $z\in A\setminus C$. 
    Then $(x,y)\in\NW_{\ell_{\mu_r}}$ and $(y,z)\in\NW_{\ell_{\mu_r}}$ but, for $\eps>0$ small enough, there is no
    $\eps$-$\alpha\omega$chain from $x$ to $z$ since any chain from $x$ to $z$ needs an intermediate jump to pass from $C$ to $A$.
    Hence, $(x,z)\not\in\NW_{\ell_{\mu_r}}$.
\end{example}
%
    %
%
\begin{figure}
    \centering
    \includegraphics[width=13cm]{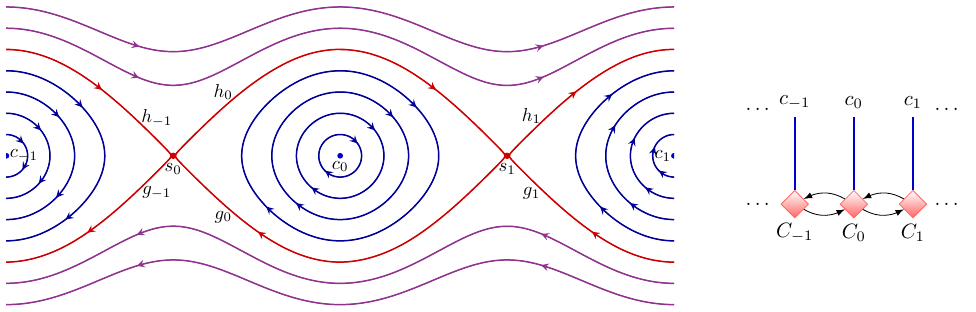}
    \\
    \phantom{a}
    \\
    \includegraphics[width=13cm]{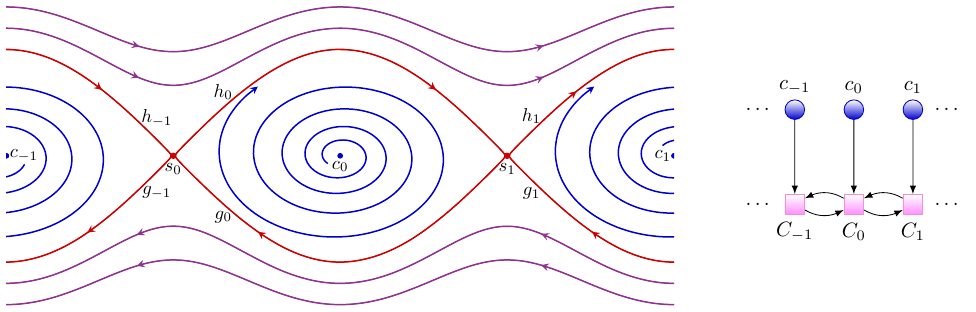}
    \caption{{\em Two examples of bidimensional dynamical systems and their prolongational graphs.} (Above) Phase-portrait of the physical pendulum on $\R$. In this case there are no attractors. Each periodic orbit (painted in blue) is a non-wandering node. Besides these nodes, the only other nodes are the boundaries $C_i$ of each disc foliated by concentric periodic orbits. For instance, $C_0$ is the disjoint union of the heteroclinic orbits $h_0$ and $g_0$ and the two saddle fixed points $s_0$ and $s_1$. Notice that this system, being Hamiltonian, has no attractor nor repellor and has a set of nodes with the power of continuum. The loops in the graph are due to the fact that each red node intersects its nearest neighborhoods. (Below) After replacing the periodic orbits with spirals spiralling outward, the fixed points $c_i$ become repelling and each interval of nodes above gets replaced by a repellor/attractor pair.}
    \label{fig:physical pendulum}
\end{figure}
%
\subsection{The Graph of \EM$\NW_F$} 
The qualitative properties of $\NW_F$ can be encoded into a graph as follows.
\begin{definition}
    The graph of $\NW_F$ is the directed graph   $\Gamma_{\NW_F}$ having the nodes of $NW_F$ as its vertices and such that there is an edge from node $N$ to node $M$ if and only if there is an $x\in N$ and a $y\in M$ such that $x\NWto y$.
\end{definition}
%

%
\begin{example}
  \label{ex:axiomA}
  In case of an Axiom-A diffeomorphism $F$, $\Gamma_{\NW_F}$ coincides with Smale's graph of $F$ described in the introduction.
\end{example}
%
%
\begin{example}
    \label{ex:circle}
    Consider a continuous-time semi-flow $F$ on the unit circle $C$ such that its north pole $N$ and  south pole $S$ are fixed while every other point rotates clockwise. 
    Then 
    $\Gamma_{\NW_F}$ 
    has two nodes, namely the fixed poles $N$ and $S$, and two edges, one from $N$ to $S$ and one from $S$ to $N$.
\end{example}
\begin{example}
    Extend the example above to a flow $\tilde F$ on the closed unit disc $D$ so that $\tilde F|_C=F$, the origin $c$ is fixed and every point in the interior of $D$ belongs to a bi-infinite spiral that asymptotes to $c$ for $t\to-\infty$ and to $C$ for $t\to\infty$. 
    Then $NW_{\tilde F}=\{c\}\cup C$ and there are two nodes: $C$ and $p$.
    The graph $\Gamma_{\NW_{\tilde F}}$ has a single edge, going from $p$ to $C$.
\end{example}
%
%
%
\begin{example}
    Consider the phase-space dynamics of the physical pendulum in Example~\ref{ex: physical pendulum}.
    In this case $\Gamma_{\tilde F}$ consists in countably many infinite segments, say $I_i=\{i\}\times[0,1]$, where $(i,0)$ represents the critical loop $C_i$ and each other point of $I_1$ represents a periodic orbit in the disc $D_i$. 
    There is a loop between each pair of nodes $(i,0)$ and $(i+1,0)$.
    As discussed in the example above, if we replace the dynamics in the $D_i$ by spirals, then there is a pair of nodes $C_i$, $c_i$ for every $i$, namely each $I_i$ gets replaced by a pair of nodes $C_i,c_i$ and an edge from one to the other (depending on whether the spiral runs outward or inward).
\end{example}
\begin{example}
  \label{ex:Klein}
    Let $F$ be the continuous-time semi-flow whose dynamics is sketched in Figure~\ref{fig:klein}(left).
    Here $X$ is the Klein bottle, seen as the unit square with boundary conditions:
    the opposite vertical sides of the square are glued after a twist, while the horizontal ones are glued straight.
    \black
    The set of fixed points of $F$ consists in all points painted in green plus the points $p_1$ and $p_2$.
    There is a homoclinic orbit based at the middle point of the segment of green fixed points, painted in teal.
    \black
    Each other orbit is heteroclinic, joining either $q$ with $p_1$ or $p_2$ to $r$ or two points of the green segment symmetric with respect to its center.
    As shown in the picture, all points on the horizontal segment between $p_1$ and $p_2$ are non-wandering but are not  limit-points nor limits of limit-points. 
    Hence, $NW_F$ is the union of the closed horizontal segment $G$ between $q$ and $r$ (painted in green) and of the closed horizontal segment $S$ between $p_1$ and $p_2$ (painted in red).
    While $S$ is a  node, each green fixed point is a node in itself. The prolongational graph of $F$ is shown in Figure~\ref{fig:klein}(right). The red saddle represents the node $S$.
    The central point $c$ of $G$ has a loop edge, due to the trajectory painted in teal in the left picture.
    Every other point of $G$ has an edge towards its symmetric with respect to $c$ and viceversa.
    Namely, for each such pair $x,y\in G$, we have that $x\NWto y$ and $y\NWto x$.
    Notice that, nevertheless, $x$ and $y$ do not belong to the same node because the chains from $x$ to $y$ and back from $y$ to $x$ are not linkable to each other.
\end{example}
\begin{figure}
    \centering
    \includegraphics[width=12cm]{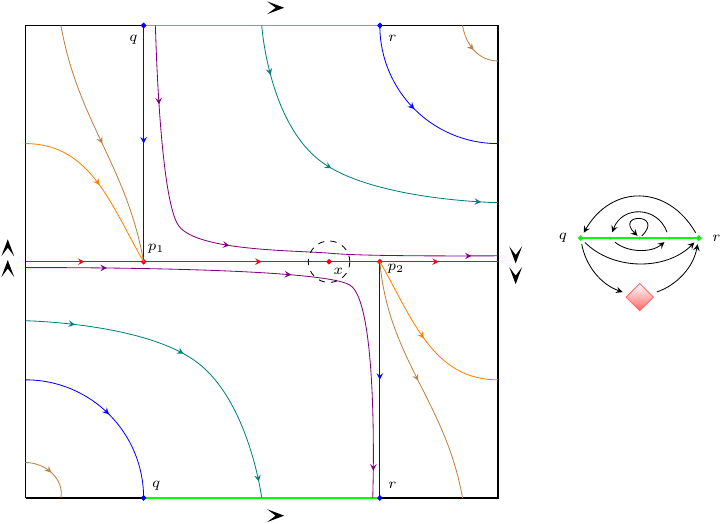}
    \caption{{\em A semi-flow with ``purely non-wandering'' points.}
      (Left) In the picture we show the dynamics of the semi-flow $F$ in Example~\ref{ex:Klein}, where $NW_F$ contains points that do not belong to the closure of the set of all limit points of $F$. 
      The phase space $X$ is the Klein bottle, the arrows next to the sides show the way the opposite sides are glued. 
      (Right) prolongational graph of $F$.
      The red saddle is the node of all points on the horizontal closed segment between $p_1$ and $p_2$.
      Each of the green fixed points is a node in itself.
      The point $c$ at the center of the green segment has a self-edge, each other point $x$ on it has an edge going to the point symmetric with respect
      to $c$ -- only two pairs of these edges are plotted.
    }
    \label{fig:klein}
\end{figure}
\begin{figure}
    \centering
    \includegraphics[width=10cm]{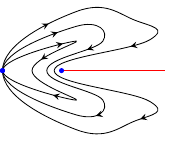}
    \caption{{\em A semi-flow with ``purely non-wandering'' edges.}
      \black
      (Left) In the picture we show the dynamics of a semi-flow with two fixed points $M$ and $N$ and where every other point lie on homoclinic trajectories
      based at $N$.
      In this case, there are non-wandering edges from $M$ to $N$ and viceversa but no bitrajectory between the two.
      (Right) Prolongational graph of $F$.
    }
    \label{fig:NWedges}
\end{figure}
\begin{example}
  \label{ex:NWedges}
    Let $F$ be the continuous-time semi-flow whose dynamics is sketched in Figure~\ref{fig:NWedges}.
    Here $X=\{(x,y):\;x\geq0\}\setminus\{(x,0):\;x\geq1\}$, $F$ has two fixed nodes $N=(0,0)$ and $M=(1,0)$ (painted in blue) and every other point with
    $x>0$ lies on a homoclinic orbit based at $N$.
    In this case, the two fixed points are the only nodes of $F$ and the graph of $F$ has {\black an edge from $M$ to $N$ and an edge from $N$ to $M$}.
\end{example}
\begin{definition}
  We call {\em bitrajectory} of $F$ a sequence of points $b=(\dots,b_{-1},b_0,b_1,\dots)$ such that $F(b_i)=b_{i+1}$ for every $i\in\bZ$.
  We denote by $\alpha(b)$ (resp. $\omega(b)$) the set of limit points of $b$ for $n\to-\infty$ (resp. $n\to\infty$).
  Given two nodes $M,N$, we say that $b$ runs from $M$ to $N$ if $\alpha(b)\subset M$ and $\omega(b)\subset N$.
\end{definition}
In Examples~\ref{ex:axiomA}--~\ref{ex:Klein}, when there is an edge from a node $M$ to a node $N$, there is a bitrajectory running from $M$ to $N$.
As Example~\ref{ex:NWedges} shows, though, this is not true in general.
Yet, next proposition shows that sometimes this is the case. 
\black
\begin{definition}
  \black
  Let $M,N$ be nodes of $\Gamma_{\NW_F}$ with $M\NWto N$, i.e. there is an edge from $M$ to $N$.
  We say $M$ and $N$ are {\EM adjacent} if, given a node $K$ with $M\NWto K$ and $K\NWto N$, either $K=M$ or $K=N$.
\end{definition}
\blue
\begin{lemma}
    \label{lemma: let's restrict}
    Let $Q$ be a global trapping region of $F$ which is a neighborhood of $NW_F$.
    Then the prolongational graph of $F$ coincides with the one of $F|_Q$.
\end{lemma}
\begin{proof}
    This is an immediate consequence of the fact that, since $Q$ is a forward-invariant neighborhood of $NW_F$, for $\eps>0$ small enough each $\eps$-$\alpha\omega$chain from $x\in NW_F$ to $y\in NW_F$ lies entirely in $Q$.
\end{proof}
\black
\begin{lemma}
  \label{lemma:NW adjacent}
  Let $M,N$ be non-wandering nodes of $F$. The following holds:
  \begin{enumerate}
    \item If there is a bitrajectory running from $M$ to $N$, then $M\NWto N$.
    \item \blue When $F$ has strong compact dynamics,\black\ the following inverse holds: if $M,N$ are adjacent,
      then there is a bitrajectory running from $M$ and $N$.
  \end{enumerate}
\end{lemma}
\begin{proof}
  (1) Via $b$, one can build, for every $\varepsilon>0$, a $\eps$-$\alpha\omega$chain from $M$ to $N$. Hence, $M\NWto N$.

  (2) 
  \blue
  We restrict $F$ to a compact global trapping region $Q$ which is a neighborhood of $NW_F$.
  \black  
  Assume first that $M\cap N\neq\emptyset$ and let $x\in M\cap N$. Then $\Omega_F(x)\subset M\cap N\subset N$.
  Since non-wandering nodes of $F$ are fully $F$-invariant, $x$ has a backward trajectory inside $M$.
  Hence, there is a bitrajectory $b$ with $\alpha(b)\subset M$ and $\omega(b)\subset N$.

  Assume now that $M$ and $N$ are disjoint. Since $Q$ is compact, $d(M,N)>0$.
  We claim that, for every $t>0$, there exist disjoint open sets $U\supset M$ and $V\supset N$ such that $F^t(U)\cap V=\emptyset$.
  If it were not so, for every $\eps>0$ there would be a $x$ with $d(x,M)<\eps$ such that $d(F^t(x),N)<\eps$.
  By taking $\eps_n=1/n$, we can build sequences $x_n$ such that $d(x_n,M)<\eps_n$ and $d(F^{t}(x_n),N)<\eps_n$.
  We can assume without loss of generality that $x_n\to y$. Clearly $y\in M$, so that $F^t(y)\in M$.
  Hence, by continuity, we should have $d(F^{t}(y),N)=0$, which is not possible since $d(M,N)>0$.

  An important consequence of the existence of such $U$ and $V$ is that every $\eps$-$\alpha\omega$chain has at least a point that does not
  belong to $U\cup V$. Take again $\eps_n=1/n$ and let $C_n$ be $\eps_n$-$\alpha\omega$chains from $M$ to $N$.
  Since $Q$ is compact, the points of these chains have at least an accumulation point $z$ not belonging to $U\cup V$.
  Since $z$ is limit of a sequence of points whose orbit passes within $\eps_n$ from $N$, by continuity $\Omega_F(z)\subset N$.

  Now, denote by $E$ the set of all accumulation points of all $\eps$-$\alpha\omega$chains from $M$ to $N$ for all $\eps>0$.
  Then $E$ is $F$-invariant.   
%
%
  Indeed, let $x_i\to z$, with $x_i\in C_i$. Then, for each $t>0$ and $x_i\in C_i$, there is a predecessor $y_i$ in $C_i$ such that $F^t(y_i)=x_i$.
  We can assume without loss of generality that $y_i$ converge to some $w$ in $E$, so that $F^t(w)=z$.

  Hence, for every $z\in E$ we can build a backward trajectory of $z$ in $E$ and therefore a bitrajectory $b$ based at $z$.
  Let $K$ be the node such that $\alpha(b)\subset K$. By construction, each point $x\in K$ is a limit of points belonging to $\eps$-$\alpha\omega$chains
  from $M$ to $N$ for $\eps\to0$. Hence, we can break each such chain into a $\eps$-$\alpha\omega$chain from $M$ to $K$ and another $\eps$-$\alpha\omega$chain
  from $K$ to $N$, so that $M\NWto K$ and $K\NWto N$. Since $M$ and $N$ are adjacent, this means that $K=M$. Hence, $b$ is a bitrajectory that runs from $M$ to $N$.
\end{proof}
\black
We conclude this subsection with the first of the main results of this work.
\begin{definition}
    A prolongational graph $\Gamma_{\NW_F}$ is {\em connected} if, whenever $NW_F=C_1\cup C_2$ with $C_1$ and $C_2$ closed and disjoint sets forward-invariant under $F$, there is an edge from a node of $C_1$ to a node of $C_2$ or viceversa.
\end{definition}
Notice that, since $C_1$ is $F$-forward-invariant, if a point of a node lies in $C_1$ then the whole node does, and similarly for $C_2$.
\begin{theorem}
    \label{thm:NW}
    Assume that $F$ has a connected global trapping region $C$ that is a neighborhood of $NW_F$.
    Then, if $F$ has compact dynamics, $\Gamma_{\NW_F}$ is a connected graph.
\end{theorem}
Notice that $C$ does not need to be globally attracting.
\begin{proof}
    \blue
    By Lemma~\ref{lemma: let's restrict}, the restriction of $F$ to $C$ has the same prolongational graph than $F$.
    Hence, we can assume without loss of generality that $X$ is connected.
    We denote by $Q$ a compact global trapping region of $F$.
    \black
    Let $C_1,C_2$ be two $F$-forward-invariant, closed and disjoint sets such that $NW_F=C_1\cup C_2$.
    Since $NW_F$ and all of its nodes are closed and $NW_F\subset Q$, then they are also compact.
    Hence, by Proposition~\ref{prop:Down(compact)} (and the comment underneath), $B_1=\Up_{\NW_F}(C_1)$ and $B_2=\Up_{\NW_F}(C_2)$ are closed.
    {\black Since $Q$ is globally attracting, every $x\in X$ has a non-empty limit set $\Om_F(x)\subset Q$.
      By case (5) of Proposition~\ref{prop:NW}, $\Om_F(x)\subset NW_F$ and therefore every $x$ is upstream from some non-wandering node.
      Hence, $B_1\cup B_2=X$.}
    Since $X$ is connected, there must be a point $x_0\in B_1\cap B_2$.
    Therefore, there exist points $x_i\in C_i$, $i=1,2$, such that $x_0\NWto x_i$.

    \black
    Denote by $N_i\subset C_i$ the node containing $x_i$, $i=1,2$ and by $N_0$ the node containing $\Omega_F(x_0)$. 
    By case (3) of Proposition~\ref{prop:Down}, either $x_i\in\cO_F(x_0)$ or $\Omega_F(x_0)\NWto x_i$, $i=1,2$.
    In the first case, by case (1) of Proposition~\ref{prop:NW}, $\cO_F(x_i)$ is a $\NW_F$-equivalent set.
    Hence $\cO_F(x_i)\subset\Omega_F(x_i)=\Omega_F(x_0)$, so that $x_i\in\Omega_F(x_0)$.
    In any case, therefore, we have that $\Omega_F(x_0)\NWto x_i$, $i=1,2$.
    \black
    Then, 
    $
    N_0\NWto N_1\subset C_1\text{ and }N_0\NWto N_2\subset C_2.
    $
    Since $N_0$ belongs to either $C_1$ or $C_2$, this means that there is either an edge from $C_1$ to $C_2$ or viceversa.
    Hence, $\Gamma_{\NW_F}$ is connected.
\end{proof}

\subsection{The prolongational graph of T-unimodal maps}
\label{sec:T-unimodal}
In this final subsection we discuss in some detail the graph of some special classes of unimodal maps.
While doing so, we sharpen the main result in~\cite{DLY20} in the context of the prolongational relation. 
\begin{definition}
    \label{def:T-unimodal}
    A continuous map $f:[a,b]\to[a,b]$ is {\em unimodal} if there exists a unique point $c\in(a,b)$ such that $f$ is monotonically increasing (resp. decreasing) in $[a,c]$ and monotonically decreasing
    (resp. increasing) in $[c,b]$.
    Point $c$ is called the {\em critical point} of $f$.
    We say that an attracting periodic orbit $\gamma$ of $f$ is {\em non-topological} if $c$ is not in the basin of attraction of $\gamma$.
    \black
    We say that an interval $J\subset[a,b]$ is a {\em wandering interval} if $f^m(J)\cap f^n(J)=\emptyset$ for every $n>m\geq0$ and $f^n(J)$ does not lie in the basin of a periodic orbit.
    \black
    We say that $f$ is {\em T-unimodal} if $f$ has no wandering intervals and no non-topological attracting periodic orbits. 
\end{definition}
\black
The ``T'' in {\em T-unimodal} alludes to the fact that the graph of all these unimodal maps is a tower (see Definition~\ref{def:tower}).
T-unimodal maps include the tent map family~\cite{ADL23} as well as every S-unimodal map~\cite{DLY20,DL22}.
See Example~5.5 in~\cite{ADL23} for an example of a non-trivial T-unimodal map that is not topologically conjugate neither to a tent map neither to a S-unimodal map. 
\black

The theorem below summarizes the most relevant well-known results on invariant sets of unimodal maps.
%
\begin{theoremX}[Gukenheimer, 1979~\cite{Guc79}, Jonker and Rand, 1980~\cite{JR80}]
  \label{thm:JonkerRand}
  Let $f$ be a unimodal map with critical point $c$. 

  \noindent 
  The repellors of $f$ can be only of the following two types:
  \begin{enumerate}
    \item a periodic orbit;
    \item a Cantor set on which $f$ acts as a subshift of finite type with a dense orbit.
  \end{enumerate}
  
  \noindent
  The attractors of $f$ can be only of the following three types:
  \begin{enumerate}
      \item a periodic orbit;
      \item a cycle of intervals containing $c$; 
      \item a Cantor set on which $f$ acts as an adding machine. 
  \end{enumerate}
  The map $f$ is transitive, i.e. has a dense orbit,  in each of the repellors and attractors types above. 
  Attractors of type 2 and 3 contain $c$.
  If there is no attractor of type 2 or 3, then one of the attracting periodic orbits has $c$ in its basin of attraction.
  When the attractor is of type 3, and only in this case, $NW_f$ has (countably) infinitely many nodes, whose unique accumulation point is the attractor.
\end{theoremX}
For a thorough discussion and proofs of the statement above, see also Thm~4.1 and~4.2 in~\cite{dMvS93}.

Theorem~\ref{thm:JonkerRand} has the following two fundamental consequences.
\begin{corollary}
    Every T-unimodal map $f$ has a unique attractor.
\end{corollary}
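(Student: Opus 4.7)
The plan is to leverage Theorem B together with the two T-unimodality hypotheses to show that every attractor must be ``anchored'' at the critical point $c$ in a way that prevents coexistence of two distinct attractors. Throughout, I shall use the standard facts that distinct attractors of a dynamical system are disjoint closed forward-invariant sets with disjoint basins of attraction, and that a single point (here $c$) has a single forward orbit and therefore lies in the basin of at most one attractor.

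First, I would dispose of the mixed case. Suppose $f$ has an attractor $A$ of type 2 (cycle of intervals containing $c$) or type 3 (Cantor adding machine). By Theorem~B, $c \in A$, and since $A$ is forward-invariant, the entire forward orbit $\cO_f(c)$ lies in $A$. Now assume, for contradiction, that $f$ possesses some other attractor $A'$. If $A'$ is an attracting periodic orbit, then by T-unimodality it is topological, so $c$ lies in the basin of $A'$. But then the forward orbit of $c$ must accumulate on $A'$, contradicting $\cO_f(c) \subset A$ and disjointness $A \cap A' = \emptyset$. If $A'$ is itself of type 2 or 3, then $c \in A'$ as well, again contradicting $A \cap A' = \emptyset$. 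Hence $A$ is the unique attractor in this case.

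Next I would handle the purely periodic case. Suppose $f$ has no attractor of type 2 or 3. Then by the final part of Theorem~B, there exists at least one attracting periodic orbit $\gamma$ with $c$ in its basin of attraction. Suppose there were another attracting periodic orbit $\gamma' \neq \gamma$. By T-unimodality, $\gamma'$ is also topological, so $c$ lies in the basin of $\gamma'$ too. This contradicts the fact that the basins of distinct attractors are disjoint (the forward orbit of $c$ can converge to at most one of them). Hence $\gamma$ is the unique attractor.

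The two cases are exhaustive by Theorem~B, so $f$ has exactly one attractor. The key observation making the argument work — and the only point requiring care — is that T-unimodality forces \emph{every} attracting periodic orbit to capture $c$ in its basin; without this, one could have coexisting attracting periodic orbits, each grabbing a different open set of initial conditions. The hypothesis of no wandering intervals is used implicitly through Theorem~B, which classifies the invariant sets under this assumption. I expect no substantive obstacle; the proof is essentially a bookkeeping argument once Theorem~B is in hand.
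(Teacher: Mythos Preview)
Your proof is correct and follows essentially the same approach as the paper: both argue that, by T-unimodality, the critical point $c$ must lie in the basin of \emph{every} attractor (either because the attractor contains $c$ outright, for types 2 and 3, or because every attracting periodic orbit is topological), and a point can belong to at most one basin. The paper compresses this into two lines while you spell out the case analysis; one minor quibble is that the no-wandering-intervals hypothesis is not actually used here---Theorem~B is stated for arbitrary unimodal maps---so only the ``no non-topological attracting periodic orbits'' half of T-unimodality is doing the work.
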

\begin{proof}
    By definition, $f$ has no non-topological attracting periodic orbit.
    Hence, by Theorem~\ref{thm:JonkerRand}, $c$ must be in the basin of attraction of the attractor, so there can be only one attractor.
\end{proof}
\begin{corollary}
    The spectral decomposition of $NW_{\ell_\mu}$ in Theorem~\ref{thm:JonkerRand} coincides with the decomposition of $NW_{\ell_\mu}$ into nodes of the prolongational relation $\NW_{\ell_\mu}$.
    In other words, every node of $\NW_{\ell_\mu}$ is of one of the five types listed in Theorem~\ref{thm:JonkerRand}.
\end{corollary}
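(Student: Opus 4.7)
The plan is to establish the corollary by proving two mutual containments: that each spectral piece of $NW_{\ell_\mu}$ sits inside a single node of $\NW_{\ell_\mu}$, and conversely that each node sits inside a single spectral piece. Both spectral pieces and nodes cover $NW_{\ell_\mu}$, so the two decompositions must then coincide.

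For the first containment, let $S$ be any piece of the spectral decomposition. By Theorem~B, $\ell_\mu|_S$ is topologically transitive, so there is a point $z\in S$ with $\cO_{\ell_\mu}(z)$ dense in $S$. Given $x,y\in S$ and $\eps>0$, we first fix $\eta\in(0,\eps)$ small enough that $\eta$-perturbations of orbit points remain $\eps$-close after one iterate, and then use density to select indices $n_1<m_1<n_2$ with $d(F^{n_1}(z),x)<\eta$, $d(F^{m_1}(z),y)<\eta$, and $d(F^{n_2}(z),x)<\eta$. The orbit segments $F^{n_1}(z),\dots,F^{m_1}(z)$ and $F^{m_1}(z),\dots,F^{n_2}(z)$ are linkable $\eps$-$\alpha\omega$chains from $x$ to $y$ and back, so $x\NWeq y$ in the sense of Definition~\ref{def:NW}. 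Hence $S$ is $\NW_{\ell_\mu}$-equivalent and is contained in a single node $N(S)$.

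For the second containment, let $N$ be a node of $\NW_{\ell_\mu}$. By Proposition~\ref{prop:NW}(4) it contains a recurrent point $x$, which lies in $NW_{\ell_\mu}$ and therefore inside some spectral piece $S_i$; by the first containment we have $S_i\subseteq N$. It remains to show $N\subseteq S_i$. Suppose, for contradiction, that some $y\in N$ lies in a different spectral piece $S_j$. Then $x\NWeq y$. By the previous corollary at most one spectral piece is an attractor, call it $A$; the rest are repellors of one of the two types of Theorem~B. We rule out the equivalence $x\NWeq y$ case by case:
\begin{enumerate}
\item If $S_j=A$ (so $S_i$ is a repellor), pick a forward-invariant open neighborhood $V$ of $A$ with $\bigcap_{n\ge0} \ell_\mu^n(V)=A$, chosen small enough that $V$ is disjoint from $S_i$ outside the (at most finite) shared repelling periodic orbits $S_i\cap A$. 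For $y\in A\setminus S_i$ and $\eps$ smaller than $d(x, V)$, any orbit starting $\eps$-close to $y$ remains in $V$ for all time, so no $\eps$-$\alpha\omega$chain ends within $\eps$ of $x$; the symmetric role $S_i=A$ works identically.
\item If both $S_i,S_j$ are repellors, Theorem~B forces one of them to be $\{0\}$ and the other to be a Cantor subshift $C\subset(0,1)$. A neighborhood argument shows orbits starting near $y\in C$ move uniformly away from both $C$ and $0$ toward the attractor, so for $\eps$ small no $\eps$-$\alpha\omega$chain can reach $0$; the stable set of $0$ is the countable set of preimages of $\{0,1\}$, which does not accumulate on $C$.
\end{enumerate}
In every case this contradicts $x\NWeq y$, so $N=S_i$, completing the proof.

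The main obstacle I anticipate is the repellor-repellor case: the attractor case follows directly from uniform attractivity, but separating two distinct repellors requires invoking the fine combinatorial structure of T-unimodal maps from Theorem~B, in particular that the stable set of $0$ under $\ell_\mu$ is the countable grand preimage of $\{0,1\}$ and therefore cannot accumulate on a Cantor repellor. Once this local expansion/separation is in hand, the rest is formal.
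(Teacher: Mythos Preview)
Your first containment --- each spectral piece carries a dense orbit, hence is $\NW_{\ell_\mu}$-equivalent --- is correct and is in fact the paper's entire argument: it simply observes that each invariant set listed in Theorem~B has a dense orbit and concludes it is a node, leaving maximality to the spectral-decomposition literature for interval maps.

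Your reverse containment, however, has genuine gaps. The step ``by the first containment we have $S_i\subseteq N$'' does not follow: you showed $S_i$ lies in \emph{some} node, not in the particular node $N$ you started from, and since $\NWeq$ is not transitive (this is precisely the point of Example~\ref{ex:non transitive}) a shared point $x\in N\cap S_i$ does not force $N\cup S_i$ to be $\NW$-equivalent, so maximality of $N$ gives nothing. Concretely, at the right endpoint of the period-3 window, with $N=A$ and $x$ on the shared period-3 orbit $\gamma\subset C\cap A$, nothing prevents the choice $S_i=C$, and $C\not\subseteq A$. The same overlap breaks your Case~1: if $x\in S_i\cap A$ then $x\in A\subseteq V$ and the bound $\eps<d(x,V)$ is vacuous. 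Finally, Case~2 rests on a misreading of Theorem~B: the theorem does \emph{not} say the only repellors are $\{0\}$ and a single Cantor subshift. A T-unimodal map can have arbitrarily many repelling periodic orbits (all the period-$2^k$ orbits at the Feigenbaum parameter, for instance) and several Cantor repellors (one per nested window), and your argument offers no mechanism to separate two such repellors from one another.
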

\begin{proof}
    Each of the invariant non-wandering sets in Theorem~\ref{thm:JonkerRand} has a dense orbit.
    Hence, each of them is a node of $\NW_{\ell_\mu}$.
\end{proof}

In~\cite{DLY20} we studied, in case of the logistic map family $\ell_\mu$, the graph $\Gamma_{\ell_\mu}$
defined below (see also~\cite{DL22} for more general statements on S-unimodal maps). 
We did not give a particular name to it, so in this article we use the name given to it by Mizin.
%
\begin{definition}[Mizin, 2002~\cite{Miz02}; De Leo \& Yorke, 2020~\cite{DLY20}]
  \label{def:graph}
  Given a map $f:X\to X$, the {\em structural graph} of $f$ is the graph $\Gamma_f$ whose nodes are the chain-recurrent nodes of $f$, namely sets $N$ of points $x$ such that,
  for each $\eps>0$, there are $\eps$-chains from $x$ to each other point of $N$ (for more details on chain-recurrence see Section~\ref{sec:CR}).
  There is an edge from node $M$ to node $N$ if and only if there is a bi-infinite trajectory $b$ such that $\alpha(b)\subset M$ and $\omega(b)\subset N$.    
\end{definition}
Notice that $\eps$-$\alpha\omega$chains are $\eps$-chains and so each non-wandering point is also chain-recurrent and each non-wandering node is contained in a chain-recurrent node.

\begin{definition}
  \label{def:tower}
    %
    A directed graph $\Gamma$ is a {\em tower} if it has no loops and there is an edge between any two nodes. 
\end{definition}
The following is our main result on CR graphs of the logistic map.
\begin{theoremX}[De Leo, Yorke 2021~\cite{DLY20}]
  \label{thm:tower}
    The structural graph of the logistic map $\ell_\mu$ is a tower for each $\mu\in[0,4]$.
\end{theoremX}
In other words, chain-recurrent nodes of $\ell_\mu$ can be sorted in a linear order $N_0,N_1,\dots,N_p$ (where $p$ can be infinite) so that there is an edge from $N_i$ to $N_j$ if and only if $j>i$.
In this ordering, node $N_0$ is always the repelling fixed point 0 and $N_p$ is always the attractor.
Notice that the cases $\mu\leq2$ and $\mu=4$ are trivial because, for those values, there is a single node (the fixed point $0$ for $\mu\leq2$ and the whole interval $[0,1]$ for $\mu=4)$.

Below we translate these result in terms of properties of $\Gamma_{\NW_{\ell_\mu}}$:

\noindent
{\em 1) Nodes.}
In~\cite{DLY20}, we provided the classification below of chain-recurrent nodes of the logistic map family.
Notice that by ``periodic window'' we indicate the maximal range of $\mu$ for which exists a given non-wandering repelling Cantor set (see Figure~\ref{fig:per3} for a picture of the logistic map family's period-3 window).
\begin{theoremX}[De Leo, Yorke 2021~\cite{DLY20}]
  \label{thm:attracting nodes}
    Let $\ell_\mu$ be a logistic map.
    Then:
    \begin{enumerate}
        \item If $\mu$ is not an end-point of a periodic window of the logistic map family, then its chain-recurrent nodes coincide with its non-wandering nodes.
        \item If $\mu$ is the left end-point of a periodic window, then the (one-sided) attractor is a periodic orbit belonging to a repelling Cantor set.
          The attracting chain-recurrent node is equal to this Cantor set.
        \item If $\mu$ is the right end-point of a periodic window, then the (one-sided) attractor is a cycle of intervals having a non-empty intersection with a repelling Cantor set. In this case, the chain-recurrent node contains the attractor, part of its basin of attraction and the repelling Cantor set.
 \end{enumerate}
\end{theoremX}

\begin{figure}
    \centering
    \includegraphics[width=13cm]{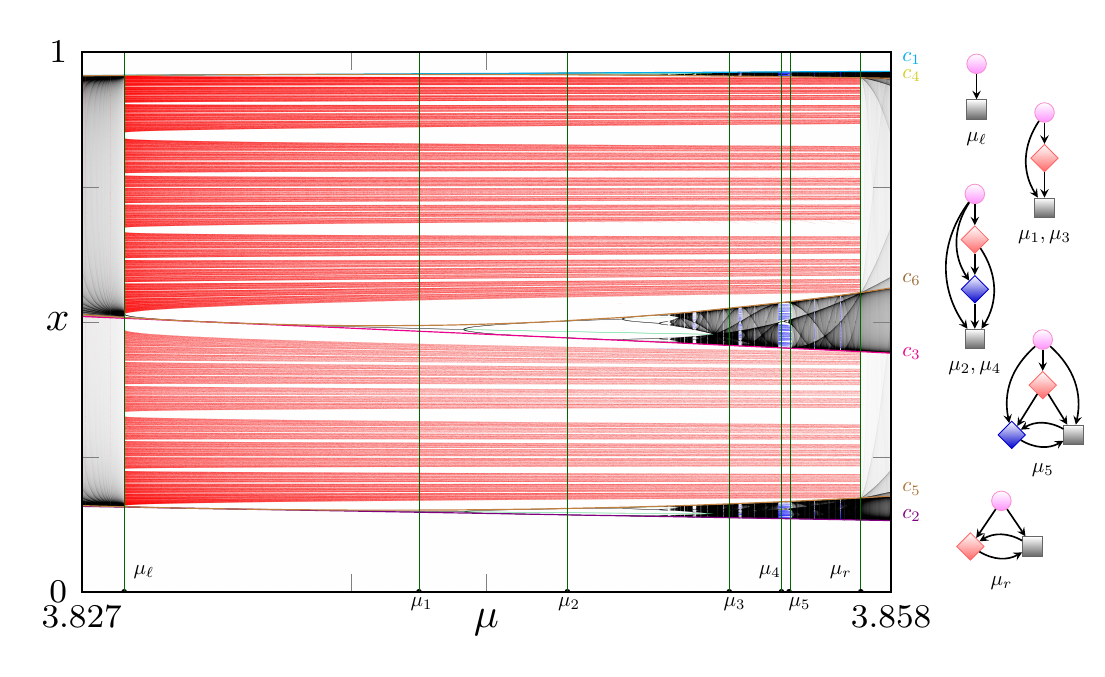}
    \caption{{\em Prolongational graphs of the logistic map for seven parameter values within the period-3 window.}
    The graph corresponding to each of the seven parameters marked on the bifurcation diagram is shown at the right of the diagram. 
    Circles represent repelling non-wandering nodes, diamonds represent saddle ones and squares represent attractors.
    Each node, excepts for the repellor (the fixed point $x=0$), is painted in the same color it appears with in the bifurcation diagram.
    The curves $c_1,\dots,c_6$ are the first 6 iterates of the critical point.
    This picture is discussed in detail at the bottom of Section~\ref{sec:T-unimodal}.
    }
    \label{fig:per3}
\end{figure}

\noindent
    {\em 2) Edges.} As shown in next proposition, when $X$ is compact, as is the case for the logistic map family, \black there is a strict relation between \black the edges
    of the structural graph 
    and the edges of the prolongational graph.
 \begin{proposition}
     Let $\Gamma_F$ be the structural graph of a map $F$, let $M,N$ be distinct nodes of $\Gamma_F$, each of which contains a \black single \black non-wandering node $\hat M,\hat N$. Then:
     \begin{enumerate}
         \item If there is an edge \black in $\Gamma_F$ \black from $M$ to $N$, then there is an edge \black in $\Gamma_{\NW_F}$ \black from $\hat M$ to $\hat N$.
         \item \black Assume that $X$ is compact and let $\hat M,\hat N$ be adjacent.
           Then there is an edge in $\Gamma_F$ from $M$ to $N$.
     \end{enumerate}
 \end{proposition}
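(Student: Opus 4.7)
For part (1), the plan is to extract the $\NW_F$-edge directly from a bi-trajectory $b$ witnessing the structural edge from $M$ to $N$, using only the definitions. A short observation establishes $\alpha(b)\cup\omega(b)\subset NW_F$: for any $p\in\alpha(b)$ and $\eps>0$, pick backward indices $-k<-k'$ with $b_{-k},b_{-k'}$ both within $\eps$ of $p$, and note that $b_{-k},\dots,b_{-k'}$ is then an $\eps$-$\alpha\omega$chain from $p$ to itself. The uniqueness hypothesis thus forces $\alpha(b)\subset M\cap NW_F=\hat M$ and $\omega(b)\subset N\cap NW_F=\hat N$. Picking $x\in\alpha(b)$ and $y\in\omega(b)$, for every $\eps>0$ we find $k,m\geq 0$ with $d(b_{-k},x)\leq\eps$ and $d(b_m,y)\leq\eps$; the orbit segment $b_{-k},b_{-k+1},\dots,b_m$ is then an $\eps$-$\alpha\omega$chain from $x$ to $y$, so $x\NWto y$, giving the desired non-wandering edge.

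For part (2), assume $X$ is compact and fix $x\in\hat M$, $y\in\hat N$ with $x\NWto y$. The plan is to produce a bi-trajectory $b$ with $\alpha(b)\subset M$ and $\omega(b)\subset N$ as a subsequential product-topology limit in $X^{\bZ}$ of long pseudo-orbits. For each $n$, take a $(1/n)$-$\alpha\omega$chain $c^{(n)},F(c^{(n)}),\dots,F^{k_n}(c^{(n)})$ from $x$ to $y$; since $y\notin\cO_F(x)$ (else $y$ would sit in the forward-invariant $M$, contradicting $M\cap N=\emptyset$), we have $k_n\to\infty$. Prepend a $(1/n)$-chain of length $T_n\to\infty$ lying in a $(1/n)$-neighborhood of $M$ (available because $M$ is chain-recurrent, so long $\eps$-chains from points near $M$ back to $x$ exist and can be chosen close to $M$) and symmetrically append a $(1/n)$-chain of length $U_n\to\infty$ in a $(1/n)$-neighborhood of $N$. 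Shift indices so that $c^{(n)}$ sits at index $0$, then extract a coordinate-wise subsequential limit $b\in X^{\bZ}$ using compactness of $X^{\bZ}$; continuity forces $F(b_{t-1})=b_t$ for every $t$, so $b$ is a bi-trajectory passing through $x$ at index $0$.

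The principal obstacle, which is also the main technical step, is verifying $\alpha(b)\subset M$ and $\omega(b)\subset N$. By construction, for each sufficiently negative $t$ we have $b^{(n)}_t$ inside a $(1/n)$-neighborhood of $M$ for all large $n$, so by closedness of $M$ the limit $b_t$ belongs to $M$, hence $\alpha(b)\subset M$; a symmetric argument gives $\omega(b)\subset N$. Making the prepending/appending precise requires careful use of the chain-recurrent structure of $M$ and $N$ together with the uniqueness of their non-wandering nodes -- uniqueness prevents the auxiliary pseudo-orbits from drifting into a different non-wandering component inside $M$ or $N$ -- and a diagonal extraction balancing $T_n$, $U_n$, $k_n$ against the shrinking neighborhoods. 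This bookkeeping, enabled by the compactness of $X$, completes the construction of a bi-trajectory realizing the $G_F$-edge from $M$ to $N$.
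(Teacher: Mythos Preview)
The paper states this proposition without proof, so there is nothing to compare against; I can only assess your argument on its own merits. Your part~(1) is correct.

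Part~(2), however, has a genuine gap. With $c^{(n)}$ placed at index~$0$, fix any $j>0$. Since $k_n\to\infty$, eventually $k_n>j$, so $b^{(n)}_j=F^j(c^{(n)})\to F^j(x)$ by continuity. Thus the coordinate-wise limit satisfies $b_j=F^j(x)$ for every $j\ge 0$, which forces $\omega(b)=\Omega_F(x)\subset M$, not $N$. The appended chain lives at indices $>k_n\to\infty$ and is therefore invisible in the pointwise limit at any fixed $j$; the ``symmetric argument'' for $\omega(b)\subset N$ simply fails. Shifting instead so that $F^{k_n}(c^{(n)})$ sits at index~$0$ repairs the $\omega$-side but loses control of $\alpha(b)$ for the same reason. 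No choice of $T_n,U_n$ or ``balancing'' fixes this, because the middle orbit segment has length $k_n\to\infty$ and swallows every fixed index.

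A secondary issue: the claim that one can choose the prepended $(1/n)$-chain to lie in a $(1/n)$-neighbourhood of $M$ is unjustified --- $\eps$-chains witnessing chain-recurrence of $M$ need not stay near $M$. This particular point can be repaired by using an actual backward orbit of $x$ inside $M$ (available since $F(M)=M$ when $X$ is compact), but as just explained that still only controls $\alpha(b)$. To get both limits simultaneously one needs a different device, for instance choosing the shift $s_n$ to be the last time the orbit segment lies in a fixed neighbourhood $U$ of $M$ and then shrinking $U$, in the spirit of the paper's Lemma on adjacent non-wandering nodes. That is a real argument, not bookkeeping.
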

 \black
 \begin{proof}
   1) Under the hypotheses, there is a bitrajectory $b$ with $\omega(b)\subset N$ and $\alpha(b)\subset M$.
   By Proposition~\ref{prop:NW}, $\omega(b)\subset\hat N$ and $\alpha(b)\subset\hat M$.
   Via $b$, we can build $\varepsilon$-$\omega\alpha$chains from $\hat M$ to $\hat N$ for every $\varepsilon>0$.
   Hence $\hat M\NWto \hat N$.

   2) This is an immediate consequence of Proposition~\ref{lemma:NW adjacent}.
 \end{proof}
 \black

In~\cite{ADL23}, we extended all main results of~\cite{DLY20} to T-unimodal maps whose attractor is a cycle of intervals by showing that our proofs ultimately only rely on \black purely \black
topological properties, \black first and foremost \black the absence of wandering intervals and of non-topological attracting periodic orbits.
The same ideas apply to the case of the other two types of attractors, so that Theorem~\ref{thm:tower},~\ref{thm:attracting nodes} actually apply more generally to T-unimodal maps. 
Hence, we can reformulate the results above in the following way.
\begin{proposition}
  \label{prop:Tunimodal}
  \black 
    Let $f$ be a T-unimodal map and let $N_0,\dots,N_p$ be its non-wandering nodes sorted according to their distance from $c$, so that $N_0=\{a\}$ and $N_p$ is the unique attractor.
    Then:
    \begin{enumerate}
    \item $N_i\cap N_j=\emptyset$ for each $i\neq j$ with $i,j<p$;
    \item each $N_i$, $i=0,\dots,p-2$, is also a chain-recurrent node;
    \item if $N_{p-1}\cap N_p=\emptyset$, then every non-wandering node of $f$ is also a chain-recurrent node and viceversa and $\Gamma_{\NW_f}$ coincides with the structural graph $\Gamma_f$ (in particular, $\Gamma_{\NW_f}$ is a tower);
    \item if $N_{p-1}\cap N_p\neq\emptyset$, then $N_{p-1}$ is a repelling Cantor set and $N_p$ is a cycle of intervals.
    In this case, $\Gamma_{\NW_f}$ is equal to a tower with $p+1$ nodes with the addition of an extra edge from $N_{p}$ to $N_{p-1}$.    
%
%
    \end{enumerate}
\end{proposition}
\noindent
\subsubsection{Some prolongational graphs of the logistic map}
The content of Proposition~\ref{prop:Tunimodal} is illustrated in Figure~\ref{fig:per3}, where are shown the prolongational graphs of seven logistic maps within the period-3 window.
The picture shows several invariant sets for each $\mu$. 
The attractor is painted in shades of gray.
The red Cantor set is the one that defines the periodic window; it arises at $\mu_\ell$, when a pair of attracting/repelling period-3 periodic orbit is created, and it changes continuously up to $\mu_r$, where it collides with the attractor. 
This is precisely the hyperbolic subshift of finite type described by Smale and Williams in~\cite{SW76}.
The attracting periodic orbit of the pair mentioned above is shown first in gray and then, after it biforcates and becomes repelling, in green, until it plunges in a chaotic attractor. 
The Cantor set painted in blue corresponds to the logistic map's period-9 window.

Each of the graphs corresponding to the seven parameters has one repellor (the fixed point $x=0$), one attractor and either one or two saddles.
Parameters $\mu_1,\dots,\mu_4$ are not end-points of a periodic window, so the prolongational graph of the corresponding logistic map is a tower, identical to the structural graph discussed in full detail in~\cite{DLY20}.
\blue
Parameter $\mu_\ell$ is the left end-point of the period-3 window.
Even in this case, the prolongational graph and the structural graph coincide.
Parameter $\mu_r$ is the right end-point of the window and its graph is a tower consisting of the three nodes  with the addition of an extra edge from the attractor back to the red Cantor set. 
\black
Finally, parameter $\mu_5$ is the right end-point of the period-9 window, so its prolongational graph is a tower with four nodes with the addition of an extra edge from the attractor to the blue Cantor set.
\section{Streams}
%
The prolongational relation $\NW_F$ is closed and extends $\cO_F$ but is not, in general, a quasi-order (see Example~\ref{ex:non transitive}).
The main goal of the present article is to show that 
closed quasi-order extensions $D$ of $\cO_F$, which extend at the same time the roles of $\cO_F$ (as a quasi-order) and $\NW_F$ (as a closed relation), play a particularly important role in dynamics.
\begin{definition}
    We call {\EM $F$-stream} (or simply {\em stream}, when there is no ambiguity) on $X$ a closed quasi-order $S$ that is an extension of $\cO_F$.
    A {\em substream} of $S$ is an $F$-stream $S'$ such that $S'\subset S$.
\end{definition}   
\begin{proposition}
    \label{prop:smallest stream}
    Let $S$ be an $F$-stream on space $X$. 
    Then $\cL_S\subset\cL_F$. 
    Hence, in particular:
    \begin{enumerate}
        \item $S\supset\NW_F$;
        \item $\cR_S\supset NW_F$;
        \item $\cD_{\cL_F}$ is the smallest $F$-stream on $X$.
    \end{enumerate}
\end{proposition}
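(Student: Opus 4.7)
The plan is to derive everything from the containment $\cO_F \subset S$. First I would verify the headline claim $\cL_S \subset \cL_F$: if $L \in \cL_S$ and $x \Fto y$, then $(x,y) \in \cO_F \subset S$ gives $x \Sto y$, hence $L(x) \geq L(y)$; together with the identification $\cL_F = \cL_{\cO_F}$ noted right after the definition of $\cL_D$, this says $L \in \cL_F$.

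Next I would dispose of (1) directly, without invoking Lyapunov functions at all: since $S$ is by definition a \emph{closed} quasi-order containing $\cO_F$, it must contain the closure $\overline{\cO_F} = \NW_F$. Item (2) then follows from Definition~\ref{def:NW}: a non-wandering $x$ is either fixed (in which case $x \in \cR_S$ by definition of $S$-recurrence) or admits $y \neq x$ with $x \NWto y$ and $y \NWto x$, and by (1) these upgrade to $x \Sto y$ and $y \Sto x$, so $x \in \cR_S$.

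For (3) I would first note that $\cD_{\cL_F}$ is itself a stream: by the earlier proposition on $\cD_\cS$ it is a closed quasi-order, and for $(x,y) \in \cO_F$ every $L \in \cL_F$ satisfies $L(x) \geq L(y)$ by definition of $\cL_F$, so $\cO_F \subset \cD_{\cL_F}$. For minimality, given any stream $S$, the containment $\cL_S \subset \cL_F$ combined with the anti-monotonicity $\cT \subset \cS \Rightarrow \cD_{\cS} \subset \cD_{\cT}$ yields
\[
\cD_{\cL_F} \subset \cD_{\cL_S} = S,
\]
where the last equality is Theorem~A (Auslander).

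The main obstacle is precisely this last step: Auslander's theorem and the Urysohn-style lemma behind it require $X$ to be separable and locally compact, whereas the running hypothesis in this article is only that $X$ is metrizable and connected. Thus (3) as written implicitly depends on such an assumption (e.g., restriction to the compact global trapping region $Q$, or to the ambient setting where Lemma~A applies); I would flag this and, if one wants a fully general statement, prove (3) instead by combining Corollary~\ref{cor:smallest} (existence of a smallest closed quasi-order $D^*$ extending $\cO_F$) with the chain of inclusions $\cD_{\cL_F} \subset \cD_{\cL_{D^*}} \supset D^*$ under Auslander's hypotheses, to identify $D^* = \cD_{\cL_F}$. Items (1) and (2) are robust and require no such hypothesis.
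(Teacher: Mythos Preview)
Your argument for the headline claim $\cL_S\subset\cL_F$ is identical to the paper's, which in fact stops there: the proof in the paper consists only of that one line and treats (1)--(3) as immediate consequences without further comment.

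Your added justifications for (1)--(3) are correct and, for (1) and (2), more direct than going through Lyapunov functions: the paper's phrasing ``Hence, in particular'' suggests deducing everything from $\cL_S\subset\cL_F$, whereas your route for (1) (closedness of $S$ plus $\cO_F\subset S$ forces $\overline{\cO_F}\subset S$) bypasses Lyapunov functions entirely and needs no hypotheses on $X$. Your observation about (3) is also well taken: the identification $\cD_{\cL_S}=S$ rests on Auslander's Theorem~A, which the paper states under separability and local compactness, so (3) as written tacitly relies on that setting. The paper does not flag this; you are being more careful than the source.
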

\begin{proof}
    Let $L\in\cL_S$.
    Since $\cO_F\subset S$, $x\Fto y$ implies $x\Sto y$ which, in turn, means that $L(x)\geq L(y)$.
    Hence, $L\in\cL_F$.
\end{proof}
The smallest stream of a semi-flow $F$ was introduced and studied first by Auslander in 60s~\cite{Aus63} and so we denote it, in the remainder of the article, by $\cA_F$.
\begin{proposition}
    \label{prop:Down_S}
    Let $S$ be an $F$-stream. Then:
    \begin{enumerate}
        \item $\Down_S(x)\supset\cO_F(x)\cup\Down_S(\Omega_F(x))$;
        \item $\Down_{S}(x)
         \supset \cO_F(x)\cup\Down_{S}(y)
        $ for all $y\in\cO_F(x)$.
    \end{enumerate}
\end{proposition}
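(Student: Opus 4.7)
The plan is to prove both inclusions using just the three defining features of a stream $S$: (i) $\cO_F \subset S$, (ii) $S$ is topologically closed, and (iii) $S$ is transitive. Transitivity is what lets us chain downstream relations, and closure is what lets us pass from orbit endpoints to $\omega$-limit points.

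For part (2), which is the easier of the two, I would start. Fix $y \in \cO_F(x)$, so $x \Fto y$. Since $\cO_F \subset S$, this gives $x \Sto y$. Then for any $z \in \Down_S(y)$ we have $x \Sto y \Sto z$, so by transitivity $z \in \Down_S(x)$. Combined with $\cO_F(x) \subset \Down_S(x)$ (again because $\cO_F \subset S$), this gives $\cO_F(x) \cup \Down_S(y) \subset \Down_S(x)$.

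For part (1), the key step is to show $x \Sto y$ for every $y \in \Omega_F(x)$; once this is established, transitivity immediately yields $\Down_S(\Omega_F(x)) \subset \Down_S(x)$, and the $\cO_F(x)$ inclusion is again automatic. So fix $y \in \Omega_F(x)$ and pick $t_n \to \infty$ with $F^{t_n}(x) \to y$. For each $n$ we have $x \Fto F^{t_n}(x)$, hence $(x, F^{t_n}(x)) \in \cO_F \subset S$. The pairs $(x, F^{t_n}(x))$ converge to $(x, y)$ in $X \times X$, and since $S$ is closed, $(x, y) \in S$, i.e.\ $x \Sto y$. Transitivity then finishes: for $z \in \Down_S(y)$, we get $x \Sto y \Sto z$, hence $z \in \Down_S(x)$.

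I do not expect a genuine obstacle here; the proposition is essentially a bookkeeping exercise demonstrating that the three axioms defining a stream already encode Prop.~\ref{prop:Down}(2)-(3) in a clean transitive form. The only subtle point is noticing that we do not need any separate argument for $\Omega_F(x)$ itself being in $\Down_S(x)$ beyond invoking closure of $S$ on the convergent sequence of orbit pairs $(x, F^{t_n}(x)) \to (x, y)$, which is the same trick that made $\NW_F$ contain limit sets but is now upgraded by transitivity of $S$.
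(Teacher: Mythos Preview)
Your proof is correct and follows essentially the same approach as the paper: use $\cO_F\subset S$ to get $\cO_F(x)\subset\Down_S(x)$, use closure of $S$ on the sequence $(x,F^{t_n}(x))\to(x,y)$ to get $\Omega_F(x)\subset\Down_S(x)$, and then use transitivity for the rest. If anything, your write-up of the closure step is more explicit than the paper's.
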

\begin{proof}
    (1) If $x\Fto y$, then $x\Sto y$ because $S$ is an extension of $\cO_F$.
    If $y\in\Omega_F(x)$, then $x\Sto y$ because $y$ is the limit of a sequence of points on $\cO_F(x)$ and $S$ is closed.
    If $y\in\Down_S(\Omega_F(x))$, then $x\Sto y$ by transitivity.

    (2) This is an immediate consequence of transitivity.
\end{proof}

\subsection{Recurrent points and nodes of a stream}
Recurrent and non-wandering points are generalizations of periodic points.
Similarly, nodes of $\NW_F$
are a generalization of periodic orbits of $F$ (which are the nodes of $\cO_F$).
Streams allow to define a natural and quite general analogue of periodic orbits in the following way.
\begin{definition}
    Given a stream $S$ of $F$, we write {\EM$x\Seq y$}, and we say that $x$ and $y$ are {\EM$S$-equivalent}, if $x\Sto y$ and $y\Sto x$.
    
    We say that $x$ is {\EM $S$-recurrent} if either $x$ is fixed or there is a $y\neq x$ such that $x\Seq y$.
    We denote by $\cR_S$ the set of all $S$-recurrent points.
    We say that a set $M\subset\sR_{S}$ is $S$-equivalent if all points of $M$ are mutually $S$-equivalent.
\end{definition}
The set $\cR_S$ comes with a natural equivalence relation, unavailable in $NW_F$ due to the lack of transitivity of $\NW_F$.
%
\begin{proposition}
    The relation $\Seq$ is an equivalence relation on $\cR_{S}$.
\end{proposition}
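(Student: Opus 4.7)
The proof is essentially a direct verification that the three properties defining an equivalence relation transfer from the quasi-order axioms on $\Sto$ to its symmetric companion $\Seq$. The plan is to check reflexivity, symmetry, and transitivity in turn, noting that we actually get reflexivity on all of $X$ (not just on $\cR_S$), so the restriction of the ambient relation to $\cR_S$ is harmless.

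First I would handle reflexivity: since $S$ is a quasi-order we have $x\Sto x$ for every $x\in X$ (Def.~\ref{def: QO}(1)), and hence $x\Seq x$. In particular this holds for $x\in\cR_S$. Second, symmetry is immediate from the very definition of $\Seq$: the condition ``$x\Sto y$ and $y\Sto x$'' is symmetric in $x$ and $y$, so $x\Seq y$ if and only if $y\Seq x$.

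The only step that uses anything nontrivial is transitivity, and even there the argument is a direct application of the transitivity axiom for quasi-orders (Def.~\ref{def: QO}(2)). Suppose $x,y,z\in\cR_S$ with $x\Seq y$ and $y\Seq z$. Unfolding the definitions, we have $x\Sto y$, $y\Sto x$, $y\Sto z$, $z\Sto y$. Transitivity of $\Sto$ applied to $x\Sto y\Sto z$ gives $x\Sto z$, and applied to $z\Sto y\Sto x$ gives $z\Sto x$; together these yield $x\Seq z$.

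There is no real obstacle here: the proposition is a formal consequence of the quasi-order axioms imposed in the definition of a stream, and the set $\cR_S$ plays no active role in the verification (it is simply the domain on which we are asserting the equivalence relation). The content of the definition of $\cR_S$ becomes important later, when one wants to partition $\cR_S$ into $\Seq$-equivalence classes (the nodes of the stream), but for the statement at hand the proof reduces to quoting reflexivity and transitivity of $\Sto$.
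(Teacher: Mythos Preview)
Your proof is correct and is exactly the routine verification the paper has in mind; in fact the paper does not spell out an argument at all (it leaves this proposition, together with the next one, to the reader), so your direct check of reflexivity, symmetry, and transitivity from the quasi-order axioms in Def.~\ref{def: QO} is precisely what is intended.
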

\begin{proposition}
    \label{prop:R_S monotonic}
    Let $S,S'$ be two $F$-streams and assume $S\subset S'$. 
    Then $\cR_S\subset\cR_{S'}$.
\end{proposition}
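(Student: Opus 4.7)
The plan is to unpack the definition of $\cR_S$ and observe that both conditions defining $S$-recurrence are monotone (in the appropriate direction) in the stream $S$. Concretely, fix $x \in \cR_S$; by definition either $x$ is fixed under $F$ or there exists $y \neq x$ with $x \Seq y$. In the first case, being fixed is a property of $F$ alone and has nothing to do with the choice of stream, so $x$ is automatically $S'$-recurrent.

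In the second case, $x \Sto y$ and $y \Sto x$. Since $S \subset S'$ as subsets of $X \times X$, every pair in $S$ is also a pair in $S'$, hence $x \succcurlyeq_{S'} y$ and $y \succcurlyeq_{S'} x$, i.e.\ $x$ and $y$ are $S'$-equivalent. Because $y \neq x$, this exhibits the witness required by the definition of $\cR_{S'}$, so $x \in \cR_{S'}$.

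There is no real obstacle here: the statement is a purely formal monotonicity property following from the set-theoretic inclusion $S \subset S'$ together with the fact that the fixed-point condition is stream-independent. The transitivity and closedness built into the definition of a stream are not even needed for this particular inclusion; what is used is simply that both $\Sto$ and $\succcurlyeq_{S'}$ are relations, and that the former is contained in the latter.
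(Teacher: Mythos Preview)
Your proof is correct and is exactly the natural argument: unpack the definition of $S$-recurrence and observe that both the fixed-point condition and the existence of a witness $y\neq x$ with $x\Seq y$ pass from $S$ to $S'$ under the inclusion $S\subset S'$. The paper itself leaves this proof to the reader, so there is no alternative approach to compare against.
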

We leave the proof to the reader.

{

\begin{definition}
    Let $D'$ be a sub-quasi-order of $D$ and let $N$ and $N'$ nodes of, respectively, $D$ and $D'$.
    We say that $N$ extends $N'$ if $N\supset N'$.
\end{definition}
}
\begin{definition}
    We call {\em nodes} the equivalence classes of $\cR_{S}$ with respect to $\Seq$.
    If $x\in\cR_S$, we denote by $\Node_S(x)$ the node containing $x$.
\end{definition}
Note that, unlike the nodes of $\NW_F$, nodes of a stream are all mutually disjoint because they are defined via an equivalence relation.
\begin{example}
    Every point in $\cR_{\cO_F}$ is $F$-periodic 
    and the nodes of $\cR_{\cO_F}$ are precisely the periodic orbits of $F$. 
    Not all recurrent points of $F$, though, are in $\cR_{\cO_F}$ since, in general, there are recurrent points that are not periodic.
    For instance, let $F$ be a rotation on the circle by an irrational angle. 
    Then the orbit of any point is dense, namely every point is recurrent for $F$, but no point is actually periodic.
\end{example}
\begin{proposition}
    \label{prop:streams}
    Let $S$ be an $F$-stream. Then:
    \begin{enumerate}
        \item For each $x$, $\Omega_F(x)$ is a $S$-equivalent set. In particular, there is a node $N$ of $S$ such that $\Omega_F(x)\subset N$.
        \item If $x$ is $F$-recurrent, then $\cO_F(x)\subset\Omega_F(x)\subset\Node_S(x)$.
    \end{enumerate}
\end{proposition}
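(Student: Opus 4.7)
My plan is to prove the two claims in sequence, with the first doing essentially all the work.

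For part (1), I will first show that any two points $y_1, y_2 \in \Omega_F(x)$ satisfy $y_1 \Seq y_2$. By definition of $\Omega_F(x)$, there exist sequences $t_n \to \infty$ and $s_n \to \infty$ with $F^{t_n}(x) \to y_1$ and $F^{s_n}(x) \to y_2$. By passing to subsequences I may assume the times interlace: $t_1 < s_1 < t_2 < s_2 < \cdots$. Since each $F^{s_n}(x)$ lies on the forward orbit of $F^{t_n}(x)$, we have $F^{t_n}(x) \Fto F^{s_n}(x)$, hence $F^{t_n}(x) \Sto F^{s_n}(x)$ because $S$ extends $\cO_F$. Passing to the limit in the closed relation $S$ gives $y_1 \Sto y_2$. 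The reverse inequality follows identically from the interlacing $s_n < t_{n+1}$. This shows $\Omega_F(x)$ is $S$-equivalent.

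Next I verify that every $y \in \Omega_F(x)$ is in fact $S$-recurrent (so that the whole set $\Omega_F(x)$ sits inside $\cR_S$, and hence inside a single node). If $\Omega_F(x)$ contains at least two distinct points, then for any $y \in \Omega_F(x)$ we can pick $y' \neq y$ in $\Omega_F(x)$ with $y \Seq y'$, satisfying the definition of $S$-recurrence. If $\Omega_F(x) = \{y\}$ is a singleton, I claim $y$ is fixed: by continuity of $F$, $F^{t_n}(x) \to y$ implies $F^{t_n+1}(x) \to F(y)$, so $F(y) \in \Omega_F(x) = \{y\}$, hence $F(y) = y$. In either case $y \in \cR_S$, and since all of $\Omega_F(x)$ is $\Seq$-equivalent, it is contained in a single node $N$.

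For part (2), the hypothesis that $x$ is $F$-recurrent means $x \in \Omega_F(x)$. By iterating the continuity argument above, $F^k(x) \in \Omega_F(x)$ for every $k \geq 0$, so $\cO_F(x) \subset \Omega_F(x)$. Applying part (1), $\Omega_F(x)$ lies in a single node of $S$, and since $x \in \Omega_F(x)$ this node is $\Node_S(x)$ by definition.

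The only step requiring any care is the interlacing construction in part (1), which I expect to be routine; the remainder is bookkeeping about the singleton case and using transitivity/closedness of $S$. There is no real obstacle, since all the heavy lifting (closedness of $S$, $\cO_F \subset S$, continuity of $F$) is already built into the definition of a stream and the compactness framework established earlier.
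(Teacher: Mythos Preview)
Your proof is correct. The interlacing argument for part (1) is clean and works exactly as you describe: once $t_n < s_n$, the pair $(F^{t_n}(x), F^{s_n}(x))$ lies in $\cO_F \subset S$, and closedness of $S$ passes this to the limit $(y_1,y_2)$.

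The paper's own proof takes a slightly different route: it simply observes that $S \supset \NW_F$ (Prop.~\ref{prop:smallest stream}) and then invokes the earlier Prop.~\ref{prop:Om(x)}, which already established that $\Omega_F(x)$ is $\NW_F$-equivalent. So the paper factors the argument through the non-wandering relation, while you prove it directly for $S$. The underlying idea is the same --- in fact your interlacing construction is precisely what lies behind the proof of Prop.~\ref{prop:Om(x)} --- so your approach is more self-contained but not genuinely different.

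One small remark on the singleton case: writing ``$F(y) = y$'' only shows $y$ is a period-$1$ point, which in the continuous-time setting is weaker than being fixed. Your argument actually gives $F^s(y) \in \Omega_F(x) = \{y\}$ for \emph{every} $s > 0$, which does force $y$ to be fixed; just make that explicit.
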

\begin{proof}
    Since $S\supset\NW_F$, the claim is an immediate consequence of Proposition~\ref{prop:Om(x)} and Proposition~\ref{prop:NW}. 
\end{proof}
\begin{proposition}
    \label{prop:Lyap nodes}
    Let $S$ be a stream.
    Then each Lyapunov function $L\in\cL_S$ is constant on each node of $S$.
\end{proposition}
\begin{proof}
  Let $N$ be a node. If $N$ is a fixed point, there is nothing to prove.
  Otherwise, let $x,y\in N$ with $x\neq y$. Then $x\Sto y$ and $y\Sto x$, so that, by Definition~\ref{def:Lyapunov},
  $L(x)\geq L(y)$ and $L(y)\geq L(x)$.
  Hence, $L$ is constant on $N$.
\end{proof}
\begin{proposition}
    \label{prop:streams intersection}
    Any intersection of $F$-streams is an $F$-stream.
\end{proposition}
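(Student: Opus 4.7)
The plan is to verify, one defining property at a time, that the intersection $S = \bigcap_{\alpha \in A} S_\alpha$ of an arbitrary family of $F$-streams $\{S_\alpha\}_{\alpha \in A}$ is itself a closed quasi-order extending $\cO_F$. Three of the four conditions have already been essentially shown in the earlier proposition asserting that the class of quasi-orders on $X$ is closed under intersection, so my main job is to organize the argument and add the two ingredients (closedness and extension of $\cO_F$) that promote a quasi-order to a stream.

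First I would check reflexivity and transitivity: each $S_\alpha$ contains the diagonal $\Delta = \{(x,x) : x \in X\}$, hence so does $S$; and if $(x,y), (y,z) \in S$, then $(x,y),(y,z) \in S_\alpha$ for every $\alpha$, so transitivity of each $S_\alpha$ gives $(x,z) \in S_\alpha$ for every $\alpha$, hence $(x,z) \in S$. This is exactly the content already cited for quasi-orders, so I would just invoke it.

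Next, closedness of $S$ is immediate because an arbitrary intersection of closed subsets of $X \times X$ is closed; no topological assumption on $X$ or on the index set $A$ is required. Finally, each $S_\alpha$ extends $\cO_F$ by hypothesis, so $\cO_F \subset S_\alpha$ for every $\alpha$, and therefore $\cO_F \subset \bigcap_\alpha S_\alpha = S$. Combining these four facts gives that $S$ is a closed quasi-order on $X$ containing $\cO_F$, which is exactly the definition of an $F$-stream.

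There is no real obstacle here: the statement is essentially a bookkeeping lemma that justifies the existence of a smallest $F$-stream (Auslander's stream $\cA_F$, obtained as the intersection of all $F$-streams, exactly as in Cor.~\ref{cor:smallest} for general closed quasi-orders extending a relation). The only subtle point worth flagging is that one must allow $A$ to be arbitrary (in particular uncountable), which is fine because both "closed" and "contains $\cO_F$" are intersection-stable without any cardinality restriction.
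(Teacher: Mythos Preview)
Your proof is correct and follows the same approach as the paper, which condenses the argument to a single line (``Quasi-orders and closed sets are closed under intersection''); you have simply spelled out the details, including the extension of $\cO_F$, which the paper leaves implicit.
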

\begin{proof}
    Quasi-orders and closed sets are closed under intersection.
\end{proof}
\begin{corollary}
    Given any set $R\subset X\times X$ and semi-flow $F$, there is a smallest $F$-stream containing $R$.
    \black
    In particular, $\cA_F$ coincides with the smallest $F$-stream containing $\NW_F=\overline{\cO_F}$.
\end{corollary}
%

\subsection{The Graph of a stream}
The qualitative properties of a stream can be encoded in a graph as follows.
\begin{definition}
    \label{def: N1>=N2}
    Let $S$ be an $F$-stream.
    Given two sets $A,B\subset X$, we write $A\Sto B$ if and only if $x\Sto y$ for each $x\in A$ and $y\in B$.
\end{definition}
%
%
\begin{definition}[\em Graph of a stream]
    \label{def graphs}
    Given an $F$-stream, the graph of $S$, denoted by {\em\EM$\Gamma_S$}, is the directed graph whose nodes are the nodes of $\sR_S$ and such that there is an edge from a node $N_1$ to a node $N_2\neq N_1$ if and only if $N_1\leadsto N_2$.
\end{definition}
Notice that, due to transitivity, if there is an edge from $N_1$ to $N_2$, then $x\leadsto y$ for every $x\in N_1$ and $y\in N_2$.
Moreover, again by transitivity, edges are themselves transitive, in the sense that $N_1\leadsto N_2$ plus $N_2\leadsto N_3$ implies
$N_1\leadsto N_3$. 
Hence, when we draw the graph of a stream, we usually draw it with only a minimal set of edges, where the rest can be inferred.

\black\medskip\noindent
%
\begin{proposition}
    The graph of a stream $S$ has no cycles.
\end{proposition}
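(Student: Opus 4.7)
The plan is to argue by contradiction: suppose $\Gamma_S$ contains a cycle, namely a sequence of pairwise distinct nodes $N_1,N_2,\dots,N_k$ with $k\ge 2$ such that $N_1\Sto N_2$, $N_2\Sto N_3$, $\dots$, $N_{k-1}\Sto N_k$, and $N_k\Sto N_1$. The goal is to use Definition~\ref{def: N1>=N2} together with the transitivity of $S$ to conclude that any two points drawn from two of these nodes must be $S$-equivalent, contradicting the fact that nodes are distinct equivalence classes of $\Seq$.

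The key steps, in order, are the following. First, pick arbitrary representatives $x\in N_1$ and $y\in N_2$. By Definition~\ref{def: N1>=N2} applied to the edge $N_1\Sto N_2$, we immediately have $x\Sto y$. Second, choose a representative $z_i\in N_i$ for $i=3,\dots,k$ and a representative $x'\in N_1$ (for instance $x'=x$). Apply Definition~\ref{def: N1>=N2} to each of the edges $N_2\Sto N_3$, $N_3\Sto N_4$, $\dots$, $N_{k-1}\Sto N_k$, $N_k\Sto N_1$ to obtain the chain
\[
y\Sto z_3,\quad z_3\Sto z_4,\quad\dots,\quad z_{k-1}\Sto z_k,\quad z_k\Sto x.
\]
Third, apply transitivity of $S$ along this chain to conclude $y\Sto x$. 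Combined with $x\Sto y$ from the first step, this gives $x\Seq y$. But then $x$ and $y$ lie in the same $\Seq$-equivalence class, so $\mathrm{Node}_S(x)=\mathrm{Node}_S(y)$, i.e.\ $N_1=N_2$, contradicting the assumption that the nodes along the cycle are distinct.

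There is essentially no obstacle here: the only ingredients are (i) the interpretation of an edge between nodes as a relation between \emph{all} pairs of representatives, which is built into Definition~\ref{def: N1>=N2} and Definition~\ref{def graphs}, and (ii) transitivity of $\Sto$, which holds by definition of a stream. The one detail worth stating carefully is the corner case $k=2$, where the ``cycle'' consists of two edges $N_1\Sto N_2$ and $N_2\Sto N_1$; in that case the argument above degenerates to: pick $x\in N_1$, $y\in N_2$, use the two edges to get $x\Sto y$ and $y\Sto x$ directly, and conclude $x\Seq y$, again contradicting $N_1\neq N_2$. Thus, no cycle of any length can exist.
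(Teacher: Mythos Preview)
Your proof is correct and takes essentially the same approach as the paper: use transitivity of $\Sto$ along the cycle to conclude that representatives of the nodes are mutually $S$-equivalent, hence lie in a single node, contradicting the assumption that the nodes are distinct. The paper's version is simply a one-line compression of your argument.
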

\begin{proof}
    If $x_1\leadsto\dots\leadsto x_n\leadsto x_1$ for some integer $n>1$, then all these $x_i$ are mutually $S$-equivalent and so they all belong to the same node. 
\end{proof}
\begin{figure}
 \centering
 \includegraphics[width=13cm]{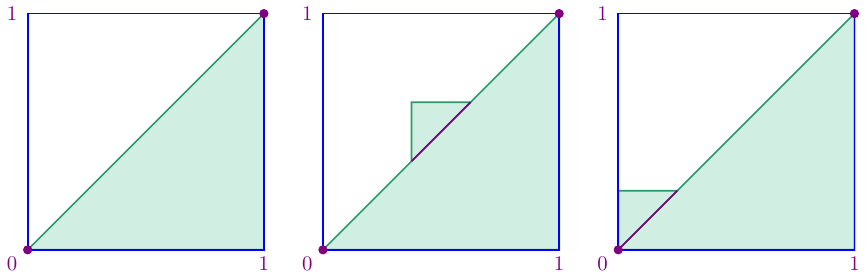}
 \caption{%
   {\em Three examples of stream structures on the unit interval for the flow $F$ of the ODE $x'=-\sin(\pi x)$.}
   (Left) The the smallest stream of $F$ is the closed triangle with vertices $(0,0),(1,0),(1,1)$. 
   This stream has only two nodes: the fixed points 1 and 0. 
   Both nodes are invariant under $F$.
   The graph of this stream is a tower with two nodes (see Figure~\ref{fig:graphs}(a)).
   (Center) An $F$-stream that is not an $\Omega$-stream. 
   In this case there is a third node containing the point 0.5. 
   Notice that this node is not forward-invariant under $F$.
   The graph of this stream is a tower with three nodes (see Figure~\ref{fig:graphs}(b)).
   (Right). 
   In this stream there are just two nodes but, unlike the smallest stream, the node containing the fixed point 0,
   is forward-invariant but not fully-invariant under $F$.
   The graph of this stream is identical to the graph of the smallest stream.
 }
 \label{fig:streams}
 \end{figure} 
\begin{example}
    \label{ex:streams}
    In Figure~\ref{fig:streams} we show three different $F$-streams on $X=[0,1]$, where $F$ is a continuous-time flow that keeps 0 and 1 fixed and moves every other point leftward so that it asymptotes to 0. For instance, this is the case of the flow of the ODE $x'=-\sin(\pi x)$. 

    The stream on the left is the closure of $\cO_F$, so it is the smallest $F$-stream $\cA_F$ (which, in this case, also coincides with $\NW_F$). 
    Its nodes are the repelling fixed points $N_0=\{1\}$ and the attracting fixed point $N_1=\{1\}$.
    Indeed, for any non-fixed $\cA_F$-recurrent point $x$ there should be a $y\neq x$ such that both $(x,y)\in\cA_F$ and $(y,x)\in\cA_F$ but in $\cA_F$, that is entirely contained on and below the diagonal of $X\times X$, there are no pairs with this property.
    Finally, there is an edge from 1 to 0 since $(x,0)\in\cA_F$ for all $x\in[0,1)$ and so, since streams are closed, $(1,0)\in\cA_F$.
    Hence, $\Gamma_{\cA_F}$ is a tower with two nodes.
    In this case, both nodes are $F$-invariant.
    
    The stream $S$ in the center contains extra points in a neighborhood of $(1/2,1/2)$. 
    These extra points are responsible for the presence of extra $S$-recurrent points that constitute a single new node $N_{1/2}$, which is a segment centered at $x=1/2$.
    By closedness, as above, one can see that there are edges from $N_1$ to $N_{1/2}$ and from $N_{1/2}$ to $N_0$.
    By transitivity, this means that there is also an edge from $N_1$ to $N_0$.
    Hence, $\Gamma_S$ is a tower with three nodes.
    Notice that this extra node is not $F$-invariant nor even just forward-invariant under $F$; in particular, it does not contain any limit set of its points (they all converge to 0 under $F$).

    Similarly to the case above, the extra points (with respect to $\cA_F$) of the stream $S'$ in the right give rise to new $S'$-recurrent points but these ones are all $S'$-equivalent to 0 and so, rather than creating a new node, they just make the node $N_0$ thicker.
    Hence, $\Gamma_{S'}=\cA_F$.
    In this case, though, node $N_0$ is not $F$-invariant but only forward-invariant.
\end{example}
Recall that the smallest stream $\cA_F$ is the smallest transitive extension of $\NW_F$.
Hence, $\Gamma_{\cA_F}$ can be obtained from $\Gamma_{\NW_F}$ by replacing recursively each loop in the prolongational graph by a single $\cA_F$-node.
\begin{figure}
    \centering
    \includegraphics[width=12cm]{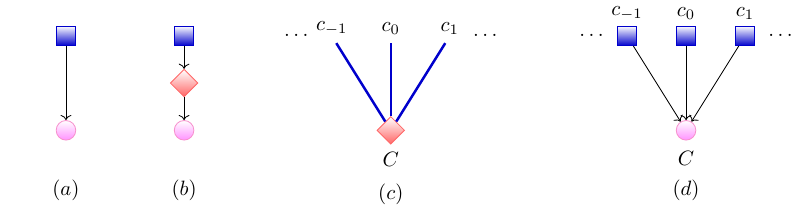}
    \caption{{\em Examples of graphs of the smallest stream of a semi-flow.}
      \black
          {\em (a) A tower with two nodes.} This is the graph of the streams in Fig.~\ref{fig:streams}(left,right) and of the smallest stream of the logistic map for $\mu\in(1,3]$.
            {\em (b) A tower with three nodes.} This is the graph of the stream in Fig.~\ref{fig:streams}(center) and of the smallest stream of the logistic map for
            infinitely many intervals of values of $\mu$, e.g. for $\mu\in(3,1+\sqrt{6}]$ (in which case the saddle is a repelling fixed point and the attractor a period-2 orbit)
    or for $\mu$ close enough from the right to the left endpoint of the period-3 window (in which case the saddle is a repelling Cantor set and the attractor a period-3 orbit).
      {\em (c,d) Graphs of the smallest streams of the systems described in Examples~\ref{ex: physical pendulum} and~\ref{ex:physpenstream}.}
    }
    \label{fig:graphs}
\end{figure}
\begin{example}
  \label{ex:physpenstream}
    Consider the physical pendulum system $F^t$ in Figure~\ref{fig:physical pendulum}(above).
    Every red point (i.e. every point on a saddle or on a heteroclinic orbit) can by joined to any other red point,
    for every $\eps>0$, by a finite sequence of $\eps$-$\alpha\omega$chains.
    Hence, every red point is $\cA_F$-equivalent to every other red point, so that the set of all red points is a single $\cA_F$-node.
    The graph $\Gamma_{\cA_F}$ is shown in \black Figure~\ref{fig:graphs}(c). \black
    Similarly happens in case of the system in Figure~\ref{fig:physical pendulum}(below), whose graph is shown in \black Figure~\ref{fig:graphs}(d).
\end{example}
\begin{example}
    Let $f$ be a T-unimodal map.
    We discussed in Section~\ref{sec:T-unimodal} the structure of $\Gamma_{\NW_f}$.
    When $\Gamma_{\NW_f}$ has no loops, then $\Gamma_{\cA_f}=\Gamma_{\NW_f}=G_f$.
    When it does have loops, after eliminating the only loop that can arise in those prolongational graphs, the graph gets again back to $G_f$, the structural graph of $f$.
    In particular, the graph of the smallest stream of a T-unimodal map is always a  tower. 
\end{example}
\begin{definition}
    A stream graph $\Gamma_S$ is {\em connected} if, whenever $S=C_1\cup C_2$, with $C_1$ and $C_2$ closed and disjoint sets each of which is union of nodes of $S$, there is an edge from a node of $C_1$ to a node of $C_2$ or viceversa.
\end{definition}
%
    
%
\black
\begin{theorem}
    \label{thm: connectedness}
    Let $S$ be a $F$-stream and assume that $F$ has a connected global trapping region $C$ that is a neighborhood of $NW_F$.
    Then, if $F$ has compact dynamics, $\Gamma_{S}$ is a connected graph.
\end{theorem}
\begin{proof}
    Each $F$-stream $S$ has two types of nodes: those that are an extension of non-wandering nodes of $F$ and those that are not. 
    In this proof, we will refer to the first type as ``$\Omega$-nodes''.
    Notice that $S$-nodes that are not $\Omega$-nodes cannot be forward-invariant under $S$.
    If they were, indeed, they would contain an $F$-recurrent point, since every node is compact under the theorem's hypotheses, and every
    $F$-recurrent point is in some non-wandering node.
    Ultimately, each $S$-node either is an $\Omega$-node or has an edge from itself to at least one $\Omega$-node.

    Suppose now that $S=C_1\cup C_2$ with $C_1,C_2$ closed and disjoint sets each of which is a union of nodes.
    If either one of the two, say $C_1$, only contains nodes that are not $\Omega$-nodes, then there is at least an edge from $C_1$ to $C_2$
    because the limit sets of all points of $C_1$ lie in $C_2$.
    Suppose now that both $C_1$ and $C_2$ contain $\Omega$-nodes \blue  and recall that, under the theorem's hypotheses, $\Gamma_{\NW_F}$ is connected (Theorem~\ref{thm:NW}).
    Hence,\black\ there is at least an edge between an $\Omega$-node in $C_1$ and an $\Omega$-node in $C_2$.
    Otherwise, it would be possible to sort the non-wandering nodes into two disjoint closed sets so that there would be no edge between the two sets,
    namely $\Gamma_{\NW_F}$ would not be connected.
    Hence, $\Gamma_S$ is connected.
\end{proof}
%
\subsection{$\Omega$streams.}
An immediate consequence of stream's transitivity is that, for all $y\in\cO_F(x)$,
$$
\Down_{S}(x) \supset \cO_F(x)\cup\Down_{S}(y).
$$
It turns out that streams for which the sets at the left and right hand sides above are equal enjoy rather special properties.
In this section we illustrate some of them.

%
\begin{definition}
    We say that an $F$-stream $S$ is a {\EM $\Omega$stream} if
    \beq
    \Down_S(x) = \cO_F(x)\cup\Down_S(y)
    \eeq
    for every $y\in\cO_F(x)$.
\end{definition}
\begin{proposition}
  \label{prop:Ostream}
  Let $F$ be a semi-flow with compact dynamics. 
    Then $S$ is an $F$-$\Omega$stream if and only if    
    \beq
    \Down_S(x) = \cO_F(x)\cup\Down_S(\Om_F(x)).
    \eeq
\end{proposition}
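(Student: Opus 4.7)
The inclusion $\Down_S(x) \supset \cO_F(x)\cup\Down_S(\Om_F(x))$ holds for every $F$-stream by Prop.~\ref{prop:Down_S}(1), and similarly $\Down_S(x) \supset \cO_F(x)\cup\Down_S(y)$ for every $y\in\cO_F(x)$ by Prop.~\ref{prop:Down_S}(2). So in both implications the ``$\supset$'' direction is free, and the real content lies in the reverse inclusions.

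\smallskip
\noindent\textbf{$(\Rightarrow)$.} Suppose $S$ is an $\Omega$stream. To prove $\Down_S(x) \subset \cO_F(x)\cup\Down_S(\Om_F(x))$, take $z \in \Down_S(x)$ with $z \notin \cO_F(x)$. Pick any sequence $t_n\to\infty$ and set $y_n = F^{t_n}(x) \in \cO_F(x)$. By the $\Omega$stream property applied to each $y_n$, we have $z \in \cO_F(x) \cup \Down_S(y_n)$, and since $z \notin \cO_F(x)$ this forces $y_n \Sto z$ for every $n$. Because $F$ has compact dynamics, the orbit $\cO_F(x)$ accumulates on a non-empty compact set, so a subsequence $y_{n_k}$ converges to some $w \in \Om_F(x)$. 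Closedness of $S$ then yields $w \Sto z$, i.e. $z \in \Down_S(\Om_F(x))$.

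\smallskip
\noindent\textbf{$(\Leftarrow)$.} Suppose $\Down_S(x) = \cO_F(x) \cup \Down_S(\Om_F(x))$ for every $x$. Given $y \in \cO_F(x)$, take $z \in \Down_S(x)$; we want $z \in \cO_F(x) \cup \Down_S(y)$. By the hypothesis, either $z \in \cO_F(x)$, in which case we are done, or $z \in \Down_S(\Om_F(x))$, so there is $w \in \Om_F(x)$ with $w \Sto z$. The key observation is the standard identity $\Om_F(y) = \Om_F(x)$ for $y \in \cO_F(x)$ (write $y = F^s(x)$ and shift the diverging sequence in the definition of the limit set by $s$). Thus $w \in \Om_F(y)$, and Prop.~\ref{prop:Down_S}(1) applied to $y$ gives $w \in \Down_S(y)$. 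Transitivity of $S$ then yields $z \in \Down_S(y)$, as required.

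\smallskip
The compact dynamics hypothesis enters exactly once, in the forward direction, to guarantee that the diverging sequence $y_n$ on the orbit has an accumulation point in $\Om_F(x)$; this is the only step I expect to require care, since the rest is a bookkeeping exercise using closedness and transitivity of $S$ together with the invariance $\Om_F(F^s(x)) = \Om_F(x)$.
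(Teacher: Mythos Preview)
Your argument is correct and matches the paper's approach for the $(\Rightarrow)$ direction: both show that if $z\notin\cO_F(x)$ then every point of $\cO_F(x)$ is $S$-upstream of $z$, and then pass to a limit point in $\Om_F(x)$ using closedness of $S$ and compact dynamics. Your treatment is in fact more complete than the paper's, which proves only $(\Rightarrow)$ explicitly and leaves $(\Leftarrow)$ unstated; your use of $\Om_F(y)=\Om_F(x)$ for $y\in\cO_F(x)$ is exactly the right way to close that gap.
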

\begin{proof}
    By Proposition~\ref{prop:Down_S}, it is enough to prove that, if $z\in\Down_S(x)$, then either $x\Fto z$ or $\Omega_F(x)\Sto z$.
    Let $z\in\Down_S(x)$ and suppose that $z\not\in\cO_F(x)$.
    Then, since $S$ is an $\Omega$stream, $y\Sto z$ for all $y\in\cO_F(x)$.
    Since $F$ has a compact global trapping region, $\Omega_F(x)\neq\emptyset$.
    Let $w\in\cO_F(x)$.
    Then there is a sequence $y_n\in\cO_F(x)$ such that $y_n\to w$.
    Since $y_n\Sto z$, then $w\Sto z$ as well since $S$ is closed.
    Hence, $\Omega_F(x)\Sto z$.
\end{proof}
Next example shows that the compactness hypothesis in the previous proposition cannot be dropped.
\begin{example}
    Let $F$ be the flow of the vector field $\eta=(2v,1-v^2)$ in the $(u,v)$ plane.
    This flow is shown in Figure~\ref{fig:H}.
    Notice that the function $H(u,v)=(1-v^2)e^u$ is a first-integral for $\eta$.
    In this case $\Omega_F(x)=\emptyset$ for every $x=(u,v)\in\R^2$ but there are points $x$ for which $\Down_{\NW_F}(x)$, and therefore $\Down_S(x)$ for every $F$-stream $S$, is strictly larger than $\cO_F(x)$.
    Indeed, notice that this flow has two special integral trajectories $s_\pm=\{v=\pm1\}$ that are inseparable in the following sense.
    Say that a neighborhood $U$ of an integral trajectory $t$ is {\em saturated} when, if $x\in U$, the whole integral trajectory through $x$ is contained in $U$.
    Then every saturated neighborhood of $s_+$ has non-empty intersection with every saturated neighborhood of $s_-$.
    The reader can verify that
    $$
    \Down_{\NW_F}(x)=
    \begin{cases}
    \cO_F(x)\cup s_+,& x\in s_-\\
    \cO_F(x),&\text{otherwise}\\
    \end{cases}.
    $$
    In Figure~\ref{fig:H} two downstream sets are shown, one for a point in $s_-$ (in blue) and one for a point not on $s_-$ (in red).
    Notice that, in this case, $\sR_S=NW_F=\emptyset$.
    Results in~\cite{DL11} show a connection between non-trivial downstream sets of a flow and the solvability of the relative cohomological equation.
\end{example}
\begin{figure}[t]
    \centering
    \includegraphics[width=10cm]{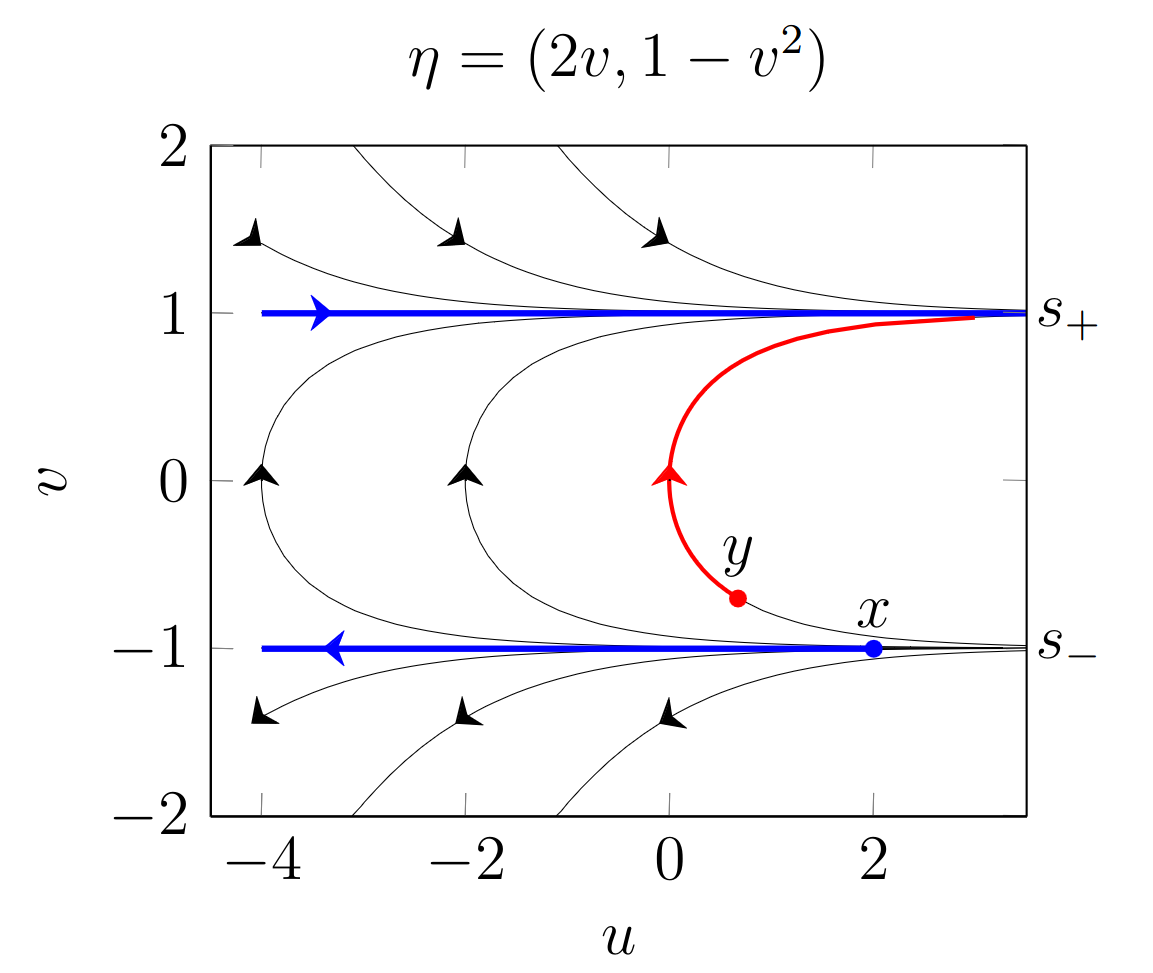}
    \caption{{\EM A semi-flow $F$ such that $\Down_{\NW_F}(x)\supsetneq\cO_F(x)\cup\Down_{\NW_F}\Omega_F(x)$.} 
    Here $X=\R^2$ and $F$ is the flow of the vector field $\eta=(2v,1-v^2)$.
    We painted in black some flow lines of $\eta$ and in blue the points of $\Down_{\NW_F}(x)$.
    The blue points are the points in $\Down_{\NW_F}(x)$.
    The red points are the points in $\Down_{\NW_F}(y)$.
    }
    \label{fig:H}
\end{figure}
\begin{definition}
    We say that a node $N$ of an $F$-stream $S$ is {\em dynamical} if $N$ contains $F$-recurrent points. 
\end{definition}
Next propositions shows that every dynamical node contains non-wandering points.
\begin{proposition}
    \label{prop:forward-invariant}
    Every forward-invariant node of a stream is dynamical. 
\end{proposition}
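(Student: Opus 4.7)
The plan is short: pick any $x \in N$, use forward-invariance together with closedness of $N$ to trap $\Omega_F(x)$ inside $N$, and then invoke Prop.~\ref{prop:recurrent pt} to harvest an $F$-recurrent point from $\Omega_F(x)$.

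First I would verify that a node $N$ of a stream $S$ is always closed. For any $x \in N$ we have $N = \Node_S(x) = \Down_S(x) \cap \Up_S(x)$, since the node is precisely the $\Seq$-equivalence class of $x$. Each of the two factors is closed by Prop.~\ref{prop:Down(compact)} applied to the compact singleton $\{x\}$: note that the proof of that proposition used only closedness of the relation and not its transitivity, so it applies verbatim to the closed quasi-order $S$. Hence $N$ is closed.

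Now I would fix any $x \in N$. By $F$-forward-invariance of $N$ the orbit $\cO_F(x)$ lies in $N$, and because $N$ is closed we obtain $\overline{\cO_F(x)} \subset N$. In particular, $\Omega_F(x) \subset N$. Under the standing compact-dynamics hypothesis, Prop.~\ref{prop:recurrent pt} provides an $F$-recurrent point $y \in \Omega_F(x)$, and then $y \in N$ shows that $N$ is dynamical.

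I do not anticipate any genuine obstacle: the argument is just a chain of Prop.~\ref{prop:Down(compact)} and Prop.~\ref{prop:recurrent pt} through the trivial inclusion $\Omega_F(x) \subset \overline{\cO_F(x)}$. The only point worth emphasizing is that closedness of stream nodes is \emph{automatic} from the definition of a stream as a closed quasi-order, so no extra regularity assumption is needed beyond what has already been imposed on $F$.
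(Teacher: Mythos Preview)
Your proof is correct and follows essentially the same route as the paper: trap $\Omega_F(x)$ inside $N$ via forward-invariance and closedness, then invoke Prop.~\ref{prop:recurrent pt}. The paper's one-line argument glosses over the closedness of $N$, which you spell out explicitly via $N=\Down_S(x)\cap\Up_S(x)$ and Prop.~\ref{prop:Down(compact)}; this is a useful clarification, since closedness of arbitrary stream nodes is not stated separately elsewhere in the paper (it is proved only for $\NW_F$ and for $\Omega$streams).
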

\begin{proof}
    If a node is forward-invariant, it contains the limit-sets of each of its points. 
    By the compactness hypothesis, every limit-set is non-empty.
    By the Proposition~\ref{prop:recurrent pt}, each limit-set contains $F$-recurrent points.
\end{proof}
\begin{corollary}
    A node $N$ of an $F$-stream is dynamical if and only if it extends a node of $NW_F$.
\end{corollary}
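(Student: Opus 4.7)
The plan is to prove the two directions separately, using essentially (i) Proposition~\ref{prop:NW}(4), which gives a recurrent point inside every non-wandering node under compact dynamics, and (ii) Proposition~\ref{prop:smallest stream}(1), which says that every stream $S$ contains $\NW_F$ and hence collapses $\NW_F$-equivalence classes into single $S$-equivalence classes.

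For the ``if'' direction, suppose $N$ extends a node $M$ of $NW_F$, i.e.\ $M\subset N$. Since $F$ has compact dynamics (our standing assumption whenever we speak of nodes of a stream in this section), Proposition~\ref{prop:NW}(4) guarantees that $M$ contains an $F$-recurrent point $x$. Then $x\in N$, so $N$ is dynamical.

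For the ``only if'' direction, suppose $N$ is dynamical and pick a recurrent $x\in N$. Since any recurrent point is in particular non-wandering, $x\in NW_F$; let $M$ be the node of $\NW_F$ that contains $x$. The key step is to show $M\subset N$. Take any $y\in M$. By definition of a non-wandering node, $x\NWeq y$, i.e.\ $x\NWto y$ and $y\NWto x$. By Proposition~\ref{prop:smallest stream}(1), $S\supset\NW_F$, so $x\Sto y$ and $y\Sto x$, which gives $x\Seq y$. Since $\Node_S(x)=N$ by hypothesis, $y\in N$. Hence $M\subset N$, and $N$ extends the $NW_F$-node $M$.

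No step should pose real difficulty: the heart of the argument is just the observation that $\NW_F$-equivalence is finer than $S$-equivalence whenever $S$ is a stream, combined with the existence statement from Proposition~\ref{prop:NW}(4). The only thing to watch is that ``extends'' here has to be read in the weaker, set-theoretic sense introduced just before Definition~\ref{def graphs} (the non-wandering relation is not itself a quasi-order), and that the compactness hypothesis is tacitly in force so that Proposition~\ref{prop:NW}(4) applies.
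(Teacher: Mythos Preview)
Your proof is correct and essentially the same as the paper's, just with the details written out. The paper's one-line proof (``an immediate consequence of the fact that non-wandering nodes are forward-invariant'') is really only addressing the ``if'' direction: a non-wandering node $M$ is forward-invariant and closed, hence (by the argument behind Proposition~\ref{prop:forward-invariant}, equivalently Proposition~\ref{prop:NW}(4)) contains a recurrent point, so any $S$-node extending $M$ is dynamical. For the ``only if'' direction the paper says nothing explicit; your use of $\NW_F\subset S$ (Proposition~\ref{prop:smallest stream}(1)) to push $\NW_F$-equivalence up to $S$-equivalence is exactly the missing step, and is the natural argument.
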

\begin{proof}
    This is an immediate consequence of the fact that non-wandering nodes are forward-invariant.
\end{proof}
\begin{proposition}
    \label{prop:minimal streams}
    Let $S$ be an $F$-$\Omega$stream. Then:
    \begin{enumerate}
        \item If $x$ is $S$-recurrent, then $\cO_F(x)\subset\Node_S(x)$.
        \item $\Down_S(\Om_F(x))\cap\sR_S=\Down_S(x)\cap\sR_S$.
        \item Every node of $S$ is closed and forward-invariant under $F$.
        \item Every node of $S$ is dynamical.
        \item $\cR_S$ is closed and forward-invariant under $F$.
        \item $S$ is forward-invariant under the natural action induced by $F$ on $X\times X$.
        \item $\Down_S(M)$ is forward-invariant under $F$ for each $M\subset X$.
        \item $\Up_S(C)$ is forward-invariant under $F$ for each set $C\subset\cR_S$ that is union of nodes of $S$.
    \end{enumerate}
\end{proposition}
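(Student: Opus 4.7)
The plan is to treat claim (1) as the engine from which the other seven cascade, since every other claim either reduces to the defining identity of an $\Omega$stream or invokes (1) to propagate $S$-equivalence along forward orbits. I would fix $x\in\cR_S$ (not fixed; the fixed case is trivial) and $y=F^t(x)$, and prove $y\Sto x$. Picking $w\neq x$ with $x\Seq w$, the identity $\Down_S(x)=\cO_F(x)\cup\Down_S(y)$ applied to $w$ leaves two possibilities. If $w\in\Down_S(y)$, then $y\Sto w\Sto x$. Otherwise $w=F^s(x)$: if $s\ge t$, then $w\in\cO_F(y)$ and again $y\Sto w\Sto x$; if $s<t$, then $y\in\cO_F(w)$, and applying the identity to $w$ yields $x\in\Down_S(w)=\cO_F(w)\cup\Down_S(y)$, so either $y\Sto x$ or $x$ is $F$-periodic, in which case $y\Fto x$ around the period.

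Claim (2) then drops out of (1): the inclusion $\supset$ is immediate, and if $z\in\Down_S(x)\cap\cR_S$, the characterization $\Down_S(x)=\cO_F(x)\cup\Down_S(\Om_F(x))$ proved just above this proposition puts $z$ either in $\Down_S(\Om_F(x))$ directly, or in $\cO_F(x)$; in the second case, (1) together with closedness of $S$ places $\Om_F(z)=\Om_F(x)$ inside $\Node_S(z)$, so $\Om_F(x)\Sto z$ as well. Claim (3) is two standard moves: forward-invariance of each node is a literal restatement of (1), and closedness is the usual sequence argument -- picking any $y\in N$ and using closedness of $S$ to conclude $\bar x\Seq y$ from $x_n\Seq y$. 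Claim (4) is then (3) combined with Prop.~\ref{prop:forward-invariant}. Claim (5) uses (1) twice: forward-invariance is again just (1), and for closedness, if $x_n\to\bar x$ in $\cR_S$ and $\bar x$ is not fixed, (1) supplies $F(x_n)\Seq x_n$, and closedness of $S$ together with continuity of $F$ gives $F(\bar x)\Seq\bar x$ with $F(\bar x)\neq\bar x$.

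Claim (6) is the cleanest use of the defining identity: $x\Sto y$ places $y$ in $\cO_F(x)\cup\Down_S(F(x))$, and in either branch transitivity of $S$ yields $F(x)\Sto F(y)$. Claim (7) is not actually specific to $\Omega$streams -- $x\Sto y\Fto F(y)$ gives $x\Sto F(y)$ by transitivity alone. Claim (8) combines the previous ideas: given $y\Sto c$ with $c\in C$, the identity $\Down_S(y)=\cO_F(y)\cup\Down_S(F(y))$ gives either $F(y)\Sto c\in C$, or $c\in\cO_F(y)$; in the second case $c\in\cR_S$, so (1) places $\cO_F(c)\subset\Node_S(c)\subset C$, and $F(y)\Fto F(c)\in C$ closes the argument.

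The only real obstacle is the bookkeeping in (1), where one has to compare positions along the forward orbit of $x$ and deal separately with the degenerate case in which $x$ turns out to be $F$-periodic; once (1) is in hand, every remaining claim is a short direct deduction from either the $\Omega$stream identity, closedness of $S$, or a previously proven part of this same proposition.
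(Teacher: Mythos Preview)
Your proposal is correct and follows essentially the same approach as the paper's proof: use (1) as the engine, with the same case analysis along the orbit via the $\Omega$stream identity, and then derive the remaining parts from (1), closedness of $S$, and the defining identity. The paper's argument for (1) differs only cosmetically (it disposes of the $F$-recurrent case up front via Prop.~\ref{prop:streams} rather than handling the periodic case at the end as you do), and its proofs of (6) and (8) route through $\Omega_F(x)$ and through (6)+(1) respectively, whereas your direct use of $\Down_S(x)=\cO_F(x)\cup\Down_S(F(x))$ is marginally cleaner; but these are stylistic, not substantive, differences.
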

\begin{proof}
    (1)  
    If $x$ is $F$-recurrent, the claim follows immediately from Proposition~\ref{prop:streams}.
        
    Assume now that $x$ is $S$-recurrent but not $F$-recurrent. 
    Then there is a $y\neq x$ that is $S$-equivalent to $x$.
    Now, let $z\in\cO_F(x)$.
    Since $S$ is an $\Omega$stream, $\Down_S(x)=\cO_F(x)\cup\Down_S(z)$.
    Hence, either $y\in\cO_F(x)$ or $y\in\Down_S(z)$.
    When either $y\in\cO_F(z)$ or $y\in\Down_S(z)$, then $y$ (and so $x$) is $S$-equivalent to $z$ and so $z\in\Node_S(x)$.
    When $z\in\cO_F(y)$, namely $y$ is between $x$ and $z$ in the orbit of $x$, we have that
    $\Down_S(y)=\cO_F(y)\cup\Down_S(z)$.
    Since, by hypothesis, $x\in\Down_S(y)$, then either $x\in\cO_F(y)$ or $x\in\Down_S(z)$.
    The first case cannot happen since $x$ would be periodic.
    In the second case, it follows immediately that $z$ is $S$-equivalent to $x$ and so $z\in\Node_S(x)$.

    (2)
    Since $\Down_S(\Omega_F(x))\subset\Down_S(x)$, it is enough to prove that every $S$-recurrent point in $\Down_S(x)$ does belong to $\Down_S(\Omega_F(x))$.
    Since, by Proposition~\ref{prop:streams}, $\Omega_F(x)$ is a $S$-equivalent set, it is actually enough to prove that every $S$-recurrent point in $\Down_S(x)$ is downstream from any point $y\in\Om_F(x)$.
    Notice that, since $S$ is an $\Omega$stream, either $z\in\cO_F(x)$ or $z\in\Down_S(y)$.
    Since $z\in\cR_S$, in both cases we get that $z\in\Down_S(y)$.

    (3)
    We prove first that $N$ is closed.
    Let $x_n$ be a sequence of points of $N$ converging to some $x\in X$.
    The case when $x$ is fixed is trivial, so we assume it is not, namely there exists $t>0$ such that $F^t(x)\neq x$.
    We can assume without loss of generality that also none of the $x_n$ is fixed.
    By point (1) above, each $\cO_F(x_n)$ is  $S$-equivalent, so that $F^t(x_n)$ and $x_n$ are $S$-equivalent for every $n\geq1$.
    By continuity, therefore, since $S$ is closed we have that $x$ and $F^t(x)$ are $S$-equivalent, so that $x\in\sR_S$.
    Moreover, since by construction $x_n\lra x_m$ for each $n,m$, then by continuity also $x_n\lra x$ for all $n$, namely $x\in N$.

    We prove now that $N$ is forward-invariant, namely that $\cO_F(x)\subset N$ for each $x\in N$.
    If $x$ is periodic, then $\cO_F(x)=\Om_F(x)\subset\Node_S(x)=N$ and the claim follows.
    Assume now that $x$ is not periodic.
    Then there is a $y\neq x$ that is $S$-equivalent to $x$.
    Let $z\in\cO_F(x)$.  
    In order to prove that $z\in N$, it is sufficient to show that either $z\in\Down_S(x)$ or $z\in\Down_S(y)$.
    Since $S$ is an $\Omega$stream, then either $y\in\Down_S(z)$ or $y\in\cO_F(x)$.
    The only non-trivial case is when $y$ is between $x$ and $z$ in $\cO_F(x)$. 
    In this case $z\in\cO_F(y)$ and so, again by minimality, we get that 
    $\Down_S(y)=\cO_F(y)\cup\Down_S(z)$.
    Since $x\in\Down_S(y)$, the only non-trivial case is that $x\in\cO_F(y)$.
    But then $\cO_F(x)=\cO_F(y)$, namely the orbit is periodic, against our working assumption.
    Hence, $\cO_F(x)$ is a $S$-equivalent set, namely $\cO_F(x)\subset N$.

    (4) 
    This is a corollary of Proposition~\ref{prop:forward-invariant}.

    (5) 
    That $\cR_S$ is closed follows from the same argument used in (3). 
    Moreover, $\cR_S$ is forward-invariant because it is the union of forward-invariant sets.

    (6)
    We need to show that $F^t(S)\subset S$, namely that, if $(x,y)\in S$, then  $(F^tx,F^ty)\in S$ for each $t\geq0$.
    So, let $(x,y)\in S$.
    By minimality, there are two cases: either $y\in\cO_F(x)$, in which case the result is true by transitivity, or $y$ is downstream from $\cO_F(x)$. In the latter case, by continuity, $$
    F^tx\leadsto\Om(x)\leadsto y\leadsto F^ty.$$

    (7)
    This amounts just to the observation that $\cO_F(x)\subset\Down_S(x)$.
    
    (8)
    Let $x\in\Up_S(C)$, \ie, $x\leadsto C$. 
    Then, there exists $y\in C$ such that $x\leadsto y$.
    Since $S$ is forward-invariant, then, 
    for every $t\geq0$, $F^t(x)\leadsto F^t(y)$. 
    Since $y\in\cR_S$, $F^t(y)\lra y$.
    Hence, $F^t(x)\Sto y$, so that $F^t(x)\in\Up_S(C)$.
\end{proof}
\begin{proposition}
    \label{prop:intersection of minimal}
    Let $S_\alpha$ be a collection of $F$-$\Omega$streams.
    Then:
    \begin{enumerate}
        \item $S_\cap = \bigcap_\alpha S_\alpha$ is an $F$-$\Omega$stream.
        \item $\cR_{S_\cap} = \bigcap_\alpha\cR_{S_\alpha}$.
            \item For every node $N$ of $\cR_{S_\cap}$, denote by $N_\alpha$ the node of $\cR_{S_\alpha}$ that extends $N$.
            Then $N=\bigcap_\alpha N_\alpha$.
    \end{enumerate}
\end{proposition}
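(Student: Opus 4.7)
The key tool throughout will be Prop.~\ref{prop:minimal streams}(1), which says that for an $\Omega$stream $S$, if $x$ is $S$-recurrent then the whole orbit $\cO_F(x)$ sits inside $\Node_S(x)$. This single fact will let us manufacture a \emph{uniform} witness to $S_\cap$-recurrence, which is the only non-routine point in the proposition.

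For part (1), I already know from Prop.~\ref{prop:streams intersection} that $S_\cap$ is a closed quasi-order extending $\cO_F$, so it is a stream. It remains to verify the $\Omega$stream identity $\Down_{S_\cap}(x) = \cO_F(x)\cup\Down_{S_\cap}(y)$ for each $y\in\cO_F(x)$. The inclusion $\supset$ is automatic from Prop.~\ref{prop:Down_S}. For $\subset$, take $z\in\Down_{S_\cap}(x)$ and suppose $z\notin\cO_F(x)$. For each index $\alpha$, the $\Omega$stream property of $S_\alpha$ gives $z\in\cO_F(x)\cup\Down_{S_\alpha}(y)$; since $z\notin\cO_F(x)$, this forces $z\in\Down_{S_\alpha}(y)$ for \emph{every} $\alpha$, whence $z\in\bigcap_\alpha\Down_{S_\alpha}(y)=\Down_{S_\cap}(y)$.

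For part (2), the inclusion $\cR_{S_\cap}\subset\bigcap_\alpha\cR_{S_\alpha}$ is immediate from Prop.~\ref{prop:R_S monotonic} applied to $S_\cap\subset S_\alpha$. The reverse inclusion is the main obstacle: given $x\in\bigcap_\alpha\cR_{S_\alpha}$, each $S_\alpha$ provides a witness $y_\alpha\neq x$ with $x\Seq_{S_\alpha} y_\alpha$, and a priori these witnesses need not agree. To get around this, I assume $x$ is not fixed and pick any $y\in\cO_F(x)$ with $y\neq x$ (e.g.\ $y=F^t(x)$ for a sufficiently small $t>0$). Because $S_\alpha$ is an $\Omega$stream and $x\in\cR_{S_\alpha}$, Prop.~\ref{prop:minimal streams}(1) gives $\cO_F(x)\subset\Node_{S_\alpha}(x)$, so $y$ is $S_\alpha$-equivalent to $x$ for \emph{every} $\alpha$. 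Thus $(x,y),(y,x)\in S_\alpha$ for all $\alpha$, hence $(x,y),(y,x)\in S_\cap$, proving $x\in\cR_{S_\cap}$.

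For part (3), first observe that if $N$ is a node of $S_\cap$ and $x,y\in N$, then $(x,y),(y,x)\in S_\cap\subset S_\alpha$, so $x$ and $y$ lie in the same $S_\alpha$-node; this both justifies the notation $N_\alpha$ (the unique $S_\alpha$-node containing $N$) and shows $N\subset\bigcap_\alpha N_\alpha$. Conversely, fix $x\in N$ and let $y\in\bigcap_\alpha N_\alpha$. Then for every $\alpha$, $x\Seq_{S_\alpha} y$, so $(x,y),(y,x)\in S_\alpha$ for all $\alpha$, which gives $(x,y),(y,x)\in S_\cap$ and hence $y\in\Node_{S_\cap}(x)=N$. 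This closes the argument.
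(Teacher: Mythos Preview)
Your proof is correct and follows essentially the same route as the paper. For (1) and (2) the paper invokes exactly the same ingredients (Prop.~\ref{prop:streams intersection} and Prop.~\ref{prop:minimal streams}(1)), the only cosmetic difference being that the paper writes the $\Omega$stream identity as the single chain $\bigcap_\alpha\Down_{S_\alpha}(x)=\cO_F(x)\cup\bigcap_\alpha\Down_{S_\alpha}(y)$, which is precisely your case split $z\in\cO_F(x)$ versus $z\notin\cO_F(x)$ compressed into one line; for (3) the paper simply writes ``Left to the reader'', so your argument there is a clean completion rather than a deviation.
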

\begin{proof}
    (1) 
    $S_\cap$ is a stream by Proposition~\ref{prop:streams intersection}. Moreover, for every $y\in\cO_F(x)$,
    \beq
    \Down_{S_\cap}(x)=
    \bigcap_\alpha\Down_{S_\alpha}(x)=
    \cO_F(x)\cup\bigcap_\alpha\Down_{S_\alpha}(y)=\cO_F(x)\cup\Down_{S_\cap}(y),
    \eeq
    so $S_\cap$ is an $\Omega$stream.
    (2) 
    This is an immediate consequence of the fact that, by Proposition~\ref{prop:minimal streams}, the whole orbit of a point $x\in\cR_S$ is $S$-recurrent and points of the orbit are all mutually $S$-equivalent. 
    In particular, this means that 
    \beq
    x\in\cR_{S_\alpha} \Longleftrightarrow
    \cO_F(x)\subset\Node_{S_\alpha}(x)
    \Longleftrightarrow
    \cO_F(x)\subset\Node_{S_\cap}(x)
    \Longleftrightarrow
    x\in\cR_{S_\cap}.
    \eeq    
    (3)
    Left to the reader.
\end{proof}
%

\subsection{Subgraphs and Towers}
Graphs of streams can be highly complicated.
Even in case of the logistic map, one of the simplest possible non-trivial dynamical system, for certain parameters the non-wandering set has infinitely many nodes, although its phase space $X=[0,1]$ is compact.
Moreover, as shown in the example below, 
one can easily modify locally the logistic map so to obtain a unimodal map without wandering intervals and with an arbitrary (at most countable number) of attractors (all but one consisting of non-topological attracting orbits), saddles and repellors.
\begin{figure}
    \centering
    \includegraphics[width=12cm]{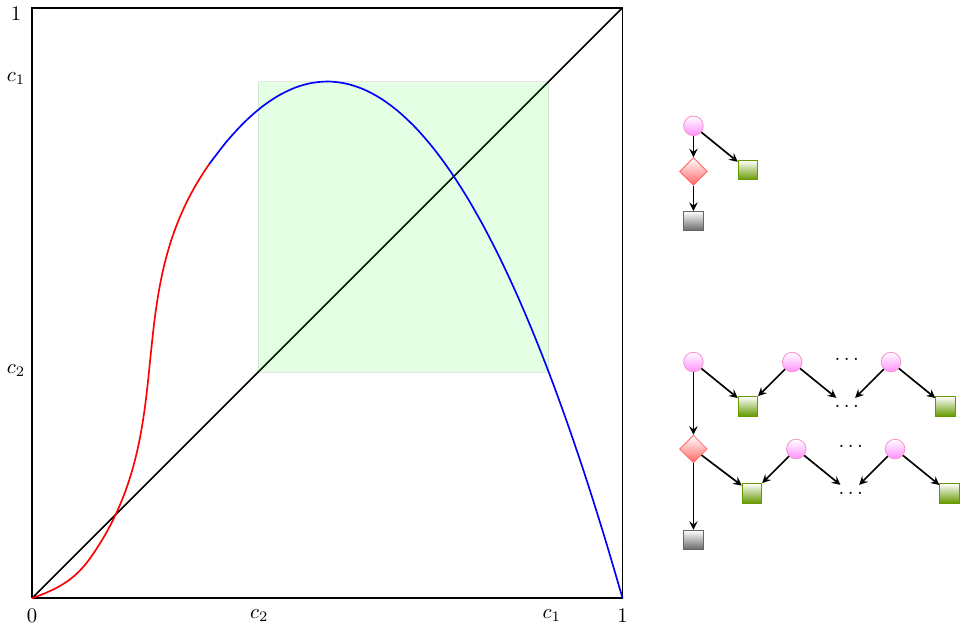}
    \caption{{\em A unimodal map and the graph of its smallest stream.} 
    (left) Graph of a unimodal map $f$. This unimodal map coincides with the logistic map $\ell_\mu$, with $\mu=3.5$, in some neighborhood of the trapping region $[c_2,c_1]$, where $c_1=\ell_\mu(0.5)$ and $c_2=\ell_\mu(c_1)$.
    (right, top)
    Graph of the smallest stream of $f$, namely the chains stream (see Section~\ref{sec:countably many nodes}).
    Notice that the top node of the graph is 
    the repelling fixed point corresponding to the internal intersection with the diagonal of the red portion of the graph of $\ell_\mu$.
    The attracting node in green is the left endpoint $x=0$.
    (right, bottom)
    By repeating the trick, one can create any number of non-topological periodic orbits, resulting in the graph shown in the bottom of the right column.
    In the two stream graphs are shown only the basic edges.
    Every non-displayed edge can be inferred by transitivity.
    }
    \label{fig:unimodal}
\end{figure}
\begin{example}
    Recall that the prolongational graph of a logistic map $\ell_\mu$, $\mu\in[0,4]$, with $\mu$ not at the boundary of a periodic window, is a tower. 
    In this case, it coincides with the graph of the smallest stream of $\ell_\mu$ (see the bottom of Section~\ref{sec:countably many nodes}).
    Now, set $c_k=\ell^k_\mu(1/2)$.
    Notice that the dynamics of $\ell_\mu$ in the interval $J=[0,c_2)$ is trivial:
    $0$ is fixed and every other point enters the trapping region $Q=[c_2,c_1]$ in finite time.
    All non-trivial dynamics takes place inside $Q$.
    Hence, by modifying $\ell_\mu$ in $J$, one obtains a smooth unimodal function $f$ whose graph contains, as a subgraph (see Definition~\ref{def:subgraph}), the graph of $\ell_\mu$.
    In Figure~\ref{fig:unimodal} we show an example of such unimodal maps.
    The map $f$ whose graph is shown in the picture has a single non topological attracting fixed point, obtained by modifying a logistic map $\ell_\mu$ in a neighborhood of 0.
    In this particular case $\mu=3.5$, so that $\ell_\mu$ has three nodes: the  fixed node $0$ (repelling), the other fixed node $p$ (saddle) and a period-2 periodic orbit (attracting).
    The blue part of the graph of $f$ coincides with the graph of $\ell_\mu$.
    Close to zero, we choose $f$ so that  $0<f'(0)<1$ and that $f$ increases strictly monotonically until it meets the blue branch.
    This way, 0 is attracting while the other intersection $\phi$ of the red branch of the graph is repelling.
    Hence, $NW_f$ (and therefore  $\cA_f$) has four nodes: the fixed point 0 (attracting, non-topological), $\phi$ (repelling), $p$ (saddle) and the period-2 orbit (attracting).
    With the same trick, one can get unimodal maps $g$ with
    infinitely many non-topological attracting periodic orbits.
    Assume that one starts with a logistic map $\ell_\mu$ has infinitely many nodes. Then the graph of $\cA_g$ 
    has infinitely many repelling nodes, infinitely many attracting nodes and infinitely many saddle nodes.
    An example of such graph (with a single saddle but potentially infinitely many repelling and attracting nodes) is shown in Figure~\ref{fig:unimodal}(right, bottom).
\end{example}
The situation gets much more complicated in dimension larger than one. 
Indeed, while unimodal maps with infinitely many nodes are non-generic, Sheldon Newhouse showed~\cite{New74} that, on any compact manifold of dimension larger than one and for any $r\geq2$, there are open sets  ${\cal U}\subset\text{Diff}^r(M)$ such that a $C^r$-generic map of $\cal U$ has  
infinitely many attractors.

Hence, it is particularly important to identify important building blocks of stream graphs.
Towers, first introduced in Section~\ref{sec:T-unimodal}, are perhaps the most important one.
In this section, we present a general result about them.
%
\begin{definition}
    \label{def:subgraph}
    Given a directed graph $\Gamma$, the directed graph $\Gamma'$ is a {\em subgraph} of $\Gamma$ if the set of nodes of $\Gamma'$ is a subset of the set of nodes of $\Gamma$ and, if there is an edge from node $N$ to node $M$ in $\Gamma'$, there is an edge from node $N$ to node $M$ in $\Gamma$.
    Given a directed graph $\Gamma$, we call {\em tower} any {\black acyclic} subgraph $\Gamma'\subset\Gamma$ such that there is an edge between each $M,N\in\Gamma'$.
    A tower is {\em maximal} if it is not a subgraph of a larger tower.
\end{definition}
\begin{definition}
    A node $N$ of a stream $S$ is a {\em top node} if there are no $S$-nodes upstream of it and is a {\em bottom node} if there are no $S$-nodes downstream of it. 
\end{definition}
While it is tempting thinking of bottom nodes as attractors, the following example shows that this is not always the case.
\begin{example}
    Consider the scalar ordinary differential equation
\beq
 \frac{dx}{dt} = - x^2\sin\frac{\pi}{x}
\eeq
where the right-hand side is defined to be 0 at $x=0$. For integers $n\neq 0$, the points $x=\pm\frac{1}{n}$ are fixed points and are attracting for $n$ odd and repelling for $n$ even. Notice that all trajectories with $x(0)>1$ are attracted to the point $x=1$ while those with $x(0)<-1$ are attracted to the point $x=-1$. 
Hence there are countably many nodes, and there is an edge between adjacent nodes, one of which is an attractor and a top node and another is a repellor and a bottom node.  
The graph is a connected curve that includes the node $x=0$. However, there are no edges to or from that node. Hence it is both a top and a bottom node. This graph is plotted as a saw tooth plot with top nodes higher than the bottom nodes they connect to. An infinite number of nodes converge to the node at $x=0$.
\end{example}
%
%
\begin{theorem}
    \label{thm:topnode}
    Let $S$ be an $F$-stream and assume that either $X$ is compact or that $S$ is an $\Omega$stream.    
    Then each node $N$ of $S$ is contained in a maximal tower of $\Gamma_S$.    
\end{theorem}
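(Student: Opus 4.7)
The plan is a direct application of Zorn's lemma. I would consider the poset $\mathcal{T}$ consisting of all towers $T \subseteq \Gamma_S$ whose node set contains $N$, ordered by subgraph inclusion. Since $\Gamma_S$ is acyclic (by the proposition that stream graphs have no cycles), each tower is linearly ordered by $\Sto$, so $\mathcal{T}$ is equivalently the set of $\Sto$-chains in the node set of $\Gamma_S$ that pass through $N$.

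First I would check nonemptiness: the one-node subgraph $\{N\}$ vacuously satisfies the pairwise-edge condition of Def.~\ref{def:subgraph} (interpreted among distinct nodes, as required by the absence of self-loops), so $\{N\} \in \mathcal{T}$. Then I would verify the chain hypothesis of Zorn's lemma: given any totally ordered subcollection $\{T_\alpha\}_{\alpha \in A}$ of $\mathcal{T}$, let $T^* := \bigcup_\alpha T_\alpha$ be the subgraph whose nodes and edges are obtained by taking the union over $\alpha$. Certainly $N \in T^*$. For any two distinct nodes $M_1, M_2 \in T^*$, fix indices $\alpha_i$ with $M_i \in T_{\alpha_i}$; since $\{T_\alpha\}$ is a chain, one of $T_{\alpha_1}, T_{\alpha_2}$ contains the other, so both $M_1$ and $M_2$ lie in a common $T_\alpha$, whose tower property supplies an edge between them in $\Gamma_S$ and hence in $T^*$. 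Thus $T^*$ is a tower in $\mathcal{T}$ and is an upper bound for the chain. Zorn's lemma then produces a maximal element of $\mathcal{T}$, which is by definition a maximal tower containing $N$.

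The argument above uses only Zorn's lemma together with the acyclicity of $\Gamma_S$, so neither compactness of $X$ nor the $\Omega$stream hypothesis appears to be needed for the bare existence statement. I would expect these assumptions to be invoked not for existence of the maximal tower per se, but in order to anchor it with genuine top and bottom nodes, in the spirit of the stronger claim in Thm.~3 of the introduction. Showing that a maximal chain of nodes actually attains a maximum (a top node, with no incoming edges) and a minimum (a bottom node, with no outgoing edges) requires a compactness-flavored argument, for instance intersecting a nested family of closed forward-invariant nodes, using Prop.~\ref{prop:minimal streams} and Prop.~\ref{prop:NW_F in Q}, to produce a limit node that must itself be a node of $\Gamma_S$. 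I anticipate that this upgrade -- guaranteeing extrema of the maximal chain -- would be the only nontrivial step, the Zorn step itself being routine.
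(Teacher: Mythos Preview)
Your Zorn argument is correct and matches exactly the opening step of the paper's proof: the paper also writes ``By Zorn's Lemma, there is a maximal tower $\cT$ of nodes such that $N\in\cT$'' without further justification. For the statement as literally phrased (existence of a maximal tower through $N$, with ``maximal'' meaning ``not contained in a larger tower'' as in Def.~\ref{def:subgraph}), nothing more is needed and the hypotheses are indeed superfluous.

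Your diagnosis of where the hypotheses enter is also on target. The paper's proof does not stop at Zorn: it goes on to show that the maximal tower so obtained actually has a top node (and, symmetrically, a bottom node), which is the content advertised as point~3 in the introduction. The mechanism is precisely the one you anticipated: for each $M\in\cT$ one forms the closure of the union of nodes upstream of $M$ in $\cT$, obtaining a nested family of compact sets whose intersection $V$ is nonempty; one checks $V$ is $S$-equivalent, hence lies in a single node $T$, and maximality of $\cT$ forces $T$ to be a top node. Compactness of $X$ (or the $\Omega$stream property, which confines $\cR_S$ to a compact trapping region) is what makes the nested-intersection argument go through. So the paper is really proving the stronger top/bottom-node statement under the heading of this theorem, and your outline of that upgrade is accurate.
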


\begin{proof}
    \blue
    In order to prove the claim, we need to work in a compact environment.
    Let $Q$ be a compact global trapping region of $F$.
    If the $F$-stream $S$ is a $\Omega$stream, then every node of $S$ lies in $Q$ (see case 5 of Lemma~\ref{lemma:S trapping region}) and so it is enough to prove the claim for the restriction of $F$ to $Q$. 
    If $S$ is not a $\Omega$stream, then we must assume that $X$ is compact.
    Hence, we can assume without loss of generality that $X$ is compact.
    
    We prove below that there is a top node $T\Sto N$ and we leave the analogous proof for a bottom node to the reader.
    By Zorn's Lemma, there is a maximal tower $\cT$ of nodes such that $N\in\cT$.
    For each node $M\in\cT$, let $\cT(M)=\{A\in\cT: A\Sto M\}$.
    Then $\cT(M)$ is an ordered chain.
    The set $U(M)\bydef\cup_{A\in\cT(M)}A$ is compact and the collection of all $U(A)$ for $A\in\cT$ is a nested set of compact sets.
    Let $V\bydef\cap_{A\in\cT(M)} U(A)$.
    $V$ is non-empty and compact and consists of points that are mutually $S$-equivalent.
    Hence, $V$ is contained in a node $T\bydef\Node(V)\in\cT$ and all points of $T$ are in $V$.
    If there were a node $B\Sto T$, with $B\neq T$, then it would not be in the chain and so the chain would not be maximal, a contradiction.
    Hence, $T$ is a top node.
\end{proof}



\subsection{Trapping regions of streams} 
%
Recall that a trapping region for $F$ is a closed set $Q$ such that $F^t(Q)\subset Q$ for all $t\geq0$.
By analogy, we provide the following definition in case of streams.
\begin{definition}
    Given a stream $S$, we say that a closed set $Q$ is a trapping region for $S$ 
    if $\Down_S(Q)\subset Q$.
\end{definition}
Notice that, since $\cO_F\subset S$, each trapping region for an $F$-stream $S$ is also a trapping region for $F$.

%
\begin{lemma}
    \label{lemma:S trapping region}
    Let $S$ be an $F$-stream on $X$ and let $Q$ be a trapping region for $S$.
    The following hold:
    \begin{enumerate}
        \item if $S'$ is a substream of $S$, then $Q$ is a trapping region for $S'$;
        \item if a node $N$ of $S$ has some point in $Q$, then $N\subset Q$;
        \item there is no edge in $\Gamma_S$ from any node in $Q$ to any node outside $Q$;
        \item if $S$ is an $\Omega$stream \blue and $Q$ is a global trapping region for $F$\black, then $x\Sto y$, with $x,y\not\in Q$, if and only if $x\Fto y$;
        \item if $S$ is an $\Omega$stream \blue and $Q$ is a global trapping region for $F$\black, $\cR_S\subset Q$.
    \end{enumerate}
\end{lemma}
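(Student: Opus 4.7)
The plan is to prove the five parts in the order (1), (2), (3), (5), (4), with each step feeding the next.

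Parts (1)--(3) will be direct consequences of the defining property $\Down_S(Q)\subset Q$ together with the equivalence of points inside a node. For (1), monotonicity of $\Down$ in $S$ gives $\Down_{S'}(Q)\subset \Down_S(Q)\subset Q$. For (2), if $x\in N\cap Q$ and $y\in N$, then $y\Seq x$, so $y\in\Down_S(x)\subset\Down_S(Q)\subset Q$. For (3), an edge $N_1\to N_2$ with $N_1\subset Q$ supplies $y\in N_2$ with $y\in\Down_S(Q)\subset Q$, and (2) then promotes this to $N_2\subset Q$.

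For (5), I will exploit the $\Omega$stream hypothesis together with the paper's standing compact-dynamics framework, in which $Q$ acts as a global attractor for every $F$-orbit. Let $x\in\cR_S$. By Prop.~\ref{prop:minimal streams}(1), $\cO_F(x)\subset\Node_S(x)$, and by Prop.~\ref{prop:minimal streams}(3) this node is closed and $F$-forward-invariant. Compact dynamics makes $\Omega_F(x)$ non-empty, and taking limits of $F^n(x)\in\Node_S(x)$ together with the closedness of the node yields $\Omega_F(x)\subset\Node_S(x)$. Global attraction of $Q$ forces $\Omega_F(x)\subset Q$ as well, so $\Node_S(x)\cap Q\neq\emptyset$; part (2) then gives $\Node_S(x)\subset Q$, and in particular $x\in Q$.

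For (4), the direction $x\Fto y\Rightarrow x\Sto y$ is immediate since $\cO_F\subset S$. Conversely, suppose $x\Sto y$ with $x,y\notin Q$ and, for contradiction, $y\notin\cO_F(x)$. The $\Omega$stream identity $\Down_S(x)=\cO_F(x)\cup\Down_S(y')$, applied at each $y'\in\cO_F(x)$, yields $y'\Sto y$ for every $y'\in\cO_F(x)$. Since $S$ is closed, the set $\{z:z\Sto y\}$ is closed, so it contains every limit point of $\cO_F(x)$; in particular, $\omega\Sto y$ for every $\omega\in\Omega_F(x)$. By Prop.~\ref{prop:streams}(1) the set $\Omega_F(x)$ is $S$-equivalent and, by compactness, non-empty, so any such $\omega$ lies in $\cR_S$. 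Part (5) then puts $\omega\in Q$, whence $y\in\Down_S(\omega)\subset\Down_S(Q)\subset Q$, contradicting $y\notin Q$.

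The main obstacle will be (5): the delicate ingredient is concluding $\Omega_F(x)\subset Q$, which rests on reading $Q$ as a global $F$-trapping region in the sense of the paper's standing framework rather than merely an $S$-forward-invariant closed set. Once this is in hand, the $\Omega$stream identity and the closedness and forward-invariance of nodes from Prop.~\ref{prop:minimal streams} pin $x$ inside $Q$, and (4) follows by the routine closure-and-limit argument above.
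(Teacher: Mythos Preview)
The paper leaves this lemma without proof, so there is no authorial argument to compare against. Your proofs of (1)--(3) are clean and correct.

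For (4) and (5) there is a genuine gap, and you have in fact put your finger on it yourself: you need $\Omega_F(x)\subset Q$ (and $\Omega_F(x)\neq\emptyset$), which you justify by reading $Q$ as a global $F$-trapping region ``in the sense of the paper's standing framework.'' That identification is not licensed by the hypotheses. The lemma's $Q$ is only assumed to be a trapping region \emph{for the stream} $S$, i.e.\ closed with $\Down_S(Q)\subset Q$; the paper's compact-dynamics assumption asserts that \emph{some} compact global trapping region exists, not that \emph{this particular} $Q$ is one.

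Without that extra hypothesis, (4) and (5) are actually false. Take $X=[0,1]$, $F$ the identity semi-flow, and $S=\{(x,y):y\le x\}$. Over the identity every $F$-stream is an $\Omega$stream, since $\cO_F(x)=\{x\}$ reduces the defining identity to $\Down_S(x)=\{x\}\cup\Down_S(x)$. The singleton $Q=\{0\}$ is closed with $\Down_S(0)=\{0\}\subset Q$, hence a trapping region for $S$. But every point is $F$-fixed, so $\cR_S=[0,1]\not\subset Q$, contradicting (5); and $0.5\Sto 0.3$ with both points outside $Q$ while $\cO_F(0.5)=\{0.5\}$, contradicting (4).

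Your argument does go through once one adds the hypothesis that $Q$ is globally attracting for $F$ (so that $\emptyset\neq\Omega_F(x)\subset Q$ for every $x$), and this is exactly the setting in which the paper later invokes the lemma---for instance in the proof that nodes of $\cC_F$ are $F$-invariant, where the $Q$ in play is the compact global trapping region supplied by Prop.~\ref{prop:CR trapping region}. So the intended content is almost certainly what you wrote; but the missing hypothesis must be made explicit, not absorbed into a ``standing framework'' that does not, as written, pin down \emph{this} $Q$.
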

\begin{proof}
    \blue
    (1)
    If $S'$ is a substream of $S$, then $\Down_{S'}(Q)\subset\Down_{S}(Q)\subset Q$.

    (2)
    Let $x\in N\cap Q$ and let $y\in N$.
    Then $y\in\Down_S(x)\subset Q$. 

    (3)
    Let $N$ be a node of $S$ inside $Q$ and let $M$ be another node of $S$.
    Since there is an edge from $N$ to $M$, then $M\subset\Down_S(N)$.
    Hence, $M\subset Q$.

    (4)
    Since $S$ is a $\Omega$stream, $\Down_S(x)=\cO_F(x)\cup\Down_S(\Omega_F(x))$.
    Since $Q$ is globally attracting, either $\Omega_F(x)$ is empty or $\Omega_F(x)\subset Q$.
    Hence, $\Down_S(\Omega_F(x))\subset Q$ and so, since $y\in\Down_S(x)$, necessarily $y\in\cO_F(x)$, namely $x\Fto y$.

    (5)
    Let $x\in\cR_S$.
    If $x$ is periodic, then $\Omega_F(x)=\cO_F(x)$ and, since $\Omega_F(x)\subset Q$, then $x\in Q$.
    If $x$ is not periodic, then there is a $y\in\cR_S$ with $y\neq x$, $x\Sto y$ and $y\Sto x$.
    Assume that $x\not\in Q$. 
    Then it must be also $y\not\in Q$ by case (3) and so, by case (4),  $y\in\cO_F(x)$ and $x\in\cO_F(y)$.
    But then $x$ is periodic and so it must lie in $Q$, against the assumption.
\end{proof}
\begin{proposition}
    Let $\cS_S$ be the set of substreams of an $\Omega$stream $S$ and let $Q$ be a trapping region for $S$ \blue that is globally attracting.
    \black
    Then $S_1,S_2\in\cS_S$ coincide if and only if their restriction to $Q$ coincide.
\end{proposition}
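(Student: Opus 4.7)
My approach is to prove each direction separately. The forward direction, $S_1 = S_2 \Rightarrow S_1|_Q = S_2|_Q$, is immediate. For the converse, I will fix $(x, y) \in S_1$ and show $(x, y) \in S_2$ by case-splitting on the position of $x$ and $y$ relative to $Q$; the reverse inclusion follows by symmetry.

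Three of the four cases are direct. If $x \in Q$, the trapping property forces $y \in Q$, so $(x, y) \in S_1|_Q = S_2|_Q \subset S_2$. If $y \in \cO_F(x)$, then $(x, y) \in \cO_F \subset S_2$. If $x, y \notin Q$, item~(4) of Lemma~\ref{lemma:S trapping region} applied to the ambient $\Omega$stream $S \supset S_1$ forces $y \in \cO_F(x)$, collapsing into the previous case. These cases tacitly use the compact-dynamics setup under which $Q$ absorbs every $\omega$-limit set, so $\Omega_F(x) \subset Q$ for all $x$.

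The crux is the remaining case $x \notin Q$, $y \in Q$, $y \notin \cO_F(x)$. My plan is to factor $(x, y)$ through a bridging point $w \in \Omega_F(x) \cap Q$. Applying the $\Omega$stream identity $\Down_S(x) = \cO_F(x) \cup \Down_S(\Omega_F(x))$ to $(x, y) \in S$ produces $w \in \Omega_F(x) \subset Q$ with $(w, y) \in S$. Choosing $t_n \to \infty$ so that $F^{t_n}(x) \to w$, the pairs $(x, F^{t_n}(x)) \in \cO_F \subset S_1 \cap S_2$ and the closure of each $S_i$ together yield $(x, w) \in S_1 \cap S_2$. Once $(w, y) \in S_1$ is also established, the pair $(w, y)$ lies in $S_1|_Q = S_2|_Q \subset S_2$, and transitivity of $S_2$ applied to $(x, w), (w, y) \in S_2$ concludes that $(x, y) \in S_2$.

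The main obstacle is the upgrade $(w, y) \in S \Rightarrow (w, y) \in S_1$, since a substream of an $\Omega$stream need not itself be an $\Omega$stream, so the identity cannot simply be reapplied inside $S_1$. My plan is to iterate the identity inside the ambient $S$ to obtain $(F^{t_n}(x), y) \in S$ for every $n$, and then push these pairs into $S_1$ by combining the forward-invariance of $S$ under the diagonal $F$-action (Prop.~\ref{prop:minimal streams}(6)) with the fact that $\Omega_F(x)$ lies inside a single $S_1$-equivalence class (since $\NW_F \subset S_1$, all points of $\Omega_F(x)$ are mutually $S_1$-equivalent, and any subsequential limit $y^*$ of $F^{t_n}(y)$ sits in the same class as $y$ when $y \in \cR_{S_1}$). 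Closure of $S_1$ along $F^{t_n}(x) \to w$ then delivers $(w, y) \in S_1$. If this direct propagation needs reinforcement, a backup route is Auslander's characterization $S_i = \cD_{\cL_{S_i}}$: one shows that every Lyapunov function of $S_i$ is determined by its $Q$-values together with monotonicity along orbits asymptoting into $Q$, so that $S_1|_Q = S_2|_Q$ forces $\cL_{S_1} = \cL_{S_2}$ and hence $S_1 = S_2$.
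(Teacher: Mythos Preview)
Your case analysis matches the paper's proof exactly: the same four positions of $(x,y)$ relative to $Q$, the same use of Lemma~\ref{lemma:S trapping region}(4) to dispose of the $x,y\notin Q$ case, and the same reduction of the crux case to a bridging point $w\in\Omega_F(x)\cap Q$.

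Where you depart from the paper is in the crux case $x\notin Q$, $y\in Q$, $y\notin\cO_F(x)$. The paper's proof there is one line: ``since $S$ is a $\Omega$stream, either $(x,y)\in\cO_F$ \ldots\ or $y\in\Down_{S_1}(\Omega_F(x))$'' --- that is, the paper applies the $\Omega$stream identity $\Down(x)=\cO_F(x)\cup\Down(\Omega_F(x))$ \emph{directly to $S_1$}, after which $(w,y)\in S_1\cap(Q\times Q)=S_2\cap(Q\times Q)$ and transitivity in $S_2$ finishes. You correctly flag that a substream of an $\Omega$stream need not itself be an $\Omega$stream (indeed the centre stream in Fig.~\ref{fig:streams} is a non-$\Omega$ substream of $X\times X$), so this step is not self-evident; the paper is either tacitly treating the substreams as $\Omega$streams or glossing over the same point you worry about.

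Your attempts to route around this, however, do not succeed. The forward-invariance you invoke (Prop.~\ref{prop:minimal streams}(6)) is proved only for $\Omega$streams, so it cannot be applied to $S_1$; and even granting it, from $(x,y)\in S_1$ you would only obtain $(F^{t_n}(x),F^{t_n}(y))\in S_1$, not $(F^{t_n}(x),y)\in S_1$. Your recovery via a subsequential limit $y^*$ of $F^{t_n}(y)$ then requires both $y\in\cR_{S_1}$ and $y^*\in\Node_{S_1}(y)$, neither of which is given: the second would need $y$ to be $F$-recurrent, not merely $S_1$-recurrent (Prop.~\ref{prop:streams}(1) puts $\Omega_F(y)$ in a single $S_1$-class, but not in the class of $y$). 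The Lyapunov backup is too sketchy to assess: $S_1|_Q=S_2|_Q$ does not obviously force $\cL_{S_1}=\cL_{S_2}$, since Lyapunov functions are defined on all of $X$ and nothing you have said prevents two of them from agreeing on $Q$ while ordering some pair $(x,y)$ with $x\notin Q$ oppositely.

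In short: your write-up is faithful to the paper's strategy and more scrupulous about the delicate step, but the gap you identify is not closed by the arguments you propose. The paper simply asserts the $\Omega$stream decomposition for $S_1$ and proceeds.
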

\begin{proof}
    By hypothesis, $S_1\cap(Q\times Q)=S_2\cap(Q\times Q)$.
    We need to prove that $(x,y)\in S_1$ if and only if $(x,y)\in S_2$ when either $x\not\in Q$ or $y\not \in Q$ (or both).
    
    Suppose that $(x,y)\in S_1$.
    By case 4 of the lemma above, when both $x,y$ are outside of $Q$, $(x,y)$ belongs to all substreams of $S$.
    If $x\in Q$, since $Q$ is a trapping region for $S$, we must have that $y\in Q$.
    The last case is when $x\not\in Q$ and $y\in Q$. 
    In this case, since $S$ is an $\Omega$stream, either $(x,y)\in\cO_F$, in which case it belongs to all streams, or $y\in\Down_{S_1}(\Om_F(x))$.
    Since $S_1$ and $S_2$ coincide inside $Q$, then $(x,y)\in S_2$.
\end{proof}


    


\section{Streams of chains}
The most important generalizations of  recurrent points in literature are all based on chains: generalized recurrence, first considered by Auslander in 1963~\cite{Aus63}, is based on $\eps$-$\alpha\omega$chains; chain-recurrence, introduced by Conley in 1972~\cite{Con72b,Moe88}, is based on $\eps$-chains; strong chain-recurrence, introduced in 1977 by Easton~\cite{Eas77}, is based on  $\eps$-$\Sigma$chains (see~\cite{Wis18} for several examples and recent results on these three recurrence generalizations).
In this section we show that each of those generalizations are the set of recurrent points of suitable streams. 
\subsection{The chains streams}
\label{sec:CR}
\blue
Recall that, given a map $f$ on $X$, a $(f,\eps)$-chain from $x$ to $y$ of length $n+1$ is a sequence $(c_0,\dots,c_n)$ such that $c_0=x$, $c_n=y$ and $d(f(c_i),c_{i+1})<\eps$ for all $i=0,\dots,n-1$.

The following technical lemma will be used several times in the rest of the section.
\begin{lemma}[Hurley, 1991~\cite{Hur91}, Lemma 1.2]
    \label{lemma: UC discrete}
    Let $f:X\to X$ be continuous  \blue and let $x\in X$.\black
    Then, for any $\eps>0$ and any integer $p>0$, there exists a $\delta>0$ such that:
    \begin{enumerate}
        \item 
        $
        d(x,y)<\delta \implies d(f^k(x),f^k(y))<\eps\;\text{ for every }\;k=0,\dots,p;
        $
        \item 
        for every $(f,d,\delta)$-chain $C=(c_0,c_1,\dots,c_n)$  \blue based at $x$ \black with $n\leq p$, the chain $C'$ consisting in the pair of points $c_0,c_n$ is a $(f^n,d,\eps)$-chain.
        Equivalently, 
        $$
        d(f^n(c_0),c_n)<\eps.
        $$
        \blue
        If $f$ is uniformly continuous, then $\delta$ can be chosen independently on $x$.
    \end{enumerate}
\end{lemma}
\begin{proof}
    \blue
    (1)
    We leave it to the reader.

    (2)
    The case $p=1$ is a tautology and holds for every $\delta<\eps$. 
    Assume now that the claim holds for all $p\leq p_0$.
    Since we already know that the claim is true for all $n=1,\dots,p_0$, it is enough to prove that, for every $\eps>0$, there is a $\delta>0$ such that $d(f^{p_0+1}(c_0),c_{p_0+1})<\eps$.
    
    Consider a chain $(c_0,\dots,c_{p_0+1})$ and notice that 
    $$
    d(f^{p_0+1}(c_0),c_{p_0+1})
    \leq
    d(f^{p_0+1}(c_0),f(c_{p_0}))
    +
    d(f(c_{p_0}),c_{p_0+1}).
    $$
    By continuity, there is an $\eta>0$ such that $d(y,z)<\eta$ implies $d(f(y),f(z))<\eps/2$.
    By the inductive assumption, 
    there is a $\delta_1>0$ such that, if $(c_0,\dots,c_{p_0})$ is a $(f,d,\delta_1)$-chain, then $d(f^{p_0}(c_0),c_{p_0})<\eta$.
    Therefore, if $(c_0,\dots,c_{p_0})$ is a $(f,d,\delta_1)$-chain, then $d(f^{p_0+1}(c_0),f(c_{p_0}))<\eps/2$.
    
    Assume now that $(c_0,\dots,c_{p_0+1})$ is a $\eps/2$-chain.
    Then $d(f(c_{p_0}),c_{p_0+1})<\eps/2$.
    Finally, let $\delta=\min\{\eps/2,\delta_1\}$.
    Then, for every $\delta$-chain based at $x$ of length at least $p_0+2$, 
    $
    d(f^{p_0+1}(c_0),c_{p_0+1})<\eps/2+\eps/2=\eps.
    $
\end{proof}
\blue
\begin{corollary}[Hurley, 1991]
    \label{cor: chains with infinite length}
    Let $y\in\Down_{\cC_{F,d}}(x)$ and $y\not\in\cO_F(x)$.
    Then, if $\eps_n\to0^+$ and $C_n$ is a $(F,d,\eps_n)$-chain from $x$ to $y$, the length of the $C_n$ diverges as $\eps_n$ goes to 0.
\end{corollary}
\black
\begin{definition}
    Given a discrete-time semi-flow $F$ on $X$ and a metric $d$ compatible with the topology of $X$, we call {\EM $(F,d,\eps)$-chains stream} the relation
    \beq
    \cC_{F,d,\eps} = \{(x,y):\text{ there is a $(F,d,\eps)$-chain from $x$ to $y$}\}.
    \eeq
    We call {\EM $(F,d)$-infinitesimal chains stream} (or simply {\em chains stream}) the relation
    \beq
    \cC_{F,d} = \bigcap_{\eps>0}\cC_{F,d,\eps}.
    \eeq
\end{definition}
Next two propositions grant that the relations defined above are indeed streams.
\begin{proposition}
    $\cC_{F,d,\eps}$ is a stream for every $\eps>0$.
\end{proposition}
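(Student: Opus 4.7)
The plan is to verify in turn the three defining features of a stream: that $\cC_{F,d,\eps}$ extends $\cO_F$, that it is reflexive and transitive, and that it is topologically closed in $X\times X$.

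First I would handle the orbit extension together with reflexivity. If $(x,y)\in\cO_F$ with $y=F^n(x)$ for some $n\geq 1$, then the sequence $z_i:=F^{i-1}(x)$, $i=1,\dots,n+1$, is a $0$-chain, and hence an $\eps$-chain, from $x$ to $y$: every gap satisfies $d(F(z_i),z_{i+1})=0\leq\eps$. The case $n=0$ (reflexivity) is supplied by the convention that the degenerate one-point sequence $z_1=x$ counts as a chain from $x$ to itself, or equivalently by the observation that $\cO_F$ already contains the diagonal (since $F^0=\mathrm{id}$) and any extension of $\cO_F$ inherits this. For transitivity, given an $\eps$-chain $(x=z_1,\dots,z_k=y)$ and an $\eps$-chain $(y=w_1,\dots,w_\ell=z')$, I would concatenate them into $(z_1,\dots,z_{k-1},w_1,\dots,w_\ell)$; both original chain inequalities survive, and the only seam that needs checking, $d(F(z_{k-1}),w_1)=d(F(z_{k-1}),z_k)\leq\eps$, is already the last inequality of the first chain.

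The main obstacle is closedness. Suppose $(x_n,y_n)\to(x,y)$ with each $(x_n,y_n)\in\cC_{F,d,\eps}$, witnessed by an $\eps$-chain of length $k_n$. Naive modifications fail: replacing $x_n$ by $x$ in the chain from $x_n$ to $y_n$ only yields $d(F(x),z_{n,2})\leq\eps+o(1)$ by continuity of $F$, which strictly exceeds $\eps$. The correct strategy is subsequential extraction, exploiting the closed nature of the defining inequality together with continuity of $F$. I would first pass to a subsequence on which the lengths $k_n$ are constant, call it $k$. Then, from $d(F(x_n),z_{n,2})\leq\eps$ and $F(x_n)\to F(x)$, each second coordinate $z_{n,2}$ is confined to a bounded neighborhood of $F(x)$; iterating the estimate $d(F(z_{n,i}),z_{n,i+1})\leq\eps$ keeps every intermediate coordinate bounded. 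A diagonal extraction, using the local compactness available inside a compact trapping region (the standing setting of the paper), yields $z_{n,i}\to z_i$ for every $i$; passing to the limit in each inequality $d(F(z_{n,i}),z_{n,i+1})\leq\eps$---preserved since the condition is closed and $F$ is continuous---produces an honest $\eps$-chain $(x=z_1,z_2,\dots,z_{k-1},z_k=y)$ from $x$ to $y$. The two points that most need care are the reduction to bounded chain length (the $k_n$ could a priori diverge, in which case one must shorten the chains by collapsing segments that revisit a small neighborhood) and the compactness required to extract convergent intermediate points; both obstacles vanish once all chains are located inside a compact invariant region.
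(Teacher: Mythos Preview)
Your handling of transitivity and of the inclusion $\cO_F\subset\cC_{F,d,\eps}$ matches the paper. On reflexivity you are a bit careless: the paper's definition of chain requires $k>1$, so your ``degenerate one-point sequence'' is not a chain, and your fallback is circular (the $n=0$ case of $\cO_F\subset\cC_{F,d,\eps}$ \emph{is} reflexivity). The paper itself simply asserts ``by definition $\cC_{F,d,\eps}$ contains the diagonal,'' so this appears to be taken as a convention rather than argued.

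The substantive divergence is closedness. The paper's argument is one line: once $d(x,x_n)<\eps$ and $d(y,y_n)<\eps$, the concatenation $\{x\}+C_n+\{y\}$ is declared to be an $\eps$-chain from $x$ to $y$---no compactness, no subsequences, no bound on $k_n$. Your extraction argument, by contrast, imports a compact-trapping-region hypothesis that is not part of the statement and leaves a real gap: the reduction of unbounded $k_n$ is only gestured at, and ``collapsing segments that revisit a small neighborhood'' does not actually produce an $\eps$-chain, since the shortcut jump $d(F(c_{i-1}),c_j)$ can exceed $\eps$ even when $c_i$ and $c_j$ are close. Your instinct that closedness is the delicate step is not unfounded---even the paper's concatenation skates over the seam conditions $d(F(x),x_n)\leq\eps$ and $d(F(y_n),y)\leq\eps$---but the intended argument is to manipulate a single chain $C_n$ directly, not to extract limits across all of them.
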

\begin{proof}
    {\EM $\cC_{F,d,\eps}$ is a quasi-order.}
    By definition $\cC_{F,d,\eps}$ contains the diagonal of $X\times X$. Moreover, $\cC_{F,d,\eps}$ is transitive because the concatenation of two $F$-$\eps$-chain is an $F$-$\eps$-chain.

    \noindent
    {\EM $\cC_{F,d,\eps}$ is closed.}
    Assume that $(x_n,y_n)\to(x,y)$ for some sequence of pairs belonging to $\cC_{F,d,\eps}$.
    For each $n>0$, denote by $C_n$ any $\eps$-chain from $x_n$ to $y_n$.
    Then, for $n$ large enough, we can assume that $d(x,x_n)<\eps$ and $d(y,y_n)<\eps$ and so the chain $\{x\}+C_n+\{y\}$ is an $\eps$-chain from $x$ to $y$.
    In particular, $(x,y)\in\cC_{\eps,F}$.

    \noindent
    {\EM $\cO_F\subset\cC_{F,d,\eps}.$} Each $y\in\cO_F(x)$ can be joined to $x$ by a $0$-chain (orbit segment).
\end{proof}
%


Notice that, in general, the $\cC_{F,d,\eps}$ are not $\Omega$streams, as the example below shows.
\black
\begin{example}
    Let $F(x)=x^2$ on $X=[0,1]$.
    This is a discrete version of the case discussed in Example~\ref{ex:streams}: the end-points are fixed while every other point moves leftward, asymptoting to 0.
    Since $X$ is compact, $\cC_{F,d,\eps}$ does not depend on $d$ and we denote it by $\cC_{F,\eps}$.
    For every $\eps>0$, every point close enough to 0 (resp. to 1) is $\cC_{F,\eps}$-equivalent to 0 (resp. to 1).
    For $\eps\geq1$, every finite sequence of points in $X$ is an $\eps$-chain and so 
    $\cC_{F,\eps}=X\times X$.
    For $\eps>0$ small enough, points close enough to $1/2$ are further away than $\eps$ from their image, so that the node containing 0 is disjoint form the node containing 1.
    In this case, $\cC_{F,\eps}$ has two nodes ($N_0$, containing 0, and $N_1$, containing 1) and its graph is equal to the one of $\cC_F$, namely a tower with two nodes. 
    Nevertheless, for $x\in N_1$ with $x\neq1$, we have that $1\in\Down_{\cC_{F,\eps}}(x)$ although $1\not\in\cO_F(x)\cup\Down_{\cC_{F,\eps}}(0)$.
    {\black Hence, by Proposition~\ref{prop:Ostream}, $\cC_{F,\eps}$ cannot be a $\Omega$stream.}
\end{example}
\begin{lemma}
    \label{lemma:minimal}
    For every $x$ and $y\in\cO_F(x)$,
    $$
    \cO_F(x)\cup\Down_{\cC_{F,d}}(y)\supset\Down_{\cC_{F,d}}(x).
    $$
\end{lemma}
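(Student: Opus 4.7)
The plan is to prove the reverse inclusion to the one provided by Prop.~\ref{prop:Down_S}, but specialized to the chains stream. Let $y = F^n(x)$ with $n \geq 1$ (the case $n=0$ is trivial) and take $z \in \Down_{\cC_{F,d}}(x)$. If $z \in \cO_F(x)$ the conclusion is immediate, so I will assume $z \notin \cO_F(x)$ and show that $z \in \Down_{\cC_{F,d}}(y)$, that is, that for every $\eps > 0$ there exists an $(F,d,\eps)$-chain from $y$ to $z$.

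The key observation is a standard chain-compounding estimate: by iterated use of the continuity of $F$ at the points $x, F(x), \dots, F^n(x)$, for every $\eta > 0$ one can find $\delta = \delta(\eta, n, x) > 0$ such that every $\delta$-chain $z_1 = x, z_2, \dots, z_m$ with $m \leq n+1$ satisfies $d(z_m, F^{m-1}(x)) < \eta$. Since $z \notin \cO_F(x)$, the finite collection of distances $\rho_j := d(z, F^j(x))$ for $0 \leq j \leq n$ consists of strictly positive numbers, so $\rho_* := \min_{0 \leq j \leq n}\rho_j > 0$. Also choose $\delta_0 > 0$ by continuity of $F$ at $y$ so that $d(u, y) < \delta_0$ implies $d(F(u), F(y)) < \eps/2$. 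I will then choose $\delta$ small enough so that (i) $\delta \leq \eps/2$, (ii) the compounding estimate above holds with $\eta = \min(\rho_*/2, \delta_0)$.

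With this choice of $\delta$, observe first that no $\delta$-chain from $x$ to $z$ can have length at most $n+1$: such a chain would end at a point $z_m$ with $d(z_m, F^{m-1}(x)) < \rho_*/2$, while $d(z, F^{m-1}(x)) \geq \rho_* > \rho_*/2$. Since $z \in \Down_{\cC_{F,d}}(x)$ guarantees that a $\delta$-chain from $x$ to $z$ does exist, any such chain $z_1 = x, z_2, \dots, z_k = z$ must have length $k \geq n+2$. Moreover, our choice of $\eta$ ensures $d(z_{n+1}, y) = d(z_{n+1}, F^n(x)) < \delta_0$.

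Now I define the candidate chain from $y$ to $z$ by truncation: set $w_1 := y$ and $w_i := z_{n+i}$ for $i = 2, \dots, k-n$, so that $w_{k-n} = z_k = z$. The internal links satisfy $d(F(w_i), w_{i+1}) = d(F(z_{n+i}), z_{n+i+1}) \leq \delta \leq \eps$ for $i \geq 2$. For the initial link, the triangle inequality together with our choice of $\delta_0$ gives
\[
d(F(w_1), w_2) = d(F(y), z_{n+2}) \leq d(F(y), F(z_{n+1})) + d(F(z_{n+1}), z_{n+2}) < \tfrac{\eps}{2} + \delta \leq \eps.
\]
Hence this is an $\eps$-chain from $y$ to $z$, and since $\eps > 0$ was arbitrary, $z \in \Down_{\cC_{F,d}}(y)$, completing the proof. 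The main obstacle, which the above handles via the trichotomy $z \in \cO_F(x)$ versus $z$ at positive distance from the finite set $\{x, F(x), \dots, F^n(x)\}$, is ensuring that the relevant $\delta$-chains are long enough to be safely truncated; without this length guarantee, a purely ``apply $F^n$ to the chain'' strategy would fail because the endpoint would drift away from $z$.
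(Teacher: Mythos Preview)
Your proof is correct and follows essentially the same approach as the paper's: use a chain-compounding estimate to see that a sufficiently fine $\delta$-chain from $x$ tracks the orbit segment $x,F(x),\dots,F^n(x)$, then truncate the chain at the point nearest $y$ to obtain an $\eps$-chain from $y$ to $z$. The paper's proof invokes Lemma~\ref{lemma: UC discrete} to say the $\delta$-chain ``passes within $\eps$ from $F(y)$'' and then truncates at index $n+2$; you instead show the chain passes within $\delta_0$ of $y=F^n(x)$ and truncate at index $n+1$, patching the first link with continuity of $F$ at $y$. These are minor variations of the same idea.

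Your write-up is in fact more careful than the paper's on two points. First, you use only pointwise continuity of $F$ at the finitely many points $x,F(x),\dots,F^n(x)$, whereas the paper's appeal to Lemma~\ref{lemma: UC discrete} is phrased as a uniform statement. Second, you explicitly handle the possibility that the $\delta$-chain from $x$ to $z$ is too short to truncate, by using $z\notin\cO_F(x)$ to force $k\geq n+2$; the paper's sketch leaves this case implicit in the inclusion of $\cO_F(x)$ on the left-hand side.
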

\begin{proof}
  \blue
  Let $z\in \Down_{\cC_{F,d}}(x)$, $z\not\in\cO_F(x)$.
  Let $n>0$ be such that $y=F^n(x)$.
  By case 2 of Lemma~\ref{lemma: UC discrete}, there is a $\delta>0$ such that, for every $(F,d,\delta)$-chain $(c_0,\dots,c_n)$ based at $x$, $d(y,c_n)=d(f^n(x),c_n)<\eps$.
  Let $(c_0,\dots,c_N)$, $N\geq n+1$, be a $(F,d,\delta)$-chain from $x$ to $z$.
  Such chain exists by Corollary~\ref{cor: chains with infinite length}.
  Its subchain $(c_0,\dots,c_{n+1})$ is a $(F,d,\delta)$-chain based at $x$ and so $d(F^{n+1}(x),c_{n+1})<\eps$.
  Since $F^{n+1}(x)=F(y)$, this means that $(y,c_{n+1},\dots,c_N)$ is a $(F,d,\eps)$-chain from $y$ to $z$.
  Since this holds for every $\eps>0$, then  $z\in\Down_{\cC_{F,d}}(y)$.
\end{proof}
\begin{proposition}
    \label{prop: C is a Ostream}
    $\cC_{F,d}$ is an $\Omega$stream.
\end{proposition}
\begin{proof}
    $\cC_{F,d}$ is a stream because streams are closed under intersections (Proposition \ref{prop:streams intersection}). 
    It is an $\Omega$stream as an immediate consequence of Proposition~\ref{prop:Down_S}(2) and the lemma above.
\end{proof}
\begin{proposition}
    \label{prop:CR trapping region}
    \blue
    Assume $X$ be locally compact and let $F$ have compact dynamics.
    Then each compact global trapping region of $F$ is a trapping region for $\cC_{F,d}$.
\end{proposition}
\begin{proof}
    \blue
    Denote by $Q$ a compact global trapping region of $F$.
    We need to prove that $\Down_{\cC_{F,d}}(Q)\subset Q$.
        
    Let $(x,y)\in\cC_{F,d}$ with $x\in Q$ and $y\not\in Q$ and set $\rho=d(y,Q)$.
    Notice that $\rho>0$ since $Q$ is compact.
    Since $X$ is locally compact, we can assume that $Q_\rho$ has compact closure -- if not, we set $\rho$ to a smaller value for which that holds.
    
    Recall that $\cI_F(Q_\eps)$ denotes the smallest $F$-invariant subset of $X$ containing $Q_\eps$.
    By Lemma~\ref{lemma: invariant set}, there is an $\eps>0$ such that $\cI_F(Q_\eps)\subset Q_{\rho/2}$.
    Hence, if $x\in Q_\eps$, $F^t(x)\in Q_{\rho/2}$ for all $t\geq0$.
    Since $\cI_F(A)\subset\cI_F(B)$ for $A\subset B$, we can assume without loss of generality that $\eps<\rho$.
    Recall also that, by definition, there is a $N>0$ such that $F^t(Q_\eps)\subset Q_{\eps/2}$ for all $t\geq N$.
    Finally, by Lemma~\ref{lemma: UC discrete}, there is an $\eta>0$ such that, for every $(F,d,\eta)$-chain $(c_0,\dots,c_n)$ with $n\leq N$, we have that $d(F^n(c_0),c_n)<\eps/2$.
    Since $\overline{Q_\rho}$ is compact, each $F^t$ is uniformly continuous on it and so the $\eps$ above does not depend on the starting and ending point of the chain as long as they lie in $\overline{Q_\rho}$.

    Let now $(z_0,\dots,z_N,\dots,z_{2N},\dots,z_{kN+q})$ be a $(F,d,\eta)$-chain from $x$ to $y$.
    Since $z_0=x\in Q$, $F^N(z_0)\in Q$ and $d(F^N(z_0),z_N)<\eps/2$, so $z_N\in Q_{\eps/2}$.
    Since $z_N\in Q_{\eps}$, $F^N(z_N)\in Q_{\eps/2}$ and $d(F^N(z_N),z_{2N})<\eps/2$, so $z_{2N}\in Q_{\eps}$.
    By repeating this same argument a finite number of times we get that $z_{kN}\in Q_{\eps}$.
    Finally, since $q\leq N$ and $z_{kN}\in Q_{\eps}$, we have that $F^q(z_{kN})\in\cI_F(Q_\eps)\subset Q_{\rho/2}$ and $d(F^q(z_{kN}),z_{kN+q})<\eps/2$, so that $y=z_{kN+q}\in Q_{\rho/2+\eps/2}$.
    Since $\rho/2+\eps/2<\rho$, we get a contradiction.
\end{proof}
\blue
\begin{corollary}
    Let $X$ be locally compact and assume that $F$ has a compact global trapping region $Q$.
    Then $\cR_{\cC_{F,d}}\subset Q$.
\end{corollary}
\begin{proof}
    This is an immediate consequence of Proposition~\ref{prop:CR trapping region} and case 5 of  Lemma~\ref{lemma:S trapping region}.
\end{proof}
\black
The streams $\cC_{F,d,\eps}$ depend, in general, on the particular metric $d$ used on $X$.
Nevertheless, as proved below, when $X$ is locally compact and $F$ has compact dynamics, $\cC_{F,d}$ is purely topological.
\blue
\begin{lemma}
    \label{lemma: C1=C2}
    Let $X$ be compact and let $d_1,d_2$ be any two metrics generating the topology of $X$.
    Then $\cC_{F,d_1}=\cC_{F,d_2}$ for any discrete-time semiflow $F$ on $X$.
\end{lemma}
\begin{proof}
    Suppose that there are $x,y\in X$ such that $y$ is $\cC_{F,d_1}$-downstream but not $\cC_{F,d_2}$-downstream from $x$.

    Let $C_i$ be a sequence of $(d_1,\eps_i)$-chains from $x$ to $y$, with $\eps_i\to0$. 
    Since $y$ is not $\cC_{F,d_2}$-downstream from $x$, there is a $\delta>0$ such that, for each $i$, there is a point $x_{k_i}$ on $C_i$ such that
    $
    d_2(f(x_{k_i-1}),x_{k_i}) > \delta.
    $

    Since $X$ is compact, we can assume without loss of generality that these $x_{k_i}$ converge to a point $z$. 
    This means that, for every $\eta>0$, we can find an $i$ such that
    $d_1(f(x_{k_i-1}),x_{k_i}) < \eta$ and $d_1(z,x_{k_i}) < \eta$,
    so that $d_1(z,f(x_{k_i-1})) < 2\eta$.
    In particular, also $f(x_{k_i-1}) \to z$ with respect to the $d_1$ distance. 
    Since $d_1$ and $d_2$ are equivalent, $x_{k_i}\to z$ and $f(x_{k_i})\to z$ for both $d_1$ and $d_2$.
    This means that, for every $\eta > 0$, we can find an $i$ large enough such that
    $d_2(z,x_{k_i}) < \eta$ and $d_2(z,f(x_{k_i-1})) < \eta$.
    On the other side, we also have (see above) that
    $d_2(f(x_{k_i-1}),x_{k_i}) > \delta$ for every $i$.
    These three inequalities are incompatible with the triangular inequality for $\eta$ small enough.
    Hence, we must have that $\cC_{F,d_1}=\cC_{F,d_2}$. 
\end{proof}
\begin{proposition}
    \blue
    Assume that $X$ is locally compact and let $d_1,d_2$ be any two equivalent metrics on $X$.
    If $F$ has compact dynamics,
    then $\cC_{F,d_1}=\cC_{F,d_2}$.
\end{proposition}
\begin{proof}
    \blue
    Let $Q$ be a compact global trapping region for $F$.
    By Proposition~\ref{prop:CR trapping region}, $Q$ is a trapping region for both $\cC_{F,d_1}$ and $\cC_{F,d_2}$. 
    By Proposition~\ref{prop: C is a Ostream}, we know that both $\cC_{F,d_1}$ and $\cC_{F,d_2}$ are $\Omega$streams.
    We now show that they are identical.
    
    Consider first the case $x,y\not\in Q$.
    By case 4 of Lemma~\ref{lemma:S trapping region}, $(x,y)\in\cC_{F,d_i}$, $i=1,2$, if and only if $y\in\cO_F(x)$, which is a condition independent on the distance used on $X$.
    Hence, $\cC_{F,d_1}$ and $\cC_{F,d_2}$ agree on pairs of points outside of $Q$.
    The case $(x,y)\in\cC_{F,d_i}$ with $x\in Q$ and $y\not\in Q$ cannot happen for either $i=1$ or $i=2$ because of Proposition~\ref{prop:CR trapping region}.
    Consider now the case where $x\not\in Q$ and $y\in Q$.
    Since $\cC_{F,d_1}$ and $\cC_{F,d_2}$ are $\Omega$streams, either $y\in\Omega_F(x)$ or there are $z_1,z_2\in\Omega_F(x)$ such that $x\CRtoI z_1\CRtoI y$ and $x\CRtoII z_1\CRtoII y$.
    Since each pair $(x,y)$ with $y\in\Omega_F(x)$ is in both $\cC_{F,d_1}$ and $\cC_{F,d_2}$,
    we are now left with the case when $x,y\in Q$.
    The fact that $(x,y)\in\cC_{F,d_1}$ if and only if $(x,y)\in\cC_{F,d_2}$ is proven in Lemma~\ref{lemma: C1=C2}.
\end{proof}
When $\cC_{F,d}$ is the same for all $d$, we just write $\cC_F$.

The following proposition generalizes an analogue result of Douglas Norton in~\cite{Nor95}, stated for $X$ compact. 
\blue
%
\begin{proposition}
    Assume that $X$ is locally compact and let $F$ be a semi-flow with compact dynamics.
    Then each node of $\cC_{F}$ is $F$-invariant.
\end{proposition}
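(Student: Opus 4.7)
The plan is to establish $F(N)=N$. Since $\cC_F$ is an $\Omega$stream (verified immediately above), Prop.~\ref{prop:minimal streams}(3) already gives the forward-invariance $F(N)\subset N$; the content of the statement is therefore the reverse inclusion $N\subset F(N)$, i.e.\ to exhibit, for each $x\in N$, some $y\in N$ with $F(y)=x$. The case of a fixed $x$ is trivial, so I assume $x$ is not fixed.

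The core idea is to read approximate preimages of $x$ off the $\eps$-chains witnessing $x\in\cR_{\cC_F}$. For each small $\eps>0$, pick an $\eps$-chain $x=z_0^\eps,z_1^\eps,\dots,z_{n_\eps}^\eps=x$ (of length $n_\eps\geq 2$, since $x$ is not fixed) and set $y_\eps:=z_{n_\eps-1}^\eps$, so that $d(F(y_\eps),x)\leq\eps$. Along a sequence $\eps_i\to0$, any cluster point $q$ of $\{y_{\eps_i}\}$ will satisfy $F(q)=x$ by continuity of $F$, and the task reduces to producing such a $q$ inside $X$ and verifying that $q\in N$.

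The main obstacle is extracting such a cluster point without local compactness of $X$. For this I would invoke the argument inside the proof of Prop.~\ref{prop:CR trapping region}: for every $\delta>0$ there exists $\eta>0$ such that every $\eta$-chain starting in $Q$ stays in $N_\delta(Q)$. Since $x\in N\subset Q$ by Lemma~\ref{lemma:S trapping region}(5), choosing the $\eps_i$ small enough forces $d(y_{\eps_i},Q)\to0$. Picking $q_i\in Q$ with $d(y_{\eps_i},q_i)\to0$ and invoking the compactness of $Q$, I pass to a subsequence along which $q_i\to q\in Q$; then $y_{\eps_i}\to q$, and $F(q)=x$ by continuity.

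It remains to check that $q$ and $x$ are $\cC_F$-equivalent. One direction is immediate: $F(q)=x$ yields $q\Fto x$, so $q\in\Up_{\cC_F}(x)$. For the other direction, given $\delta>0$ choose $i$ with $\eps_i<\delta/2$ and $d(y_{\eps_i},q)<\delta/2$, and modify the original $\eps_i$-chain from $x$ to itself by replacing its penultimate point $y_{\eps_i}$ with $q$. The resulting sequence $x=z_0^{\eps_i},z_1^{\eps_i},\ldots,z_{n_i-2}^{\eps_i},q$ is a $\delta$-chain from $x$ to $q$, because the altered last jump has length at most $d(F(z_{n_i-2}^{\eps_i}),y_{\eps_i})+d(y_{\eps_i},q)\leq\eps_i+\delta/2<\delta$ by the triangle inequality. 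Hence $q\in\Down_{\cC_F}(x)$ as well, so $q\in N$ and $x=F(q)\in F(N)$, completing the proof.
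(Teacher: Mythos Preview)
Your argument is correct. The paper's own proof is essentially a citation: it observes (via Prop.~\ref{prop:CR trapping region} and Lemma~\ref{lemma:S trapping region}) that $\cR_{\cC_F}\subset Q$, declares that one may therefore assume $X$ compact, and then invokes Norton's Prop.~3.8 in~\cite{Nor95}. You instead unpack the argument directly, and your proof is essentially the standard ``penultimate point of the chain'' construction that underlies Norton's result and also appears (for the non-wandering relation) in the paper's own Prop.~\ref{prop:NW}(6).

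The one genuine difference is how the lack of local compactness of $X$ is handled. The paper sidesteps it by reducing to the compact space $Q$ and citing Norton there; you keep working in $X$ and instead use the fact (extracted from the proof of Prop.~\ref{prop:CR trapping region}) that $\eta$-chains starting in $Q$ remain in $N_\delta(Q)$, which forces $d(y_{\eps_i},Q)\to 0$ and lets you extract a limit $q\in Q$. This is arguably cleaner than the paper's reduction, since it avoids the implicit question of whether the $\cC_{F|_Q}$-nodes coincide with the $\cC_F$-nodes. Your verification that $q\in N$ via the modified $\delta$-chain $x,z_1,\dots,z_{n_i-2},q$ is correct and is the step the paper's citation hides.
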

\begin{proof}
    By Proposition~\ref{prop:CR trapping region} and Lemma~\ref{lemma:S trapping region}, $\cR_{\cC_F}\subset Q$.
    Hence, we can assume without loss of generality that $X$ is compact. 
    Then, by Norton's Proposition~3.8 in~\cite{Nor95}, the claim follows.
\end{proof}
\black
\subsection{Continuous-time chains streams.}
So far, we only considered the case of discrete-time chains.
Here, we prove that this can be done without loss of generality because the time-1 map $f=F^1$ of a continuous-time semi-flow $F^t$ completely determines the nodes and edges of the graph of $F^t$.
Our results extend, within a compact dynamics context, the following important result by Mike Hurley:
%
\begin{theoremX}[Hurley, 1995~\cite{Hur95}]
  \label{thm:Hurley}
  Let $F$ be a continuous-time semi-flow on a compact metric space $X$ and let $f=F^1$ be the corresponding time-1 discrete-time semi-flow.    
  Then $\cR_{\cC_{F,d}}=\cR_{\cC_{f,d}}$.
\end{theoremX}
Notice that the result above is not stated explicitly in~\cite{Hur95} but is rather a corollary of a more general result (Theorem~5 in~\cite{Hur95}) that holds, in general metric spaces, 
for a stronger version of chain-recurrence, where the ``$\eps$'' of an $\eps$-chain is not a constant but rather a strictly positive function. 
In case of a compact metric space, this general result reduces to Theorem~\ref{thm:Hurley}.

We start with the following definition.
\begin{definition}
  \label{def: continuous-time chains}
  Given a continuous-time semi-flow $F$ on $X$ and a metric $d$ compatible with the topology of $X$, given $\eps>0$ and $T>0$, a {\EM$(F,d,\eps,T)$-chain} of length $n+1$ from $x$ to $y$ is a sequence of $n+1$ points $c_0,\dots,c_n$ together with a finite sequence of positive real numbers $t_0,\dots,t_{n-1}$ such that:
    \begin{enumerate}
        \item $c_0=x$, $c_n=y$;
        \item $d(F^{t_i}(c_i),c_{i+1})\leq\eps$ for all $i=0,\dots,n-1$;
        \item $t_i\geq T$ for all $i=0,\dots,n-1$.
    \end{enumerate}
\end{definition}
The following technical lemma is a continuous-time analogue of Lemma~\ref{lemma: UC discrete}.
\begin{lemma}[Hurley, 1995]
    \label{lemma: UC cont}
    Let $F$ be a continuous-time semi-flow on $X$.
    Then, for any $\eps>0$, $T>0$ and $p>0$, there exists a $\delta>0$ such that:
    \begin{enumerate}
        \item 
        $
        d(x,y)<\delta \implies d(F^t(x),F^t(y))<\eps\;\text{ for every }\;t\in[0,T];
        $
        \item 
        for every $(F,d,\delta,T)$-chain $C$ with $p+1$ points $c_0,\dots,c_p$ and times $t_0,\dots,t_{p-1}$, the chain $C'$ with points $c_0,c_p$ and time $\tau=\sum_{i=0}^{p-1}t_i$ is a $(F,d,\eps,\tau)$-chain.
        Equivalently, 
        $$d(F^{\tau}(c_0),c_p)<\eps.$$
    \end{enumerate}
    If $f$ is uniformly continuous, then $\delta$ can be chosen independently on $x$.
\end{lemma}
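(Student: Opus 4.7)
My plan is to derive both parts from uniform continuity of the semi-flow on a suitable compact domain, with part~(2) following from part~(1) by a cascade of tolerances along the chain.

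For part~(1), I would restrict to a compact set $K\subset X$ (the compact global trapping region $Q$ of the semi-flow, or $X$ itself when $X$ is compact). The map $F:[0,T]\times K\to X$ is continuous, and the domain $[0,T]\times K$ is compact, so $F$ is uniformly continuous there. In particular, for every $\eps>0$ there is $\delta>0$ such that $d(x,y)<\delta$ implies $d(F^t(x),F^t(y))<\eps$ for every $t\in[0,T]$, which is exactly~(1). This will also produce a uniform modulus of continuity $\omega$ for the family $\{F^{t}\}$ over the relevant time range, satisfying $\omega(r)\to 0$ as $r\to0$.

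For part~(2), I would set $\phi_i:=F^{t_0+\cdots+t_{i-1}}(c_0)$, so that $\phi_0=c_0$ and $\phi_{i+1}=F^{t_i}(\phi_i)$, and estimate the error $d(\phi_i,c_i)$ by induction on $i$. The triangle inequality yields
\[
d(\phi_{i+1},c_{i+1})\le d\bigl(F^{t_i}(\phi_i),F^{t_i}(c_i)\bigr)+d\bigl(F^{t_i}(c_i),c_{i+1}\bigr)\le \omega\bigl(d(\phi_i,c_i)\bigr)+\delta,
\]
where the chain hypothesis bounds the second term by $\delta$ and part~(1) bounds the first by $\omega(d(\phi_i,c_i))$. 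I would then choose a decreasing cascade $\eps_p=\eps>\eps_{p-1}>\cdots>\eps_0>0$ with $\omega(\eps_i)<\eps_{i+1}/2$, which is possible because $\omega(r)\to 0$; finally, set $\delta<\min_i\eps_i/2$. By induction $d(\phi_i,c_i)<\eps_i$, and taking $i=p$ gives $d(F^\tau(c_0),c_p)=d(\phi_p,c_p)<\eps_p=\eps$, which is exactly the condition for $C'$ to be an $(F,d,\eps,\tau)$-chain (the minimum-time requirement $\tau\ge\tau$ being trivial).

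The main obstacle is arranging uniform continuity of $F^{t_i}$ \emph{uniformly in $i$}: the chain condition only requires $t_i\ge T$ with no a priori upper bound, so the modulus $\omega$ cannot be obtained from a single compact $[0,T_{max}]$ without extra work. In the compact-dynamics setting of the paper this is routinely dealt with either by truncating to the finitely many transitions at hand (since $p$ is fixed and each relevant chain lives inside $Q$, one may restrict to $t_i\in[T,T_{max}]$ with a uniform upper bound $T_{max}$ depending on $\eps,T,p$), or by invoking the equicontinuity of the family $\{F^t\}_{t\ge 0}$ on the compact trapping region. Once the uniform modulus $\omega$ is secured, the cascade argument above is purely formal, and the resulting $\delta$ simultaneously yields both conclusions~(1) and~(2).
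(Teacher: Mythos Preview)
Your treatment of part~(1) is correct and matches the paper's one-line justification: uniform continuity of $F$ on the compact set $[0,T]\times K$. For part~(2) the paper does not give its own proof but cites Lemma~7 of Hurley (1995); your cascade induction on the errors $d(\phi_i,c_i)$ is the natural direct argument.

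You correctly isolate the obstacle---the times $t_i$ are only bounded \emph{below} by $T$---but neither of your proposed fixes works, and this is a genuine gap. Restricting to $t_i\in[T,T_{\max}]$ is not permitted: the conclusion must hold for \emph{every} $(F,d,\delta,T)$-chain of length $p+1$, and such chains can have arbitrarily large $t_i$ no matter how small $\delta$ is. Equicontinuity of $\{F^t\}_{t\ge 0}$ on a compact trapping region fails whenever there is any expanding behavior (the Lorenz ball, or simply the flow of $\dot x=x(1-x)$ on $[0,1]$). Indeed this last flow already refutes part~(2) as literally written: take $p=2$, $T=1$, $\eps=\tfrac12$; for any $\delta>0$ set $c_0=0$, $t_0=1$, $c_1=\delta$, then choose $t_1$ large and $c_2=1$. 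This is a valid $(F,d,\delta,1)$-chain, yet $F^{\tau}(c_0)=0$ so $d(F^{\tau}(c_0),c_2)=1>\eps$. Thus the obstacle you flagged cannot be ``routinely dealt with''; the paper's reference to Hurley presumably carries an extra hypothesis (an upper bound such as $t_i\le 2T$, or Hurley's function-valued $\eps$) under which your cascade argument would go through without difficulty.
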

\begin{proof}
    \blue
    (1)
    We leave it to the reader.

    (2)
    The proof goes exactly as for the discrete-time version, case 2 of Lemma~\ref{lemma: UC discrete}.
    %
\end{proof}
\begin{definition}
    We call {\EM $(F,d,\eps,T)$-chains stream} the relation
    \beq
    \cC_{F,d,\eps,T} = \cO_F\cup\{(x,y):\text{ there is a $(F,d,\eps,T)$-chain from $x$ to $y$}\}.
    \eeq
    We call {\EM $(F,d,T)$-infinitesimal chains stream}
    the relation
    \beq
    \cC_{F,d,T} = \bigcap_{\eps>0}\cC_{F,d,\eps,T}.
    \eeq
    We call {\EM $(F,d)$-infinitesimal chains stream} (or simply {\em chains stream}) the relation
    \beq
    \cC_{F,d} = \bigcap_{T>0}\cC_{F,d,T}.
    \eeq
    By analogy, given a discrete-time semi-flow $f$, we call {\EM $(f,d,\eps,N)$-chain} of length $n+1$ from $x$ to $y$ a sequence of $n+1$ points $c_0,\dots,c_n$ together with a finite sequence of positive integers $k_0,\dots,k_{k-1}$ such that:
    \begin{enumerate}
        \item $c_0=x$, $c_n=y$;
        \item $d(f^{k_i}(c_i),c_{i+1})\leq\eps$ for all $i=0,\dots,n-1$;
        \item $k_i\geq N$ for all $i=0,\dots,n-1$.
    \end{enumerate}
    We call {\EM $(f,d,\eps,N)$-chains stream} the relation
    \beq
    \cC_{f,d,\eps,N} = \cO_f\cup\{(x,y):\text{ there is a $(f,d,\eps,N)$-chain from $x$ to $y$}\}
    \eeq
    and {\EM $(f,d,N)$-infinitesimal chains stream}
    the relation
    \beq
    \cC_{f,d,N} = \bigcap_{\eps>0}\cC_{f,d,\eps,N}.
    \eeq
\end{definition}
%
The reader can verify that all the relations above are indeed streams.
\blue
\begin{proposition}
    \label{prop: cont Ostream}
    Let $F$ be a continous-time semiflow.
    Then $\cC_{F,d}$ is an $\Omega$stream and, for every $T>0$, $\cC_{F,d,T}$ is an $\Omega$stream.
\end{proposition}
\begin{proof}
    We start with the case of $\cC_{F,d,T}$.
    Let $y=F^{\bar t}(x)$.
    We need to prove that 
    $$
    \cO_F(x)\cup\Down_{\cC_{F,d,T}}(y)\supset\Down_{\cC_{F,d,T}}(x).
    $$
    Let $z\in\Down_{\cC_{F,d,T}}(x)\setminus\cO_F(x)$ and let $\eps>0$.
    Then there is a $(F,d,T,\eps)$-chain from $x$ to $z$ with points $(c_0,\dots,c_N)$ and times $(t_0,\dots,t_{N-1})$.
    Notice that, since a $(F,d,T,\eps)$-chain is also a $(F,d,T',\eps)$-chain for all $T'<T$, we can assume without loss of generality that $T>\bar t$.
        
    By Lemma~\ref{lemma: UC cont}, there exists $\delta>0$ such that, for every $(F,d,T,\delta)$-chain $(c_0=x,c_1,c_2)$ with times $(t_0,t_1)$, we have that $d(F^{t_0+t_1}(x),c_2)<\eps$, namely
    $$
    d(F^{t_0+t_1-\bar t}(y),c_2)<\eps.
    $$
    We can assume without loss of generality that $\delta\leq\eps$.
    Since $\bar t<T\leq t_0$, then the sequence $(y,c_2,\dots,c_N)$ with times $(t_0+t_1-\bar t,t_2,\dots,t_{N-1})$ is a $(F,d,T,\eps)$-chain from $y$ to $z$.

    Since every $\cC_{F,d,T}$ is an $\Omega$stream, then $\cC_{F,d}=\cap_{t\geq0}\cC_{F,d,T}$ is an $\Omega$stream as well.
\end{proof}
\begin{proposition}
    Let $f$ be a discrete-time semiflow.
    Then, for every $N>0$, $\cC_{f,d,N}$ is an $\Omega$stream.
\end{proposition}
\begin{proof}
    The argument is identical to the one used to prove Proposition~\ref{prop: cont Ostream}.
\end{proof}

%
We now state, for continuous-time chains, some statement we proved in case of discrete-time and that can be proved with the very same proof of their discrete-time version.
\begin{proposition}
    \label{prop: cont-time chains}
    Let $F$ be a continuous-time semiflow on $X$ and set $f=F^1$.
    If $X$ is locally compact and $F$ has a compact global trapping region $Q$, the following holds:
    \begin{enumerate}
        \item $Q$ is a trapping region for $\cC_{F,d}$, for $\cC_{F,d,T}$ for all $T>0$ and for $\cC_{f,d,N}$ for all $N>0$;
        \item $\cR_{\cC_{F,d}}$, each $\cR_{\cC_{F,d,T}}$, $T>0$, and each $\cC_{f,d,N}$ for all $N>0$ are subsets of $Q$;
        \item $\cR_{\cC_{F,d}}$, each $\cR_{\cC_{F,d,T}}$, $T>0$,  and each $\cC_{f,d,N}$ for all $N>0$ are $F$-invariant;
        \item $\cC_{F,d}$, each $\cR_{\cC_{F,d,T}}$, $T>0$, and each $\cC_{f,d,N}$ for all $N>0$ are independent on the metric.
    \end{enumerate}
\end{proposition}
Under assumptions that grant the metric independence of a relation, we will use for them the notation $\cC_F$, $\cC_{F,T}$ and $\cC_{f,N}$.

\black
Next two lemmas show that, for all that concerns infinitesimal chains, it is enough to consider the time-1 map $f=F^1$.
\begin{lemma}
    \label{lemma:long chains}
    For any $\eps>0$ and $x\in\cR_{\cC_{F,d,T}}$, there are $(F,d,\eps,T)$-chains of arbitrarily large length from $x$ to itself.
\end{lemma}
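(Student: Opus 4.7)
The plan is to handle the two disjuncts in the definition of $\cC_{F,d,T}$-recurrence of $x$, namely $x$ fixed or the existence of some $y \neq x$ with $x \Seq y$ in $\cC_{F,d,T}$, constructing in each case a single $(F,d,\eps,T)$-chain from $x$ back to itself; arbitrary length then follows by concatenating this loop with itself.

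If $x$ is fixed, then $F^T(x) = x$, so the constant sequence of $n+1$ copies of $x$ together with time-gaps $t_0 = \cdots = t_{n-1} = T$ is a valid $(F,d,\eps,T)$-chain of length $n+1$ for every $n \geq 1$, and we are done directly.

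If $x$ is not fixed, then by definition there is $y \neq x$ with $(x,y),(y,x) \in \cC_{F,d,T} = \bigcap_{\eta > 0} \cC_{F,d,\eta,T}$. Using that $\cC_{F,d,\eta,T} = \cO_F \cup \{\text{chains}\}$, each direction is witnessed, for every $\eta > 0$, by either a genuine $(F,d,\eta,T)$-chain or an orbit segment. In the generic subcase I would simply concatenate at $y$ a $(F,d,\eps,T)$-chain from $x$ to $y$ with one from $y$ to $x$ to obtain a loop at $x$. The degenerate subcases are when one or both directions live in $\cO_F$ only: if both do, then $x$ is forced to be periodic and any orbit segment whose duration is a sufficiently large multiple of the period serves as a length-$(n+1)$ chain for arbitrary $n$; if only one direction, say $x$ to $y$, lives in $\cO_F$ via $y = F^s(x)$ with $s < T$, then I would prepend $x$ to a genuine chain $y = d_0, \ldots, d_k = x$ with leading time-gap $\tau_0 + s \geq T$, using that $F^{\tau_0+s}(x) = F^{\tau_0}(y) = F^{\tau_0}(d_0)$ already lies within $\eps$ of $d_1$ by the chain condition.

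Once a $(F,d,\eps,T)$-chain $L$ of some length $\ell \geq 2$ from $x$ to itself has been produced, concatenating $N$ copies of $L$ at the common endpoint $x$ yields a $(F,d,\eps,T)$-chain of length $N(\ell-1)+1$ from $x$ to itself, which grows without bound as $N$ increases. The whole argument is essentially bookkeeping plus concatenation; the only point that warrants any care is the degenerate subcase above, where a short orbit hop of duration $s < T$ must be absorbed into the first time-gap of the accompanying chain so that the minimum-time condition $t_i \geq T$ is preserved.
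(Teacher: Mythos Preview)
Your approach is the same as the paper's: exhibit one $(F,d,\eps,T)$-chain from $x$ to itself, then concatenate it with itself to get arbitrary length. The paper's proof is in fact much terser than yours---it simply asserts ``By hypothesis, there is at least a $(F,d,\eps,T)$-chain $C$ from $x$ to itself'' and then concatenates. Your case analysis (fixed point; both directions via chains; both via $\cO_F$ forcing periodicity; one direction via a short orbit hop) supplies the justification the paper leaves implicit, which is a good thing since $\cC_{F,d,\eps,T}=\cO_F\cup\{\text{chains}\}$ does not literally hand you a chain in every case.

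One small point to watch: your ``say $x$ to $y$'' in the mixed subcase is not quite a without-loss-of-generality. If instead $(y,x)\in\cO_F$ via $x=F^{s'}(y)$ with $s'<T$ and the other direction is a genuine chain $c_0=x,\dots,c_m=y$, you cannot simply append, because you only know $d(F^{\sigma_{m-1}}(c_{m-1}),y)\le\eps$, not $d(F^{\sigma_{m-1}+s'}(c_{m-1}),x)\le\eps$. The fix is immediate: use continuity of $F^{s'}$ to pick $\eta\le\eps$ with $d(a,y)\le\eta\Rightarrow d(F^{s'}(a),x)\le\eps$, take the $x\to y$ chain to be a $(F,d,\eta,T)$-chain (available since $(x,y)\in\cC_{F,d,T}$), and then replace the last step by time $\sigma_{m-1}+s'$ landing at $x$. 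With that tweak your argument is complete.
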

\begin{proof}
    Fix any integer $n>0$.
    By hypothesis, there is at least a $(F,d,\eps,T)$-chain $C$ from $x$ to itself.
    By concatenating $C$ with itself enough times, the result is a $(F,d,\eps,T)$-chain of length larger than $n$.
\end{proof}
In several statements below, starting from next one, we will use the notation ${\BF \lfloor T\rfloor}$ to indicate the largest integer not larger than $T$.
\begin{lemma}
    \label{lemma:TvsN}
    Assume that $X$ is locally compact and let $F$ be a continuous-time semi-flow with compact dynamics.
    Let $x$ and $y$ be $\cC_{F,T}$-equivalent 
    and set $f=F^1$ and $N=\lfloor T\rfloor$.
    Then $x$ and $y$ are $\cC_{f,N}$-equivalent.
\end{lemma}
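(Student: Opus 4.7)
My plan is to discretize the continuous-time chain into an integer-time chain by rounding cumulative jump times to integers and compensating for the resulting fractional offsets by translating each chain point along its $F$-orbit by a small amount. The key analytic input will be the uniform continuity of $F^s$ for $s\in[0,1]$ on the compact trapping region $Q$, which is available because $F$ has compact dynamics (and by Lemma~\ref{lemma:S trapping region} the relevant recurrent set of $\cC_{F,d,T}$ lies in $Q$, so we may arrange the chain points to lie in a compact set).

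Given $\eps>0$, I would first choose $\delta>0$ so that $d(a,b)<\delta$ implies $d(F^s(a),F^s(b))<\eps/2$ for every $s\in[0,1]$. I then pick a $(F,d,\delta,T)$-chain $x=c_0,c_1,\dots,c_n=y$ with times $t_i\geq T$ and let $\tau_i=\sum_{j<i}t_j$. Setting
$$
\sigma_i=\lceil\tau_i\rceil-\tau_i\in[0,1),\qquad k_i=\lceil\tau_{i+1}\rceil-\lceil\tau_i\rceil,\qquad c'_i=F^{\sigma_i}(c_i),
$$
the inequality $k_i\geq t_i-\sigma_i>T-1$ forces the integer $k_i$ to satisfy $k_i\geq\lfloor T\rfloor=N$, and the identity $k_i+\sigma_i=t_i+\sigma_{i+1}$ combined with the choice of $\delta$ yields
$$
d(f^{k_i}(c'_i),c'_{i+1})=d\bigl(F^{\sigma_{i+1}}(F^{t_i}(c_i)),F^{\sigma_{i+1}}(c_{i+1})\bigr)<\eps/2,
$$
so $c'_0,c'_1,\dots,c'_n$ will be an $(f,d,\eps/2,N)$-chain from $x$ to $c'_n=F^{\sigma_n}(y)$.

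The main obstacle will be that $c'_n$ coincides with $y$ only when $\tau_n$ is an integer; in general $\sigma_n\in[0,1)$ need not vanish. To handle this, I would exploit the $\cC_{F,d,T}$-recurrence of $y$, which holds by hypothesis (when $x\neq y$ one obtains loops at $y$ by concatenating the $x\to y$ and $y\to x$ chains): since there are $(F,d,\eta,T)$-chains from $y$ to itself for every $\eta>0$, I can insert such loops into the original chain to produce chains from $x$ to $y$ whose total times $\tau_n$ can be brought within any prescribed distance of an integer, making $\sigma_n$ as small as desired. By the joint continuity of $F$ on $[0,1]\times Q$, $d(F^{\sigma_n}(y),y)<\eps/2$ then holds, so replacing the final chain point $F^{\sigma_n}(y)$ by $y$ yields a genuine $(f,d,\eps,N)$-chain from $x$ to $y$. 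Running the identical argument on a $(F,d,\delta,T)$-chain from $y$ to $x$ produces the reverse chain, yielding the required $\cC_{f,d,N}$-equivalence.
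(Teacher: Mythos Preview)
Your core construction—rounding the cumulative jump times upward and shifting each chain point along the flow by the fractional offset $\sigma_i$—is correct, and the computations $k_i+\sigma_i=t_i+\sigma_{i+1}$, $k_i>T-1$ (hence $k_i\geq N$), and $d\bigl(f^{k_i}(c'_i),c'_{i+1}\bigr)<\eps/2$ all check out. This is a genuinely different and in some ways tidier discretization than the paper's: the paper follows Hurley and samples the concatenated loop $x\to y\to x$ at exact multiples of $N$ along a piecewise-periodic orbit $\gamma$ of period $S$, then invokes density of $\{kN\bmod S\}$ to land near $y$; you instead keep the original jump points and merely slide them onto the integer-time grid, producing a chain with variable integer steps $k_i\geq N$.

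The gap is in your endpoint fix. You assert that by inserting $(F,d,\eta,T)$-loops at $y$ one can bring the total time $\tau_n$ arbitrarily close to an integer, but this is not automatic: a given loop has some period $S$, concatenating $k$ copies yields total times $\tau_n+kS$, and the fractional parts $\{kS\}$ are dense in $[0,1)$ only when $S$ is irrational. Nothing in your argument rules out $S\in\mathbb{Q}$, in which case the set of achievable $\tau_n\bmod 1$ is finite and may miss every neighbourhood of $0$. The repair is exactly the device the paper invokes for its own ``$S/N$ rational'' obstruction: perturb one flow time in the loop by a small $\theta>0$ (the affected jump grows only by a uniformly small amount, by joint continuity of $F$ on $[0,\theta_0]\times Q$) and choose $\theta$ so that $S+\theta\notin\mathbb{Q}$; then $\{k(S+\theta)\}$ is equidistributed and your claim goes through. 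So your route and the paper's diverge in the discretization step but converge on the same irrationality/perturbation trick to land exactly at~$y$; you should make that step explicit.
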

\begin{proof}
    Let $Q$ be a compact global trapping region for $F$.
    By Proposition~\ref{prop: cont-time chains},
    $Q$ is also a trapping region for both streams $\cC_{F,T}$ and $\cC_{f,N}$.
    Hence, their recurrent points are entirely contained in $Q$ and so it is enough to consider the analogue problem for the restriction of $F$ to $Q$.  
    Hence, in the reminder of the proof we assume, without loss of generality, that $X$ is compact.
    
    We will prove that, for every $\eps>0$, there is a $(f,N,\eps)$-chain from $x$ to $y$. 
    The same argument then can be used to show that there is a $(f,\eps,N)$-chain from $y$ to $x$. 

    Fix an $\eps>0$ and let $\delta>0$ satisfy point (1) of Lemma~\ref{lemma: UC cont} and point (2) of Lemma~\ref{lemma: UC discrete} with $p=N$.
    We can assume without loss of generality that $\delta\leq\eps$.
    Let $C$ be a $(F,T,\delta)$-loop based at $x$ with points $(c_0,\dots,c_r)$ and times $(t_0,\dots,t_{r-1})$ such that 
    $c_i=y$ for some $0<i<r$.
    \blue
    Set 
    $S=t_0+\dots+t_{r-1}$.
    Notice that, if $S/N$ is irrational, since rationals are dense, we can change $t_{r-1}$ to a new time $t'_{r-1}$ so that the new chain $(c_0,\dots,c'_{r-1},c_r)$ is still a $(F,T,\delta)$-loop based at $x$ but this time its period $S'$ is such that $S'/N$ is rational.
    Hence, we can assume without loss of generality that $S/N$ is rational.
    \black
    
    Following Hurley~\cite{Hur95}, we build a $(f,N,\eps)$-loop $C'$ based at $x$ 
    in the following way.
    For every $j$, set $s_j=\sum_{i=0}^jt_j$. 
    The $s_j$ are precisely the times at which, on the chain $C$, there are jumps -- precisely, a jump from $F^{t_j}(c_{j})$ to $c_{j+1}$.
    We start $C'$ by setting $c'_0=c_0$.
    Then, after Hurley, for each $k$, we follow the rule below:
    \begin{enumerate}
        \item if there is no jump in $C$ in the interval $(kN,(k+1)N]$, then we set $c'_{k+1}=f^N(c'_k)=F^N(c'_k)$;
        \item if there is a jump in $C$ at $s_j\in(kN,(k+1)N]$, then we set\\ $c'_{k+1}=F^{(k+1)N-s_j}(c_{j+1})$.
    \end{enumerate}
    The two cases above cover all possible cases because, since in $C$ jumps take place at least $T\geq N$ time units apart, there can be at most one jump in each interval $(kN,(k+1)N]$.

    \blue
    Once the $(F,T,\delta)$-loop gets back to $x$, not necessarily this is the case for the $(f,N,\eps)$-chain.
    Nevertheless, recall that $S/N$ is rational, namely there are integers $m,n>0$ such that $mS=nN$.
    The chain obtained by repeating $m$ times the sequence $(c_0,\dots,c_r)$ and the relative times $(t_0,\dots,t_{r-1})$ is still a $(F,T,\delta)$-loop.
    Hence, after applying the construction above to this new loop, the next-to-last of the $(f,N,\eps)$-chain coincides with the next-to-last point of the $(F,T,\delta)$-loop.
    Since $\delta\leq\eps$, this shows that the $(f,N,\eps)$-chain can be completed to a $(f,N,\eps)$-look based at $x$ by adding $x$ as the last point of the chain.
\end{proof}
\begin{corollary}
    \label{cor:Ff}
    Assume that $X$ is locally compact and $F$ is a continuous-time semi-flow with compact dynamics.
    Set $f=F^1$, fix a $T>0$ and set $N=\lfloor T\rfloor$.
    Then $\cR_{\cC_{F,T}}=\cR_{\cC_{f,N}}$ and each node of $\cR_{\cC_{F,T}}$ is a node of $\cR_{\cC_{f,N}}$ and viceversa.
\end{corollary}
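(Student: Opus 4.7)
The plan is to combine Lemma~\ref{lemma:TvsN}, which handles one direction of the equivalence, with a short ``step-up'' argument in the purely discrete setting. By Lemma~\ref{lemma:TvsN}, every $\cC_{F,d,T}$-equivalent pair $(x,y)$ is automatically $\cC_{f,d,N}$-equivalent; since $x$ is $\cC$-recurrent iff it is fixed or admits a distinct $\cC$-equivalent partner, this immediately yields $\cR_{\cC_{F,d,T}}\subseteq\cR_{\cC_{f,d,N}}$ together with the fact that each $\cC_{F,d,T}$-node is contained in some $\cC_{f,d,N}$-node. For the converse I would reduce to a purely discrete statement: since $N=\lfloor T\rfloor$ forces $T\le N+1$, any $(f,d,\eps,N+1)$-chain reads as a $(F,d,\eps,T)$-chain after setting $t_i:=k_i\ge N+1\ge T$, so it suffices to prove the \emph{step-up}: $\cC_{f,d,N}$-equivalence implies $\cC_{f,d,N+1}$-equivalence.

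To prove the step-up I would restrict to a compact global trapping region via Prop.~\ref{prop:CR trapping region} and Lemma~\ref{lemma:S trapping region}, so that $X$ may be assumed compact. Fix $\eps>0$ and pick $\delta>0$ such that the conclusion of Lemma~\ref{lemma: UC discrete}(2) holds for every $p\in\{1,\dots,2N+1\}$ simultaneously. Given $\cC_{f,d,N}$-equivalent points $x,y$, take an $(f,d,\delta,N)$-chain from $x$ to $y$ and expand each $k_i$-jump into $k_i$ consecutive unit-step $f$-orbit jumps, producing a standard $(f,d,\delta)$-chain $p_0=x,p_1,\dots,p_L=y$ of expanded length $L$. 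If $L<N+1$ (which can only happen when the chain has a single jump of length exactly $N$), prepend the extra round-trip $x\to y\to x$ guaranteed by $\cC_{f,d,N}$-equivalence, bringing $L$ to at least $3N\ge N+1$. Then sample at positions $0,\,N+1,\,2(N+1),\,\dots,(q{-}1)(N+1),\,L$ with $q=\lfloor L/(N+1)\rfloor$: all consecutive gaps lie in $[N+1,\,2N+1]$, so by the choice of $\delta$ the resulting sequence is an $(f,d,\eps,N+1)$-chain from $x$ to $y$. A symmetric argument handles $y\to x$.

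The main obstacle here is essentially bookkeeping: one must check that the sample gaps all lie in $[N+1,\,2N+1]$, that $L$ can always be made $\ge N+1$, and that Lemma~\ref{lemma: UC discrete}(2) transfers the smallness of each $\delta$-jump in the expanded chain to smallness of the ``big jump'' at each sample; all three are handled by the explicit sampling rule and the pre-concatenation trick. Combining the step-up with the trivial inclusion $\cC_{f,d,N+1}\subseteq\cC_{F,d,T}$ (valid since $N+1\ge T$) and with Lemma~\ref{lemma:TvsN} yields $\cR_{\cC_{F,d,T}}=\cR_{\cC_{f,d,N}}$ together with matching nodes, proving the corollary.
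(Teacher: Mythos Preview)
Your argument is correct. The paper gives no proof for this corollary, treating it as an immediate consequence of Lemma~\ref{lemma:TvsN}; but as you correctly observe, that lemma only yields the inclusion $\cR_{\cC_{F,d,T}}\subseteq\cR_{\cC_{f,d,N}}$, and the reverse inclusion needs the step-up $\cC_{f,d,N}\text{-equiv}\Rightarrow\cC_{f,d,N+1}\text{-equiv}$ (followed by the trivial embedding into $\cC_{F,d,T}$ since $N+1\ge T$). Your resampling argument for the step-up is exactly the idea the paper itself uses a few lines later in the proof of Theorem~\ref{thm: cont=disc}, so your approach is the intended one, just made explicit where the paper leaves it tacit.
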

\begin{lemma}
    Assume that $X$ is locally compact and $F$ is a continuous-time semi-flow with compact dynamics.
    Set $f=F^1$, fix a $T>0$ and set $N=\lfloor T\rfloor$.
    Then, if $x$ is $\cC_{F,T}$-upstream of $y\in\cR_{\cC_{F,T}}$,
    $x$ is $\cC_{f,N}$-upstream of $y$.
\end{lemma}
\begin{proof}
    Let $C$ be a $(F,T,\delta)$-chain from $x$ to $y$ and let $D$ be a $(F,T,\delta)$-chain loop from $y$ to itself.
    Let $D^n$ be the concatenation of $D$ with itself $n$ times.
    Then by concatenating $C$ with $D^n$ we can get a chain from $x$ to $y$ of arbitrary length.
    Hence, by using the very same procedure of the previous lemma, we can prove that, for every $\eps>0$, there is a $(f,N,\eps)$-chain from $x$ to $y$.
\end{proof}
\begin{corollary}
    Assume that $X$ is locally compact and $F$ is a continuous-time semi-flow with compact dynamics.
    Set $f=F^1$, fix a $T>0$ and set $N=\lfloor T\rfloor$.
    Then $\Gamma_{\cC_{F,T}}=\Gamma_{\cC_{f,N}}$.
\end{corollary}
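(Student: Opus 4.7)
By Corollary~\ref{cor:Ff}, the graphs $\Gamma_{\cC_{F,d,T}}$ and $\Gamma_{\cC_{f,d,N}}$ have the same vertex set, so the claim reduces to showing they have identical edge sets. By transitivity of any stream and the fact that nodes are equivalence classes, an edge between two distinct nodes $M_1,M_2$ exists in $\Gamma_S$ iff $x\Sto y$ for some (equivalently, every) $x\in M_1$ and $y\in M_2$. Since the node partitions coincide for the two streams, it therefore suffices to show that for $x,y$ in distinct common nodes, $x\Sto_{\cC_{F,d,T}} y$ iff $x\Sto_{\cC_{f,d,N}} y$.

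The direction ($\Rightarrow$) is exactly the content of the lemma immediately preceding this corollary, so we focus on ($\Leftarrow$). When $T$ is a positive integer we have $T=N$, and any $(f,d,\eps,N)$-chain is literally an $(F,d,\eps,T)$-chain (viewing integer times as real times), making ($\Leftarrow$) trivial. We thus assume $N<T<N+1$. Fix $\eps>0$ and suppose $x\Sto_{\cC_{f,d,N}} y$ with $y\in\cR_{\cC_{f,d,N}}=\cR_{\cC_{F,d,T}}$. Using uniform continuity of $F$ on the compact global trapping region over the time window $[0,2T]$, choose $\delta>0$ so that $d(a,b)<\delta$ forces $d(F^s(a),F^s(b))<\eps/8$ for every $s\in[0,2T]$.

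Take an $(f,d,\delta,N)$-chain $C_0$ from $x$ to $y$. Since $y$ is $\cC_{f,d,N}$-recurrent, we may concatenate $C_0$ with enough $(f,d,\delta,N)$-loops at $y$ (available by Lemma~\ref{lemma:long chains}) to obtain an $(f,d,\delta,N)$-chain $C$ from $x$ to $y$ whose total integer time $S$ satisfies $S\geq T$. View $C$ as a piecewise continuous $F$-pseudo-orbit $\gamma:[0,S]\to X$, flowing under $F$ between chain transitions and jumping by at most $\delta$ at each transition. Setting $K:=\lceil S/T\rceil-1\geq 0$, define
\[
p_0:=x,\qquad p_i:=\gamma(iT)\ \text{ for }1\le i\le K,\qquad p_{K+1}:=y,
\]
with consecutive step times $T,T,\dots,T,S-KT$, the last of which lies in $[T,2T)$; all step times are $\geq T$. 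Since jumps in $C$ are spaced at integer intervals of at least $N$, each step (of duration at most $2T<2N+2$) spans at most a universal constant number of jumps of $C$. Each $\delta$-jump, after being transported by $F^s$ for some $s\in[0,2T]$, contributes at most $\eps/8$ to the discrepancy $d(F^{t_i}(p_i),p_{i+1})$, which therefore stays below $\eps$. Hence $\{p_0,\dots,p_{K+1}\}$ is an $(F,d,\eps,T)$-chain from $x$ to $y$, establishing $x\Sto_{\cC_{F,d,T}} y$ as required.

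The main technical point is this reverse sampling estimate: unlike in Lemma~\ref{lemma:TvsN}, where the sampling rate $N$ falls below the source chain's jump spacing $T$ so that at most one source jump falls in any sample interval, here the sampling rate $T$ can exceed the source jump spacing $N$, allowing several source jumps per interval. The saving bound $T<N+1$ is what keeps this number bounded by a universal constant independent of the chain, so that uniform continuity of $F^s$ on the compact trapping region still produces a useable error estimate.
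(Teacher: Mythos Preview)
Your argument is essentially correct and in fact goes further than the paper, which states this corollary without proof. The lemma immediately preceding it only supplies the direction you call $(\Rightarrow)$: a $\cC_{F,d,T}$-edge yields a $\cC_{f,d,N}$-edge. You rightly observe that $(\Leftarrow)$ also requires an argument, since an $(f,d,\eps,N)$-chain has \emph{integer} step times $k_i\geq N$ that may be strictly smaller than $T$ (as $N=\lfloor T\rfloor\leq T$), and you supply a re-sampling construction in the spirit of Lemma~\ref{lemma:TvsN}.

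Two small corrections. First, your choice $K:=\lceil S/T\rceil-1$ gives a last step time $S-KT\in(0,T]$, not $[T,2T)$; you want $K:=\lfloor S/T\rfloor-1$ (nonnegative once $S\geq T$), so that $(K{+}1)T\leq S<(K{+}2)T$ and the final step has length in $[T,2T)$ as you claim. Second, your error estimate (``each $\delta$-jump \dots contributes at most $\eps/8$'') is correct but deserves one line of justification, since at first sight errors might appear to compound rather than add. The point is a telescoping: if $s_1<\dots<s_m$ are the jump times in the step ending at time $t_{\mathrm{end}}$, set $q_k:=F^{t_{\mathrm{end}}-s_k}(\gamma(s_k))$; then $q_{k-1}=F^{t_{\mathrm{end}}-s_k}(\gamma(s_k^-))$ and hence $d(q_{k-1},q_k)$ is the image under $F^{t_{\mathrm{end}}-s_k}$ (exponent in $[0,2T)$) of a $\delta$-gap, so $<\eps/8$ by your choice of $\delta$. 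Since $T<N{+}1$ and the step length is $<2T$, at most four jumps fall in any step, and the sum is $<\eps$.

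A shorter alternative for $(\Leftarrow)$, available once one has the first claim of Theorem~\ref{thm: cont=disc} (whose proof is independent of this corollary): $\cO_f\cup\cC_{f,d,N}=\cO_f\cup\cC_{f,d,N+1}$, so any $(x,y)\in\cC_{f,d,N}$ with $y\notin\cO_f(x)$ admits, for every $\eps$, an $(f,d,\eps,N{+}1)$-chain; this is already an $(F,d,\eps,T)$-chain because each integer time $k_i\geq N{+}1>T$.
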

The results above show already that all that the qualitative description of the dynamics of a continuous-time semi-flow $F$ with compact dynamics is all encoded in the powers of its time-1 map. 
Below we show that, in fact, the first power of the time-1 map is enough.
\begin{theorem}
    \label{thm: cont=disc}
    Assume that $X$ is locally compact and $f$ is a discrete-time semi-flow with compact dynamics.
    Then, for any integer $N>0$, $\cO_f\cup\cC_{f,N}=\cC_{f}$.
    If $f=F^1$ for some continuous-time semi-flow $F$, then we have also that $\cC_{F}=\cO_F\cup\cC_{f}$.
    In particular, $\Gamma_{\cC_F}=\Gamma_{\cC_f}$.
\end{theorem}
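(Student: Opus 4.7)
The plan is to prove the two equalities separately; the discrete identity $\cO_f\cup\cC_{f,d,N}=\cC_{f,d}$ is the backbone, and the continuous-vs.-time-$1$ identity follows by combining it with the sampling techniques developed in Lemma~\ref{lemma:TvsN} and the subsequent unnumbered lemma.

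For the first identity, the inclusion $\cO_f\cup\cC_{f,d,N}\subset\cC_{f,d}$ is immediate from the definitions: orbit pairs give trivial chains, and any $(f,d,\eps,N)$-chain refines to a $(f,d,\eps)$-chain by inserting the missing iterates of $f$ between consecutive jumps. For the reverse inclusion, given $(x,y)\in\cC_{f,d}$ with $y\not\in\cO_f(x)$, I use that $\cC_{f,d}$ is a $\Omega$stream to find $z\in\Om_f(x)$ with $(z,y)\in\cC_{f,d}$; such $z$ is automatically $\cC_{f,d}$-recurrent by Prop.~\ref{prop:streams}. Fix $\eps>0$ and use Lemma~\ref{lemma: UC discrete} to choose $\delta>0$ so that any $(f,d,\delta)$-chain of length at most $2N$ collapses into a single $(f^j,d,\eps)$-step with $j\geq N$. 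Using $\cC_{f,d}$-recurrence of $z$, I concatenate a $(f,d,\delta)$-chain loop at $z$ of length $\geq N$ with a $(f,d,\delta)$-chain from $z$ to $y$ and then group the composite chain into blocks of $N$ consecutive steps, letting the last block absorb any leftover $<N$ steps so its length lies in $[N,2N-1]$. The outcome is a $(f,d,\eps,N)$-chain from $z$ to $y$. Since $z\in\Om_f(x)$ there is some $k\geq N$ with $d(f^k(x),z)<\eps$, and prepending the single step $x\mapsto z$ produces the desired $(f,d,\eps,N)$-chain from $x$ to $y$.

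For the second identity, the inclusion $\cO_F\cup\cC_{f,d}\subset\cC_{F,d}$ is nearly free: $\cO_F\subset\cC_{F,d}$ is immediate, and any $(x,y)\in\cC_{f,d}\setminus\cO_f$ lies in $\cC_{f,d,N}$ for every $N$ by the first identity, so the $(f,d,\eps,N)$-chains it furnishes are simultaneously $(F,d,\eps,N)$-chains with integer jump times $t_i=k_i\geq N$; letting $N$ exceed any prescribed $T$ places $(x,y)$ in $\bigcap_T\cC_{F,d,T}=\cC_{F,d}$. For the reverse inclusion, take $(x,y)\in\cC_{F,d}\setminus\cO_F$. Replicating the argument of Lemma~\ref{lemma:minimal} in continuous time shows $\cC_{F,d}$ is a $\Omega$stream, so some $z'\in\Om_F(x)$ satisfies $(z',y)\in\cC_{F,d}$. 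Writing $\Om_F(x)=\bigcup_{s\in[0,1)}F^s(\Om_f(x))$ (a consequence of compact dynamics and continuity of $F$), pick $z\in\Om_f(x)$ and $s\in[0,1)$ with $z'=F^s(z)$; transitivity then gives $(z,y)\in\cC_{F,d}$, and $z$ is $\cC_{f,d}$-recurrent by Corollary~\ref{cor:Ff}. Now I replay the sampling argument of Lemma~\ref{lemma:TvsN}: concatenating an arbitrarily long $(F,d,\delta,1)$-loop at $z$ with a $(F,d,\delta,1)$-chain from $z$ to $y$ yields a continuous-time chain from $z$ to $y$ of arbitrarily large total time $S$. Perturbing the loop period $L$ slightly to be irrational makes the integer-time samples of this chain dense on its trajectory, and tuning the number of loop repetitions so $S$ falls within arbitrarily small distance of an integer $m$ produces, by uniform continuity (Lemma~\ref{lemma: UC cont}), a sample $c'_m$ with $d(c'_m,y)<\eps$. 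Prepending the single step $x\mapsto z$ (available since $z\in\Om_f(x)$) gives a $(f,d,\eps)$-chain from $x$ to $y$, whence $(x,y)\in\cC_{f,d}$.

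The main obstacle is arranging the sampling in the second part so the chain terminates near $y$. In the first part, matching the total chain length to a multiple of $N$ up to a bounded remainder is trivial by adjusting the loop count, but in the continuous case the loop period $L$ is a real number a priori incommensurate with the integer sampling step, so I invoke Hurley's irrationality perturbation to force the integer-time samples to be dense along the continuous trajectory and then tune the number of loop repetitions so the total travel time falls arbitrarily close to an integer. Uniform continuity on the compact trapping region $Q$ (to which the chain dynamics may be restricted by Prop.~\ref{prop:CR trapping region}) then converts ``near-integer time'' into ``$\eps$-close to $y$''.
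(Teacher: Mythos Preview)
Your argument is correct. For the discrete identity you follow the paper's block-grouping strategy (choose $\delta$ via Lemma~\ref{lemma: UC discrete} for all $p\leq 2N$, then partition a long $\delta$-chain into blocks of length in $[N,2N-1]$, each collapsing to a single $\eps$-step). Your variant routes through $z\in\Om_f(x)$ and loops there rather than at $y$; this is a minor improvement, since the paper's appeal to ``some $(f,d,\delta)$-chain from $y$ to itself'' is unjustified when $y$ is not chain-recurrent (though easily repaired: if $y\notin\cO_f(x)$ then $\delta$-chains from $x$ to $y$ grow arbitrarily long as $\delta\to0$, so no loop is needed).

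For the continuous identity your argument is more complete than the paper's. The paper asserts $\cC_{F,d,T}=\cO_F\cup\cC_{f,d,\lfloor T\rfloor}$ citing Corollary~\ref{cor:Ff}, but that corollary and the unnumbered lemma after it only match recurrent sets, nodes, and pairs $(x,y)$ with $y$ \emph{recurrent}; they say nothing about non-recurrent targets $y$. You close this gap with a genuine twist on Hurley's sampling: instead of looping at the terminal point $y$ (which would require $y$ recurrent, as in the paper's unnumbered lemma), you loop at the \emph{initial} recurrent point $z$ and tune the number $n$ of repetitions so that the total time $S=nS_D+S_C$ lands arbitrarily close to an integer; equidistribution of $\{nS_D\}$ in $[0,1)$ (after perturbing $S_D$ irrational) then forces the last integer-time sample to fall near $y$. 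Two small remarks: your phrase ``integer-time samples dense on its trajectory'' is imprecise---density holds only on the periodic loop portion, and what you actually exploit is density of the fractional parts $\{nS_D\}$; and when $(z,y)\in\cO_F$ with $y=F^s(z)$, $0<s<1$, the required $(F,d,\delta,1)$-chain from $z$ to $y$ is not immediate but is obtained by extending the last step of a loop at $z$ by time $s$ and absorbing the slack via uniform continuity.
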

\begin{proof}
    As in the proof of Lemma~\ref{lemma:TvsN}, we can assume without loss of generality that $X$ is compact.

    First notice that, for any $N>0$, $\cC_{f}\subset\cO_f\cup\cC_{f,N}$ because every $(f,N,\eps)$-chain $C$ can be seen as a $(f,\eps)$-chain -- just break each jumpless segment in pieces of length 1.
    To complete the proof, we need to prove that, given any $N>0$ and $\eps>0$, if $x$ can be joined to $y$ by a $(f,\eta)$-chain for every $\eta>0$, then we can join $x$ to $y$ with a $(f,N,\eps)$-chain.

    So, let $\eps>0$, set $p=2N$ and let $\delta>0$ be the $\delta$ whose existence is granted by Lemma~\ref{lemma: UC discrete}(2) and $C$ a $(f,\delta)$-chain from $x$ to $y$.
    Recall that, by possibly concatenating $C$ with some $(f,\delta)$-chain from $y$ to itself, we can assume that $C$ has at least $N$ points.
    Let $c_0,\dots,c_r$, $r\geq N$, be the points of $C$.
    Then, by Lemma~\ref{lemma: UC discrete}(2), each pair $c_{kN},c_{(k+1)N}$ is a $(f,N,\eps)$-chain.
    If $r=q N$ for some integer $q>0$, then $c_0,c_N,\dots,c_{q N}$ is a $(f,N,\eps)$-chain from $x$ to $y$.
    Otherwise, $q N<r<(q+1)N$ for some $q>0$.
    In this case, we use as the final segment of the new chain the pair $c_{(q-1)N},c_r$. 
    Since $2N>r-(q-1)N>N$, even this pair is a $(f,N,\eps)$-chain.
    Hence, the chain $c_0,c_N,\dots,c_{(q-1)N},c_r$ is, in any case, a $(f,N,\eps)$-chain from $x$ to $y$.
    This proves that $\cO_f\cup\cC_{f,N}=\cC_{f}$.

    The second claim of the theorem comes from the fact that $\cC_{F,T} = \cO_F\cup\cC_{f,\lfloor T\rfloor}$ (Corollary~\ref{cor:Ff}) and that $\cO_F\cup\cC_{f,N}=\cO_F\cup\cC(f)$ for every integer $N>0$ (by the first claim of this theorem).
\end{proof}
\subsection{The {\EM$\Sigma$}chains streams}
{\black This subsection is based on Easton's definition of $\Sigma$chains given in Definition~\ref{def:chains}.}
\begin{definition}
    Given a metric $d$ compatible with the topology of $X$, we call {\EM $(F,d)$-$\Sigma$chains stream} the relation
    \beq
    \Sigma_{F,d}  = \{(x,y):\text{for every $\eps>0$, there is a $(F,d,\eps)$-$\Sigma$chain from $x$ to $y$}\}.
    \eeq
\end{definition}
\begin{proposition}
    $\Sigma_{F,d} $ is an $\Omega$stream.
\end{proposition}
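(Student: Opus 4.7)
The plan is to mirror, almost verbatim, the proof just given that $\cC_{F,d}$ is an $\Omega$stream. The key observation is that every $\eps$-$\Sigma$chain is, a fortiori, an $\eps$-chain (since each individual jump is at most the total jump sum), so the uniform-continuity lemmas already proved for chains apply without modification.

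I first verify that $\Sigma_{F,d}$ is a stream. Reflexivity and the containment $\cO_F \subset \Sigma_{F,d}$ follow from the observation that orbit segments are $\Sigma$chains with jump-sum zero. Transitivity is clean: concatenating an $(\eps/2)$-$\Sigma$chain from $x$ to $y$ with an $(\eps/2)$-$\Sigma$chain from $y$ to $z$ yields an $\eps$-$\Sigma$chain from $x$ to $z$, since jump sums add. For closedness, given $(x_n,y_n)\to(x,y)$ in $\Sigma_{F,d}$ and $\eps>0$, I would choose $n$ large enough that $d(F(x),F(x_n))<\eps/3$ and $d(y_n,y)<\eps/3$, pick an $(\eps/3)$-$\Sigma$chain $x_n=c_0,c_1,\dots,c_k=y_n$, and form the new chain $x,c_1,\dots,c_{k-1},y$. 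The telescoped estimates
\[
d(F(x),c_1)\le d(F(x),F(x_n))+d(F(x_n),c_1), \quad d(F(c_{k-1}),y)\le d(F(c_{k-1}),y_n)+d(y_n,y),
\]
together with the fact that the inner jumps coincide with a subset of the original chain's jumps, yield a total $\Sigma$-sum bounded by $d(F(x),F(x_n))+d(y_n,y)+\eps/3<\eps$.

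For the $\Omega$stream property, one direction follows from Prop.~\ref{prop:Down_S} applied to $\Sigma_{F,d}$. For the other, I would establish the $\Sigma$chain analogue of Lemma~\ref{lemma:minimal}: if $y=F^m(x)$ and $z\in\Down_{\Sigma_{F,d}}(x)\setminus\cO_F(x)$, then $z\in\Down_{\Sigma_{F,d}}(y)$. Fix $\eps>0$; since an $\eps'$-$\Sigma$chain is in particular an $\eps'$-chain, Lemma~\ref{lemma: UC discrete}(2) provides a $\delta>0$ (depending on $\eps$ and $m$) such that any $\delta$-chain of length $m+1$ starting at $x$ ends within $\eps/3$ of $F^m(x)=y$, and moreover $d(a,b)<\eps/3$ implies $d(F(a),F(b))<\eps/3$. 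Take a $\delta$-$\Sigma$chain $x=x_0,\dots,x_k=z$ of total jump-sum $<\delta$ (WLOG $k\ge m$, by appending the orbit of $z$ at the end); then $d(y,x_m)<\eps/3$, so truncating to $y,x_{m+1},\dots,x_k=z$ yields a chain whose new first jump satisfies $d(F(y),x_{m+1})\le d(F(y),F(x_m))+d(F(x_m),x_{m+1})<\eps/3+\delta$, while the tail jumps sum to at most $\delta$, giving total $\Sigma$-sum strictly less than $\eps$.

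The main obstacle will be ensuring the bookkeeping in this Lemma~\ref{lemma:minimal}-analogue is consistent across the applications of uniform continuity (one for the initial-segment drift, one for the new first jump after truncation). Modulo this, every other step is routine and parallels the chains-stream proof verbatim.
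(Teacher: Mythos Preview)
Your proof is correct and follows essentially the same approach as the paper: verify reflexivity, transitivity (via concatenating $\eps/2$-$\Sigma$chains), closedness, and containment of $\cO_F$, then deduce the $\Omega$stream property from the observation that every $\eps$-$\Sigma$chain is in particular an $\eps$-chain, so the uniform-continuity argument behind Lemma~\ref{lemma:minimal} carries over. The paper simply invokes that last step by reference (``the same argument used to prove that $\cC_F$ is a $\Omega$stream applies''), whereas you spell out the truncation-and-rebasing details explicitly; your closedness construction (replacing endpoints rather than prepending/appending) is a minor variant of the paper's.
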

\begin{proof}
    {\EM $\Sigma_{F,d} $ is a quasi-order.}
    By definition $\Sigma_{F,d} $ contains the diagonal of $X\times X$. 
    Now, suppose that $(x,y),(y,z)\in\Sigma_{F,d} $ and fix an $\eps>0$. 
    By choosing an $\eps/2$-$\Sigma$chain from $x$ to $y$ and another one from $y$ to $z$, their concatenation is an $\eps$-$\Sigma$chain from $x$ to $y$.
    Hence, $\Sigma_{F,d} $ is transitive.
    
    \noindent
    {\EM $\Sigma_{F,d} $ is closed.}
    Assume that $(x_n,y_n)\to(x,y)$ for some sequence of pairs belonging to $\Sigma_{F,d} $.
    For each $n>0$, denote by $C_n$ any $\eps/2$-$\Sigma$chain from $x_n$ to $y_n$.
    Then, for $n$ large enough, we can assume that $d(x,x_n)<\eps/4$ and $d(y,y_n)<\eps/4$ and so the chain $\{x\}+C_n+\{y\}$ is an $\eps$-$\Sigma$chain from $x$ to $y$.
    In particular, $(x,y)\in\Sigma_{F,d} $.

    \noindent
    {\EM $\cO_F\subset\Sigma_{F,d}$}. Each $y\in\cO_F(x)$ can be joined to $x$ by a $0$-chain (orbit segment).
    
    \noindent
    {\EM $\Sigma_{F,d} $ is an $\Omega$stream.} The same argument used to prove that $\cC_F$ is an $\Omega$stream applies to this case, since every $(F,d,\eps)$-$\Sigma$chain from $x$ to $y$ is also a $(F,d,\eps)$-chain between the same points.
\end{proof}
%
%
Notice that, unlike chains streams, $\Sigma$chains streams do depend on the distance $d$ used in their definition.
\begin{example}
    Let $F$ be the flow of an ODE $x'=g(x)$ on $X=[0,1]$, where $g$ is a non-positive Lipschitz function on $X$ whose set of zeros coincide with the middle-third Cantor set $C$. 
    Since the measure of $C$ with respect to the Euclidean distance $e$ on $X$ is zero, there are $(F,e,\eps)$-$\Sigma$chains from 0 to 1 for every $\eps>0$, so that all points of $X$ are $\Sigma_{F,e}$-equivalent to each other.
    Now, let $d$ be a distance topologically equivalent to $e$ but for which the measure of $C$ is strictly positive.
    Then, for $\eps$ small enough, no $(F,d,\eps)$-$\Sigma$chain can join 0 to 1, so that $\Sigma_{F,e}\neq\Sigma_{F,d}$ (see~\cite{Wis18} for several other examples).
\end{example}


\subsection{The smallest stream}
By Proposition~\ref{prop:smallest stream}, we know that each semi-flow $F$ has a smallest stream, namely the smallest closed quasi-order containing $\cO_F$ (and, therefore, containing $\NW_F$).
This relation, that we denote by $\cA_F$, has been first introduced and studied by Auslander in~\cite{Aus63}.
In that article, under the assumption that $X$ is separable and locally compact, Auslander introduced $\cA_F$ as the set
of all pairs $(x,y)$ such that $L(x)\geq L(y)$ for all Lyapunov functions $L\in\cL_F$.
Hence, $\cA_F=\cD_{\cL_F}$.
In the proposition below, we show that $\cA_F$ can also be seen as a stream of chains, in particular a $\Sigma$chains stream.
We also drop Auslander's assumptions on $X$ and we rather transfer them to $F$.

    

%
\begin{proposition}
    Assume that $F$ has a separable and locally compact global trapping region $Q$.
    Denote by $\cM_X$ the set of all distances on $X$ compatible with its topology.
    Then the following holds:
    \begin{enumerate}
        \item $\cA_F=\displaystyle\bigcap_{d\in\cM_X}\Sigma_{F,d}$;
        \item $\cR_{\cA_F}=\displaystyle\bigcap_{d\in\cM_X}\cR_{\Sigma_{F,d} }$;
        \item $\cA_F$ is an $\Omega$stream.
        \item $\cA_F=\Sigma_{F,d} $ for some $d\in\cM_X$.
    \end{enumerate}
\end{proposition}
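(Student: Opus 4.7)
The four parts rest on a common \textbf{Lipschitz trick}: given any $L\in\cL_F$ and base distance $d\in\cM_X$, the modified distance
$$d_L(a,b)=d(a,b)+|L(a)-L(b)|$$
is again in $\cM_X$ (since $L$ is continuous, $d_L\to 0$ iff $d\to 0$) and makes $L$ globally $1$-Lipschitz. Along any $(F,d_L,\eps)$-$\Sigma$chain $c_0=x,c_1,\dots,c_n=y$, telescoping and combining $L(F(c_i))\le L(c_i)$ with $|L(c_{i+1})-L(F(c_i))|\le d_L(F(c_i),c_{i+1})$ yields
$$L(y)-L(x)\;\le\;\sum_{i=0}^{n-1}d_L(F(c_i),c_{i+1})\;<\;\eps,$$
so passing to $\eps\to 0$ gives $L(x)\ge L(y)$ for every $(x,y)\in\Sigma_{F,d_L}$.

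Part (1) follows at once. The inclusion $\cA_F\subset\Sigma_{F,d}$ holds for every $d$ because $\cA_F$ is the smallest stream. Conversely, if $(x,y)\notin\cA_F$, Auslander's Theorem~A (applied to $F|_Q$, valid under the separability and local compactness hypothesis on $Q$) produces $L\in\cL_F$ with $L(x)<L(y)$; the trick above then gives $(x,y)\notin\Sigma_{F,d_L}$. Part (3) is then immediate from Proposition~\ref{prop:intersection of minimal}, since each $\Sigma_{F,d}$ is a $\Omega$stream and $\Omega$streams are closed under intersection.

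For Part (2), one inclusion is immediate from Part (1). For the reverse, assume $x\in\bigcap_d\cR_{\Sigma_{F,d}}$ is not fixed (the fixed case is trivial). Since each $\Sigma_{F,d}$ is a $\Omega$stream, Proposition~\ref{prop:minimal streams}(1) gives $F(x)\in\Node_{\Sigma_{F,d}}(x)$, so $(x,F(x))$ and $(F(x),x)$ belong to every $\Sigma_{F,d}$; by Part (1) they belong to $\cA_F$, hence $x\in\cR_{\cA_F}$.

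The substantive step is Part (4). Separability of $Q$ yields a countable family $\{L_n\}\subset\cL_F$ with $\|L_n|_Q\|_\infty\le 1$ whose restrictions are uniformly dense in $\cL_F|_Q$. Define
$$d(a,b)\;=\;d_0(a,b)+\sum_{n=1}^\infty 2^{-n}\,|L_n(a)-L_n(b)|,$$
a uniformly convergent sum defining a metric topologically equivalent to $d_0$. For each fixed $n$, $L_n$ is Lipschitz in $d$ with constant at most $2^n$, and the Lipschitz trick therefore gives $L_n(x)\ge L_n(y)$ for every $(x,y)\in\Sigma_{F,d}$. By density and Auslander's Theorem~A this forces $(x,y)\in\cD_{\cL_F}=\cA_F$, so $\Sigma_{F,d}\subset\cA_F$; combined with Part (1) we obtain equality. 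The expected obstacle lies in handling pairs $(x,y)$ where one coordinate lies outside $Q$: the density argument is clean on $Q\times Q$, and one extends to all of $X\times X$ by exploiting that $\cA_F$ is a $\Omega$stream together with the trapping property of $Q$ (in the spirit of Proposition~\ref{prop:NW_F in Q}), which reduces non-trivial $\cA_F$-pairs to pairs inside $Q$ together with forward $F$-orbits.
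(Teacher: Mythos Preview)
Your arguments for Parts~(1)--(3) follow the paper's proof essentially verbatim: the ``Lipschitz trick'' via $d_h(a,b)=d(a,b)+|h(a)-h(b)|$ is exactly the paper's construction, and the deductions of (2) and (3) from Proposition~\ref{prop:intersection of minimal} match the paper (your argument for (2) simply unpacks that proposition's proof).

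Part~(4) is where you diverge. You build the metric from a \emph{countable dense family} $\{L_n\}\subset\cL_F$ via $d=d_0+\sum_n 2^{-n}|L_n(\cdot)-L_n(\cdot)|$, then argue each $L_n$ is $2^n$-Lipschitz for $d$ and finish by density. The paper instead invokes Auslander's Theorem~2 in~\cite{Aus63}, which produces a \emph{single} $L\in\cL_F$ with $\cA_F=\cD_{\{L\}}$, and then uses $d_L$ exactly as in Part~(1). This is considerably cleaner: no density argument is needed, no issue of extending from $Q\times Q$ to $X\times X$ arises, and the verification is a one-line reuse of the computation you already did. Your route can likely be made rigorous, but the two gaps you yourself flag (justifying that separability of $Q$ gives a countable family dense in $\cL_F|_Q$ in a topology strong enough for the limit argument, and handling pairs with a coordinate outside $Q$) are real and nontrivial; the paper avoids both by using the single-function characterization of $\cA_F$.
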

\begin{proof}
    (1)
    Fix a metric $d$ on $X$ compatible with its topology.
    It is enough to prove that, for each $h\in\cL_F$, there exists a metric $d_h$ compatible with $d$ such that $h(x)\geq h(y)$ for each $(x,y)\in\Sigma_{F,d_h}$.
    Indeed, let $\cL_{\Sigma_{F,d} }$ be the set of all Lyapunov function for $\Sigma_{F,d} $. 
    Then $\cap_d\Sigma_{F,d} =\cD_{\cup_d\cL_{\Sigma_{F,d} }}$.
    Hence, if such $d_h$ exists, then $h\in\cL_{\Sigma_{F,d_h}}$ and so $\cup_d\cL_{\Sigma_{F,d} }=\cL_F$.
    Therefore, ultimately, $\cap_d\Sigma_{F,d} =\cD_{\cL_F}=\cA_F$.
    
    We claim that $d_h(x,y)\bydef d(x,y)+|h(y)-h(x)|$ does the job.
    Indeed, suppose that $(x,y)\in\Sigma_{F,d_h}$. 
    Then, for every $\varepsilon>0$, there exists a chain $x_0,\dots,x_k$, with $x_0=x$ and $x_k=y$, such that 
    $$
    d_h(f(x_0),x_{1})+\dots+ d_h(f(x_{k-1}),x_{k})< \varepsilon.
    $$
    This means that
    $$
    d(f(x_0),x_{1})+\dots+ d(f(x_{k-1}),x_{k})+
    $$
    $$
    +|h(f(x_0))-h(x_1)|+\dots+|h(f(x_{k-1}))-h(x_k)|< \varepsilon.
    $$
    Hence, for every $\varepsilon>0$, we have that
    $$
    h(y) = h(x_k) \leq h(x_k) \overbrace{- h(f(x_{k-1})) + h(x_{k-1})}^\text{non-negative since $h\in\cL_F$} - \dots \overbrace{- h(f(x_0)) + h(x_0)}^\text{non-negative since $h\in\cL_F$} \leq
    $$
    $$
    \leq |h(x_k)-h(f(x_{k-1}))|+\dots+|h(x_1)-h(f(x_{0}))|+h(x_0)\leq
    $$
    $$
    \leq \varepsilon + h(x),
    $$
    so that
    $$
    h(y)\leq h(x).
    $$

    \noindent
    (2)
    It follows immediately from point (1) above and Proposition~\ref{prop:intersection of minimal}(2).

    \noindent
    (3)
    It follows immediately from point (2) above and Proposition~\ref{prop:intersection of minimal}(1).

    \noindent
    (4) 
    By Theorem~2 in~\cite{Aus63}, there is a function $L\in\cL_F$ such that $\cA_F=\cD_{\{L\}}$.
    In other words, $(x,y)\in\cA_F$ if and only if $L(x)\geq L(y)$ and $x\in\cR_{\cA_F}$ if and only if $L$ is constant on the orbit of $x$.
    Now, let $d$ be any distance compatible with the topology of $X$ and define 
    $$
    d_L(x,y) = d(x,y) + |L(x)-L(y)|.
    $$
    Then the same argument used in point (1) of this proposition shows that $(x,y)\in\Sigma_{F,d_L}$ implies that $L(x)\geq L(y)$ which, in turn, implies that $(x,y)\in\cA_F$.
    Since $\cA_F$ is contained in every stream, it follows that $\cA_F=\Sigma_{F,d_L}$.
\end{proof}
\subsection{Chains streams with countably many nodes}
\label{sec:countably many nodes}
\begin{definition}
    Assume $N_1,N_2$ are distinct nodes of a stream $S$.
    We say they are {\EM adjacent} if $N_1\Sto N_2$ and, whenever $N_1\Sto N\Sto N_2$, then either $N=N_1$ or $N=N_2$.
\end{definition}
\black
The following lemma generalizes to streams the content of Lemma~\ref{lemma:NW adjacent} and it is proved using the very same argument.
\black
\begin{lemma}
    \label{lemma:streams adjacent}
    Let $F$ be a semi-flow with compact dynamics 
    and let $S$ be a substream of $\cC_F$.
    Then, there is a bitrajectory between every pair of adjacent nodes of $S$. 
\end{lemma}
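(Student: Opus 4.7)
The plan is to mirror the proof of Lemma~\ref{lemma:adjacent}, adapted to the multi-jump structure of $\cC_F$-chains. Since $S\subset\cC_F$ and $F$ has compact dynamics, Prop.~\ref{prop:CR trapping region} together with Lemma~\ref{lemma:S trapping region} places $\cR_S$ inside a compact global trapping region $Q$, so $N_1$ and $N_2$ are disjoint compact subsets of $Q$. I would fix closed neighborhoods $U_1\supset N_1$ and $U_2\supset N_2$ with $U_1\cap U_2=\emptyset$. Since $N_1\Sto N_2$ in $S\subset\cC_F$, for each $\eps_n\to 0$ there is an $\eps_n$-chain $C_n$ from a fixed $x\in N_1$ to a fixed $y\in N_2$, and for $\eps_n$ small every such chain must pass through $Q\setminus(U_1\cup U_2)$. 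Selecting such a point $z_n\in C_n$ and passing to a convergent subsequence gives $z_n\to z\notin N_1\cup N_2$.

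The forward orbit $z,F(z),F^2(z),\dots$ provides the forward part of the candidate bitrajectory. For the backward part I would use the predecessor-in-chain argument of Lemma~\ref{lemma:adjacent}: the chain predecessor $w_n$ of $z_n$ in $C_n$ satisfies $d(F(w_n),z_n)<\eps_n$, so subsequence limits yield $w_{-1}$ with $F(w_{-1})=z$ by continuity; iterating (each $w_{-k}$ is obtained as a limit of chain predecessors whose jump error tends to $0$) produces a full bitrajectory $b$ through $z$. It remains to verify $\omega(b)\subset N_2$ and $\alpha(b)\subset N_1$. By Prop.~\ref{prop:streams}(1), $\omega(z)$ is an $S$-equivalent set and thus lies in a single $S$-node $N_\omega$, and analogously $\alpha(b)\subset N_\alpha$ for an $S$-node $N_\alpha$. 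The strategy is then to deduce $N_1\Sto N_\omega\Sto N_2$ and $N_1\Sto N_\alpha\Sto N_2$, whereupon $S$-adjacency forces $N_\omega,N_\alpha\in\{N_1,N_2\}$; since $z\notin N_1$ and (by symmetry in the backward construction) the $w_{-k}$ eventually sit outside $N_2$, one obtains $N_\omega=N_2$ and $N_\alpha=N_1$.

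The hard part is establishing these $S$-relations. The chain data immediately supplies only the weaker $\cC_F$-relations $N_1\CRto z\CRto N_2$, and for a strict substream $S\subsetneq\cC_F$ such chain relations need not lift to $S$. To close this gap I would exploit that $N_1\Sto N_2$ is a genuine $S$-relation, together with the closedness of $\Down_S(N_1)$ and $\Up_S(N_2)$ (Prop.~\ref{prop:Down(compact)}), and propagate membership along finite $F$-orbit pieces via uniform continuity of $F$ on $Q$ and the closure property of $S$ applied one step at a time. The delicate point, which has no analogue in Lemma~\ref{lemma:adjacent}, is that $\cC_F$-chains can be arbitrarily long with small but cumulatively uncontrolled error, so one cannot shadow a whole chain by a single $F$-orbit; the argument must instead proceed pointwise and then transfer the resulting $S$-upstream/downstream membership to $\omega(z)$ and $\alpha(b)$ using their $S$-equivalence, which is precisely why $S$-adjacency is the right hypothesis to complete the proof.
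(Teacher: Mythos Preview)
You have correctly located the crux: the $\eps$-chain construction delivers relations in $\cC_F$, not in a general substream $S$, so once you have the limit point $z$ and the bitrajectory $b$ there is no direct way to sandwich the $S$-nodes $N_\omega$ and $N_\alpha$ between $N_1$ and $N_2$ in the $S$-order. Your proposed fix does not close this gap. Closedness of $\Down_S(N_1)$ and $\Up_S(N_2)$ lets you pass to limits of points \emph{already known} to lie in those sets, but it cannot manufacture the relations $N_1\Sto z$ or $z\Sto N_2$ from chain data: an $\eps$-chain from $N_1$ to $z$ witnesses only the $\cC_F$-relation, and ``propagating along finite $F$-orbit pieces one step at a time'' cannot upgrade that to an $S$-relation when $S\subsetneq\cC_F$, because nothing in your argument places the intermediate chain points in $\Down_S(N_1)$ to begin with. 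The cumulative-error difficulty you flag is real, and your sketch does not overcome it.

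The paper avoids this entirely by declaring at the outset that it suffices to treat the case $S=\cC_F$. With $S=\cC_F$ the chain relations \emph{are} the stream relations, so your ``hard part'' simply evaporates: the argument of Lemma~\ref{lemma:adjacent} carries over verbatim with $\eps$-chains in place of $\eps$-$\alpha\omega$chains, transitivity of $\cC_F$ gives $N_1\succcurlyeq_{\cC_F} N_\omega\succcurlyeq_{\cC_F} N_2$ and $N_1\succcurlyeq_{\cC_F} N_\alpha\succcurlyeq_{\cC_F} N_2$, and $\cC_F$-adjacency then forces $N_\omega=N_2$ and $N_\alpha=N_1$. The missing idea in your proposal is therefore not a sharper shadowing estimate but this reduction; you are working unnecessarily hard trying to handle arbitrary substreams directly.
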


\black
The following is the last of our main results and shows that, when the smallest stream has only countably many nodes,
it coincides with the chains stream, that is usually easier to deal with.
\black
\begin{theorem}
    \label{thm: CF=AF}
    Let $F$ be a semi-flow with compact dynamics.
    Then, if $\cA_F$ has countably many nodes, 
    $\cA_F=\cC_F$. 
    In particular, $\Sigma_{F,d}=\cC_F$ for every metric $d$
    compatible with the topology of $X$.    
\end{theorem}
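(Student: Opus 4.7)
The plan is to establish $\cC_F \subseteq \cA_F$; combined with the universal chain $\cA_F \subseteq \Sigma_{F,d} \subseteq \cC_F$ from Prop.~\ref{prop:smallest stream}, this forces all three relations to coincide. The main tool is a complete Auslander Lyapunov function $L \in \cL_F$ with $\cA_F = \cD_{\{L\}}$ (Thm.~2 of \cite{Aus63}, already invoked in the proof of the proposition preceding this theorem): $L$ is constant on each $\cA_F$-node, weakly decreasing along every orbit, and strictly decreasing along orbits of points outside $\cR_{\cA_F}$.

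Countability enters through the set of ``critical values'' $V := L(\cR_{\cA_F})$. Since $L$ is constant on each of the countably many nodes, $V$ is countable. Applied to the $\Omega$stream $\cA_F$, Prop.~\ref{prop:minimal streams}(5) gives that $\cR_{\cA_F}$ is closed, and Prop.~\ref{prop:CR trapping region} together with Lemma~\ref{lemma:S trapping region}(1,5) places it inside the compact trapping region $Q$. Hence $\cR_{\cA_F}$ is compact and $V$ is a compact countable subset of $\bR$; since every nonempty open interval of $\bR$ is uncountable, $V$ has empty interior, so $\bR \setminus V$ is open and dense.

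Suppose for contradiction $(x,y) \in \cC_F \setminus \cA_F$, so that $L(y) > L(x)$. Density of $\bR \setminus V$ yields an open interval $(a,b) \subset (L(x),L(y)) \setminus V$; setting $c := (a+b)/2$ and $\delta := (b-a)/4$, one has $L(x) < c-\delta$, $c+\delta < L(y)$ and $[c-\delta,c+\delta] \cap V = \emptyset$. Then $A := L^{-1}((-\infty,c])$ is closed and $F$-forward-invariant, contains $x$, and excludes $y$. The slab $K := L^{-1}([c-\delta,c+\delta]) \cap Q$ is compact and disjoint from $\cR_{\cA_F}$, so $L$ strictly decreases along every orbit starting in $K$; a finite-cover compactness argument upgrades this to a uniform drop, producing an integer $N$ such that $L(F^N(z)) < c-\delta$ for every $z$ in an open neighborhood of $K$. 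By Theorem~\ref{thm: cont=disc}, every pair in $\cC_F$ is joined, for all small $\eps > 0$, by an $(f,d,\eps,N)$-chain with $f = F^1$. Taking $\eps$ smaller than the modulus-of-continuity threshold for $L$ corresponding to $\delta/2$, a direct induction on chain steps shows $L$ can never climb above $c$ along such a chain: once in the slab, the forced $N$-step orbital segment drops $L$ below $c-\delta$, leaving a $\delta/2$-buffer that absorbs the next jump. This contradicts the chain arriving at $y$, proving $(x,y) \in \cA_F$.

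The main obstacle is precisely this final chain-tracking induction. A naive estimate $L(c_i) \leq L(c_{i-1}) + \omega_L(\eps)$ allows $L$ to drift upward by $n\,\omega_L(\eps)$ along a chain of length $n$, and $n$ may grow far faster than $\omega_L(\eps) \to 0$, so short-gap chains give no useful barrier. This is exactly why Theorem~\ref{thm: cont=disc} is invoked to enforce long orbital segments between jumps, so that the uniform drop of $L$ by more than $\delta$ over each $N$-iterate segment (forced by compactness of $K$ together with compact dynamics) outpaces the $\eps$-sized upward kicks at jumps. The countability hypothesis is what guarantees the separating buffer $[c-\delta,c+\delta]$ of positive width disjoint from $V$; without it, $\bR \setminus V$ could fail to contain any open interval inside $(L(x),L(y))$ and the whole separation scheme collapses.
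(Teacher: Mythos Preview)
Your proof is correct and takes a genuinely different route from the paper's. The paper argues structurally on the graph $\Gamma_{\cC_F}$: given $(x,y)\in\cC_F$ it extracts a maximal tower $\cT$ of $\cC_F$-nodes between the node containing $\Omega_F(x)$ and a node upstream of $y$; this tower is countable (each $\cC_F$-node, being dynamical, extends an $\cA_F$-node), and a Cantor--Bendixson induction on $\cT$ together with Lemma~\ref{lemma:streams adjacent} (bitrajectories between adjacent nodes) forces every $L\in\cL_F$ to be non-increasing along $\cT$, whence $L(x)\geq L(y)$ and $(x,y)\in\cD_{\cL_F}=\cA_F$. You instead fix a \emph{single} complete Lyapunov function $L$ with $\cA_F=\cD_{\{L\}}$, use countability as nowhere-density of the critical-value set $V=L(\cR_{\cA_F})$ to find a buffer interval, and replace the tower/Cantor--Bendixson machinery by a quantitative uniform-drop estimate on the compact slab $K$; this in turn forces you to invoke Theorem~\ref{thm: cont=disc} to trade ordinary $\eps$-chains for $(f,d,\eps,N)$-chains with long orbital segments, which the paper never needs here, just as you never need Lemma~\ref{lemma:streams adjacent} or the Cantor--Bendixson rank. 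The paper's route is more conceptual and keeps the graph structure front and center (in line with the article's theme), while yours is closer in spirit to classical Conley-theory barrier arguments and is arguably more self-contained once the complete Lyapunov function and the long-step chain theorem are available.
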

\begin{proof}
  \blue Recall that $\cA_F=\cD_{\cL_F}$. Hence, it is enough to show \black that $\cC_F=\cD_{\cL_F}$, namely that there is an $\eps$-chain from $x$ to $y$ for every $\eps>0$
  if and only if $L(x)\geq L(y)$ for each Lyapunov function $L$ of $F$.
  \black
  Since $\cD_{\cL_F}$ is the smallest stream of $F$, we already know that $\cC_F\supset\cD_{\cL_F}$ and so it is enough to prove the inverse inclusion.
  In turn, this amounts to show that $L(x)\geq L(y)$ for every $(x,y)\in\cC_F$.
%
  If $y\in\cO_F(x)$ the claim is trivially true, so let us assume that $y\not\in\cO_F(x)$.
  \black
  Since $F$ has a compact globally attracting trapping region, $\Omega_F(x)$ is not empty and is contained in a node $T$ of $\cC_F$.
  Similarly, since $y\in\Down_{\cC_F}(\Omega_F(x))$, there is at least a node $B$, possibly equal to $T$, such that $y\in\Down_{\cC_F}(B)$.
  By Zorn's Lemma, there is at least a tower $\cT$ of $\Gamma_{\cC_F}$ having $T$ as a top node and $B$ as a bottom node.
    
  Notice that, by hypothesis, $\cT$ has at most countably many nodes.
  Denote by $\cT'$ the set of all nodes of $\cT$ that are not isolated. 
  These nodes subdivide the set of all isolated nodes of $\cT$ into at most countably many sequences $\cT_i$ ordered with respect to $\cC_F$.
  In each $\cT_i$, every pair of consecutive nodes is a pair of adjacent nodes.
  Hence, any $L\in\cL_F$ is non-decreasing on each $\cT_i$.
    
  Now, consider the set $\cT^*$ of all nodes of $\cT$ minus all nodes contained in the $\cT_i$.
  This set is at most countable and we can repeat the same argument above as follows. 
  Denote by $(\cT^*)'$ the set of all nodes of $\cT^*$ that are not isolated. 
  These nodes subdivide the set of all isolated nodes of $\cT^*$ into at most countably many sequences $\cT^*_i$ ordered with respect to $\cC_F$.
  In each $\cT^*_i$, between every pair $N_1$ and $N_2$ of consecutive nodes is either no node, in which case $N_1$ and $N_2$ are a pair of adjacent nodes, or there is one of the $\cT_i$ sets of nodes.
  In either case, by continuity, $L(N_1)\geq L(N_2)$ and so, more generally, any $L\in\cL_F$ is non-decreasing on each $\cT^*_i$ and so, by continuity, on the union of the nodes in all $\cT_i$ and $\cT^*_i$.

  Now, recall that a closed set is countable if and only if its Cantor-Bendixson rank is finite, namely if there is an ordinal $\alpha_0$ such that 
  there is a finite number of accumulation points of order $\alpha$.
  Repeating the argument above, we can prove that, at every order $\alpha$, any $L$ is non-decreasing on the set of isolated limit points of $\cT$ of order $\alpha$.
  Since there is a finite number of limit points of order $\alpha_0$, the argument stops at that point. 
  Hence, ultimately, $L$ is non-decreasing on the whole $\cT$.
  By continuity, this means that $L(x)\geq L(y)$.
%
    %
    %
\end{proof}
\medskip\noindent
\subsection{The chains graph of T-unimodal maps.}
%
We conclude this section by mentioning an immediate consequence of our Theorem~\ref{thm:tower}, namely that the structural graph $\Gamma_{\ell_\mu}$ of the logistic map $\ell_\mu$
is a tower (see Section~\ref{sec:T-unimodal}).
By definition, the nodes of the structural graph of a T-unimodal map $f$ are the nodes of $\cC_{f}$.  
There is an edge from $M$ to $N$ in $\Gamma_{f}$ if there is a bitrajectory $b$ with $\alpha(b)\subset M$ and $\omega(b)\subset N$. 
In turn, this immediately implies that $M$ is $\cC_{f}$-upstream from $N$.
Hence, we have the following:
\begin{proposition}
    For each T-unimodal map $f$, $\cA_f=\cC_f$ and $\Gamma_{\cC_{f}}=\Gamma_{f}$.
    Hence, the smallest stream of $f$ is the chains stream and its graph is a tower.
\end{proposition}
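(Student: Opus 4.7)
The plan is to route both equalities through the smallest stream $\cA_f$, combining three ingredients already established earlier in the paper: Theorem~B (which bounds the cardinality of $NW_f$), Theorem~\ref{thm: CF=AF} (which collapses everything between $\cA_F$ and $\cC_F$ when $\cA_F$ has at most countably many nodes), and the T-unimodal extension in~\cite{ADL23} of the earlier discussion of $\Gamma_{\cA_f}$ for T-unimodal maps (which identifies $\Gamma_{\cA_f}$ with $\Gamma_f$ and shows it is a tower). Since $f:[0,1]\to[0,1]$ is continuous on a compact space, $f$ automatically has compact dynamics, so the hypothesis needed by those results holds for free.

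First I would prove $\cA_f=\cC_f$. By Theorem~B, $NW_f$ has at most countably many nodes: finitely many in the generic case, and a countable sequence accumulating on the attractor when the attractor is an adding-machine Cantor set. Since $\Gamma_{\cA_f}$ is obtained from $\Gamma_{\NW_f}$ by recursively collapsing each loop into a single $\cA_f$-equivalence class (as recalled in the paper right after the definition of the graph of a stream), the countability bound passes to $\cA_f$. Theorem~\ref{thm: CF=AF} then yields $\cA_f=\cC_f$, and the same theorem gives $\Sigma_{f,d}=\cC_f$ for every admissible metric $d$, so every stream between $\cA_f$ and $\cC_f$ coincides with $\cC_f$.

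Next I would verify $\Gamma_{\cC_f}=\Gamma_f$. The vertex sets of the two graphs agree by definition, since both are the chain-recurrent equivalence classes of $f$. For the edges, any bitrajectory $b$ with $\alpha(b)\subset N$ and $\omega(b)\subset M$ witnessing an edge in $\Gamma_f$ supplies, by truncation, $\eps$-chains from $N$ to $M$ for every $\eps>0$, as remarked just before the proposition, so it also witnesses an edge in $\Gamma_{\cC_f}$. For the reverse implication I would combine the previous step with the earlier T-unimodal example that asserts $\Gamma_{\cA_f}=\Gamma_f$, obtaining $\Gamma_{\cC_f}=\Gamma_{\cA_f}=\Gamma_f$. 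The final clause that this graph is a tower is then immediate from Theorem~C in the form extended to T-unimodal maps in~\cite{ADL23}.

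The real work is not in the bookkeeping above but in the T-unimodal extension of Theorem~C itself: one must check that the logistic-family tower argument of~\cite{DLY20} depends only on the two defining T-unimodal properties (absence of wandering intervals and of non-topological attracting periodic orbits), a step carried out in~\cite{ADL23}. A secondary subtlety I would verify is that the collapse $\Gamma_{\NW_f}\to\Gamma_{\cA_f}$ preserves countability, but this is transparent because $\cA_f$-classes are determined entirely by the loop structure of the at-most-countable graph $\Gamma_{\NW_f}$.
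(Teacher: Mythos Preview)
Your proposal is correct and follows essentially the same ingredients as the paper: countability of the non-wandering nodes via Theorem~B, the collapse $\cA_f=\cC_f$ via Theorem~\ref{thm: CF=AF}, and the T-unimodal extension of Theorem~C for the tower conclusion. The paper's own argument is extremely terse (the proposition is stated as an immediate consequence of the preceding paragraph), so you are mostly filling in detail the paper leaves implicit.

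One small organizational difference worth noting: for the equality $\Gamma_{\cC_f}=\Gamma_f$, the paper argues directly from the tower property rather than routing through $\Gamma_{\cA_f}$. Once you know the nodes agree and that every $\Gamma_f$-edge (coming from a bitrajectory) is also a $\cC_f$-edge, the fact that $\Gamma_f$ is already a \emph{tower} forces equality: a tower has the maximal edge set compatible with acyclicity, and $\Gamma_{\cC_f}$ is acyclic since it is the graph of a stream. So no reverse-implication argument is needed, and in particular you do not need to invoke $\cA_f=\cC_f$ or the earlier example $\Gamma_{\cA_f}=G_f$ to conclude $\Gamma_{\cC_f}=\Gamma_f$. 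Your route works, but it makes the forward-implication paragraph redundant; the paper's route is slightly cleaner here. Conversely, your write-up is more explicit about why $\cA_f$ has at most countably many nodes, which the paper leaves entirely to context (placement in the same section as Theorem~\ref{thm: CF=AF}).
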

%
%

%
\section{Final comments}
\medskip\noindent
{\bf An interpretation of streams in terms of modeling.}
No model of a natural phenomenon can be able to reproduce exactly its dynamics.
In other words, we can safely assume that, given a semi-flow $F$, the dynamics of $F$ represents the real behavior only within some $\eps>0$, depending on the degree of accuracy of the model.
The smaller $\eps$, the more accurate the model is, but we can safely assume that $\eps>0$.
This means that what gives us the most reliable information on the dynamics of the system we are modeling is, more than the qualitative dynamics of $F$ (that would be described by the graph of the prolongational relation), the graph of some $F$-stream.

For instance, suppose we are modeling some phenomenon using a discrete-time model and that we do not have precise information on the amount of the error of our model at every step, except perhaps that the error is ``not too large''. 
Then the graph of the chains stream $\cC_F$ is one of the most suitable graphs to describe the qualitative behavior of the system. 
Indeed, by its very definition, the nodes and edges of the chains graph describes the behaviour of a point under $F$ in presence of arbitrarily small perturbations.
Recall that, when the number of nodes is countable, as happens in most applied cases, the chains stream actually coincides with the Auslander stream $\cA_F$.
In case some quantitative information on the error is available, then streams such as $\cC_{F,d,\eps}$ might be more suitable to describe the actual dynamics of the system described by $F$. For instance, if we model a motion in the $[0,1]$ segment as in Figure~\ref{fig:streams}(left) and we know that the model suffers by some ``large'' error for $x$ close to $0.5$, then, depending on the amount of the error, it could happen that the corresponding stream looks like the one in 
Figure~\ref{fig:streams}(center), where the new node means that, due do the large errors, the true dynamics of the natural system is compatible with the presence of a third node about $0.5$.
Possibly a more accurate model could tell whether such a node exists or not.

\medskip\noindent
{\bf An interpretation of streams in terms of controlled systems.}
Given an $F$-stream $S$, our interpretation of $x\Sto y$ is that, under possibly some kind of control, it is possible to start at $x$ and end up at $y$.

The relation $\cO_F$ is the one that represents the system in absence of controls: it is possible to end up in finite time in $y$ when starting at $x$ if and only if $y$ is in the orbit of $x$.
Any stream, including $\cA_F$, describes the behavior of the system under some kind of control.

\black
For instance, consider the three streams shown in Figure~\ref{fig:streams}.
\black
The first (left) describes what happens in the system when arbitrarily small controls can be applied at any point.
There are two effects.
The first is that it is possible to leave from the fixed point 1 and reach any other point of the interval -- this is why $1\Sto x$ for every $x\in[0,1]$.
The second is that it is possible to jump on the fixed point 0 in finite time from the orbit of any point $x<1$.

The second (center) describes what happens in the system when, besides the ``infinitesimal control'' above, one applies some finite control in a region in the middle.
Although the flow tends to move all points towards 0, with this finite control one can keep indefinitely a trajectory within the intermediate node and can reach any point within it.

The third describes what happens when one applies some finite control close enough to a pre-existing node. 
In this case, rather than having a new node, a previously existing node ``extends'' into a larger node.
Notice that this node is forward-invariant but not invariant.

In this context, also the prolongational relation plays a role. 
Its graph shows from which node to which node it is possible to get by using only two controls, a first (arbitrarily small) ``push'' at the very beginning of the motion and a second (arbitrarily small) ``push'' at its very end.
\section*{Acknowledgments}
The authors are in debt with Joe Auslander for several stimulating discussions on the matter.
The authors are also thankful to the anonymous referees for helping improving the quality of the article.
The first author was partially supported by NSF grant \# 2308225.

\bibliographystyle{amsplain}  
\bibliography{refs}  
\end{document}

